\renewcommand\thetable{\thesection.\@arabic\c@table}
\theoremstyle{plain}
\newtheorem{maintheorem}{Theorem}
\newtheorem{maincorollary}[maintheorem]{Corollary}
\newtheorem{theorem}{Theorem }[section]
\newtheorem{proposition}[theorem]{Proposition}
\newtheorem{lemma}[theorem]{Lemma}
\newtheorem{corollary}[theorem]{Corollary}
\theoremstyle{definition} \theoremstyle{remark}
\newtheorem{remark}[theorem]{Remark}
\newtheorem{example}[theorem]{Example}
\newtheorem{definition}[theorem]{Definition}
\newcommand{\field}[1]{\mathbb{#1}}
\newcommand{\real}{\field{R}}
\newcommand{\torus}{\field{T}}
\newcommand{\al} {\alpha}       
\newcommand{\ga} {\gamma}    
\newcommand{\de} {\delta}       \newcommand{\De}{\Delta}
\newcommand{\vep}{\varepsilon}
\newcommand{\la} {\lambda}      \newcommand{\La}{\Lambda}
\newcommand{\si} {\sigma}
\newcommand{\om} {\omega}       \newcommand{\Om}{\Omega}
\newcommand{\Z}{\mathbb{Z}}
\newcommand{\N}{\mathbb{N}}
\newcommand{\R}{\mathbb{R}}
\newcommand{\supp}{\operatorname{supp}}
\newcommand{\diam}{\operatorname{diam}}
\newcommand{\topp}{\operatorname{top}}
\newcommand{\Leb}{\operatorname{Leb}}
\newcommand{\ti}{\tilde }
\newcommand{\Ptop}{P_{\topp}}
\newcommand{\htop}{h_{\topp}}
\newcommand{\Wloc}{W_{\text{loc}}}
\newcommand{\hWloc}{\hat W_{\text{loc}}}
\newcommand{\un}{\underbar}
\newcommand{\cI}{{\mathcal I}}
\newcommand{\cR}{\mathcal{R}}
\newcommand{\cP}{\mathcal{P}}
\newcommand{\cL}{\mathcal{L}}
\newcommand{\cM}{\mathcal{M}}
\newcommand{\cB}{\mathcal{B}}
\newcommand{\cG}{\mathcal{G}}
\newcommand{\cQ}{\mathcal{Q}}
\newcommand{\cF}{\mathcal{F}}
\newcommand{\cT}{\mathcal{T}}
\newcommand{\cW}{\mathcal{W}}
\newcommand{\cU}{\mathcal{U}}
\newcommand{\cS}{\mathcal{S}}
\newcommand{\cA}{\mathcal{A}}
\newcommand{\cZ}{\mathcal{Z}}
\begin{document}

\title{Existence, uniqueness and stability of equilibrium states for
       non-uniformly expanding maps}

\author{Paulo Varandas and Marcelo Viana}

\address{IMPA, Est. D. Castorina 110 \\ 22460-320 Rio de Janeiro, RJ, Brazil}
\email{varandas@impa.br, viana@impa.br}

\thanks{P.V. was partially supported by FCT-Portugal through the grant
        SFRH/BD/11424/2002, Funda\c c\~ao Calouste Gulbenkian, and CNPq.
        M.V. was partially supported by CNPq, FAPERJ, and PRONEX-Dynamical Systems.}
\date{\today}

\begin{abstract}
We prove existence of finitely many ergodic equilibrium states for a
large class of non-uniformly expanding local homeomorphisms on
compact manifolds and H\"older continuous potentials with not very
large oscillation. No Markov structure is assumed. If the
transformation is topologically mixing there is a unique equilibrium
state, it is exact and satisfies a non-uniform Gibbs property. Under
mild additional assumptions we also prove that the equilibrium
states vary continuously with the dynamics and the potentials
(statistical stability) and are also stable under stochastic
perturbations of the transformation.
\end{abstract}

\maketitle
%\tableofcontents

%%%%%%%%%%%%%%%%%%%%%%%%%%%%%%%%%%%%%%%%%%%%%%%%%%%%%%%%%%%%%%%%%%%%%%%
\section{Introduction}\label{s.introduction}

The theory of equilibrium states of smooth dynamical systems was
initiated by the pioneer works of Sinai, Ruelle,
Bowen~\cite{Si72,BR75,Bo75,Ru76b}. For uniformly hyperbolic
diffeomorphisms and flows they proved that equilibrium states exist
and are unique for every H\"older continuous potential, restricted
to every basic piece of the non-wandering set. The basic strategy to
prove this remarkable fact was to (semi)conjugate the dynamics to a
subshift of finite type, via a Markov partition.

Several important difficulties arise when trying to extend this
theory beyond the uniformly hyperbolic setting and, despite
substantial progress by several authors, a global picture is still
far from complete. For one thing, existence of generating Markov
partitions is known only in a few cases and, often, such partitions
can not be finite. Moreover, equilibrium states may actually fail to
exist if the system exhibits critical points or singularities (see
Buzzi~\cite{Buz01}).

A natural starting point is to try and develop the theory first for
smooth systems which are hyperbolic in the non-uniform sense of
Pesin theory, that is, whose Lyapunov exponents are non-zero
``almost everywhere''. This was advocated by Alves, Bonatti,
Viana~\cite{ABV00}, who assume non-uniform hyperbolicity at
\emph{Lebesgue} almost every point and deduce existence and
finiteness of physical (Sinai-Ruelle-Bowen) measures. In this
setting, physical measures are absolutely continuous with respect to
Lebesgue measure along expanding directions.

It is not immediately clear how this kind of hypothesis may be
useful for the more general goal we are addressing, since one
expects most equilibrium states to be singular with respect to
Lebesgue measure. Nevertheless, in a series of recent works,
Oliveira, Viana~\cite{Ol03,OV06,OV07} managed to push this idea
ahead and prove existence and uniqueness of equilibrium states for a
fairly large class of smooth transformations on compact manifolds,
inspired by \cite{ABV00}. Roughly speaking, they assumed that the
transformation is expanding on most of the phase space, possibly
with some relatively mild contracting behavior on the complement.
Moreover, the potential should be H\"older continuous and its
oscillation $\sup\phi-\inf\phi$ not too big. On the other hand, they
need a number of additional conditions on the transformation, most
notably the existence of (non-generating) Markov partitions, that do
not seem natural.

Important contributions to the theory of equilibrium states outside
the uniformly hyperbolic setting have been made by several other
authors: Denker, Keller, Nitecki, Przytycki,
Urba{\'n}ski~\cite{DKU90, DU91a,DU91b,DNU95,DPU96,Ur98}, Bruin,
Keller, Todd~\cite{BrK98,BT06,BT07a}, and Pesin, Senti,
Zhang~\cite{PS05,PS06,PSZ07}, for one-dimensional maps, real and
complex. Wang, Young~\cite{WY01} for H\'enon-like maps. Buzzi,
Maume, Paccaut, Sarig,~\cite{Bu99,BPS01,BM02,BS03} for piecewise
expanding maps in higher dimensions. Buzzi,
Sarig~\cite{BS03,Sa99,Sa01,Sa03,Yu03} for countable Markov shifts.
Denker, Urba{\'n}ski~\cite{DU91d,DU91f,DU92b} and
Yuri~\cite{Yu99,Yu00,Yu03} for maps with indifferent periodic
points. Leplaideur, Rios~\cite{LR06,LR2} for horsehoes with
tangencies at the boundary of hyperbolic systems. This list is
certainly not complete. Some results, including ~\cite{BR06} and
\cite{OV06} are specific for measures of maximal entropy. An
important notion of entropy-expansiveness was introduced by
Buzzi~\cite{Bu00b}, which influenced \cite{Bu05,OV06} among other
papers.

In this paper we carry out the program set by Alves, Bonatti, Viana
towards a theory of equilibrium states for the class of
non-uniformly expanding maps originally proposed in
\cite[Appendix]{ABV00}. We improve upon previous results of
\cite{OV07} in a number of ways. For one thing, we completely remove
the need for a Markov partition (generating or not). In fact, one of
the technical novelties with respect to previous recent works in
this area is that we prove, in an abstract way inspired by
Ledrappier~\cite{Le84a}, that every equilibrium state must be
absolutely continuous with respect to a certain conformal measure.
When the map is topologically mixing, the equilibrium state is
unique, and a non-lacunary Gibbs measure. In this regard let us
mention that Pinheiro~\cite{Pi08} has recently announced an inducing
scheme for constructing (countable) Markov partitions for a class of
non-invertible transformations closely related to ours. Another
improvement is that our results are stated for local homeomorphisms
on compact metric spaces, rather than local diffeomorphisms on
compact manifolds (compare \cite[Remark~2.6]{OV07}). In addition, we
also prove stability of the equilibrium states under random noise
(stochastic stability) and continuity under variations of the
dynamics (statistical stability).

Our basic strategy to prove these results goes as follows. First we
construct an expanding conformal measure $\nu$ as a special
eigenmeasure of the dual of the Ruelle-Perron-Frobenius operator.
Then we show that every accumulation point $\mu$ of the Cesaro sum
of the push-forwards $f^n_*\nu$ is an invariant probability measure
that is absolutely continuous with respect to $\nu$ with density
bounded away from infinity, and that there are finitely many
distinct such ergodic measures. In addition, we prove that these
absolutely continuous invariant measures are equilibrium states, and
that any equilibrium state is necessarily an expanding measure.
Finally, we establish an abstract version of Ledrappier's theorem
\cite{Le84a} and characterize equilibrium states as invariant
measures absolutely continuous with respect to $\nu$.

This paper is organized as follows. The precise statement of our
results is given in Section~\ref{Statement of results}. We included
in Section~\ref{Preliminary results} preparatory material that will
be necessary for the proofs. Following the approach described above,
we construct an expanding conformal measure and prove that there are
finitely many invariant and ergodic measures absolutely continuous
with respect it through Sections~\ref{Conformal Measure} and
~\ref{Absolutely Continuous Measures}. In Section~\ref{Proof Thm 1}
we prove Theorems~\ref{Thm. Equilibrium States} and ~\ref{Thm.
Equilibrium States2}. Finally, in Section~\ref{Proof of Theorems 3,
4 and 5} we prove the stochastic and statistical stability results
stated in Theorems~\ref{Thm. Statistical Stability} and ~\ref{Thm.
Stochastic Stability2}.

\medskip

\textbf{Acknowledgements:} This paper is an outgrowth of the first
author's PhD thesis at IMPA. We are grateful to V. Pinheiro, V.
Araújo and K. Oliveira for very useful conversations.

%%%%%%%%%%%%%%%%%%%%%%%%%%%%%%%%%%%%%%%%%%%%%%%%%%%%%%%%%%%%%%%%%%%%%%%
\section{Statement of the results}\label{Statement of results}

%%%%%%%%%%%%%%%%%%%%%%%%%%%%%%%%%%%%%%%%%%%%%%%%%%%%%%%%%%%%%%%%%%%%%%%
\subsection{Hypotheses}

We say that $X$ is a \emph{Besicovitch metric space} if it is a
metric space where the Besicovitch covering lemma (see e.g.
\cite{Gu75}) holds. These metric spaces are characterized in
\cite{Fed69} and include e.g. any subsets of Euclidean metric spaces
and manifolds.

We consider $M \subset N$ to be a compact Besicovitch metric space
of dimension $m$ with distance $d$. Let $f:M \to N$ be a \emph{local
homeomorphism} and assume that there exists a bounded function
$x\mapsto L(x)$ such that, for every $x\in M$ there is a
neighborhood $U_x$ of $x$ so that $f_x : U_x \to f(U_x)$ is
invertible and
$$
d(f_x^{-1}(y),f_x^{-1}(z))
    \leq L(x) \;d(y,z), \quad \forall y,z\in f(U_x).
$$
Assume also that every point has finitely many preimages and that
the level sets for the degree $\{x : \#\{f^{-1}(x)\}=k\}$ are
closed. Given $x\in M$ set $\deg_x(f)=\# f^{-1}(x)$. Define
$h_n(f)=\min_{x\in M} \deg_x(f^n)$ for $n\ge 1$, and consider the
limit
$$
h(f)=\liminf_{n\to\infty} \frac1n \log h_n(f).
$$
It is clear that
$$
\log\Big( \max_{x\in M} \#\{f^{-1}(x)\}\Big)
    \geq h(f) \geq
\log\Big( \min_{x\in M} \#\{f^{-1}(x)\}\Big).
$$
If $M$ is connected, every point has the same number $\deg(f)$ of
preimages by $f$ and $h(f)=\log \deg(f)$ is the topological entropy
of $f$ (see Lemma~\ref{Gibbs imply equilibrium} below). The limit
above also exists e.g. when the dynamics is (semi)conjugated to a
subshift of finite type. By definition, there exists $N \ge 1$ such
that $\deg_x(f^n) \geq e^{h(f) n}$ for every $x\in M$ and every
$n\ge 1$. Up to consider the iterate $f^N$ instead of $f$ we will
assume that every point in $M$ has at least $e^{h(f)}$ preimages by
$f$.

For all our results we assume that $f$ and $\phi$ satisfy conditions
(H1), (H2), and (P) stated in what follows. Assume that that there
exist constants $\si>1$ and $L>0$, and an open region $\cA\subset M$
such that \vspace{.1cm}
\begin{itemize}
\item[(H1)] $L(x)\leq L$ for every $x \in \cA$ and
$L(x)\leq \sigma^{-1}$ for all $x\in M\backslash \cA$, and $L$ is
close to $1$: the precise conditions are given in \eqref{eq.
relation expansion} and \eqref{eq. relation potential} below.
\item[(H2)] There exists $k_0 \geq
1$ and a covering $\cP=\{P_1,\dots, P_{k_0}\}$ of $M$ by domains of
injectivity for $f$ such that $\cA$ can be covered by $q<e^{h(f)}$
elements of $\cP$. \vspace{.1cm}
\end{itemize}
The first condition means that we allow expanding and contracting
behavior to coexist in $M$: $f$ is uniformly expanding outside $\cA$
and not too contracting inside $\cA$. The second one requires
essentially that in average every point has at least one preimage in
the expanding region. The interesting part of the dynamics is given
by the restriction of $f$ to the compact metric space
$$
K= \bigcap_{n\geq 0} f^{-n}(M),
$$
that can be connected or totally disconnected. We give examples
below where $K$ is a manifold and where it is a Cantor set. In
\cite[Remark~2.6]{OV07} the authors pointed out that their results
could hold in more general metric spaces and for non-smooth maps.

In addition we assume that $\phi:M \to \mathbb R$ is H\"older
continuous and that its variation is not too big. More precisely,
assume that:
\begin{itemize}
\item[(P)]$\sup\phi - \inf\phi <h(f)-\log q$. \vspace{.1cm}
\end{itemize}
Notice this is an open condition on the potential, relative to the
uniform norm, and it is satisfied by constant functions. It can be
weakened somewhat. For one thing, all we need for our estimates is
the supremum of $\phi$ over the union of the elements of $\cP$ that
intersect $\cA$. With some extra effort (replacing the $q$ elements
of $\cP$ that intersect $\cA$  by the same number of smaller
domains), one may even consider the supremum over $\cA$, that is,
$\sup\phi\mid_\cA - \inf\phi <h(f)-\log q$. However, we do not use
nor prove this fact here.

Let us comment on this hypothesis. A related condition,
$\Ptop(f,\phi)> \sup\phi$, was introduced by Denker,
Urba\'nski~\cite{DU91a} in the context of rational maps on the
sphere. Another related condition, $P(f,\phi,\partial\cZ) <
P(f,\phi)$, is used by Buzzi, Paccaut, Schmitt~\cite{BPS01}, in the
context of piecewise expanding multidimensional maps, to control the
map's behavior at the boundary $\partial\cZ$ of the domains of
smoothness: without such a control, equilibrium states may fail to
exist~\cite{Buz01}. Condition (P) seems to play a similar role in
our setting.

%%%%%%%%%%%%%%%%%%%%%%%%%%%%%%%%%%%%%%%%%%%%%%%%%%%%%%%%%%%%%%%%%%%%%%%
\subsection{Examples}\label{subsec.examples}

Here we give several examples and comment on the role of the
hypotheses (H1), (H2) and (P), specially in connection with the
supports of the measures we construct, the existence and finitude of
equilibrium states.

\begin{example}\label{ex.saddle}
Let $f_0:\torus^d\to\torus^d$ be a linear expanding map. Fix some
covering $\cP$ for $f_0$ and some $P_1\in\cP$ containing a fixed (or
periodic) point $p$. Then deform $f_0$ on a small neighborhood of
$p$ inside $P_1$ by a pitchfork bifurcation in such a way that $p$
becomes a saddle for the perturbed local homeomorphism $f$. By
construction, $f$ coincides with $f_0$ in the complement of $P_1$,
where uniform expansion holds. Observe that we may take the
deformation in such a way that $f$ is never too contracting in
$P_1$, which guarantees that (H1) holds, and that $f$ is still
topologically mixing. Condition (P) is clearly satisfied by
$\phi\equiv 0$. Hence, Theorems~\ref{Thm. Equilibrium States} and
~\ref{Thm. Equilibrium States2} imply that there exists a unique
measure of maximal entropy, it is supported in the whole manifold
$\torus^d$ and it is a non-lacunary Gibbs measure.
\end{example}

Now, we give an example where the union of the supports of the
equilibrium states does not coincide with the whole manifold.

\begin{example}\label{Hopf bifurcation}
Let $f_0$ be an expanding map in $\mathbb T^2$ and assume that $f_0$
has a periodic point $p$ with two complex conjugate eigenvalues
$\ti\si e^{i\varpi}$, with $\ti\si>3$ and $k\varpi \not\in 2\pi \Z$
for every $1\leq k\leq 4$. It is possible to perturb $f_0$ through
an Hopf bifurcation at $p$ to obtain a local homeomorphism $f$,
$C^5$-close to $f_0$ and such that $p$ becomes a periodic attractor
for $f$ (see e.g. \cite{HV05} for details). Moreover, if the
perturbation is small then (H1) and (H2) hold for $f$. Thus, there
are finitely many ergodic measures of maximal entropy for $f$. Since
these measures are expanding their support do not intersect the
basin of attraction the periodic attractor $p$.
\end{example}

An interesting question concerns the restrictions on $f$ imposed by
(P). For instance, if $\phi=-\log|\det Df|$ satisfies (P) then there
can be no periodic attractors. In fact, the expanding conformal
measure $\nu$ coincides with the Lebesgue measure which is an
expanding measure and positive on open sets. An example where the
potential $\phi=-\log|\det Df|$ satisfies (P) is given by
Example~\ref{ex.saddle} above, since condition (P) can be rewritten
as
\begin{equation}\label{eq.P}
 \frac{\sup_{x\in \torus^2} |\det Df(x)|}{\inf_{x\in \torus^2} |\det Df(x)|} < \deg(f),
\end{equation}
and clearly satisfied if the perturbation is small enough.

\smallskip

The next example shows that some control on the potential $\phi$ is
needed to have uniqueness of the equilibrium state: in absence of
the hypothesis (P), uniqueness may fail even if we assume (H1) and
(H2).

\begin{example}\emph{(Manneville-Pomeau map)}
If $\al \in (0,1)$, let $f:[0,1]\to [0,1]$ be the local
homeomorphism given by
\begin{equation*}\label{eq. Manneville-Pomeau}
f_\al(x)= \left\{
\begin{array}{cl}
x(1+2^{\alpha} x^{\alpha}) & \mbox{if}\; 0 \leq x \leq \frac{1}{2}  \\
2x-1 & \mbox{if}\; \frac{1}{2} < x \leq 1.
\end{array}
\right.
\end{equation*}
Observe that conditions (H1) and (H2) are satisfied. It is well
known that $f$ has a finite invariant probability measure $\mu$
absolutely continuous with respect to Lebesgue. Using Pesin formula
and Ruelle inequality, it is not hard to check that both $\mu$ and
the Dirac measure $\de_0$ at the fixed point $0$ are equilibrium
states for the potential $\phi = -\log|\det Df|$. Thus,
\emph{uniqueness fails} in this topologically mixing context. For
the sake of completeness, let us mention that in this example $f$ is
not a local homeomorphism, but one can modify it to a local
homeomorphisms in $S^1=[0,1]/\sim$ by
$$
f_\al(x)
  = \left\{ \begin{array}{cl}
           x(1+2^{\alpha} x^{\alpha}) & \mbox{if}\; 0 \leq x \leq \frac{1}{2}  \\
           x-2^\al (1-x)^{1+\al} & \mbox{if}\; \frac{1}{2} < x \leq 1,
             \end{array}
    \right.
$$
where $\sim$ means that the extremal points in the interval are
identified. Note that the potential $\phi$ is not (H\"older)
continuous.
\end{example}

The previous phenomenon concerning the lack of uniqueness of
equilibrium states can appear near the boundary of the class of maps
and potentials satisfying (H1) and (H2) and (P).

\begin{example} Let $f_\al$ be the map given by the previous example
and let $(\phi_\beta)_{\beta>0}$ be the family of H\"older
continuous potentials given by $\phi_{\beta}=
-\log(\det|Df|+\beta)$. On the one hand, observe that $\phi_{\beta}$
converge to $\phi=-\log (|\det Df|)$ as $\beta \to 0$. On the other
hand, similarly to \eqref{eq.P}, one can write condition (P) as
$$
\frac{\beta+2+\al}{\beta+1}< 2,
    \quad\text{or simply}\quad
\beta>\al.
$$
For every $\al>0$, since $f_\al$ is topologically mixing and
satisfies (H1),(H2) and $\phi_{2\al}$ satisfies (P) for every
$\al>0$ there is a unique equilibrium state $\mu_\al$ for $f_\al$
with respect to $\phi_{2\al}$. Moreover, $\phi_{2\al}$ approaches
$\phi$, which seems to indicate that the condition (P) on the
potential should be close to optimal in order to get uniqueness of
equilibrium states.

Since $\htop(f)=\log 2$, condition (P) can be rewritten also as
$\sup\phi-\inf\phi<\htop(f)$. In \cite[Proposition~2]{BT07a}, the
authors proved that for every H\"older continuous potential that
does not satisfy (P) has no equilibrium state obtained from some
'natural' inducing schemes.
\end{example}

The next example illustrates that our results also apply when the
set $K$ is totally disconnected.

\begin{example}\label{ex.Cantor}
Let $f:[0,1]\to \R$ be the unimodal map $f(x)=-8x(x-1)(x+1/8)$.
Since the critical point is outside of the unit interval $[0,1]$,
$K=\cap_n f^{-n}([0,1)]$ is clearly a Cantor set. Although the
existence of a critical point, the restriction of $f$ to the
intervals in $f^{-1}[0,1])$ is a local homeomorphism. It is not hard
to check that (H1) and (H2) hold in this setting and that $f\mid K$
is topologically mixing. As a consequence of the results below we
show that there is a unique measure of maximal entropy for $f$,
whose support is $K$.
\end{example}

%%%%%%%%%%%%%%%%%%%%%%%%%%%%%%%%%%%%%%%%%%%%%%%%%%%%%%%%%%%%%%%%%%%%%%%
\subsection{Existence of equilibrium states}\label{existence eq.
states}

We say that $f$ is \emph{topologically mixing} if, for each open set
$U$ there is a positive integer $N$ so that $f^N(U)=M$. Let $\cB$
denote the Borel $\sigma$-algebra of $M$. An $f$-invariant
probability measure $\eta$ is \emph{exact} if the $\si$-algebra
$\cB_\infty = \cap_{n\geq 0} f^{-n}\cB$ is $\eta$-trivial, meaning
that it contains only zero and full $\eta$-measure sets. Given a
continuous map $f:M\to M$ and a potential $\phi:M \to \mathbb R$,
the variational principle for the pressure asserts that
\begin{equation*}
\label{variational principle} \Ptop(f,\phi)=\sup \left\{
h_\mu(f)+\int \phi \;d\mu : \mu \;\text{is}\; f\text{-invariant}
\right\}
\end{equation*}
where $\Ptop(f,\phi)$ denotes the topological pressure of $f$ with
respect to $\phi$ and $h_\mu(f)$ denotes the metric entropy. An
\textit{equilibrium state} for $f$ with respect to $\phi$ is an
invariant measure that attains the supremum in the right hand side
above.

\begin{maintheorem}\label{Thm. Equilibrium States}
Let $f:M \to M$ be a local homeomorphism with Lipschitz continuous
inverse and $\phi:M \to \R$ a H\"older continuous potential
satisfying (H1), (H2), and (P). Then, there is a finite number of
ergodic equilibrium states $\mu_1, \mu_2, \dots, \mu_k$ for $f$ with
respect to $\phi$ such that any equilibrium state $\mu$ is a convex
linear combination of $\mu_1, \mu_2, \dots, \mu_k$. In addition, if
the map $f$ is topologically mixing then the equilibrium state is
unique and exact.
\end{maintheorem}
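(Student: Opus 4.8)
The plan is to follow the strategy sketched in the introduction, building everything around a distinguished conformal measure. First I would construct an expanding conformal measure $\nu$ as an eigenmeasure for the dual $\cL_\phi^*$ of the Ruelle--Perron--Frobenius operator $\cL_\phi u(x)=\sum_{f(y)=x} e^{\phi(y)} u(y)$, with eigenvalue $\la$. The hypotheses (H1), (H2) and (P) enter here: condition (P), $\sup\phi-\inf\phi<h(f)-\log q$, together with the fact that most preimages lie in the expanding region $M\setminus\cA$, gives enough room to bound the operator from below and to show that orbits spend most of their time in the expanding region; this yields the bounded distortion and the non-uniform Gibbs estimate $\nu(B_n(x,\epsilon))\asymp e^{S_n\phi(x)-n\log\la}$ along hyperbolic times. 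One also checks $\log\la=\Ptop(f,\phi)$, using that hyperbolic-time cylinders have the right measure and a counting argument with the degree bound $\deg_x(f^n)\ge e^{h(f)n}$.

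Next I would analyze the Cesàro averages $\mu_n=\frac1n\sum_{j=0}^{n-1} f_*^j\nu$ and show that any weak-$*$ accumulation point $\mu$ is $f$-invariant and absolutely continuous with respect to $\nu$, with density bounded above (this is where the expanding-measure property and the bounded-distortion estimates from the previous step are used to control $f_*^j\nu$ uniformly). The Radon--Nikodym derivative $d\mu/d\nu$ is then an invariant function for a suitably normalized transfer operator, so $\{d\mu/d\nu>0\}$ is an $f$-invariant set; a covering/compactness argument using the finitely many domains in $\cP$ forces this set to consist of finitely many ergodic pieces, giving finitely many ergodic invariant measures $\mu_1,\dots,\mu_k$ absolutely continuous w.r.t.\ $\nu$. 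One then shows, via the Gibbs property of $\nu$ and the Brin--Katok / Shannon--McMillan--Breiman theorem, that each such $\mu_i$ satisfies $h_{\mu_i}(f)+\int\phi\,d\mu_i=\log\la=\Ptop(f,\phi)$, i.e.\ is an equilibrium state.

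To get the converse — every equilibrium state is absolutely continuous with respect to $\nu$ — I would prove an abstract version of Ledrappier's theorem in the spirit of~\cite{Le84a}: if $\mu$ is invariant with $h_\mu(f)+\int\phi\,d\mu=\Ptop(f,\phi)$, then combining the entropy characterization through the increasing partitions $f^{-n}\cB$, the Rokhlin formula for the Jacobian of $\mu$, and the variational inequality forces the Jacobian of $\mu$ to equal $\la e^{-\phi}$ $\mu$-almost everywhere, which is exactly the conformality relation defining $\nu$; absolute continuity $\mu\ll\nu$ follows. First one must show any equilibrium state is an expanding measure (so the above machinery applies): this uses that a measure whose free energy is maximal cannot give positive weight to the ``bad'' set where orbits fail to be hyperbolic, because (P) makes the entropy plus integral there strictly sub-maximal. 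Once $\mu\ll\nu$ is established, writing $d\mu/d\nu$ as an invariant density and decomposing into ergodic components shows $\mu$ is a convex combination of $\mu_1,\dots,\mu_k$.

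Finally, for the topologically mixing case, I would show the conformal measure $\nu$ is positive on open sets (using topological mixing and the lower bound on the transfer operator), hence the invariant density $d\mu/d\nu$ is positive $\nu$-a.e., its support is all of $M$, and there is a single ergodic component: $k=1$. Exactness would follow by showing the density is bounded away from zero and infinity along hyperbolic times and running a Gibbs-type argument on the tail $\sigma$-algebra $\cB_\infty$, concluding it is $\mu$-trivial. I expect the main obstacle to be the abstract Ledrappier argument combined with establishing that equilibrium states are expanding measures without any Markov structure: controlling the Jacobian and the entropy simultaneously, and ruling out mass escaping to the non-expanding part purely from the variational equality and condition (P), is the delicate technical heart of the proof.
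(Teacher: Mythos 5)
Your overall outline — eigenmeasure $\nu$ for $\cL_\phi^*$, Cesàro averages $\frac1n\sum f^j_*\nu$ with density bounded above, finitely many ergodic components via a covering argument, Brin--Katok to show the a.c.i.m.'s are equilibrium states, an estimate showing equilibrium states are expanding, and a Ledrappier-type absolute-continuity theorem to close the loop — matches the paper's structure step by step. One important piece of the sketch, however, is not merely vague but actually incorrect as stated, and it sits exactly where you yourself flag the delicate heart of the proof.

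You propose, for the abstract Ledrappier step, that Rokhlin's formula plus the variational equality ``forces the Jacobian of $\mu$ to equal $\la e^{-\phi}$ $\mu$-almost everywhere,'' after which ``absolute continuity $\mu\ll\nu$ follows.'' Neither half of this is right. Rokhlin's formula and the equilibrium condition give only the \emph{integral} identity $\int \log J_\mu f\,d\mu = \int \log(\la e^{-\phi})\,d\mu$, not a pointwise one. And the claimed pointwise conclusion is even false in the end: once you do know $\mu\ll\nu$ with density $h=d\mu/d\nu$, the Jacobian of $\mu$ is $J_\mu f = \la e^{-\phi}\,(h\circ f)/h$, which is \emph{not} equal to $\la e^{-\phi}$ unless $h$ is $f$-invariant, hence (by ergodicity) constant — which does not happen in general. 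So the step ``$J_\mu f = \la e^{-\phi}$, therefore $\mu\ll\nu$'' is a non sequitur in both directions: the premise is wrong and, even if it held, two measures with equal Jacobian for a non-invertible map need not be mutually absolutely continuous.

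What the paper actually does at this step is pass to the natural extension $\hat f:\hat M\to\hat M$, use the non-uniform expansion of the equilibrium state $\eta$ to build Pesin local unstable leaves $\Wloc^u(\hat x)$ through $\hat\eta$-a.e.\ point, and construct a measurable increasing partition $\hat\cQ$ subordinated to these leaves (Propositions~\ref{p.Pesinpointwise} and \ref{p.generating.partition}). Pulling $\nu$ back along unstable leaves produces conditional measures $\hat\nu_{\hat x}$; Rokhlin's formula in the natural extension gives $h_{\hat\eta}(\hat f) = \int\log J_{\hat\nu}\hat f\,d\hat\eta$ (Lemma~\ref{l.Pesin.extension}), and combining this with $h_{\hat\eta}(\hat f) = H_{\hat\eta}(\hat f^{-1}\hat\cQ\mid\hat\cQ)$ (Proposition~\ref{p.entropygenerating}) and a strict-concavity (Jensen) argument forces the \emph{conditional} measures $\hat\eta_{\hat x}$ to coincide with the normalised $\Delta(\hat x,\cdot)\,d\hat\nu_{\hat x}$, hence to be absolutely continuous with respect to $\hat\nu_{\hat x}$ leaf by leaf. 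Absolute continuity $\eta\ll\nu$ is then obtained by integrating over leaves, not by a Jacobian identity. The rest of your sketch (finitude via the Lebesgue-density-type argument on hyperbolic pre-balls, the mixing case, and exactness via bounded distortion on $\cB_\infty$) lines up with the paper and would go through once this step is repaired.
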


Our strategy for the construction of equilibrium states is, first to
construct a certain conformal measure $\nu$ which is expanding and a
non-lacunary Gibbs measure. Then we construct the equilibrium
states, which are absolutely continuous with respect to this
reference measure $\nu$. Both steps explore a weak hyperbolicity
property of the system. In what follows we give precise definitions
of the notions involved.

A probability measure $\nu$, not necessarily invariant, is
\emph{conformal} if there exists some function $\psi:M\to \R$ such
that $$\nu(f(A))=\int_A e^{-\psi} d\nu$$ for every measurable set
$A$ such that $f \mid A$ is injective.

Let $S_n \phi=\sum_{j=0}^{n-1} \phi \circ f^j$ denote the $n$th
Birkhoff sum of a function $\phi$. The \emph{dynamical ball} of
center $x\in M$, radius $\de>0$, and length $n \geq 1$ is defined by
$$
B(x,n,\delta)=\{y\in M : d(f^j(y),f^j(x)) \leq \delta, \;\forall \,
0\leq j \leq n\}.
$$
An integer sequence $(n_k)_{k\geq1}$ is \emph{non-lacunary} if it is
increasing and $n_{k+1}/n_k\to 1$ when $k\to\infty$.

\begin{definition} A probability measure $\nu$ is a
\emph{non-lacunary Gibbs measure} if there exist uniform constants
$K>0$, $P\in \mathbb R$ and $\de>0$ so that, for $\nu$-almost every
$x \in M$ there exists some non-lacunary sequence $(n_k)_{k\geq 1}$
such that
\begin{equation*}\label{eq. Gibbs at hyperbolic times}
K^{-1} \leq \frac{\nu(B(x,n_k,\delta))}
                 {\exp(-P\,n_k + S_{n_k}\phi(y))}\leq K
\end{equation*}
for every $y\in B(x,n_k,\delta)$ and every $k\ge 1$.
\end{definition}

The weak hyperbolicity property of $f$ is expressed through the
notion of hyperbolic times, which was introduced in
\cite{Al00,ABV00} for differentiable transformations. We say that
$n$ is a \emph{$c$-hyperbolic time} for $x \in M$ if
\begin{equation}\label{eq. c-hyperbolic times}
\prod_{j=n-k}^{n-1} L(f^j(x)) < e^{-ck}
 \quad \text{for every} \; 1\leq k\leq n.
\end{equation}
Often we just call them hyperbolic times, since the constant $c$
will be fixed, as in \eqref{eq. relation expansion}. We denote by
$H$ the set of points $x \in M$ with infinitely many hyperbolic
times and by $H_j$ the set of points having $j\ge 1$ as hyperbolic
time. A probability measure $\nu$, not necessarily invariant, is
\emph{expanding} if $\nu(H)=1$.

The \emph{basin of attraction} of an $f$-invariant probability
measure $\mu$ is the set $B(\mu)$ of points $x \in M$ such that
$$
\frac{1}{n} \sum_{j=0}^{n-1} \de_{f^j(x)} \ \ \text{converges weakly
to $\mu$ when $n\to\infty$.}
$$

\begin{maintheorem}\label{Thm. Equilibrium States2}
Let $f:M \to M$ be a local homeomorphism and $\phi:M \to \R$ be a
H\"older continuous potential satisfying (H1), (H2), and (P). Let
$\mu_1, \mu_2 ,\dots ,\mu_k$ be the ergodic equilibrium states of
$f$ for $\phi$. Then every $\mu_i$ is absolutely continuous with
respect to some conformal, expanding, non-lacunary Gibbs measure
$\nu$. The union of all basins of attraction $B(\mu_i)$ contains
$\nu$-almost every point $x \in M$. If, in addition, $f$ is
topologically mixing then the unique absolutely continuous invariant
measure $\mu$ is a non-lacunary Gibbs measure.
\end{maintheorem}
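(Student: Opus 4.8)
The plan follows the strategy sketched in the introduction. First I would take as reference measure $\nu$ a fixed point of the dual transfer operator, $\mL_\phi^*\nu=\la\nu$, where $\mL_\phi u(x)=\sum_{f(y)=x}e^{\phi(y)}u(y)$ and $\la=e^{\Ptop(f,\phi)}$; equivalently $\nu$ is conformal with Jacobian $J_\nu f=\la\,e^{-\phi}$, so $J_\nu f^n=\la^n e^{-S_n\phi}$. The heart of the argument is to prove that this $\nu$ is \emph{expanding} and a \emph{non-lacunary Gibbs measure}. For the first property, the point is that condition (P), together with (H2), makes the ``bad'' cylinders exponentially small: a preimage orbit of length $n$ that stays inside $\cA$ corresponds to an itinerary through at most $q$ elements of $\cP$ at each step, so there are at most $q^n$ of them, whereas $f^n$ has at least $e^{h(f)n}$ inverse branches; weighting branches by $e^{S_n\phi}/\la^n$ distorts this ratio by at most $e^{n(\sup\phi-\inf\phi)}$, which by (P) still decays exponentially since $\sup\phi-\inf\phi<h(f)-\log q$. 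A Borel--Cantelli argument then shows $\nu$-a.e.\ point visits $M\setminus\cA$ with positive frequency, hence, by (H1) and a Pliss-type lemma applied to \eqref{eq. c-hyperbolic times}, has infinitely many $c$-hyperbolic times, so $\nu(H)=1$. The non-lacunary Gibbs property then follows from bounded distortion at hyperbolic times: when $n_k$ is a hyperbolic time for $x$, the hyperbolic pre-ball $V_{n_k}(x)\subset B(x,n_k,\de)$ is mapped by $f^{n_k}$ homeomorphically onto $B(f^{n_k}(x),\de)$ with $\nu$-distortion bounded independently of $k$, whence $\nu(B(x,n_k,\de))$ is two-sidedly comparable to $\exp(-\Ptop(f,\phi)n_k+S_{n_k}\phi(y))$ for every $y\in B(x,n_k,\de)$; and the set of hyperbolic times of a point is always non-lacunary. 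This produces the measure $\nu$ in the statement.

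With $\nu$ in hand, the first assertion reduces to showing $\mu_i\ll\nu$ for each $i$. This is built into the construction of equilibrium states underlying Theorem~\ref{Thm. Equilibrium States}: the ergodic equilibrium states are exactly the ergodic components of the Ces\`aro limits of $\frac1n\sum_{j=0}^{n-1}f^j_*\nu$, and these are absolutely continuous with respect to $\nu$ with density bounded from above, the bound coming again from bounded distortion at hyperbolic times. Alternatively, one runs the abstract Ledrappier argument of \cite{Le84a}: any equilibrium state $\mu$ is first shown to be an expanding measure, and then Rokhlin's formula $h_\mu(f)=\int\log J_\mu f\,d\mu$ together with a Gibbs/Jensen inequality comparing $J_\mu f$ with $J_\nu f$ forces $J_\mu f=J_\nu f$ $\mu$-a.e.\ and hence $\mu\ll\nu$. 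Either route gives $\mu_i\ll\nu$.

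Next---and this I expect to be the main obstacle---one must show that $\bigcup_i B(\mu_i)$ has full $\nu$-measure, i.e.\ that the $\mu_i$ are physical measures for the reference $\nu$, in the spirit of the construction of Sinai--Ruelle--Bowen measures for Lebesgue in \cite{ABV00}. For $\nu$-a.e.\ $x$ choose hyperbolic times $n_1<n_2<\cdots$; on the pre-balls $V_{n_k}(x)$ the distortion of $f^{n_k}$ is uniformly bounded, so the normalized push-forwards $f^{n_k}_*(\nu|_{V_{n_k}(x)})$ are uniformly comparable to $\nu$ restricted to a $\de$-ball around $f^{n_k}(x)$; this yields enough tightness and non-degeneracy to conclude that every weak-$*$ accumulation point of the empirical measures $\frac1n\sum_{j=0}^{n-1}\de_{f^j(x)}$ is an $f$-invariant probability measure, absolutely continuous with respect to $\nu$ with uniformly bounded density. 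By the finiteness in Theorem~\ref{Thm. Equilibrium States} such a measure is a convex combination of $\mu_1,\dots,\mu_k$, and ergodicity of the $\mu_i$ together with an ergodic-decomposition argument pins this accumulation point to a single $\mu_i$ for $\nu$-a.e.\ $x$. The delicate points are controlling simultaneously the distortion of $f^{n_k}$ from above and $\nu(B(f^{n_k}(x),\de))$ from below at hyperbolic times, and ruling out escape of mass to invariant measures outside the span of the $\mu_i$; the latter is handled by the estimate $f^n_*\nu\ll\nu$ with uniformly bounded density, which forces any such accumulation point to be absolutely continuous with respect to $\nu$ in the first place.

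Finally, assume $f$ is topologically mixing, so there is a unique absolutely continuous invariant measure $\mu$, with density $\rho=d\mu/d\nu$ normalized by $\mL_\phi\rho=\la\rho$. We already know $\rho$ is bounded above; topological mixing upgrades this to $0<\inf\rho\le\sup\rho<\infty$, since every open set eventually maps onto $M$ and the eigenfunction identity then propagates a positive lower bound for $\rho$ from a small ball to all of $M$. Hence $\mu$ and $\nu$ are equivalent with Radon--Nikodym derivatives bounded above and below, so $(\inf\rho)\,\nu(B(x,n_k,\de))\le\mu(B(x,n_k,\de))\le(\sup\rho)\,\nu(B(x,n_k,\de))$; combining this with the non-lacunary Gibbs bounds for $\nu$---along the \emph{same} non-lacunary sequences $(n_k)$---gives the non-lacunary Gibbs bounds for $\mu$, only changing the Gibbs constant $K$. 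This establishes the last assertion of the theorem.
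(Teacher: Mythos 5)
There is a genuine gap, and it sits exactly where you pass from ``Gibbs at hyperbolic times'' to ``non-lacunary Gibbs measure''. You assert that ``the set of hyperbolic times of a point is always non-lacunary''. This is false: Pliss' lemma (Lemma~\ref{positive density}) only gives positive density of hyperbolic times, which bounds the ratios $n_{k+1}(x)/n_k(x)$ by roughly $1/\theta$ but does not make them converge to $1$. Non-lacunarity is not a pointwise automatic fact; in the paper it is obtained from the abstract criterium of Lemma~\ref{non-lacunary criterium} (Corollary~\ref{c.hyperbolicislacunary}), which requires an \emph{invariant} measure for which the first hyperbolic time $n_1$ is integrable, together with the concatenation property of Remark~\ref{r.concatenation}. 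This is why the paper first proves $\int n_1\,d\nu<\infty$ (Corollary~\ref{c.nu.expanding}), then constructs the absolutely continuous invariant measures $\mu_i$ with densities bounded above (and, in the mixing case, bounded below, so $\mu\sim\nu$), and only then concludes that the sequences of hyperbolic times are non-lacunary almost everywhere and that $\nu$ and $\mu$ are non-lacunary Gibbs measures. Your proposal never establishes integrability of $n_1$ nor invokes any such criterium, so both the claim that $\nu$ is a non-lacunary Gibbs measure and the final claim for $\mu$ are unproved as written (the Gibbs \emph{inequalities} at hyperbolic times are fine).

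A second, smaller but real, problem is your argument that $\bigcup_i B(\mu_i)$ has full $\nu$-measure. Comparing $f^{n_k}_*(\nu|_{V_{n_k}(x)})$ with $\nu$ on $\delta$-balls controls push-forwards of the \emph{reference measure}, not the orbit of the individual point $x$; it does not imply that accumulation points of the empirical measures $\frac1n\sum_j\de_{f^j(x)}$ are absolutely continuous with bounded density (that statement is essentially what has to be proved, and limits of such atomic averages have no a priori reason to be absolutely continuous). The paper instead works with the Ces\`aro averages $\frac1n\sum_j f^j_*\nu$ (Lemma~\ref{density control}), proves finiteness via Lemma~\ref{support Gibbs}, and deduces the basin statement from the decomposition into finitely many ergodic components. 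Finally, your ``alternative'' route to $\mu_i\ll\nu$ compresses the hardest step of the paper: equality of Jacobians (or the entropy identity) does not by itself yield absolute continuity in this non-uniform, non-Markov setting; the paper's Theorem~\ref{thm.equilibrium is acim} goes through the natural extension, local unstable leaves and an increasing Ledrappier-type partition to disintegrate $\eta$ against $\nu$, and some substitute for that machinery is needed.
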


As a byproduct of the previous results we can obtain the existence
of equilibrium states for \emph{continuous} potentials satisfying
(P). Without some extra condition no uniqueness of equilibrium
states is expected to hold even if $f$ is topologically mixing.

\begin{maincorollary}\label{Cor.ContinuousPotentials}
Let $f:M \to M$ be a local homeomorphism satisfying (H1) and (H2).
If $\phi:M \to \R$ is a continuous potential satisfying (P) then
there exists an equilibrium state for $f$ with respect to $\phi$.
Moreover, there is a residual set $\mathcal R$ of potentials in
$C(M)$ that satisfy (P) such that there is unique equilibrium state
for $f$ with respect to $\phi$.
\end{maincorollary}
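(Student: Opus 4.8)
The plan is to derive Corollary~\ref{Cor.ContinuousPotentials} from Theorems~\ref{Thm. Equilibrium States} and~\ref{Thm. Equilibrium States2} by a standard approximation argument, using the upper semicontinuity of the entropy function that is implicit in our setting. First I would fix a continuous potential $\phi$ satisfying (P) and choose a sequence of H\"older continuous potentials $\phi_n \to \phi$ uniformly. Since (P) is an open condition relative to the uniform norm, we may assume every $\phi_n$ also satisfies (P), as well as (H1) (recall from (H1) that the only constraint on $\phi$ is an inequality of the form \eqref{eq. relation potential}, again an open condition). By Theorem~\ref{Thm. Equilibrium States}, each $\phi_n$ has an equilibrium state $\mu_n$, so $h_{\mu_n}(f)+\int \phi_n\, d\mu_n = \Ptop(f,\phi_n)$. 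Passing to a subsequence, $\mu_n$ converges weakly to some $f$-invariant probability measure $\mu$.

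The next step is to pass to the limit in this identity. On the left, $\int \phi_n\, d\mu_n \to \int \phi\, d\mu$ because $\phi_n\to\phi$ uniformly and $\mu_n\to\mu$ weakly. For the entropy term I would use upper semicontinuity of $\eta\mapsto h_\eta(f)$ in the weak$^*$ topology, which holds here: $f$ is a local homeomorphism with finitely many preimages and closed degree level sets, so it is expansive-like enough that one may either invoke entropy-expansiveness in the sense of Buzzi~\cite{Bu00b} (cf.\ the discussion in \cite{OV06}) or, more elementarily, note that the partition $\cP$ from (H2) is a finite generating partition (its diameter condition together with the local homeomorphism property forces $\bigvee_{j\ge 0} f^{-j}\cP$ to generate $\cB$), whence $h_\eta(f)=h_\eta(f,\cP)$ depends upper semicontinuously on $\eta$. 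On the right, $\Ptop(f,\cdot)$ is continuous in the uniform norm (indeed $1$-Lipschitz), so $\Ptop(f,\phi_n)\to\Ptop(f,\phi)$. Combining: $h_\mu(f)+\int\phi\,d\mu \ge \limsup_n\big(h_{\mu_n}(f)+\int\phi_n\,d\mu_n\big)=\Ptop(f,\phi)$, and the reverse inequality is the variational principle, so $\mu$ is an equilibrium state for $\phi$. This proves the first assertion.

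For the residual set, I would let $\mathcal{R}$ be the set of $\phi\in C(M)$ satisfying (P) at which the function $\phi\mapsto \Ptop(f,\phi)$ restricted to the open set $\mathcal{U}=\{\phi : \sup\phi-\inf\phi<h(f)-\log q\}$ is \emph{differentiable} (in the G\^ateaux sense, i.e.\ has a unique tangent functional). Since $\Ptop(f,\cdot)$ is convex and continuous on the open convex set $\mathcal{U}\subset C(M)$, a Banach-space version of Mazur's theorem (convex continuous functions on a separable Banach space are G\^ateaux-differentiable on a residual set) gives that $\mathcal{R}$ is residual in $\mathcal{U}$. The standard fact that a tangent functional to $\Ptop(f,\cdot)$ at $\phi$ is exactly an equilibrium state for $\phi$ (the Ruelle--Walters characterization, valid under upper semicontinuity of entropy) then shows that $\phi\in\mathcal{R}$ has a unique equilibrium state. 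Since $\mathcal{U}$ is open in $C(M)$, a residual subset of $\mathcal{U}$ is residual in $C(M)$ intersected with $\mathcal{U}$, which is the claimed statement.

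The main obstacle is the entropy issue: the clean approximation argument for existence, and the identification of tangent functionals with equilibrium states for the residual part, both rest on upper semicontinuity of $\eta\mapsto h_\eta(f)$, which is not automatic for a general continuous map and must be justified from our hypotheses. I would address this by establishing that $\cP$ (or a refinement of it chosen so that its boundary is small, as already discussed after (P)) is a finite generating partition for $f$ on $M$, which reduces $h_\eta(f)$ to $h_\eta(f,\cP)$, an upper semicontinuous function of $\eta$; alternatively one cites entropy-expansiveness. Everything else — openness of (P) and (H1) under uniform perturbation, continuity of pressure, weak$^*$ compactness of invariant measures, and Mazur's theorem — is routine.
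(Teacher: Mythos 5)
Your overall scheme (approximate $\phi$ by H\"older potentials $\phi_n$ satisfying (P), take equilibrium states $\mu_n$, pass to a weak$^*$ limit, and get the residual uniqueness from convexity of the pressure) is the same as the paper's, but the step on which everything hinges is not justified and, as stated, is wrong. You invoke \emph{global} upper semicontinuity of $\eta\mapsto h_\eta(f)$, justified either by entropy-expansiveness or by the claim that the covering $\cP$ from (H2) is a finite generating partition because of a ``diameter condition''. Hypothesis (H2) imposes no diameter condition on $\cP$ (its elements are merely domains of injectivity), and $f$ is allowed to be contracting on $\cA$: maps in this class can have attracting periodic orbits (see Example~\ref{Hopf bifurcation}), so they are not expansive, backward refinements $\bigvee_{j\ge0}f^{-j}\cP$ need not separate points for an arbitrary invariant measure, and neither entropy-expansiveness nor upper semicontinuity of the entropy map on all of $\mathcal M(f)$ follows from (H1)--(H2); the paper never claims it.

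The correct repair --- and what the paper actually does --- is to use that the approximating equilibrium states $\mu_n$ are \emph{expanding} measures with uniform constants: the $c$ and $\de$ of Lemma~\ref{delta} do not depend on $n$, so any fixed partition $\cR$ of diameter less than $\de$ with $\mu(\partial\cR)=0$ is generating for \emph{each} $\mu_n$ (via contraction along hyperbolic pre-balls), giving $h_{\mu_n}(f)=h_{\mu_n}(f,\cR)$; then the elementary inequalities $h_\mu(f)\ge h_\mu(f,\cR)\ge\limsup_n h_{\mu_n}(f,\cR)$, together with continuity of $\phi\mapsto\Ptop(f,\phi)$, yield $h_\mu(f)+\int\phi\,d\mu\ge\Ptop(f,\phi)$. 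No semicontinuity for arbitrary invariant measures is needed or available. The same caveat affects your residual argument: the Ruelle--Walters identification ``tangent functional $=$ equilibrium state'' requires exactly the upper semicontinuity you have not established. You can avoid it by using only the unconditional inclusion (every equilibrium state is a tangent functional) together with the existence statement from the first part: uniqueness of the tangent functional on a Mazur-residual set then gives at most one, hence exactly one, equilibrium state. The paper instead restricts the semicontinuity statement to the space of $c$-expanding measures and quotes Walters' Corollary~9.15.1; either route works, but only once the entropy step is grounded in the expanding structure of the measures rather than in a generating or expansiveness property of $f$ itself.
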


%%%%%%%%%%%%%%%%%%%%%%%%%%%%%%%%%%%%%%%%%%%%%%%%%%%%%%%%%%%%%%%%%%%%%%%
\subsection{Stability of equilibrium states}\label{stability eq. states}

Let $\cF$ be a family of local homeomorphisms with Lipschitz inverse
and $\mathcal W$ be some family of continuous potentials $\phi$. A
pair $(f,\phi) \in \cF \times \cW$ is \emph{statistically stable}
(relative to $\cF\times\cW$) if, for any sequences $f_n \in \cF$
converging to $f$ in the uniform topology, with $L_n$ converging to
a $L$ in the uniform topology, and $\phi_n \in \cW$ converging to
$\phi$ in the uniform topology, and for any choice of an equilibrium
state $\mu_n$ of $f_n$ for $\phi_n$, every weak$^*$ accumulation
point of the sequence $(\mu_n)_{n\geq 1}$ is an equilibrium state of
$f$ for $\phi$. In particular, when the equilibrium state is unique,
statistical stability means that it depends continuously on the data
$(f,\phi)$.

\begin{maintheorem}\label{Thm. Statistical Stability}
Suppose every $(f,\phi)\in\cF\times\cW$ satisfies (H1), (H2), and
(P), with uniform constants (including the H\"older constants of
$\phi$). Assume that the topological pressure $\Ptop(f,\phi)$ varies
continuously in the parameters $(f,\phi) \in \cF \times \cW $. Then
every pair $(f,\phi) \in \cF \times \cW $ is statistically stable
relative to $\cF\times\cW$.
\end{maintheorem}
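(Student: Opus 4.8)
The plan is to follow the strategy of the paper and reduce statistical stability to (a) the convergence of the conformal eigenmeasures attached to the $\mu_n$ and (b) the characterization of equilibrium states as the $f$-invariant measures that are absolutely continuous with respect to such a conformal measure. Fix $(f,\phi)\in\cF\times\cW$, sequences $f_n\to f$, $L_n\to L$, $\phi_n\to\phi$ in the uniform topology, and equilibrium states $\mu_n$ of $f_n$ for $\phi_n$; let $\mu$ be any weak$^*$ accumulation point of $(\mu_n)_n$ and, passing to a subsequence, assume $\mu_n\to\mu$. By Theorem~\ref{Thm. Equilibrium States2} applied to $(f_n,\phi_n)$ (together with Theorem~\ref{Thm. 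Equilibrium States} to also cover possibly non-ergodic $\mu_n$), each $\mu_n$ is absolutely continuous with respect to a conformal measure $\nu_n$, which is an eigenmeasure of the dual transfer operator, $\cL_{f_n,\phi_n}^*\nu_n=\lambda_n\nu_n$ with $\lambda_n=e^{\Ptop(f_n,\phi_n)}$ (here $\cL_{f,\phi}g(x)=\sum_{y\in f^{-1}(x)}e^{\phi(y)}g(y)$). Because all the structural constants (including the H\"older constants) are uniform over $\cF\times\cW$, the density $d\mu_n/d\nu_n$ is bounded by a constant $a$ independent of $n$. Since $M$ is compact, after a further extraction $\nu_n\to\nu$ weak$^*$ for some probability measure $\nu$.

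Two observations are routine. First, $\mu$ is $f$-invariant: for $g\in C(M)$, $\left|\int g\circ f\,d\mu-\int g\,d\mu\right|=\lim_n\left|\int g\circ f\,d\mu_n-\int g\circ f_n\,d\mu_n\right|\le\lim_n\|g\circ f-g\circ f_n\|_\infty=0$, using uniform continuity of $g$ and $f_n\to f$ uniformly. Second, $\mu\le a\,\nu$, so that $\mu\ll\nu$ with density at most $a$: for $g\in C(M)$ with $g\ge0$, $\int g\,d\mu=\lim_n\int g\,d\mu_n\le a\lim_n\int g\,d\nu_n=a\int g\,d\nu$. The uniform density bound is crucial here, since absolute continuity is not stable under weak$^*$ limits, whereas domination by a fixed multiple of a converging sequence of measures is.

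The core of the argument is to prove that $\nu$ is a conformal measure of $(f,\phi)$ with eigenvalue $e^{\Ptop(f,\phi)}$, that is, $\cL_{f,\phi}^*\nu=e^{\Ptop(f,\phi)}\nu$. It suffices to show that $\cL_{f_n,\phi_n}g\to\cL_{f,\phi}g$ uniformly for every $g\in C(M)$: then $\int\cL_{f,\phi}g\,d\nu=\lim_n\int\cL_{f_n,\phi_n}g\,d\nu_n=\lim_n\lambda_n\int g\,d\nu_n=e^{\Ptop(f,\phi)}\int g\,d\nu$, where the identification of the limiting eigenvalue uses the hypothesis that $\Ptop$ varies continuously on $\cF\times\cW$, so that $\lambda_n=e^{\Ptop(f_n,\phi_n)}\to e^{\Ptop(f,\phi)}$ --- and this is precisely what guarantees that the limit $\nu$ carries the eigenvalue for which the characterization of equilibrium states of $(f,\phi)$ holds. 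I expect the uniform convergence of the transfer operators to be the main obstacle. The point is to track preimages uniformly: using a uniform injectivity radius for the maps in $\cF$ (available from the finite cover $\cP$ by domains of injectivity) together with the uniform Lipschitz bound on the inverse branches, one checks that for $n$ large, uniformly in $x\in M$, every $f$-preimage $w$ of $x$ has exactly one $f_n$-preimage $w_n$ near it, with $\sup_x d(w_n,w)\to0$, and that $f_n$ has no further preimages of $x$ (a preimage of $x$ under $f_n$ staying away from $f^{-1}(x)$ would, by compactness and $f_n\to f$ uniformly, accumulate on a point of $f^{-1}(x)$, a contradiction; in particular $\deg_x(f_n)=\deg_x(f)$ for large $n$). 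Combined with $\phi_n\to\phi$ uniformly and uniform continuity of $g$, this yields $\|\cL_{f_n,\phi_n}g-\cL_{f,\phi}g\|_\infty\to0$. This is exactly where the hypotheses $f_n\to f$ and $L_n\to L$ in the uniform topology are used.

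Finally, since $(f,\phi)$ satisfies (P) --- so that $\Ptop(f,\phi)-\sup\phi>\log q\ge0$, whence by conformality $\nu(f(A))=\int_A e^{\Ptop(f,\phi)-\phi}\,d\nu\ge\nu(A)$, i.e.\ $f$ expands $\nu$-measure --- the arguments of Section~\ref{Conformal Measure} show that any conformal measure of $(f,\phi)$ with eigenvalue $e^{\Ptop(f,\phi)}$ is expanding; in particular $\nu$ is expanding. Thus $\mu$ is an $f$-invariant probability measure that is absolutely continuous with respect to a conformal, expanding measure $\nu$ of $(f,\phi)$ with eigenvalue $e^{\Ptop(f,\phi)}$, and by the converse direction of the characterization of equilibrium states established in Section~\ref{Proof Thm 1} --- namely that every such invariant measure is an equilibrium state --- $\mu$ is an equilibrium state of $f$ for $\phi$, which is the assertion of statistical stability. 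Apart from the operator convergence in the third step (and the closely related stabilization of the degree along the sequence), the remaining ingredients are either soft --- weak$^*$ compactness, passing inequalities to the limit --- or already contained in Sections~\ref{Conformal Measure}--\ref{Proof Thm 1}.
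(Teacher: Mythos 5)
Your route is different from the paper's (which never passes to the limit in the conformal measures: it builds one partition $\cR$ of diameter less than $\de/2$ with $\eta(\partial\cR)=0$, uses that the constants $c$ and $\de$ of Lemma~\ref{delta} are uniform on $\cF$ so that $\cR$ is generating for \emph{every} map in the family, deduces upper semicontinuity of the entropy along the sequence, and combines this with the assumed continuity of $\Ptop$), and as written it has two genuine gaps. The first is the uniform density bound. You claim $d\mu_n/d\nu_n\le a$ with $a$ independent of $n$ ``because the structural constants are uniform''. In the paper the bound of Lemma~\ref{density control} is $C_2=K_0\,\xi(\de)^{-1}$: the distortion constant $K_0$ is indeed uniform over $\cF\times\cW$, but $\xi(\de)$ is the lower bound for the $\nu$-measure of $\de$-balls centred on $\supp(\nu)$ coming from the soft compactness argument of Lemma~\ref{l.uniform ball measure}, which is carried out for each fixed conformal measure separately and gives no quantitative control. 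Nothing in the hypotheses (no uniform mixing or covering time for the family, and the supports of the $\nu_n$ may vary with $n$) yields a lower bound for $\xi_n(\de)$ uniform in $n$. Since, as you yourself stress, the inequality $\mu\le a\,\nu$ is exactly what makes absolute continuity survive the weak$^*$ limit, this is not a detail you can wave through; it is an unproved claim on which the whole argument rests.

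The second gap is the uniform convergence $\cL_{f_n,\phi_n}g\to\cL_{f,\phi}g$. Your sketch needs, for every $x$ and every $f$-preimage $w$ of $x$, the \emph{existence} of an $f_n$-preimage of $x$ near $w$ (the ``no extra preimages'' half does follow from the compactness argument you indicate). Existence does not follow from $\|f_n-f\|_\infty\to 0$ in the paper's setting: $M$ is only a compact Besicovitch metric space and may be a Cantor set (Example~\ref{ex.Cantor}), so there is no local degree or index argument forcing $x\in f_n(U_w)$ when $f_n$ is $C^0$-close to $f$; a small perturbation can push the image of a small (clopen) neighbourhood entirely off $x$, and the asserted stabilization $\deg_x(f_n)=\deg_x(f)$, uniformly in $x$, is exactly what would have to be proved. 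On a manifold this step can be justified by homotopy invariance of the local degree, but then it still requires an argument for the uniformity in $x$, and in any case it is an additional hypothesis-like restriction the theorem does not make. If you want to keep your strategy (which, if completed, would give more than the statement, namely convergence of the reference measures and conformality of the limit), you must supply a uniform lower bound for $\nu_n(B(z,\de))$, $z\in\supp(\nu_n)$, and a genuine proof of the operator convergence; otherwise the economical way out is the paper's entropy argument, which uses only the uniform generating partition, weak$^*$ convergence, and the continuity of $\Ptop(f,\phi)$ and of $(\phi_n,\mu_n)\mapsto\int\phi_n\,d\mu_n$ to get $h_\eta(f)+\int\phi\,d\eta\ge\Ptop(f,\phi)$ directly.
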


The assumption on continuous variation of the topological pressure
might hold in great generality in this setting. See the comment at
the end of Subsection~\ref{subsec.statistical} for a discussion.

Now let $\cF$ be a family of local homeomorphisms satisfying (H1)
and (H2) with uniform constants. A \emph{random perturbation} of
$f\in\cF$ is a family $\theta_\vep$, $0< \vep \le 1$ of probability
measures in $\cF$ such that there exists a family $V_\vep(f)$, $0<
\vep \le 1$ of neighborhoods of $f$, depending monotonically on
$\vep$ and satisfying
$$
\supp\theta_\vep \subset V_\vep(f) \quad\text{and}\quad
\bigcap_{0<\vep\le 1} V_\vep(f) =\{f\}.
$$
Consider the skew product map
\[
\begin{array}{rcl}
F: \cF^\N \times M  &  \to & \cF \times M \\
   (\un f, x) & \mapsto & (\si(\un f),f_1(x))
\end{array}
\]
where $\un f=(f_1, f_2, \ldots)$ and $\si: \cF^\N \to \cF^\N$ is the
shift to the left. For each $\vep>0$, a measure $\mu^\vep$ on $M$ is
\emph{stationary} (respectively, \emph{ergodic}) for the random
perturbation if the measure $\theta_\vep^\N \times \mu^\vep$ on
$\cF^\N \times M$ is invariant (respectively, ergodic) for $F$.

We assume the random-perturbation to be \emph{non-degenerate},
meaning that, for every $\vep>0$, the push-forward of the measure
$\theta_\vep$ under any map
$$
\cF \ni g \mapsto g(x)
$$
is absolutely continuous with respect to some probability measure
$\nu$, with density uniformly (on $x$) bounded from above, and its
support contains a ball around $f(x)$ with radius uniformly (on $x$)
bounded from below. The first condition implies that any stationary
measure is absolutely continuous with respect to $\nu$. In
Theorem~\ref{Thm. Stochastic Stability} we shall use also the second
condition to conclude that, assuming $\nu$ is expanding and
conformal, for any $\vep>0$ there exists a finite number of ergodic
stationary measures $\mu^\vep_1$, $\mu^\vep_2$, $\ldots$,
$\mu^\vep_l$. We say that $f$ is \emph{stochastically stable} under
random perturbation if every accumulation point, as $\vep\to 0$, of
stationary measures $(\mu^\vep)_{\vep>0}$ absolutely continuous with
respect to $\nu$ is a convex combination of the ergodic equilibrium
states $\mu_1$, $\mu_2$, $\ldots$, $\mu_k$ of $f$ for $\phi$.

A \emph{Jacobian} of $f$ with respect to a probability measure
$\eta$ is a measurable function $J_\eta f$ such that
\begin{equation}\label{eq. Jacobian}
\eta(f(A))=\int_A J_\eta f \,d\eta
\end{equation}
for every measurable set $A$ (in some full measure subset) such that
$f\mid A$ is injective. A Jacobian may fail to exist, in general,
and it is essentially unique when it exists. If $f$ is at most
countable-to-one and the measure $\eta$ is invariant, then Jacobians
do exist (see \cite{Pa69}).

\begin{maintheorem}\label{Thm. Stochastic Stability2}
Let $(\theta_\vep)_\vep$ be a non-degenerate random perturbation of
$f \in \cF$ and $\nu$ be the reference measure in Theorem~\ref{Thm.
Equilibrium States2}. Assume $\nu$ admits a Jacobian for every
$g\in\cF$, and the Jacobian varies continuously with $g$ in the
uniform norm. Then $f$ is stochastically stable under the random
perturbation $(\theta_\vep)_{\vep}$.
\end{maintheorem}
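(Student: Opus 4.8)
The plan is to prove Theorem~\ref{Thm. Stochastic Stability2} by first establishing the dichotomy that the reference measure $\nu$ controls everything, and then exploiting the continuity of Jacobians to pass to the limit $\vep\to 0$. Concretely, I would proceed in the following stages.

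\medskip

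\textbf{Step 1: Finiteness and structure of stationary measures.} For each fixed $\vep>0$, since the random perturbation is non-degenerate, any stationary measure $\mu^\vep$ is absolutely continuous with respect to $\nu$. The assumption that $\nu$ admits a Jacobian $J_\nu g$ for every $g\in\cF$ means one can run, for the skew-product $F$ and the fibered transfer operator, essentially the same argument as in Section~\ref{Absolutely Continuous Measures}: the densities $d\mu^\vep/d\nu$ are bounded away from infinity, uniformly in $\vep$ (here the uniform upper bound on the density of the push-forward of $\theta_\vep$ and the uniform lower bound on the radius of its support are used, together with the fact that the perturbed maps still satisfy (H1), (H2) with uniform constants, so hyperbolic times occur with uniform frequency). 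This yields, for each $\vep$, finitely many ergodic stationary measures $\mu^\vep_1,\dots,\mu^\vep_{l}$, with the number $l$ bounded independently of $\vep$, and a uniform bound $\sup_x d\mu^\vep/d\nu \le C$.

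\medskip

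\textbf{Step 2: Accumulation points are invariant and absolutely continuous.} Let $\mu$ be a weak$^*$ accumulation point of a sequence $\mu^{\vep_n}$ of stationary measures absolutely continuous with respect to $\nu$, with $\vep_n\to 0$. Since $\bigcap_\vep V_\vep(f)=\{f\}$, standard arguments show $\mu$ is $f$-invariant. The uniform density bound $d\mu^{\vep_n}/d\nu\le C$ passes to the limit (by lower semicontinuity of the density under weak$^*$ convergence, or by testing against continuous functions), so $\mu\ll\nu$ with $d\mu/d\nu\le C$. Here the continuity of the Jacobian $J_\nu g$ in $g$ is what guarantees that the conformality-type relations defining the perturbed transfer operators converge to those of $f$, so that the limiting density satisfies the same functional equation as the densities produced in Section~\ref{Absolutely Continuous Measures} for the unperturbed system.

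\medskip

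\textbf{Step 3: Identification with equilibrium states.} By Theorem~\ref{Thm. Equilibrium States2}, the $f$-invariant measures absolutely continuous with respect to $\nu$ are exactly the convex combinations of the ergodic equilibrium states $\mu_1,\dots,\mu_k$. Since $\mu$ is such a measure by Step~2, $\mu$ is a convex combination of $\mu_1,\dots,\mu_k$, which is precisely stochastic stability. (If $f$ is topologically mixing, there is a unique equilibrium state, and one concludes $\mu^\vep\to\mu$ as $\vep\to 0$.)

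\medskip

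The main obstacle I anticipate is Step~1--2, namely transporting the machinery of hyperbolic times, uniform density bounds, and the Ledrappier-type characterization from the deterministic setting to the random (skew-product) setting in a way that is genuinely uniform in $\vep$. The delicate point is that the perturbed maps $g\in V_\vep(f)$ need not preserve $\nu$ — only the Jacobian is controlled — so one must show that the relevant estimates (in particular the fact that $\nu$-almost every point has a positive-density set of hyperbolic times for $F$-typical random orbits, and the resulting uniform bound on invariant densities) survive under this weaker hypothesis and depend continuously on $g$ through $J_\nu g$. Once that uniformity is in place, the passage to the limit and the appeal to Theorem~\ref{Thm. Equilibrium States2} are comparatively routine.
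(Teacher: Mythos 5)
Your overall architecture is the same as the paper's: represent/compare stationary measures with $\nu$, get uniform density bounds via hyperbolic times, pass to the weak$^*$ limit to get an $f$-invariant measure absolutely continuous with respect to $\nu$, and then invoke Proposition~\ref{prop. acim} and Theorem~\ref{Thm. Equilibrium States2} to identify the limit with a convex combination of the ergodic equilibrium states. Steps 2 and 3 of your plan are indeed the comparatively routine part, and they match the paper's closing argument.

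The genuine gap is exactly the point you defer as ``the main obstacle I anticipate'': you never prove that random orbits are non-uniformly expanding with respect to $\nu$ with a first hyperbolic time map $n_1^\vep$ whose tail decays exponentially, uniformly for small $\vep$. This is the paper's Lemma~\ref{lem. nue along random orbits}, and it is the \emph{only} place where the hypothesis of the theorem (existence and continuity of $g\mapsto J_\nu g$) is actually used: one enlarges the bad region to $\ti\cA=\bigcup_{g\in\supp\theta_\vep}\cA_g$, and then estimates the $\nu$-measure of cylinders $P\in\bigvee_{j=0}^{n-1}\un f^{-j}(\cP)$ of a random itinerary by writing $\nu(\un f^n(P))=\int_P\prod_j J_\nu f_j\circ\un f^j\,d\nu$ and using $e^{-a(\vep)}\le J_\nu f/J_\nu g\le e^{a(\vep)}$ with $a(\vep)\to 0$, so that $\nu(P)\le e^{-(\log q+\vep_0-a(\vep))n}$; combined with the counting bound of Lemma~\ref{l.combinatorio} this gives exponential decay of the sets of bad frequency of visits to $\ti\cA$, exactly as in Proposition~\ref{p.recurrence}, and hence $\int n_1^\vep\,d(\theta_\vep^\N\times\nu)<\infty$ with bounds uniform in small $\vep$. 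Without this, your Step 1 does not get off the ground: note in particular that the non-degeneracy condition alone only yields absolute continuity of $\mu^\vep$ with a density bound that may blow up as $\vep\to0$, so the uniform-in-$\vep$ bound you assert cannot come from the ``uniform upper bound on the density of the push-forward of $\theta_\vep$''; it comes from the representation of each $\mu_i^\vep$ as a Cesaro limit of $\int\un f^j_*(\nu\mid B(\mu_i^\vep))\,d\theta_\vep^\N$ (Theorem~\ref{Thm. Stochastic Stability}) together with the decomposition over random hyperbolic times as in Section~\ref{Absolutely Continuous Measures}, whose constants are uniform precisely because of the tail estimate above. A secondary inaccuracy: in your Step 2 the Jacobian continuity is not needed to make ``the limiting density satisfy the same functional equation''; once $\eta\ll\nu$ and $\eta$ is $f$-invariant, Proposition~\ref{prop. acim} applies directly, with no transfer-operator convergence required.
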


The conditions on the Jacobian are automatically satisfied in some
interesting cases, for instance when $\nu$ is the Riemannian volume
or $f$ is an expanding map. This is usually associated to the
potential $\phi=-\log|\det(Df)|$. Example~\ref{ex.saddle} describes
a situation where this potential satisfies the condition (P).

%%%%%%%%%%%%%%%%%%%%%%%%%%%%%%%%%%%%%%%%%%%%%%%%%%%%%%%%%%%%%%%%%%%%%%%

\section{Preliminary results}\label{Preliminary results}

Here, we give a few preparatory results needed for the proof of the
main results. The content of this section may be omitted in a first
reading and the reader may choose to return here only when
necessary.

\subsection{Combinatorics of orbits}

Since the region $\cA$ is contained in $q$ elements of the partition
$\cP$ we can assume without any loss of generality that $\cA$ is
contained in the first $q$ elements of $\cP$. Given $\ga\in(0,1)$
and $n\ge 1$, let us consider the set $I(\ga,n)$ of all itineraries
$(i_0,\dots,i_{n-1}) \in \{1, \dots, k_0\}^n$ such that $\# \{0\le j
\le n-1: i_j\le q\} > \gamma n$. Then let
\begin{equation}\label{eq.c}
c_\ga = \limsup_{n\to\infty} \frac{1}{n} \log \#{I(\ga,n)}.
\end{equation}

\begin{lemma}\cite[Lemma~3.1]{OV07}\label{l.combinatorio}
Given $\vep>0$ there exists $\ga_0 \in (0,1)$ such that
$c_{\ga}<\log q +\vep$ for every $\ga \in (\ga_0,1)$.
\end{lemma}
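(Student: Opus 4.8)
The plan is to estimate $\#I(\ga,n)$ by an explicit count and read off its exponential growth rate, the point being that in $I(\ga,n)$ the number of ``expanding'' coordinates is forced to be close to $n$, which keeps the rate near $\log q$. Group the itineraries in $I(\ga,n)$ according to the value $\ell=\#\{0\le j\le n-1: i_j\le q\}$: there are exactly $\binom{n}{\ell}q^{\ell}(k_0-q)^{n-\ell}$ itineraries with a prescribed value of $\ell$ (choose the $\ell$ positions carrying a symbol $\le q$, then those symbols, then the symbols $>q$ at the remaining positions), and $I(\ga,n)$ consists precisely of those with $\ell>\ga n$. Hence, substituting $m=n-\ell$ and using $\binom{n}{\ell}=\binom{n}{m}$,
\[
\#I(\ga,n)=\sum_{\ell>\ga n}\binom{n}{\ell}q^{\ell}(k_0-q)^{n-\ell}
\le\sum_{\ell>\ga n}\binom{n}{\ell}q^{\ell}k_0^{\,n-\ell}
= q^{n}\sum_{m<(1-\ga)n}\binom{n}{m}\Big(\tfrac{k_0}{q}\Big)^{m}.
\]

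Next I would bound the last sum by its largest term times a polynomial factor and invoke Stirling's formula. Assume $\ga\in(1/2,1)$, so every $m$ in the sum satisfies $m<(1-\ga)n<n/2$, hence $m\le\lfloor(1-\ga)n\rfloor\le\lfloor n/2\rfloor$; since $k_0/q\ge 1$ and both $\binom{n}{m}$ and $(k_0/q)^{m}$ are nondecreasing in $m$ over this range,
\[
\sum_{m<(1-\ga)n}\binom{n}{m}\Big(\tfrac{k_0}{q}\Big)^{m}
\le (n+1)\,\binom{n}{\lfloor(1-\ga)n\rfloor}\Big(\tfrac{k_0}{q}\Big)^{(1-\ga)n}.
\]
By Stirling, $\tfrac1n\log\binom{n}{\lfloor(1-\ga)n\rfloor}\to H(1-\ga)$ as $n\to\infty$, where $H(t)=-t\log t-(1-t)\log(1-t)$; also $\tfrac1n\log(n+1)\to0$. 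Taking $\limsup_{n\to\infty}\tfrac1n\log(\cdot)$ in the displayed upper bound for $\#I(\ga,n)$ therefore gives
\[
c_{\ga}\le \log q+(1-\ga)\log(k_0/q)+H(1-\ga)\qquad\text{for every }\ga\in(1/2,1).
\]

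Finally I would close the argument by continuity in $\ga$. The function $g(\ga):=(1-\ga)\log(k_0/q)+H(1-\ga)$ is continuous on $(1/2,1)$ and $g(\ga)\to0$ as $\ga\to1^{-}$, since both summands vanish. So, given $\vep>0$, there is $\ga_0\in(1/2,1)$ with $g(\ga)<\vep$ for all $\ga\in(\ga_0,1)$, and then $c_{\ga}<\log q+\vep$ for every such $\ga$, which is the assertion. The one place that requires care — and the step I would flag as the crux — is that the naive bound $\sum_{\ell>\ga n}\binom{n}{\ell}q^{\ell}\le\sum_{\ell=0}^{n}\binom{n}{\ell}q^{\ell}=(1+q)^{n}$ is useless, because $\log(1+q)>\log q$ strictly; one genuinely has to exploit that $\ell$ is pinned near $n$ (equivalently $m$ near $0$), and this is exactly what the Stirling/entropy estimate quantifies, the tail outside $m\le(1-\ga)n$ being discarded before the estimate. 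Everything else is routine.
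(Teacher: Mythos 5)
Your proof is correct: the exact count $\#I(\ga,n)=\sum_{\ell>\ga n}\binom{n}{\ell}q^{\ell}(k_0-q)^{n-\ell}$, the largest-term/Stirling bound giving $c_\ga\le\log q+(1-\ga)\log(k_0/q)+H(1-\ga)$, and the limit $\ga\to1^-$ are all sound. The paper itself gives no proof, deferring to \cite[Lemma~3.1]{OV07}, and the argument there is the same standard binomial-coefficient/entropy estimate, so your proposal matches the intended proof in substance.
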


We are in a position to state our precise condition on the constant
$L$ in assumption (H1) and the constant $c$ in the definition of
hyperbolic time. By (P), we may find $\vep_0>0$ small such that
$\sup \phi - \inf \phi +\vep_0 < h(f) - \log q$. By
Lemma~\ref{l.combinatorio}, we may find $\gamma<1$ such that
$c_\gamma<\log q + \vep_0/4$. Assume $L$ is close enough to $1$ and
$c$ is close enough to zero so that
\begin{equation}\label{eq. relation expansion}
\sigma^{-(1-\gamma)} L^\gamma<e^{-2c} < 1
\end{equation}
and
\begin{equation}\label{eq. relation potential}
\sup \phi -\inf\phi < h(f)-\log q -\vep_0-m \log L
\end{equation}

%%%%%%%%%%%%%%%%%%%%%%%%%%%%%%%%%%%%%%%%%%%%%%%%%%%%%%%%%%%%%%%%%%%%%%%
\subsection{Hyperbolic times}\label{Hyperbolic times}

The next lemma, whose proof is based on a lemma due to Pliss (see
e.g. \cite{Man87}), asserts that, for points satisfying a certain
condition of asymptotic expansion, there are infinitely many
hyperbolic times: even more, the set of hyperbolic times has
positive density at infinity.

\begin{lemma}\label{positive density}
Let $x\in M$ and $n \geq 1$ be such that
$$
\frac{1}{n} \sum_{j=1}^{n} \log L(f^j(x)) \leq -2c<0.
$$
Then, there is $\theta>0$, depending only on $f$ and $c$, and a
sequence of hyperbolic times $1 \leq n_1(x) < n_2(x) < \dots< n_l(x)
\leq n$ for $x$, with $l \geq \theta n$ .
\end{lemma}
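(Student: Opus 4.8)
The plan is to apply the Pliss lemma to the sequence $a_j = \log L(f^j(x))$ for $j = 1, \dots, n$. First I would set up the right bookkeeping: given the hypothesis $\frac1n \sum_{j=1}^n a_j \le -2c$, I want to produce many indices $1 \le n_i \le n$ at which all backward partial sums control the expansion, i.e. $\sum_{j=m}^{n_i-1} \log L(f^j(x)) \le -c(n_i - m)$ for every $1 \le m \le n_i$. Exponentiating turns this into $\prod_{j=n_i-k}^{n_i-1} L(f^j(x)) < e^{-ck}$ for $1 \le k \le n_i$, which is exactly the definition \eqref{eq. c-hyperbolic times} of a $c$-hyperbolic time.

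The key quantitative input is the standard Pliss lemma: if $a_1, \dots, a_n$ are real numbers bounded above by some constant $A$ (here $A = \sup_{x} \log L(x) = \log L$, which is finite since $L(\cdot)$ is bounded and, by (H1), $L(x) \le \max\{L, \sigma^{-1}\}$), and if the average $\frac1n \sum a_j \le -2c$, then there exist at least $\theta n$ indices $1 \le n_1 < \dots < n_l \le n$ such that $\sum_{j=m+1}^{n_i} a_j \le -c(n_i - m)$ for all $0 \le m < n_i$, where one may take $\theta = \frac{c}{A + 2c}$ (assuming $c < A$, which holds since $c$ is close to zero). Thus $\theta$ depends only on the global bound on $L$ and on $c$, hence only on $f$ and $c$ as claimed. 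I would either quote this from \cite{Man87} or include the short telescoping argument: consider $S_k = \sum_{j=1}^k (a_j + c)$; the indices where $S_k$ attains a running maximum over $\{k, k+1, \dots, n\}$ — more precisely, the times $n_i$ with $S_{n_i} > S_m$ for all $m < n_i$... — wait, one orders the argument so that the selected times are exactly those $n_i$ with $S_{n_i} \ge S_m$ for every $m$ in the appropriate range, and a counting argument using $S_n \le -cn$ and $S_k - S_{k-1} = a_k + c \le A + c$ forces at least $\frac{c}{A+c}n$ such times; a small adjustment of constants gives the stated $\theta$.

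Translating back: each selected $n_i$ satisfies, for $0 \le m < n_i$, $\sum_{j=m+1}^{n_i} \log L(f^j(x)) \le -c(n_i - m)$; reindexing with $k = n_i - m$ (so $1 \le k \le n_i$) this reads $\sum_{j=n_i-k+1}^{n_i} \log L(f^j(x)) \le -ck$. There is a harmless shift by one in the index range between this and \eqref{eq. c-hyperbolic times} (the displayed definition uses $j$ from $n-k$ to $n-1$); this is absorbed by writing the expansion condition at the orbit point $f(x)$ versus $x$, or simply by noting both conventions give hyperbolic times differing by an index shift, which does not affect positive density — I would just be careful to match the paper's convention when writing the clean version. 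The main obstacle, such as it is, is purely organizational: making the index conventions in the Pliss lemma match \eqref{eq. c-hyperbolic times} exactly, and verifying that the upper bound $A$ on $\log L(f^j(x))$ used is legitimately finite and independent of $x$ and $n$ (it is, by boundedness of $L(\cdot)$), so that $\theta$ is genuinely uniform. No real analytic difficulty arises; the content is entirely in the Pliss selection.
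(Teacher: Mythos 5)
Your proposal is correct and is exactly the paper's intended argument: the authors simply invoke Pliss's lemma via Corollary~3.2 of \cite{ABV00}, which is the selection argument you reproduce, with $\theta$ depending only on $c$ and the uniform bound $\log L$ on $\log L(\cdot)$. The one-step index mismatch you flag is an artifact of the paper's own conventions (the hypothesis sums over $j=1,\dots,n$ while \eqref{eq. c-hyperbolic times} reaches down to $j=0$), and your remark that it is absorbed by a harmless shift (or by running Pliss with an intermediate constant $c'\in(c,2c)$ to also recover the strict inequality) is the standard and adequate fix.
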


\begin{proof}
Analogous to Corollary 3.2 of \cite{ABV00}.
\end{proof}

\begin{corollary}\label{c.definite fraction}
Let $\eta$ be a probability measure relative to which
$$
 \limsup_{n\to \infty} \frac{1}{n} \sum_{j=1}^{n} \log L(f^j(x)) \leq -2c<0
$$
holds almost everywhere. If $A$ is a positive measure set then
$$
\liminf_{n \to \infty} \frac{1}{n} \sum_{j=0}^{n-1} \frac{\eta(A\cap
H_j)}{\eta(A)}\geq \frac{\theta}{2}.
$$
\end{corollary}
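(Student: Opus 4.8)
The plan is to reduce the statement to a pointwise estimate on the frequency of hyperbolic times and then integrate. For each $n\ge 1$, interchanging the finite sum with the integral gives
$$
\frac1n\sum_{j=0}^{n-1}\eta(A\cap H_j)=\int_A g_n\,d\eta,\qquad g_n(x):=\frac1n\#\{0\le j\le n-1:\ x\in H_j\}\in[0,1].
$$
So it suffices to prove $\liminf_n\int_A g_n\,d\eta\ge(\theta/2)\,\eta(A)$, and since $0\le g_n\le 1$ this will follow from Fatou's lemma as soon as $\liminf_n g_n(x)\ge\theta/2$ for $\eta$-almost every $x$.

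For the pointwise bound, fix a small $\vep>0$. The hypothesis $\limsup_n\frac1n\sum_{j=1}^n\log L(f^j(x))\le -2c$ gives, for $\eta$-a.e.\ $x$, an integer $N(x)$ with $\frac1n\sum_{j=1}^n\log L(f^j(x))<-2c+\vep$ for all $n\ge N(x)$. The proof of Lemma~\ref{positive density}, which rests on Pliss's lemma applied to the sequence $\log L(f^j(x))$, then gives — with $-2c+\vep$ in place of $-2c$ — at least $\theta_\vep\,n$ hyperbolic times for $x$ in $\{1,\dots,n\}$ whenever $n\ge N(x)$, where $\theta_\vep\to\theta$ as $\vep\to0$; choosing $\vep$ small we may assume $\theta_\vep\ge\theta/2$. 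Hence, for all large $n$,
$$
g_n(x)\ \ge\ \frac1n\big(\#\{1\le j\le n:\ x\in H_j\}-1\big)\ \ge\ \theta_\vep-\frac1n\ \ge\ \frac\theta2-\frac1n,
$$
so that $\liminf_n g_n(x)\ge\theta/2$ for $\eta$-a.e.\ $x$.

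Finally, applying Fatou's lemma to $(g_n)$ over $A$ yields $\liminf_n\int_A g_n\,d\eta\ge\int_A\liminf_n g_n\,d\eta\ge(\theta/2)\,\eta(A)$, and dividing by $\eta(A)>0$ gives the conclusion. There is no genuine obstacle: all the substance lies in Lemma~\ref{positive density}, and the remaining ingredients — the finite interchange of sum and integral, and the Fatou passage to the limit — are routine. The only delicate point is that the hypothesis is phrased with a $\limsup$, so it supplies the average bound only up to an arbitrarily small $\vep$ for all large $n$ rather than the exact bound $-2c$; this small loss is precisely what the factor $1/2$ in the conclusion absorbs. When the bound holds with strict inequality one may instead invoke Lemma~\ref{positive density} directly and obtain the sharper constant $\theta$.
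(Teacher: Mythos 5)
Your proof is correct and follows essentially the same line as the paper's: both reduce to integrating over $A$ the pointwise lower bound on the frequency of hyperbolic times furnished by Lemma~\ref{positive density} (Pliss's lemma), the only difference being that you pass to the limit via Fatou's lemma while the paper truncates $A$ to a subset of measure at least $\eta(A)/2$ on which the waiting time $N(\cdot)$ is uniformly bounded. Your extra $\vep$-argument, replacing $-2c$ by $-2c+\vep$ and $\theta$ by $\theta_\vep$ so as to reconcile the $\limsup$ hypothesis with the exact-average hypothesis of Lemma~\ref{positive density}, is a genuine refinement that the paper's one-line citation quietly elides.
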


\begin{proof}
By Lemma~\ref{positive density}, for $\eta$-almost every point $x\in
M$ there is $N(x)\in \N$ so that $n^{-1}\sum_{j=0}^{n-1}
\chi_{H_j}(x) \geq \theta$ for every $n\geq N(x)$. Fix an integer
$N\geq 1$ and choose $\tilde A\subset A$ so that $\eta(\tilde A)\geq
\eta(A)/2$ and $N(x)\geq N$ for every $x \in \ti A$. If we integrate
the expression above with respect to $\eta$ on $A$ we obtain that
$$
\frac{1}{n} \sum_{j=0}^{n-1} \eta(H_j\cap A) \geq \theta \eta(\ti A)
\geq \frac{\theta}{2}\eta(A)
$$
for every integer $n$ larger than $N$, completing the proof of the
lemma.
\end{proof}

\begin{lemma}\label{delta}
There exists $\delta=\delta(c,f)>0$ such that, whenever $n$ is a
hyperbolic time for a point $x$, the dynamical ball
$V_n(x)=B(x,n,\delta)$ is mapped homeomorphically by $f^n$ onto the
ball $B(f^n(x),\delta)$, with
$$
d(f^{n-j}(y),f^{n-j}(z)) \leq e^{-\frac{c}{2} j} d(f^n(y),f^n(z))
$$
for every $1\leq j \leq n$ and every $y,z \in V_n(x)$.
\end{lemma}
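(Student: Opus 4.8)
The plan is to realize the dynamical ball $V_n(x)$ as the image of a composition of uniformly contracting inverse branches of $f$, after which both the homeomorphism and the contraction statement reduce to bookkeeping. The first---and only delicate---step is to fix, once and for all, a radius $\delta=\delta(c,f)>0$ small enough that, for every $p\in M$: (a) $f$ is injective on the ball $B(p,2\delta)$; and (b) there is a continuous inverse branch $f_p^{-1}\colon B(f(p),2\delta)\to M$ of $f$ with $f_p^{-1}(f(p))=p$, $f\circ f_p^{-1}=\mathrm{id}$, and $d\bigl(f_p^{-1}(a),f_p^{-1}(b)\bigr)\le L(p)\,d(a,b)$ for all $a,b$ in its domain. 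That such a \emph{uniform} $\delta$ exists is a standard consequence of compactness of $M$ together with the hypothesis that $f$ is a local homeomorphism with $L(\cdot)$-Lipschitz local inverses (as in \cite{ABV00}); the extra factor $2$ in the radii is kept only so that the open/closed distinction in the definition of $V_n(x)$ causes no trouble later.

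Given this $\delta$, I fix $x\in M$ and a $c$-hyperbolic time $n$ for $x$, write $x_j=f^j(x)$, and define maps $h_j\colon B(x_n,\delta)\to M$ by $h_0=\mathrm{id}$ and $h_j=f_{x_{n-j}}^{-1}\circ h_{j-1}$. I would then verify, by induction on $j$, that each $h_j$ is a well-defined injective continuous map with $h_j(x_n)=x_{n-j}$ and $f\circ h_j=h_{j-1}$, and that it is Lipschitz with constant $\prod_{i=n-j}^{n-1}L(x_i)$; here the defining inequality \eqref{eq. c-hyperbolic times} of a hyperbolic time (with $k=j$) gives
\[
d\bigl(h_j(a),h_j(b)\bigr)\le\Bigl(\prod_{i=n-j}^{n-1}L(x_i)\Bigr)d(a,b)<e^{-cj}\,d(a,b)
\]
and, in particular, $h_j\bigl(B(x_n,\delta)\bigr)\subset B(x_{n-j},e^{-cj}\delta)\subset B(x_{n-j},\delta)$, which is exactly what allows the next inverse branch $f_{x_{n-j-1}}^{-1}$ to be composed. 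Iterating $f\circ h_j=h_{j-1}$ yields $f^i\circ h_n=h_{n-i}$ for $0\le i\le n$; in particular $f^n\circ h_n=\mathrm{id}$ on $B(x_n,\delta)$.

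It remains to identify $h_n\bigl(B(x_n,\delta)\bigr)$ with $V_n(x)=B(x,n,\delta)$. One inclusion is immediate from $f^i(h_n(w))=h_{n-i}(w)\in B(x_i,\delta)$; for the reverse, given $y\in V_n(x)$ and $w:=f^n(y)\in B(x_n,\delta)$, a downward induction on $i$ shows $f^i(y)=h_{n-i}(w)$, since both points lie in $B(x_i,\delta)$ and have the same image $f^{i+1}(y)=h_{n-i-1}(w)$ under $f$, which is injective on $B(x_i,\delta)$; taking $i=0$ gives $y=h_n(w)$. Hence $h_n$ is a continuous bijection of $B(f^n(x),\delta)$ onto $V_n(x)$ with continuous inverse $f^n$, i.e.\ $f^n$ maps $V_n(x)$ homeomorphically onto $B(f^n(x),\delta)$; and for $y,z\in V_n(x)$ and $1\le j\le n$, putting $w=f^n(y)$, $w'=f^n(z)$ and using $f^{n-j}(y)=h_j(w)$, $f^{n-j}(z)=h_j(w')$, the Lipschitz bound on $h_j$ gives
\[
d\bigl(f^{n-j}(y),f^{n-j}(z)\bigr)\le\Bigl(\prod_{i=n-j}^{n-1}L(x_i)\Bigr)d(w,w')<e^{-cj}\,d\bigl(f^n(y),f^n(z)\bigr)\le e^{-\frac c2 j}\,d\bigl(f^n(y),f^n(z)\bigr),
\]
which is the desired estimate. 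The main obstacle, as flagged, is the uniform choice of $\delta$ in the first step; everything after it is a routine manipulation of the inverse branches, driven entirely by the hyperbolic-time inequality.
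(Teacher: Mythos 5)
Your overall strategy is the one the paper intends: its proof of Lemma~\ref{delta} is literally a citation to Lemma~2.7 of \cite{ABV00} with $\log \|Df(\cdot)^{-1}\|$ replaced by $\log L(\cdot)$, i.e. exactly your inductive construction of inverse branches along the orbit of $x$. All of the bookkeeping after the choice of $\delta$ is correct: the nesting of the images $h_j(B(f^n(x),\delta))$, the identification $h_n(B(f^n(x),\delta))=V_n(x)$ via injectivity of $f$ on small balls, and the contraction estimate driven by \eqref{eq. c-hyperbolic times}.

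The one step that is stronger than what the hypotheses give is precisely the one you flag: the existence of a uniform $\delta$ such that, for \emph{every} $p$, the inverse branch through $p$ is defined on $B(f(p),2\delta)$ and is Lipschitz there with the exact constant $L(p)$. Compactness does give a uniform injectivity radius (a Lebesgue number $\delta_0$ for the cover $\{U_x\}$) and, using that $f$ is open, a uniform $\delta_1$ with $B(f(p),\delta_1)\subset f(B(p,\delta_0))$; but on this uniform ball the Lipschitz constant you control is $L(x_i)$ for a reference point $x_i$ of a finite subcover, not $L(p)$, since nothing in (H1) forces the neighborhoods $U_p$ themselves to have uniform size. This is exactly why the lemma is stated with $e^{-cj/2}$ rather than the $e^{-cj}$ your argument produces: the standard repair is to shrink each $U_x$ to $U_x\cap B(x,\rho)$, take a Lebesgue number, and use (uniform) continuity of $L$ --- which the paper uses implicitly elsewhere, e.g. in the proof of Proposition~\ref{p.Pesinpointwise} --- to get $L(x_i)\le e^{c/2}L(p)$ for $\rho$ small, so that the branch through $p$ is $e^{c/2}L(p)$-Lipschitz on a uniform ball. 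Running your induction with these constants gives composed Lipschitz constants at most $e^{cj/2}\prod_{i=n-j}^{n-1}L(f^i(x))<e^{-cj/2}<1$, so the nesting, the identification of $V_n(x)$, and the final estimate go through verbatim and land exactly on the stated bound. In short: the proof is essentially right and matches the paper's intended argument, but condition (b) should be weakened to this $e^{c/2}$-version, since as written it does not follow from compactness alone.
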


\begin{proof}
Analogous to the proof of \cite[Lemma 2.7]{ABV00}, just replacing
$\log \|Df(\cdot)^{-1}\|$ by $\log L(\cdot)$, and using the
definition of hyperbolic time and the Lipschitz property of the
inverse branches of $f$.
\end{proof}

If $n$ is a hyperbolic time for a point $x \in M$, the neighborhood
$V_n(x)$ given by the lemma above is called \textit{hyperbolic
pre-ball}. As a consequence of the previous lemma we obtain the
following property of bounded distortion on pre-balls.

\begin{corollary}\label{lem. bounded distortion}
Assume $J_\eta f=e^\psi$ for some H\"older continuous function
$\psi$. There exist a constant $K_0>0$ so that, if $n$ is a
hyperbolic time for $x$ then
$$
K_0^{-1} \leq \frac{J_\eta f^n(y)}{J_\eta f^n(z)} \leq K_0
$$
for every $y,z \in V_n(x)$.
\end{corollary}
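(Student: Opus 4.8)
The plan is to combine multiplicativity of the $\eta$-Jacobian along orbits with the uniform backward contraction on hyperbolic pre-balls furnished by Lemma~\ref{delta}; once these are in place the whole estimate reduces to summing a geometric series, exploiting that $\psi$ is H\"older.

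First I would observe that, by Lemma~\ref{delta}, $f^n$ is injective on $V_n(x)$, hence so is $f$ on each $f^j(V_n(x))$ for $0\le j\le n-1$. Iterating the defining relation \eqref{eq. Jacobian} therefore gives the cocycle identity $J_\eta f^n=\prod_{j=0}^{n-1}(J_\eta f)\circ f^j$ on $V_n(x)$, which under the hypothesis $J_\eta f=e^\psi$ becomes $J_\eta f^n=e^{S_n\psi}$ with $S_n\psi=\sum_{j=0}^{n-1}\psi\circ f^j$. Consequently, for $y,z\in V_n(x)$,
\[
\log\frac{J_\eta f^n(y)}{J_\eta f^n(z)}
   =\sum_{j=0}^{n-1}\bigl(\psi(f^j(y))-\psi(f^j(z))\bigr).
\]

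Next, let $C_\psi>0$ and $\al\in(0,1]$ denote the H\"older constant and exponent of $\psi$. Writing $j=n-i$ with $1\le i\le n$ and using that $f^n(V_n(x))=B(f^n(x),\delta)$, so that $d(f^n(y),f^n(z))\le 2\delta$, Lemma~\ref{delta} yields
\[
d(f^j(y),f^j(z))
   =d(f^{n-i}(y),f^{n-i}(z))
   \le e^{-\frac{c}{2}i}\,d(f^n(y),f^n(z))
   \le 2\delta\,e^{-\frac{c}{2}i}.
\]
Plugging this into the previous identity and invoking H\"older continuity gives
\[
\Bigl|\log\frac{J_\eta f^n(y)}{J_\eta f^n(z)}\Bigr|
   \le C_\psi\sum_{i=1}^{n}\bigl(2\delta\,e^{-\frac{c}{2}i}\bigr)^{\al}
   \le C_\psi\,(2\delta)^{\al}\sum_{i=1}^{\infty}e^{-\frac{c\al}{2}i}
   =\frac{C_\psi\,(2\delta)^{\al}}{e^{c\al/2}-1},
\]
a finite constant depending only on $\psi$, $c$, $\delta$ and $f$ (recall $c>0$ by \eqref{eq. relation expansion}). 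Taking $K_0=\exp\!\bigl(C_\psi(2\delta)^{\al}/(e^{c\al/2}-1)\bigr)$ then completes the argument.

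I do not anticipate any genuine obstacle here: the one point deserving a line of justification is the cocycle identity for the $\eta$-Jacobian, which is precisely where injectivity of $f^n$ on the pre-ball (Lemma~\ref{delta}) enters; everything else is the classical bounded-distortion estimate along a uniformly contracting backward orbit of a H\"older potential, and the constant $K_0$ comes out uniform in $n$ and $x$ exactly because the contraction rate $e^{-c/2}$ in Lemma~\ref{delta} is uniform.
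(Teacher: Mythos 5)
Your argument is correct and is essentially the paper's own proof: Lemma~\ref{delta} gives the backward contraction $d(f^{n-i}(y),f^{n-i}(z))\le e^{-ci/2}d(f^n(y),f^n(z))$, the H\"older property of $\psi$ turns this into a summable geometric series bounding $|S_n\psi(y)-S_n\psi(z)|$ uniformly in $n$ and $x$, and exponentiating (using $J_\eta f^n=e^{S_n\psi}$) yields $K_0$. Your explicit justification of the cocycle identity via injectivity of $f^n$ on $V_n(x)$ is a welcome extra line, but the route is the same.
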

\begin{proof}
Let $n$ a hyperbolic time for a point $x$ in $M$ and $(C,\alpha)$ be
the H\"older constants of $\psi$. Using Lemma~\ref{delta} it is not
hard to see that
$$
|S_n \psi(y)-S_n \psi(z)|
 \leq C \sum_{j=0}^{+ \infty} e^{-c\al/2 j} d(f^n(x),f^n(y))^{\al}
 \leq C \de^\alpha \sum_{j=0}^{+ \infty} e^{-c\al j/2}.
$$
for any given $y, z \in V_n(x)$. Choosing $K_0$ as the exponential
of this last term and noting $J_\eta f^n$ is the exponential of
$S_n\psi$, the result follows immediately.
\end{proof}

%%%%%%%%%%%%%%%%%%%%%%%%%%%%%%%%%%%%%%%%%%%%%%%%%%%%%%%%%%%%%%%%%%%%%%%
\subsection{Non-lacunary sequences}\label{Non-lacunary sequences}

The set $H$ of points with infinitely many hyperbolic times plays a
central role in our strategy. We are going to see that for such a
point the sequence of hyperbolic times has some special properties.
The first one is described in the following remark:

\begin{remark}\label{r.concatenation}
If $n$ is a hyperbolic time for $x$ then, clearly, $n-s$ is a
hyperbolic time for $f^s(x)$, for any $1 \le s < n$. The following
converse is a simple consequence of ~\eqref{eq. c-hyperbolic times}:
if $k<n$ is a hyperbolic time for $x$ and there exists $1\le s\le k$
such that $n-s$ is a hyperbolic time for $f^s(x)$ then $n$ is a
hyperbolic time for $x$. Thus, if $n_j(x)$, $j\ge 1$ denotes the
sequence of values of $n$ for which $x$ belongs to $H_n$ then, for
every $j$ and $l$
$$
n_j(x) + n_l(f^{n_j(x)}(x)) = n_{j+l}(x)
$$
We will refer to this property as \emph{concatenation} of hyperbolic
times. Moreover, if $n$ is a hyperbolic time for $x$ and $k$ is a
hyperbolic time for $f^n(x)$, the intersection $V_n(x) \cap
f^{-k}(V_k(f^k(x)))$ coincides with the hyperbolic pre-ball
$V_{n+k}(x)$.
\end{remark}

The next lemma, which we borrow from \cite{OV07}, provides an
abstract criterium for non-lacunarity \emph{at almost every point}
of certain sequences of functions.

\begin{lemma}\cite[Proposition~3.8]{OV07}\label{non-lacunary criterium}
Let $T:M \to \N$ and $T_i:M \to \N$ , $i\in \N$ be measurable
functions and $\eta$ be a probability measure such that
$$
T(f^{T_i(x)}(x))\geq T_{i+1}(x)-T_{i}(x)
$$
at $\eta$-almost every $x \in M$. Assume $\eta$ is invariant under
$f$ and $T$ is integrable for $\eta$. Then $(T_i(x))_i$ is
non-lacunary for $\eta$-almost every $x$.
\end{lemma}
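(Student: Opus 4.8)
The plan is to derive non-lacunarity directly from the Birkhoff ergodic theorem applied to the integrable function $T$, together with the elementary observation that Cesàro convergence of a nonnegative sequence forces the terms, divided by their index, to go to zero. Concretely, I would first record: if $(a_n)_{n\ge 0}$ is a sequence of nonnegative reals whose averages $\frac1n\sum_{j=0}^{n-1}a_j$ converge to some $\ell$, then $a_n/n\to 0$, since
\[
\frac{a_{n-1}}{n}=\frac1n\sum_{j=0}^{n-1}a_j-\frac{n-1}{n}\cdot\frac{1}{n-1}\sum_{j=0}^{n-2}a_j\longrightarrow \ell-\ell=0 .
\]

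Next, since $\eta$ is $f$-invariant and $T\in L^1(\eta)$, the Birkhoff ergodic theorem gives that for $\eta$-almost every $x$ the averages $\frac1n\sum_{j=0}^{n-1}T(f^j(x))$ converge (to the conditional expectation of $T$ with respect to the $f$-invariant $\sigma$-algebra; ergodicity is not needed). By the remark above, applied with $a_j=T(f^j(x))\ge 0$, we conclude that $T(f^n(x))/n\to 0$ for $\eta$-almost every $x$. Now fix $x$ in the full-measure set where both this limit holds and the hypothesis $T(f^{T_i(x)}(x))\ge T_{i+1}(x)-T_i(x)$ holds for all $i$, and where $i\mapsto T_i(x)$ is increasing (implicit in the statement and automatic in our applications, where the $T_i$ are hyperbolic times and increase by concatenation, cf. Remark~\ref{r.concatenation}). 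If $(T_i(x))_i$ is bounded it is eventually constant and the ratio $T_{i+1}(x)/T_i(x)$ tends to $1$ trivially; otherwise $T_i(x)\to\infty$, and the hypothesis yields
\[
0\le \frac{T_{i+1}(x)-T_i(x)}{T_i(x)}\le \frac{T\!\left(f^{T_i(x)}(x)\right)}{T_i(x)} .
\]
Because $T_i(x)\to\infty$ and $T(f^n(x))/n\to 0$, the right-hand side tends to $0$ as $i\to\infty$, hence $T_{i+1}(x)/T_i(x)\to 1$, which is exactly the assertion that $(T_i(x))_i$ is non-lacunary.

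There is no serious obstacle here: the argument is short once one sees that the controlling quantity $T_{i+1}-T_i$ is dominated by $T$ evaluated at the \emph{single} time $T_i(x)$ along the orbit, so that a pointwise ``sublinear growth of $T\circ f^n$'' statement suffices. The only points requiring a little care are the use of Birkhoff in its non-ergodic form and the Cesàro-to-pointwise passage above, both of which are standard; and the (harmless) bookkeeping observation that one may assume $T_i(x)\to\infty$.
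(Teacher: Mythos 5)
Your proof is correct. The paper itself does not prove this lemma: it is quoted as \cite[Proposition~3.8]{OV07} without a proof being reproduced, so there is nothing in the present text to compare against line by line. That said, your argument is the natural one: Birkhoff's theorem (in its non-ergodic form, applied to $T\in L^1(\eta)$) gives a.e.\ convergence of the Ces\`aro averages of $T\circ f^n$, the elementary Ces\`aro observation converts this to sublinear growth $T(f^n(x))/n\to 0$, and then the defining inequality
$$
0\le \frac{T_{i+1}(x)-T_i(x)}{T_i(x)}\le \frac{T\bigl(f^{T_i(x)}(x)\bigr)}{T_i(x)}
$$
squeezes the ratio to $1$ once $T_i(x)\to\infty$, which is automatic for a strictly increasing integer-valued sequence. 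The only bookkeeping points you need, and do flag, are that the $T_i$ are taken increasing (forced by the definition of non-lacunary and by the intended application to hyperbolic times), and that $T\ge 0$ so the middle term is nonnegative. This is essentially the same proof as in \cite{OV07}; there is no genuinely different route or gap to report.
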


The application we have in mind is when $T_i=n_i$ is the sequence of
hyperbolic times, with $T=n_1$. In this case the assumption of the
lemma follows from the concatenation property in
Remark~\ref{r.concatenation}. Thus, we obtain

\begin{corollary}\label{c.hyperbolicislacunary}
If $\eta$ is an invariant expanding measure and $n_1(\cdot)$ is
$\eta$-integrable then the sequence $n_j(\cdot)$ is non-lacunary at
$\eta$-almost every point.
\end{corollary}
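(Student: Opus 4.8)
The plan is to verify that the abstract non-lacunarity criterion of Lemma~\ref{non-lacunary criterium} applies directly to the sequence of hyperbolic times. Set $T_i=n_i$, the map sending $x$ to its $i$-th hyperbolic time $n_i(x)$, defined on the set $H$ of points with infinitely many hyperbolic times; by hypothesis $\eta$ is expanding, so $\eta(H)=1$ and each $T_i$ is a well-defined $\N$-valued measurable function $\eta$-a.e. Take $T=n_1$, the first-hyperbolic-time function. The three hypotheses of Lemma~\ref{non-lacunary criterium} must then be checked: (i) measurability of $T$ and the $T_i$; (ii) $f$-invariance of $\eta$; (iii) $\eta$-integrability of $T$; and, crucially, the inequality $T(f^{T_i(x)}(x))\ge T_{i+1}(x)-T_i(x)$ a.e.

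The first two items are immediate: measurability of $n_i(\cdot)$ and $n_1(\cdot)$ follows from the definition \eqref{eq. c-hyperbolic times} of hyperbolic time, which is a countable conjunction of closed conditions on the (continuous) functions $L\circ f^j$, so each $H_j$ is measurable and hence so are the functions $n_i$; invariance of $\eta$ is a hypothesis of the corollary. Integrability of $T=n_1$ is exactly the second standing hypothesis ``$n_1(\cdot)$ is $\eta$-integrable''. So the only substantive point is the key inequality, and here I would invoke the concatenation property from Remark~\ref{r.concatenation}: for $\eta$-a.e.\ $x$ and every $i$,
$$
n_i(x) + n_1\bigl(f^{n_i(x)}(x)\bigr) = n_{i+1}(x),
$$
taking $l=1$ in the displayed identity of that remark (using that $f^{n_i(x)}(x)$ again lies in $H$, which holds $\eta$-a.e.\ by invariance and $\eta(H)=1$). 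This rearranges to $n_1(f^{n_i(x)}(x)) = n_{i+1}(x)-n_i(x)$, so the required inequality $T(f^{T_i(x)}(x))\ge T_{i+1}(x)-T_i(x)$ holds with equality.

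With all hypotheses of Lemma~\ref{non-lacunary criterium} verified, the conclusion is that $(n_i(x))_i$ is non-lacunary for $\eta$-almost every $x$, which is precisely the statement of the corollary. I expect no real obstacle here: the proof is a bookkeeping exercise that splices together the concatenation identity (Remark~\ref{r.concatenation}) with the abstract criterion (Lemma~\ref{non-lacunary criterium}), and the only genuine hypothesis that must be supplied from outside is the $\eta$-integrability of $n_1$, which is assumed. The mild care needed is to ensure everything is stated on the full-measure invariant set where $x$ and all its forward iterates have infinitely many hyperbolic times, so that $n_1(f^{n_i(x)}(x))$ is finite and the concatenation identity is literally valid there.
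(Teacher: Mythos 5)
Your proposal is correct and follows essentially the same route as the paper: the corollary is obtained by applying Lemma~\ref{non-lacunary criterium} with $T_i=n_i$ and $T=n_1$, with the hypothesis $T(f^{T_i(x)}(x))\geq T_{i+1}(x)-T_i(x)$ supplied (in fact with equality) by the concatenation property of Remark~\ref{r.concatenation}, and the integrability of $n_1$ taken as an assumption. Your extra care about working on the full-measure invariant set where all iterates lie in $H$ is exactly the implicit bookkeeping in the paper's argument.
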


%%%%%%%%%%%%%%%%%%%%%%%%%%%%%%%%%%%%%%%%%%%%%%%%%%%%%%%%%%%%%%%%%%%%%%%
\subsection{Relative pressure}\label{Relative Pressure}

We recall the notion of topological pressure on non necessarily
compact invariant sets, and quote some useful properties. In fact,
we present two alternative characterizations of the relative
pressure, both from a dimensional point of view. See Chapter 4
$\S$11 and Appendix II of \cite{Pe97} for proofs and more details.

Let $M$ be a compact metric space, $f:M \to M$ be a continuous
transformation, $\phi:M \to \mathbb R$ be a continuous function, and
$\Lambda$ be an $f$-invariant set.

\vspace{.2cm} \noindent\emph{Relative pressure using partitions:}
Given any finite open covering $\cU$ of $\Lambda$, denote by $\cI_n$
the space of all $n$-strings $\underbar i=\{(U_0, \dots, U_{n-1}) :
U_i \in \cU\}$ and put $n(\underbar i)=n$. For a given string $\un
i$ set
$$
\underbar U =\underbar{U}(\underbar i)
        =\{x \in M : f^j(x) \in U_{i_j}, \;\text{for}\; j=0 \dots n(\underbar i)\}
$$
to be the cylinder associated to $\un i$ and $n(\un U)=n$ to be its
depth. Furthermore, for every integer $N \geq 1$, let $\cS_ N \cU$
be the space of all cylinders of depth at least $N$. Given $\alpha
\in \R$ define
\begin{equation*}\label{eq. alpha measure}
m_\al(f,\phi,\Lambda,\cU,N)= \inf_{\mathcal{G}} \Big\{
\sum_{{\mathrm{\underline U}} \in \mathcal{G}}
 e^{-\alpha n({\mathrm{\underline U}})+ S_{n({\mathrm{\underline U}})} \phi({\mathrm{\underline U}})} \Big\},
\end{equation*}
where the infimum is taken over all families $\mathcal{G}\subset
\cS_N \cU$ that cover $\Lambda$ and we write
$S_n\phi({\mathrm{\underline U}})=\sup_{x \in {\mathrm{\underline
U}}} S_n \phi(x)$. Let
$$
m_\al(f,\phi,\Lambda,\cU)
    = \lim_{N \to \infty} m_\al(f,\phi,\Lambda,\cU,N)
$$
(the sequence is monotone increasing) and
$$
P_\Lambda(f,\phi,\cU)
    = \inf{\{\alpha : m_\al(f,\phi,\Lambda,\cU)=0\}}.
$$

\begin{definition}\label{def. relative pressure1}
The \textit{pressure of $(f,\phi)$ relative to $\Lambda$} is
\begin{equation*}
P_\Lambda(f,\phi)= \lim_{\diam(\cU) \to 0} P_\Lambda(f,\phi,\cU).
\end{equation*}
\end{definition}
Theorem 11.1 in \cite{Pe97} states that the limit does exist, that
is, given any sequence of coverings $\cU_k$ of $L$ with diameter
going to zero, $P_L(f,\phi,\cU_k)$ converges and the limit does not
depend on the choice of the sequence.

\vspace{.2cm} \noindent\emph{Relative pressure using dynamical
balls:}

Fix $\vep>0$. Set $\cI_n= M\times\{n\}$ and $\cI =M \times \N$. For
every $\alpha \in \R$ and $N\geq 1$, define
\begin{equation}\label{eq. alpha measure dynamical balls}
m_\al(f,\phi,\Lambda,\vep,N) = \inf_{\mathcal{G}} \Big\{
\sum_{{(x,n)} \in \mathcal{G}}
  e^{-\alpha n+ S_{n} \phi({\mathrm B(x,n,\vep)})} \Big\},
\end{equation}
where the infimum is taken over all finite or countable families
$\mathcal{G}\subset \cup_{n \geq N}\cI_n$ such that the collection
of sets $\{B(x,n,\vep): (x,n)\in \mathcal G\}$ cover $\Lambda$. Then
let
$$
m_\al(f,\phi,\Lambda, \vep)
  = \lim_{N \to \infty} m_\al(f,\phi,\Lambda,\cU,N)
$$
(once more, the sequence is monotone increasing) and
$$
P_\Lambda(f,\phi,\vep) = \inf{\{\alpha:
m_\al(f,\phi,\Lambda,\vep)=0\}}.
$$
According to Remark 1 in \cite[Page 74]{Pe97} there is a limit when
$\vep\to 0$ and it coincides with the relative pressure:
\begin{equation*}\label{def. relative pressure2}
P_\Lambda(f,\phi)= \lim_{\vep \to 0} P_\Lambda(f,\phi,\vep).
\end{equation*}

\begin{remark}\label{r.centers}
Since $\phi$ is uniformly continuous, the definition of the relative
pressure is not affected if one replaces, in \eqref{eq. alpha
measure dynamical balls}, the supremum $S_n\phi(B(x,n,\vep))$ by the
value $S_n\phi(x)$ at the center point.
\end{remark}

The following properties on relative pressure, will be very useful
later. See Theorem~11.2 and Theorem~A2.1 in \cite{Pe97}, and also
\cite[Theorem~9.10]{Wa82}.

\begin{proposition}\label{non-compact variational principle}
Let $M$ be a compact metric space, $f:M \to M$ be a continuous
transformation, $\phi:M \to \mathbb R$ be a continuous function, and
$\Lambda$ be an $f$-invariant set. Then
\begin{enumerate}
\item $P_{\Lambda}(f,\phi)\geq \sup\left\{ h_\mu(f)+\int
\phi d\mu \right\}$ where the supremum is over all invariant
measures $\mu$ such that $\mu(\La)=1$. If $\Lambda$ is compact, the
equality holds.
\item $\Ptop (f,\phi)=\sup\{P_\Lambda(f,\phi),P_{M\setminus\Lambda}(f,\phi)\}$.
\end{enumerate}
\end{proposition}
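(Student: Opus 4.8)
This statement is classical: item~(1) combines Theorem~11.1 of \cite{Pe97} (which identifies the dimensional pressure with the usual topological pressure on compact invariant sets) with the variational principle \cite[Theorem~9.10]{Wa82}, and item~(2) is the finite stability of relative pressure, a special case of Theorem~A2.1 of \cite{Pe97}. I would argue as follows. For the inequality in~(1), first reduce to an ergodic measure: if $\mu=\int\mu_\omega\,d\mu(\omega)$ is the ergodic decomposition, then $\mu_\omega(\Lambda)=1$ for $\mu$-a.e.\ $\omega$, and by affinity of the entropy the quantity $h_\mu(f)+\int\phi\,d\mu$ equals the average over $\omega$ of $h_{\mu_\omega}(f)+\int\phi\,d\mu_\omega$, hence is bounded by its essential supremum. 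So assume $\mu$ ergodic with $\mu(\Lambda)=1$, fix $\alpha<h_\mu(f)+\int\phi\,d\mu$, and pick $\gamma>0$ with $\alpha+2\gamma<h_\mu(f)+\int\phi\,d\mu$. By the Brin--Katok local entropy formula and Birkhoff's ergodic theorem (together with a routine Egorov-type argument), for $\vep>0$ small enough there exist a set $\Lambda_0\subset\Lambda$ with $\mu(\Lambda_0)>0$ and an integer $N_0$ such that
$$
\mu\big(B(y,n,2\vep)\big)\leq e^{-n(h_\mu(f)-\gamma)}
\qquad\text{and}\qquad
S_n\phi(y)\geq n\Big(\int\phi\,d\mu-\gamma\Big)
$$
for every $y\in\Lambda_0$ and every $n\geq N_0$.

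Now let $N\geq N_0$ and let $\mathcal G\subset\bigcup_{n\geq N}\cI_n$ be any countable family with $\{B(x,n,\vep):(x,n)\in\mathcal G\}$ covering $\Lambda$. Keep only the pairs $(x,n)$ with $B(x,n,\vep)\cap\Lambda_0\neq\emptyset$, and for each such pair choose $y_{x,n}\in B(x,n,\vep)\cap\Lambda_0$; then $B(x,n,\vep)\subset B(y_{x,n},n,2\vep)$, these enlarged balls still cover $\Lambda_0$, and $S_n\phi(B(x,n,\vep))\geq S_n\phi(y_{x,n})$ since $S_n\phi(B(x,n,\vep))$ denotes a supremum. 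Using the two displayed bounds and $\alpha+2\gamma<h_\mu(f)+\int\phi\,d\mu$, one checks that $e^{-n(h_\mu(f)-\gamma)}\leq e^{-\alpha n+S_n\phi(B(x,n,\vep))}$ for every $n\geq 1$, whence
$$
\mu(\Lambda_0)\leq\sum_{(x,n)}\mu\big(B(y_{x,n},n,2\vep)\big)
\leq\sum_{(x,n)\in\mathcal G}e^{-\alpha n+S_n\phi(B(x,n,\vep))}.
$$
Taking the infimum over $\mathcal G$ gives $m_\alpha(f,\phi,\Lambda,\vep,N)\geq\mu(\Lambda_0)>0$ for all $N\geq N_0$, so $m_\alpha(f,\phi,\Lambda,\vep)>0$ and $P_\Lambda(f,\phi,\vep)\geq\alpha$; letting $\vep\to0$ and then $\alpha\uparrow h_\mu(f)+\int\phi\,d\mu$ yields $P_\Lambda(f,\phi)\geq h_\mu(f)+\int\phi\,d\mu$, and hence the inequality for arbitrary invariant $\mu$ by the reduction above. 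When $\Lambda$ is compact the reverse inequality follows because $P_\Lambda(f,\phi)$ defined through \eqref{eq. alpha measure dynamical balls} then coincides with the ordinary topological pressure of $f|_\Lambda$ — for compact $\Lambda$ the infimum in \eqref{eq. alpha measure dynamical balls} is realized on finite covers of controlled cardinality, i.e.\ on $(n,\vep)$-spanning sets (Theorem~11.1 of \cite{Pe97}) — and then $P_\Lambda(f,\phi)\leq\sup\{h_\mu(f)+\int\phi\,d\mu:\mu(\Lambda)=1\}$ is the classical variational principle applied to the continuous map $f|_\Lambda$ on the compact space $\Lambda$.

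For item~(2) I would use that the relative pressure is countably stable: $P_{\bigcup_i\Lambda_i}(f,\phi)=\sup_i P_{\Lambda_i}(f,\phi)$ for any countable family of $f$-invariant sets $\Lambda_i$. This is a purely dimension-theoretic fact (Theorem~A2.1 of \cite{Pe97}): since $m_\alpha(f,\phi,\cdot,\vep,N)$ behaves as an outer measure with respect to the covered set, a cover of $\bigcup_i\Lambda_i$ is obtained by superposing covers of the pieces, while $P_{\Lambda_i}\leq P_{\bigcup_i\Lambda_i}$ by monotonicity. Applying this to the partition $M=\Lambda\cup(M\setminus\Lambda)$ into two invariant sets, together with $P_M(f,\phi)=\Ptop(f,\phi)$ (the case $\Lambda=M$ of the identification above), gives $\Ptop(f,\phi)=\sup\{P_\Lambda(f,\phi),P_{M\setminus\Lambda}(f,\phi)\}$.

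The only genuinely delicate points are the passage from arbitrary balls $B(x,n,\vep)$ to balls centered on $\Lambda$ in the argument for~(1) — handled above via the enlargement $B(x,n,\vep)\subset B(y,n,2\vep)$, which is precisely why Brin--Katok must be applied at radius $2\vep$ rather than $\vep$ — and the identification of the dimensional pressure with the spanning-set pressure on compact invariant sets. Both are standard and carried out in full in \cite{Pe97,Wa82}, so in the paper I would simply quote them.
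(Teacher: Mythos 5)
Your proposal is correct and takes the same approach as the paper, which offers no proof of this proposition and simply refers to Theorems~11.2 and A2.1 of \cite{Pe97} and Theorem~9.10 of \cite{Wa82}—precisely the literature you quote. Your reconstruction of the underlying argument (Brin--Katok local entropy together with the $2\vep$-enlargement of dynamical balls for item~(1), and countable stability of the Carath\'eodory pressure for item~(2)) is sound; the only cosmetic slip is the citation of Pesin's Theorem~11.1 where the paper points to Theorem~11.2 for the identification with classical topological pressure on compact invariant sets.
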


The next proposition is probably well-known. We include a proof
since we could not find one in the literature.

\begin{proposition}\label{p.pressure.iteration}
Let $M$ be a compact metric space, $f:M \to M$ be a continuous
transformation, $\phi:M \to \mathbb R$ be a continuous function, and
$\Lambda$ be an $f$-invariant set. Then $P_\La(f^\ell,S_\ell
\phi)=\ell P_\La(f,\phi)$ for every $\ell \geq 1$.
\end{proposition}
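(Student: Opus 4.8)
The plan is to compare the two dimensional-characterizations of relative pressure for the pairs $(f,\phi)$ and $(f^\ell, S_\ell\phi)$ directly, working with the definition via dynamical balls as in \eqref{eq. alpha measure dynamical balls}. The key observation is that a dynamical ball for $f^\ell$ of length $n$ is comparable to a dynamical ball for $f$ of length $\ell n$: precisely, if one denotes by $B_f(x,k,\vep)$ and $B_{f^\ell}(x,n,\vep)$ the dynamical balls with respect to $f$ and $f^\ell$ respectively, then $B_f(x,\ell n,\vep) \subset B_{f^\ell}(x,n,\vep)$ trivially, and conversely, by uniform continuity of $f, f^2, \dots, f^{\ell-1}$, for every $\vep>0$ there is $\vep'>0$ with $\vep'\to 0$ as $\vep\to 0$ such that $B_{f^\ell}(x,n,\vep') \subset B_f(x,\ell n,\vep)$ for all $x$ and $n$. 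Moreover the Birkhoff sums match up exactly: the $n$th Birkhoff sum of $S_\ell\phi$ under $f^\ell$ evaluated at $x$ equals $S_{\ell n}\phi(x)$, the $(\ell n)$th Birkhoff sum of $\phi$ under $f$.

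First I would use these two inclusions to relate the set functions $m_\al$. Fix $\vep>0$ and take $\vep'$ as above. Given a family $\cG\subset\cup_{k\ge N}\cI_k$ covering $\Lambda$ by $f$-dynamical balls and realizing (up to a small error) the infimum defining $m_\al(f,\phi,\Lambda,\vep,N)$, I would like to extract a subfamily whose lengths are multiples of $\ell$ and which still covers $\Lambda$; the cleanest route is instead to pass through coverings for $f^\ell$ directly. Using $B_{f^\ell}(x,n,\vep')\subset B_f(x,\ell n,\vep)$, any $f^\ell$-cover of $\Lambda$ by balls of lengths $\ge N$ gives an $f$-cover by balls of lengths $\ge \ell N$, and the associated sum for $(f^\ell, S_\ell\phi)$ at parameter $\al$ equals the associated sum for $(f,\phi)$ at parameter $\al/\ell$, because $-\al n + S_{\ell n}\phi = \ell\bigl(-(\al/\ell) \cdot \ell n /\ell + \ldots\bigr)$; concretely $-\al n + S_n(S_\ell\phi)_{f^\ell}(x) = -(\al/\ell)(\ell n) + S_{\ell n}\phi(x)$. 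This yields $m_\al(f^\ell, S_\ell\phi,\Lambda,\vep',\ell N) \ge m_{\al/\ell}(f,\phi,\Lambda,\vep,\ell N)$, and letting $N\to\infty$ then $\vep\to 0$ gives $P_\Lambda(f^\ell,S_\ell\phi)\le \ell P_\Lambda(f,\phi)$. The reverse inequality comes from the other inclusion $B_f(x,\ell n,\vep)\subset B_{f^\ell}(x,n,\vep)$, which lets one turn an $f$-cover of $\Lambda$ into an $f^\ell$-cover at the cost of rounding lengths up to the next multiple of $\ell$; the rounding changes each length by at most $\ell-1$ and hence each exponent $-\al k + S_k\phi$ by a bounded additive constant (using $|\al|$ and $\|\phi\|_\infty$), which is absorbed in the limit and does not affect the critical exponent where $m_\al$ jumps from $\infty$ to $0$. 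This gives $\ell P_\Lambda(f,\phi)\le P_\Lambda(f^\ell,S_\ell\phi)$, and the two inequalities together prove the claim.

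The step I expect to be the main obstacle is the careful bookkeeping in the rounding argument for the reverse inequality: starting from an $f$-cover $\{B_f(x,k,\vep)\}$ of $\Lambda$ with $k\ge N$, one replaces each ball by $B_{f^\ell}(x, \lceil k/\ell\rceil, \vep)\supset B_f(x,\ell\lceil k/\ell\rceil,\vep)\supset B_f(x,k,\vep)$, so the collection still covers $\Lambda$, but one must check that $\sum e^{-\al\lceil k/\ell\rceil + S_{\lceil k/\ell\rceil}(S_\ell\phi)(x)}$ is bounded by $e^{C}\sum e^{-(\al/\ell) k + S_k\phi(x)}$ for a constant $C = C(\ell,\al,\phi)$ independent of the cover — this follows because $S_{\lceil k/\ell\rceil}(S_\ell\phi)(x) = S_{\ell\lceil k/\ell\rceil}\phi(x)$ differs from $S_k\phi(x)$ by at most $(\ell-1)\|\phi\|_\infty$ and $\al\lceil k/\ell\rceil$ differs from $(\al/\ell)k$ by at most $|\al|$ — and hence that the critical exponent for $(f^\ell,S_\ell\phi)$ is at most $\ell$ times that for $(f,\phi)$. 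One should also remark (as in Remark~\ref{r.centers}) that replacing the supremum of $S_n\phi$ over the ball by the value at the center is harmless by uniform continuity, so that the computation above with values at centers is legitimate.
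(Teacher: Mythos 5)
Your overall strategy matches the paper's: compare the dynamical-ball characterizations of relative pressure for $f$ and $f^\ell$ via the inclusions $B_f(x,\ell n,\vep)\subset B_{f^\ell}(x,n,\vep)\subset B_f(x,\ell n,\rho)$ (where $\vep$ is chosen from $\rho$ by uniform continuity of $f,\dots,f^{\ell-1}$), use the exact identity of Birkhoff sums, and handle lengths that are not multiples of $\ell$ by rounding. However, there are two concrete errors.

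First, the rounding goes the wrong way. You claim the chain $B_{f^\ell}(x,\lceil k/\ell\rceil,\vep)\supset B_f(x,\ell\lceil k/\ell\rceil,\vep)\supset B_f(x,k,\vep)$, but since $\ell\lceil k/\ell\rceil\ge k$ and a dynamical ball \emph{shrinks} as its length increases, the second inclusion is reversed: $B_f(x,\ell\lceil k/\ell\rceil,\vep)\subset B_f(x,k,\vep)$. So the replacement family need not cover $\Lambda$ and the argument as written fails. The fix is to round \emph{down}: with $k'=\ell\lfloor k/\ell\rfloor\le k$ one has $B_f(x,k,\vep)\subset B_f(x,k',\vep)\subset B_{f^\ell}(x,\lfloor k/\ell\rfloor,\vep)$, so coverage is preserved, and the exponent still changes only by a constant depending on $\ell$, $\al$, $\sup|\phi|$. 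This is exactly what the paper does in Lemma~\ref{l.multiplosdeell}, where it sets $k'=\ell[k/\ell]$ and absorbs the bounded additive error into a shift $\al\mapsto\al-\rho$ of the exponent.

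Second, the signs of the two inequalities are swapped. From $B_{f^\ell}(x,n,\vep')\subset B_f(x,\ell n,\vep)$ one passes an $f^\ell$-cover to an $f$-cover and gets $m_\al(f^\ell,S_\ell\phi,\Lambda,\vep')\ge m_{\al/\ell}(f,\phi,\Lambda,\vep)$; since $m_\al(f^\ell,\cdot)=0$ then forces $m_{\al/\ell}(f,\cdot)=0$, this yields $P_\Lambda(f^\ell,S_\ell\phi)\ge\ell P_\Lambda(f,\phi)$, \emph{not} $\le$ as you wrote. Symmetrically, the (corrected) rounding-down argument turns an $f$-cover into an $f^\ell$-cover with controlled loss and yields the $\le$ direction, not $\ge$. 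The two directions together still give the claimed equality, so this second issue is a mislabelling rather than a logical gap, but it should be corrected so that each half of the argument states what it actually proves.
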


\begin{proof}
Fix $\ell\geq 1$. By uniform continuity of $f$, given any $\rho>0$
there exists $\vep>0$ such that $d(x,y) < \vep$ implies
$d(f^j(x),f^j(y))<\rho$ for all $0 \le j < \ell$. It follows that
\begin{equation}\label{eq.balls}
B_{f}(x,\ell n,\vep)
 \subset B_{f^\ell}(x, n,\vep)
 \subset B_{f}(x,\ell n, \rho),
\end{equation}
where $B_{g}(x,n,\vep)$ denotes the dynamical ball for a map $g$.
This is the crucial observation for the proof.

First, we prove the $\ge$ inequality. Given $N\ge 1$ and any family
$\cG_\ell \subset \cup_{n \geq N} \cI_n$ such that the balls
$B_{f^\ell}(x,j,\vep)$ with $(x,j)\in\cG_\ell$ cover $\Lambda$,
denote
$$
\cG=\{(x,j\ell) : (x,j)\in\cG_\ell\}.
$$
The second inclusion in \eqref{eq.balls} ensures that the balls
$B_f(x,k,\rho)$ with $(x,k)\in\cG$ cover $\Lambda$. Clearly,
$$
\sum_{(x,j)\in\cG_\ell}
 e^{-\alpha \ell j + \sum_{i=0}^{j-1} S_\ell\phi(f^{i\ell}(x))}
= \sum_{(x,k)\in\cG} e^{-\alpha k + \sum_{i=0}^{k-1} \phi(f^i(x))}.
$$
Since $\cG_\ell$ is arbitrary, and recalling Remark~\ref{r.centers},
this proves that
$$
m_{\alpha\ell}(f^\ell,S_\ell\phi,\Lambda,\vep,N)
 \ge m_\alpha(f,\phi,\Lambda,\rho,N\ell).
$$
Therefore, $m_{\alpha\ell}(f^\ell,S_\ell\phi,\Lambda,\vep)
 \ge m_\alpha(f,\phi,\Lambda,\rho)$.
Then $P_\Lambda(f^\ell,S_\ell\phi,\vep) \ge \ell P_\Lambda(f,
\phi,\rho)$. Since $\vep\to 0$ when $\rho\to 0$, it follows that
$P_\Lambda(f^\ell,S_\ell\phi) \ge \ell P_\Lambda(f, \phi)$.

For the $\le$ inequality, we observe that the definition of the
relative pressure is not affected if one restricts the infimum in
\eqref{eq. alpha measure dynamical balls} to families $\cG$ of pairs
$(x,k)$ such that $k$ is always a multiple of $\ell$. More
precisely, let $m_\alpha^\ell(f,\phi,\Lambda,\vep, N)$ be the
infimum over this subclass of families, and let
$m_\alpha^\ell(f,\phi,\Lambda,\vep)$ be its limit as $N\to\infty$.

\begin{lemma}\label{l.multiplosdeell}
We have $m_\alpha^\ell(f,\phi,\Lambda,\vep)
 \le m_{\alpha-\rho}(f,\phi,\Lambda,\vep)$ for every $\rho>0$.
\end{lemma}

\begin{proof}
We only have to show that, given any $\rho>0$,
\begin{equation}\label{eq.menorquerho}
m_\alpha^\ell(f,\phi,\Lambda,\vep,N) \le
m_{\alpha-\rho}(f,\phi,\Lambda,\vep,N)
\end{equation}
for every large $N$. Let $\rho$ be fixed and $N$ be large enough so
that $N\rho > \ell(\alpha + \sup|\phi|)$. Given any
$\cG\subset\cup_{n \geq N} \cI_n$ such that the balls
$B_f(x,k,\vep)$ with $(x,k)\in\cG$ cover $\Lambda$, define $\cG'$ to
be the family of all $(x,k')$, $k'=\ell[k/\ell]$ such that
$(x,k)\in\cG$. Notice that
$$
-\alpha k' + S_{k'}\phi(x) \le -\alpha k + \alpha \ell + S_k\phi(x)
+ \ell \sup|\phi| \le (-\alpha+\rho) k + S_k\phi(x)
$$
given that $k\ge N$. The claim follows immediately.
\end{proof}

Let $\cG'$ be any family of pairs $(x,k)$ with $k\ge N\ell$ and such
that every $k$ is a multiple of $\ell$. Define $\cG_\ell$ to be the
family of pairs $(x,j)$ such that $(x,j\ell)\in\cG'$. The first
inclusion in \eqref{eq.balls} ensures that if the balls
$B_f(x,k,\vep)$ with $(x,k)\in\cG'$ cover $\Lambda$ then so do the
balls $B_{f^\ell}(x,j,\vep)$ with $(x,j)\in\cG_\ell$. Clearly,
$$
\sum_{(x,k)\in\cG'} e^{-\alpha k + \sum_{i=0}^{k-1} \phi(f^i(x))} =
\sum_{(x,j)\in\cG_\ell}
 e^{-\alpha \ell j + \sum_{i=0}^{j-1} S_\ell\phi(f^{i\ell}(x))}.
$$
Since $\cG_\ell$ is arbitrary, and recalling Remark~\ref{r.centers},
this proves that
$$
m_\alpha^\ell(f,\phi,\Lambda,\vep,N\ell)
 \ge m_{\alpha\ell}(f^\ell,S_\ell\phi,\Lambda,\vep,N).
$$
Taking the limit when $N\to\infty$ and using
Lemma~\ref{l.multiplosdeell},
$$
m_{\alpha-\rho}(f,\phi,\Lambda,\vep) \ge
m_\alpha^\ell(f,\phi,\Lambda,\vep)
 \ge m_{\alpha\ell}(f^\ell,S_\ell\phi,\Lambda,\vep).
$$
It follows that $\ell\big(P_\Lambda(f,\phi,\vep) + \rho\big) \ge
P_\Lambda(f^\ell, S_\ell\phi,\vep)$. Since $\rho$ is arbitrary, we
conclude that $\ell P_\Lambda(f,\phi,\vep) \ge P_\Lambda(f^\ell,
S_\ell\phi,\vep)$ and so $P_\Lambda(f^\ell,S_\ell\phi) \ge \ell
P_\Lambda(f, \phi)$.
\end{proof}

The next lemma will be used later to reduce some estimates for the
relative pressure to the case when $\phi\equiv 0$. Denote
$h_\Lambda(f) = P_\Lambda(f,0)$ for any invariant set $\Lambda$.

\begin{lemma}\label{l.reduction}
$P_\Lambda(f,\phi) \leq h_\Lambda(f) + \sup \phi$.
\end{lemma}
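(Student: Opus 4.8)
The plan is to work directly from the dynamical-balls definition of relative pressure, comparing the $\alpha$-measures for the potential $\phi$ with those for the zero potential. The key observation is the crude pointwise bound on Birkhoff sums: $S_n\phi(x) \le n\sup\phi$ for every $x$ and every $n\ge 1$. First I would fix $\vep>0$ and $N\ge 1$, and take an arbitrary family $\cG\subset\cup_{n\ge N}\cI_n$ whose associated dynamical balls $B(x,n,\vep)$ cover $\Lambda$. Using Remark~\ref{r.centers} to replace the supremum of $S_n\phi$ over the ball by the value at the center, each summand in \eqref{eq. alpha measure dynamical balls} satisfies
\[
e^{-\alpha n + S_n\phi(x)} \le e^{-(\alpha-\sup\phi)\,n}.
\]
Summing over $\cG$ and taking the infimum over all such covering families gives
\[
m_\alpha(f,\phi,\Lambda,\vep,N) \le m_{\alpha-\sup\phi}(f,0,\Lambda,\vep,N).
\]

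Next I would let $N\to\infty$ to obtain $m_\alpha(f,\phi,\Lambda,\vep) \le m_{\alpha-\sup\phi}(f,0,\Lambda,\vep)$. Consequently, if $\alpha - \sup\phi > h_\Lambda(f,0,\vep) := P_\Lambda(f,0,\vep)$ is such that the right-hand $\alpha$-measure vanishes, then so does the left-hand one; hence $P_\Lambda(f,\phi,\vep) \le P_\Lambda(f,0,\vep) + \sup\phi$ for every $\vep>0$. Finally I would let $\vep\to 0$ and invoke the identity $P_\Lambda(f,\phi) = \lim_{\vep\to 0} P_\Lambda(f,\phi,\vep)$ (from \cite[Page~74]{Pe97}, quoted above), together with the definition $h_\Lambda(f) = P_\Lambda(f,0)$, to conclude $P_\Lambda(f,\phi) \le h_\Lambda(f) + \sup\phi$.

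There is really no hard part here: the whole argument is the trivial estimate $S_n\phi \le n\sup\phi$ fed through the dimension-theoretic machinery of relative pressure. The only point requiring a little care is the bookkeeping with the order of limits ($N\to\infty$ then $\vep\to 0$) and the monotonicity direction of $m_\alpha$ in $N$, but all of this is already set up in Subsection~\ref{Relative Pressure}, so the proof should be just a few lines. If one wanted to avoid even the passage through $\vep$, an alternative is to use the partition-based definition (Definition~\ref{def. relative pressure1}) and run the identical estimate on $m_\al(f,\phi,\Lambda,\cU,N)$ versus $m_{\al-\sup\phi}(f,0,\Lambda,\cU,N)$, then take $\diam(\cU)\to 0$; the choice between the two formulations is purely cosmetic.
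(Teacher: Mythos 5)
Your proposal is correct and uses the same central estimate $S_n\phi \le n\sup\phi$ as the paper; the only difference is that you run it through the dynamical-balls definition of relative pressure while the paper uses the partition-based one, a distinction you yourself correctly flag as purely cosmetic at the end.
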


\begin{proof}
Let $\cU$ be any open covering of $M$ and $N \geq 1$. By definition,
$$
m_\al(f,\phi,\Lambda,\cU,N)
     = \inf_{\mathcal{G}} \Big\{\sum_{{\mathrm{\underline U}} \in \,\mathcal{G}}
      e^{-\alpha n({\mathrm{\underline U}})+ S_{n({\mathrm{\underline U}})}
      \phi({\mathrm{\underline U}})} \Big\},
$$
where the infimum is taken over all families $\mathcal{G}\subset
\cS_N \cU$ that cover $\Lambda$. Therefore,
$$
m_\al(f,\phi,\Lambda,\cU,N)
     \le \inf_{\mathcal{G}} \Big\{\sum_{{\mathrm{\underline U}} \in \,\mathcal{G}}
      e^{(-\alpha+\sup\phi) n({\mathrm{\underline U}})} \Big\}
      = m_{\al-\sup\phi}\,(f,0,\Lambda,\cU,N).
$$
Since $N$ and $\cU$ are arbitrary, this gives that
$P_\Lambda(f,\phi)\leq h_\Lambda(f)+\sup\phi$, as we wanted to
prove.
\end{proof}

%%%%%%%%%%%%%%%%%%%%%%%%%%%%%%%%%%%%%%%%%%%%%%%%%%%%%%%%%%%%%%%%%%%%%%%
\subsection{Natural extension and local unstable leaves}\label{Natural extension}

Here we present the natural extension associated to a non-invertible
transformation and recall some results on the existence of local
unstable leaves in the context of non-uniform hyperbolicity.

Let $(M,\cB,\eta)$ be a probability space and let $f$ denote a
measurable non-invertible transformation. Consider the space
$$
\hat M
    = \Big\{ (\dots, x_2, x_1, x_0) \in M^\mathbb N : f(x_{i+1})=x_i , \;\forall i \geq
    0\Big\},
$$
endowed with the metric $\hat d(\un x,\un y)= \sum_{i \geq 0} 2^{-i}
d(x_i,y_i)$, \;$\un x, \un y \in \hat M$ and with the sigma-algebra
$\hat \cB$ that we now describe. Let $\pi_i: \hat M \to M$ denote
the projection in the $i$th coordinate. Note also that $f^{-i}(\cB)
\subset \cB$ for every $i \geq 0$, because $f^i$ is a measurable
transformation. Let $\hat\cB_0$ be the smallest sigma-algebra that
contain the elements $\pi_i^{-1}(f^{-i}(\cB))$. The measure
$\hat\eta$ defined on the algebra $\bigcup_{i=0}^{\infty}
\pi_i^{-1}(f^{-i}\cB)$ by
$$
\hat\eta(E_i)= \eta(\pi_i(E_i)) \quad\text{for every}\; E_i \in
\pi_i^{-1}(f^{-i}(\cB)),
$$
admits an extension to the sigma-algebra $\hat\cB_0$. Let $\hat\cB$
denote the completion of $\hat\cB_0$ with respect to $\hat\eta$.
The \emph{natural extension} of $f$ is the transformation
$$
\hat f: \hat M \to \hat M,
    \quad
\hat f (\dots, x_2,x_1, x_0) = (\dots, x_2, x_1, x_0, f(x_0)),
$$
on the probability space $(\hat M, \hat \cB, \hat\eta)$. The measure
$\hat\eta$ is the unique $\hat f$-invariant probability measure such
that $\pi_*\hat\eta=\eta$. Furthermore, $\hat\eta$ is ergodic if and
only if $\eta$ is ergodic, and its entropy $h_{\hat\eta}(\hat f)$
coincides with $h_{\eta}(f)$. We refer the reader to \cite{Ro64} for
more details and proofs. For simplicity reasons, when no confusion
is possible we denote by $\pi$ the projection in the zeroth
coordinate and by $x_0$ the point $\pi(\hat x)$.

Given a local homeomorphism $f$ as above, the natural extension
$\hat f^{-1}$ is Lipschitz continuous: every $\hat x$ admits a
neighborhood $U_{\hat x}$ such that
$$
d( \hat f^{-1}(\hat y), \hat f^{-1}(\hat z))
    \leq \hat L(\hat x) \; d(\hat y, \hat z),
    \quad \forall \hat y, \hat z \in \hat f(U_{\hat x}),
$$
where $\hat L=L \circ \pi$. In the presence of asymptotic expanding
behavior it is possible to prove the existence of local unstable
manifolds passing through almost every point and varying measurably.
In fact, since $L$ is continuous bounded away from zero and
infinity, given an $\hat f$-invariant probability measure
$\hat\eta$, Birkhoff's ergodic theorem asserts that the limits
$$
\lim_{n\to\infty}
    \frac1n \sum_{j=0}^{n-1} \log \hat L(\hat f^{\pm j}\hat x)
$$
exist and coincide $\hat\eta$-almost everywhere. Given $\la>0$,
denote by $\hat B_\la$ the set of points such that the previous
limit is well defined and smaller than $-\lambda$.

\begin{proposition}\label{p.Pesinpointwise}
Assume that $\eta$ is an $f$-invariant probability measure such that
$$
\limsup_{n\to\infty}
    \frac1n \sum_{j=0}^{n-1} \log L(f^{j}(x)) <-\la<0
$$
almost everywhere. Given $\vep>0$ small, there are measurable
functions $\de_\vep$ and $\ga$ from $\hat B_\la$ to $\R_+$ and, for
every $\hat x \in \hat B_\la$, there exists an embedded topological
disk $\Wloc^u(\hat x)$ that varies measurably with the point $\hat
x$ and
\begin{enumerate}
\item For every $y_0 \in \Wloc^u(\hat x)$ there is a unique $\hat y \in \hat M$
    such that $\pi(\hat y)=y_0$ and
    $$
    d(x_{-n},y_{-n}) \leq \ga(\hat x) \, e^{-(\la-\vep) n}
    \; \forall n \geq 0;
    $$
\item If a point $\hat z \in \hat M$ satisfies
    $d(x,z) \leq \de_\vep(\hat x)/ \ga(\hat f^{-1}(\hat x))$ and
    $$
    d(x_{-n},z_{-n}) \leq \de_\vep(\hat x) e^{-(\la-\vep) n}, \,
    \forall n \ge 0
    $$
    then $z_0$ belongs to $\Wloc^u(\hat x)$;
\item If $\hWloc^u(\hat x)$ is the set of points $\hat y \in \hat M$
    given by (2) above then it holds that
        $$
        d(y_{-n},z_{-n}) \leq \ga(\hat x) \, e^{-(\la-\vep) n} d(y,z)
        $$
    for every $\hat y,\hat z \in \hWloc^u(\hat x)$ and every $n \geq 0$.
\end{enumerate}
\end{proposition}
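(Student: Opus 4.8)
The plan is to realize this as an instance of the non-uniform (Pesin) stable manifold theorem applied to the natural extension $\hat f$, using that the inverse branches of $\hat f$ contract at an asymptotically exponential rate. Concretely, the past of a point $\hat x\in\hat B_\la$ is governed by the cocycle $\log\hat L(\hat f^{-j}\hat x)$, whose Birkhoff averages are $<-\la$ on $\hat B_\la$; this is precisely the data needed to run a graph transform / Hadamard--Perron argument along backward orbits. I would fix $\vep>0$ and apply Pliss's lemma (exactly as in Lemma \ref{positive density}, but along the backward orbit) to extract, for $\hat\eta$-a.e.\ $\hat x$, a sequence of \emph{backward hyperbolic times}, i.e.\ indices $n$ with $\prod_{j=n-k}^{n-1}\hat L(\hat f^{-j}\hat x)<e^{-(\la-\vep)k}$ for all $1\le k\le n$. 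At such times the composition of inverse branches $\hat f^{-n}$ restricted to a ball of a definite radius around $x_0=\pi(\hat x)$ is a uniform contraction by $e^{-(\la-\vep)n}$ onto its image; this is the analogue of Lemma \ref{delta} for the natural extension, and it already produces the estimate in item (1) for points lying in the image of these inverse branches.

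The next step is to build the disk $\Wloc^u(\hat x)$ as the set of $z_0\in M$ admitting a full backward lift $\hat z$ that stays $(\la-\vep)$-exponentially close to the backward orbit of $\hat x$; formally, $\Wloc^u(\hat x)=\pi\big(\bigcap_{n\ge0}\hat f^{n}(\hat f^{-n}(B(x_0,\rho)))\big)$ for a suitable radius $\rho$, where the iterated preimages are taken along the specific branches determined by $\hat x$. Using the graph transform along the backward orbit — writing each inverse-branch image as a graph over a fixed direction and checking that the graph transform is a contraction in the $C^0$ (Lipschitz) metric, with the contraction rate controlled by the backward hyperbolic-time estimate — one gets a well-defined limit disk $\Wloc^u(\hat x)$ that is an embedded topological disk, with its "size" $\de_\vep(\hat x)$ and the distortion factor $\ga(\hat x)$ being the measurable functions governing the radius of definition and the geometric constant in the contraction; measurability in $\hat x$ follows because all the defining data ($\hat L\circ\hat f^{-j}$, the branch choices, the Pliss times) depend measurably on $\hat x$. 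Item (2) is the \emph{characterization} of $\Wloc^u(\hat x)$: any $\hat z$ whose backward orbit is $\de_\vep(\hat x)e^{-(\la-\vep)n}$-close to that of $\hat x$ (and which starts close enough, the factor $1/\ga(\hat f^{-1}\hat x)$ absorbing one step of distortion) must, by uniqueness of the contracting graph, lie on the limit disk; this is the usual "stable set $=$ stable manifold" statement. Item (3) is the uniform backward contraction of $\hat f^{-1}$ \emph{on the leaf itself}, which drops out of the same graph-transform estimates once one knows both $\hat y,\hat z$ lie on $\hWloc^u(\hat x)$, since then their backward orbits shadow that of $\hat x$ and one may iterate the one-step Lipschitz bound $d(\hat f^{-1}(\hat y),\hat f^{-1}(\hat z))\le \hat L(\hat x)\,d(\hat y,\hat z)$ along the backward orbit, converting the Pliss estimates into the claimed $\ga(\hat x)e^{-(\la-\vep)n}d(y,z)$ bound.

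The main obstacle I anticipate is \textbf{not} the hyperbolic-estimate bookkeeping — that is a routine transcription of Lemmas \ref{positive density} and \ref{delta} to $\hat f^{-1}$, using that $\hat L=L\circ\pi$ is bounded away from $0$ and $\infty$ — but rather the \emph{measurability} of $\hat x\mapsto\Wloc^u(\hat x)$ together with the measurable choice of the functions $\de_\vep,\ga$, because the construction a priori depends on selecting Pliss times and inverse branches, and one must verify these selections can be made in a jointly measurable way (e.g.\ by taking the first Pliss time exceeding a given threshold, which is a measurable function of $\hat x$, and noting the branch of $\hat f^{-n}$ is determined by $\hat x$ itself). A secondary subtlety is that $M$ is only a compact Besicovitch metric space, not a manifold, so "embedded topological disk" and the graph-transform setup must be interpreted appropriately — here one leans on the hypothesis $M\subset N$ with $N$ a metric space of dimension $m$ and on the fact that the relevant statements are purely metric (Lipschitz graphs over a fixed "vertical" direction), so no smooth structure is actually used; alternatively, one may simply cite the abstract non-uniformly hyperbolic stable manifold theorem for Lipschitz cocycles. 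I would present the proof by reducing to the cited Pesin-theoretic construction and spelling out only the translation of hypotheses $\text{(H1)}$–style expansion into the backward-contraction input it requires.
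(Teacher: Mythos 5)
Your underlying mechanism is the right one, but the route you propose is considerably heavier than what the paper does, and one of its ingredients does not really fit the setting. The paper's proof is elementary: it \emph{defines} $\Wloc^u(\hat x)$ directly as the set of points $y_0$ admitting a backward lift $\hat y$ with $d(y_{-n},x_{-n})\le\vep$ for all $n\ge 0$, where $\vep$ is small enough that $f$ is injective on $\vep$-balls (this smallness is what gives uniqueness of the lift in item (1)). No Pliss lemma is needed: membership in $\hat B_\la$ already provides a measurable time $N_{\hat x}$ after which $\prod_{j=0}^{n-1}\hat L(\hat f^{-j}\hat x)\le e^{-\la n}$, the first $N_{\hat x}$ steps are absorbed into $\ga(\hat x)=L^{N_{\hat x}}$, and $\de_\vep(\hat x)$ is chosen so that $B(x,\de_\vep(\hat x))\subset f^{N_{\hat x}}(B(x_{-N_{\hat x}},\vep))$; iterating the one-step Lipschitz bound of the inverse branches (with a uniform-continuity correction $L(z)\le L(z')e^{\vep}$ for nearby base points) then gives items (1) and (3), and item (2) is immediate from the definition, since the hypothesis there forces $d(x_{-n},z_{-n})\le\vep$ for all $n$. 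In particular the measurability issue you worry about dissolves: everything is determined by $N_{\hat x}$, which is manifestly measurable, and the inverse branches are dictated by $\hat x$ itself, so no selection is required. By contrast, the graph-transform/Hadamard--Perron step you build the proof around is misplaced here: $M$ is only a compact Besicovitch metric space with no linear or smooth structure, there is no invariant splitting, and the local unstable set of the natural extension is a \emph{full-dimensional} neighborhood of $x_0$ in $M$ (it contains the ball $B(x,\de_\vep(\hat x))$), not a Lipschitz graph over any direction; likewise, citing an abstract stable-manifold theorem for cocycles is not available in this purely topological/metric setting. Your fallback description --- pulling back a small ball along the branches determined by $\hat x$ and using the backward contraction of the composed inverse branches --- is exactly what is needed and is essentially the paper's argument, so the proposal succeeds once the graph-transform scaffolding and the Pliss-time extraction are discarded as unnecessary.
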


\begin{proof}
Let $\vep>0$ be small enough such that the restriction of $f$ to any
ball of radius $\vep$ is injective. Given $\hat x \in \hat B_\la$,
consider the local unstable set
$$
\Wloc^u(\hat x)
    =\Big\{
    y \in M : \exists \hat y \in \hat M, \pi(\hat y)=y,
    d(y_{-n}, x_{-n}) \leq \vep, \; \forall n \geq 0
    \Big\}.
$$
By construction $\Wloc^u(\hat x)$ is non-empty, since it contains
$x$. Moreover, define $\hWloc^u(\hat x)$ as the set of points $\hat
y$ considered in the definition of $ \Wloc^u(\hat x)$. It is clear
that $\hat f^{-1}(\hWloc^u(\hat x)) \subset \hat W^u_\vep(\hat
f^{-1}(\hat x))$. We claim that $\Wloc^u(\hat x)$ contains an open
neighborhood of $x$ in $M$ and that there exists a constant
$\ga(\hat x)>0$ such that
$$
d(y_{-n},z_{-n}) \leq \ga(\hat x) \, e^{-(\la-\vep) n},
     \forall n \ge 1,
$$
for every $\hat y,\hat z \in \hWloc^u(\hat x)$. By hypothesis, there
exists $N=N_{\hat x}\ge 1$ such that
$$
\prod_{j=0}^{n-1} \hat L(\hat f^{-j}(\hat x))
        \leq e^{-\la n}, \; \forall n \geq N.
$$
Take $0<\de_\vep(\hat x)<\vep$ such that $f^{N}$ is invertible in a
neighborhood of ${x_{-N}}$ and that $B(x,\de_\vep(\hat x)) \subset
f^N(B(x_{-N},\vep))$. Moreover, by uniform continuity, there exists
$0<\vep_1<\vep$ such that $L(z) \leq L(z') \;e^{\vep}$ for every $z'
\in B(z,\vep_1)$. So, given $y,z \in B(x,\de_\vep(\hat x))$ there
are $\hat y,\hat z \in \hat M$ such that $d(y_{-n},z_{-n})\leq \vep$
for every $n \ge 0$, since
$$
d(y_{-n}, z_{-n})
    \leq e^{-(\la-\vep)n} \,d(y,z)
$$
for every $n \ge N$. This shows that $W_\vep^u(\hat x)$ contains the
ball $B(x,\de_\vep(\hat x))$ of radius $\de_\vep(\hat x)$ around $x$
in $M$ and that
$$
d(y_{-n}, z_{-n})
    \leq \ga(\hat x) \;e^{-(\la-\vep)n} \,d(y,z)
$$
for every $\hat y, \hat z \in W_\vep^u(\hat x)$ and $n \geq 0$,
where $\ga(\hat x)=L^{N_{\hat x}}$. Our choice on $\vep$ guarantees
that any $y \in W_\vep^u(\hat x)$ admits a unique $\hat y \in \hat
W_\vep^u(\hat x)$ such that $\pi(\hat y)=y$. This shows that the
projection $\pi_{\hat x} : \hWloc^u(\hat x) \to W_\vep^u(\hat x)$ is
an homeomorphism between topological disks and completes the proof
of items (1) and (3) in the proposition. On the other hand, if $\hat
z$ satisfies the requirements in (2) then clearly $d(x_{-n},z_{-n})
\leq \vep$ for all $n\geq 00$, which imply that $z \in \Wloc^u(\hat
x)$.Since the measurability of $\gamma$ and $\de_\vep$ follows from
the one of $N_{\hat x}$, the proof of the proposition in now
complete.
\end{proof}

We shall omit the dependence of $\Wloc^u(\hat x)$ on $\la$ and
$\vep$ for notational simplicity. Since local unstable leaves vary
measurably with the point, there are compact sets of arbitrary large
measure, referred as \emph{hyperbolic blocks}, restricted to which
the local unstable leaves passing through those points vary
continuously. More precisely,

\begin{corollary}\label{c.Pesinblocks}
There are countably many compact sets $(\hat \La_i)_{i \in \N}$
whose union is a $\hat\eta$-full measure set and such that the
following holds: for every $i \geq 1$ there are positive numbers
$\vep_i\ll 1$, $\la_i$, $r_i$, , $\ga_i$ and $R_i$ such that for
every $\hat x \in \hat\La_i$ there exists an embedded submanifold
$\Wloc^u(\hat x)$ in $M$ of dimension $m$, and
\begin{enumerate}
\item If $y_0 \in \Wloc^u(\hat x)$ then there is a unique $\hat y \in \hat
        M$ such that for every $n \geq 1$
        \begin{equation*}
        d(x_{-n},y_{-n}) \leq r_i e^{-\vep_i n} \quad\text{and}\quad
        d(x_{-n},y_{-n}) \leq \ga_i e^{-\la_i n};
        \end{equation*}
\item For every $0<r \leq r_i$ the set $\Wloc^u(\hat y) \cap B(x_0,r)$ is
        connected and the map
        $$
        B(\hat x, \vep_i r) \cap \hat\La_i\ni \hat y \mapsto \Wloc^u(\hat y) \cap B(x_0,r)
        $$
        is continuous (in the Hausdorff topology);
\item If $\hat y$ and $\hat z$ belong to $B(\hat x, \vep_i r) \cap \hat\La_i$
        then either $\Wloc^u(\hat y) \cap B(x_0,r)$ and $\Wloc^u(\hat z) \cap B(x_0,r)$
        coincide or are disjoint; in the later case, if  $\hat y \in {\hat W}^u(\hat z)$ then
        $d(y_0,z_0)>2r_i$;
\item If $\hat y \in \hat\La_i \cap B(\hat x, \vep_i r)$ then $\Wloc^u(\hat y)$
        contains the ball of radius $R_i$ around $\Wloc^u(\hat y) \cap
        B(x_0,r)$.
\end{enumerate}
\end{corollary}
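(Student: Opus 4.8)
The statement is the by-now-standard passage from the pointwise Pesin theory of Proposition~\ref{p.Pesinpointwise} to a countable exhaustion by \emph{hyperbolic blocks}, so the plan is to extract the blocks $\hat\La_i$ and the uniform constants from the measurable data that proposition provides. I would work throughout inside the set $\hat B_\la$, which carries full $\hat\eta$-measure under the hypothesis of Proposition~\ref{p.Pesinpointwise} together with Birkhoff's ergodic theorem, and I would fix a small $\vep>0$ as in that proposition.

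The core of the argument is an application of Lusin's theorem (and Egorov's, for the convergence rate) to all the measurable objects furnished by Proposition~\ref{p.Pesinpointwise}: the functions $\ga$, $\de_\vep$ and $\ga\circ\hat f^{-1}$; the ``first backward hyperbolic time'' $\hat x\mapsto N_{\hat x}$, equivalently the rate at which $\prod_{j=0}^{n-1}\hat L(\hat f^{-j}(\hat x))$ drops below $e^{-\la n}$; and the assignment $\hat x\mapsto\Wloc^u(\hat x)\cap B(x_0,r)$, regarded as a map into the space of compact subsets of $M$ with the Hausdorff distance, which is measurable because the local unstable disks vary measurably. This will produce a countable family of compact sets $\hat\La_i$, with $\hat\eta$-full union, on each of which $\ga$, $\de_\vep$ and $\ga\circ\hat f^{-1}$ are continuous, hence bounded away from $0$ and $\infty$; the backward contraction $\prod_{j=0}^{n-1}\hat L(\hat f^{-j}(\hat x))\le e^{-\la n}$ holds for all $n$ beyond a number $N_i$ independent of $\hat x\in\hat\La_i$, so that absorbing the first $N_i$ factors gives $\prod_{j=0}^{n-1}\hat L(\hat f^{-j}(\hat x))\le C_i e^{-\la_i n}$ for all $n\ge1$ with $\la_i$ slightly below $\la$; and $\hat x\mapsto\Wloc^u(\hat x)\cap B(x_0,r)$ is continuous in the Hausdorff topology. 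Item~(1) should then follow at once from parts~(1) and~(3) of Proposition~\ref{p.Pesinpointwise}, the estimate with the good rate $\la_i$ carrying the constant $\ga_i=\sup_{\hat\La_i}\ga$, and the estimate with the small rate $\vep_i$ coming from combining the exponential bound for large $n$ with the a priori bound $d(x_{-n},y_{-n})\le\vep$ valid for all $n$, after adjusting $r_i$ and $\vep_i$. Since each $\Wloc^u(\hat x)$ contains a ball around $x_0$ by construction, it is automatically an $m$-dimensional submanifold of $M$.

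For items~(2)--(4) I would argue as follows. Item~(2): continuity of $\hat y\mapsto\Wloc^u(\hat y)\cap B(x_0,r)$ is exactly the Lusin restriction, and connectedness of $\Wloc^u(\hat y)\cap B(x_0,r)$ for $0<r\le r_i$ is arranged by shrinking $r_i$ below a uniform lower bound for the size of the disks on the block, available because $\de_\vep$ is bounded below there. Item~(4): part~(2) of Proposition~\ref{p.Pesinpointwise} says $\Wloc^u(\hat y)$ contains a ball of radius $\de_\vep(\hat y)/\ga(\hat f^{-1}(\hat y))$ around $y_0$, and its infimum over $\hat\La_i$ is a positive number that I would take as $R_i$. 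Item~(3), the dichotomy: if $w_0$ lies in $\Wloc^u(\hat y)\cap\Wloc^u(\hat z)\cap B(x_0,r)$, then by part~(1) of Proposition~\ref{p.Pesinpointwise} the unique preimage sequences of $w_0$ inside $\hWloc^u(\hat y)$ and $\hWloc^u(\hat z)$ shadow the backward orbits of $\hat y$ and of $\hat z$; part~(3) then forces $d(y_{-n},z_{-n})\to0$, so $\hat y\in{\hat W}^u(\hat z)$, and uniqueness of the local unstable disk through a point makes the two disks agree near $w_0$; a connectedness argument along $\Wloc^u(\hat y)\cap B(x_0,r)$, using $r\le r_i$, should propagate the coincidence to the whole of $B(x_0,r)$. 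Conversely, if $\hat y\in{\hat W}^u(\hat z)$ while $\Wloc^u(\hat y)\cap B(x_0,r)$ and $\Wloc^u(\hat z)\cap B(x_0,r)$ are disjoint, then $y_0$ and $z_0$ sit in distinct connected pieces of the intersection of a single global unstable set with $B(x_0,r)$, and since each such piece carries a ball of radius $R_i$ inside the leaf by item~(4), choosing $r_i<R_i/2$ at the outset will give $d(y_0,z_0)>2r_i$.

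The part I expect to be the real obstacle is not any single step but the bookkeeping: making all of the countably many conditions above---the bounds on $\ga,\de_\vep,\ga\circ\hat f^{-1}$, the uniform backward contraction of $\hat L$, the Hausdorff continuity of the leaf map, and the size and connectedness controls needed for (2)--(4)---hold \emph{simultaneously} on the \emph{same} compact sets with the \emph{same} constants, and ensuring the final $r_i$ is small enough relative to $R_i$ for the dichotomy in~(3) to be genuinely quantitative. Everything else is a routine transcription of Proposition~\ref{p.Pesinpointwise} and is standard in nonuniform hyperbolicity theory (compare \cite{ABV00}).
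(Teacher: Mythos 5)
Your proposal is essentially correct and takes the standard route; the paper in fact gives \emph{no} proof of this corollary, treating it as a routine passage from the pointwise statement (Proposition~\ref{p.Pesinpointwise}) to hyperbolic (Pesin) blocks, exactly as you do via Lusin/Egorov applied to $\ga$, $\de_\vep$, $\ga\circ\hat f^{-1}$, the first backward hyperbolic time, and the Hausdorff-measurable leaf assignment $\hat x\mapsto\Wloc^u(\hat x)\cap B(x_0,r)$. Your handling of items (1), (2), (4) is the right transcription, and your acknowledgment that the real work is harmonizing the countably many uniformities on the same compact sets is exactly where the bookkeeping lives. The one point I would flag is in item~(3): you assert that $\hat y\in\hat W^u(\hat z)$ with disjoint local pieces forces $d(y_0,z_0)>2r_i$ via item~(4) and a choice $r_i<R_i/2$, but this needs the observation (available here because $\Wloc^u$ contains genuine $m$-balls of $M$, so leaf distance and ambient distance are comparable on the block) that the $R_i$-enlargements of the two disjoint pieces within a single leaf are themselves disjoint; without that comparability the deduction $d(y_0,z_0)>2r_i$ does not immediately follow. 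This is a minor gap of the kind you anticipate, not a wrong approach, and is readily closed on the block.
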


%%%%%%%%%%%%%%%%%%%%%%%%%%%%%%%%%%%%%%%%%%%%%%%%%%%%%%%%%%%%%%%%%%%%%%%
\section{Conformal measures}\label{Conformal Measure}

The \emph{Ruelle-Perron-Fr\"obenius transfer operator} $\cL_\phi:
C(M) \to C(M)$ associated to $f:M\to M$ and $\phi:M\to\real$ is the
linear operator defined on the space $C(M)$ of continuous functions
$g:M\to\real$ by
$$
\cL_\phi g(x) = \sum_{f(y)=x} e^{\phi(y)}g(y).
$$
Notice that $\cL_\phi g$ is indeed continuous if $g$ is continuous,
because $f$ is a local homeomorphism. It is also easy to see that
$\cL_\phi$ is a bounded operator, relative to the norm of uniform
convergence in $C(M)$:
$$
\|\cL_\phi\| \le  \max_{x \in M}\# f^{-1}(x) \; e^{\sup|\phi|}.
$$
The dual operator $\cL^*_\phi$ acts on the Borel measures of $M$ by
Consider the dual operator $\cL^*_\phi:\cM(M)\to\cM(M)$ acting on
the space $\cM(M)$ of Borel measures in $M$ by
$$
\int g \, d(\cL_\phi^*\eta) = \int (\cL_\phi g) \, d\eta
$$
for every $g\in C(M)$. Let $\la_0=r(\mathcal L_\phi)$ be the
spectral radius of $\cL_\phi$. In this section we prove the
following result:

\begin{theorem}\label{thm. conformal measure}
There exists $k \geq 1$, $r(\cL_\phi)=\la_0 \geq \la_1 \geq \dots
\geq \la_k \geq e^{h(f)+\inf \phi}$ real numbers and expanding
conformal probability measures $\nu_0, \nu_1, \dots, \nu_k$ such
that
$$
\cL_\phi^* \nu_i=\la_i \nu_i, \; \forall \,0 \leq i \leq k,
    \quad\text{and}\quad
\bigcup_{i=0}^k \supp(\nu_i)= \overline H.
$$
Moreover, each $\nu_i$ is a non-lacunary Gibbs measure and has a
Jacobian with respect to $f$ given by $J_{\nu_i} f =\la_i
e^{-\phi}$. If $f$ is topologically mixing then $\nu_0$ is an
expanding conformal measure such that $\supp \nu_0=\overline H=M$.
\end{theorem}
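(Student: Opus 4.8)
The plan is to construct a first eigenmeasure $\nu_0$ of the dual operator $\cL_\phi^*$ by a fixed point argument, to read off the conformality and the Jacobian formula from the eigenvalue equation, and then to exploit hypotheses (H1)--(H2)--(P) through the hyperbolic‑time machinery of Section~\ref{Preliminary results} to prove that $\nu_0$ is expanding and non‑lacunary Gibbs; the remaining measures $\nu_1,\dots,\nu_k$ and the topologically mixing statement are handled afterwards. For the first step, the map $\eta\mapsto\cL_\phi^*\eta/(\cL_\phi^*\eta)(M)$ is continuous on the weak$^*$‑compact convex set of Borel probabilities on $M$, so Schauder--Tychonoff yields a fixed point $\nu_0$, i.e. $\cL_\phi^*\nu_0=\la_0\,\nu_0$ with $\la_0=\int\cL_\phi 1\,d\nu_0>0$. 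Testing $\int\cL_\phi g\,d\nu_0=\la_0\int g\,d\nu_0$ against functions $g$ supported in a single element of $\cP$ gives, by change of variables, $\nu_0(f(A))=\la_0\int_A e^{-\phi}\,d\nu_0$ for every measurable $A$ on which $f$ is injective; thus $\nu_0$ is conformal and $J_{\nu_0}f=\la_0 e^{-\phi}$, hence $J_{\nu_0}f^n=\la_0^n e^{-S_n\phi}$. Iterating the eigenvalue equation on $g\equiv 1$, $\la_0^n=\int\cL_\phi^n 1\,d\nu_0=\int\sum_{f^n(y)=x}e^{S_n\phi(y)}\,d\nu_0(x)\ge e^{(h(f)+\inf\phi)n}$, since after the normalization made in Section~\ref{Statement of results} every point has $\deg_x(f^n)\ge e^{h(f)n}$ preimages; this gives $\la_0\ge e^{h(f)+\inf\phi}$, and the same computation applies to \emph{any} eigenmeasure of $\cL_\phi^*$, so every eigenvalue we meet satisfies the lower bound in the statement. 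Since $\la_0^n\le\|\cL_\phi^n1\|_\infty$ we have $\la_0\le r(\cL_\phi)$; the reverse inequality, and hence $\la_0=r(\cL_\phi)$, follows from $r(\cL_\phi)=\lim_n\|\cL_\phi^n1\|_\infty^{1/n}$ once the distortion estimates below show the oscillation of $\cL_\phi^n1$ is bounded independently of $n$, so that $\|\cL_\phi^n1\|_\infty\le C\int\cL_\phi^n1\,d\nu_0=C\la_0^n$.

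The heart of the proof is to show $\nu_0$ is expanding. Assume $\cA\subset P_1\cup\dots\cup P_q$ and, for $n\ge1$, let $E_n$ be the union of the $n$‑cylinders of $\cP$ whose itinerary lies in $I(\gamma,n)$, where $\gamma<1$ is chosen by Lemma~\ref{l.combinatorio} so that $c_\gamma<\log q+\vep_0/4$, with $\vep_0>0$ the constant in the refined form of (P) fixed before \eqref{eq. relation expansion}. Each such cylinder $\un U$ is a domain of injectivity for $f^n$, so by the Jacobian formula $\nu_0(\un U)=\la_0^{-n}\int_{f^n(\un U)}e^{S_n\phi}\,d\nu_0\le\la_0^{-n}e^{n\sup\phi}\le e^{(\sup\phi-\inf\phi-h(f))n}$; summing over the at most $e^{(\log q+\vep_0/4)n}$ such cylinders and combining the choices of $\gamma$ and $\vep_0$ gives $\nu_0(E_n)\le e^{-(3\vep_0/4)n}$, which is summable. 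By Borel--Cantelli, for $\nu_0$‑a.e.\ $x$ we have $\#\{0\le j<n:f^j(x)\in\cA\}\le\gamma n$ for all large $n$, hence $\frac1n\sum_{j=0}^{n-1}\log L(f^j(x))\le\log\big(\si^{-(1-\gamma)}L^\gamma\big)<-2c$ by \eqref{eq. relation expansion}; Lemma~\ref{positive density} then provides infinitely many (indeed a positive density of) hyperbolic times for such $x$, so $\nu_0(H)=1$. The same bound shows $\nu_0(\{n_1(\cdot)>n\})$ decays exponentially, so the first hyperbolic time is $\nu_0$‑integrable.

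For the Gibbs property, fix $x\in H\cap\supp\nu_0$ and a hyperbolic time $n$. By Lemma~\ref{delta}, $f^n$ maps the pre‑ball $V_n(x)=B(x,n,\delta)$ homeomorphically onto $B(f^n(x),\delta)$, and by Corollary~\ref{lem. bounded distortion} (with $\psi=\phi-\log\la_0$ H\"older) $J_{\nu_0}f^n$ has distortion $\le K_0$ on $V_n(x)$; hence $\nu_0(B(f^n(x),\delta))=\int_{V_n(x)}J_{\nu_0}f^n\,d\nu_0$ is comparable up to $K_0$ to $\la_0^n e^{-S_n\phi(x)}\nu_0(V_n(x))$. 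One checks $\supp\nu_0$ is forward invariant: if $f(z)\notin\supp\nu_0$ for some $z\in\supp\nu_0$, pick an injectivity domain $A\subset f^{-1}(\text{a $\nu_0$-null neighborhood of }f(z))$ with $\nu_0(A)>0$; then $\nu_0(f(A))=\la_0\int_A e^{-\phi}\,d\nu_0>0$, a contradiction. So $f^n(x)\in\supp\nu_0$ and $\nu_0(B(f^n(x),\delta))$ lies between the positive constant $\inf_{z\in\supp\nu_0}\nu_0(B(z,\delta/2))$ (positive by lower semicontinuity and compactness) and $1$. Thus $\nu_0(B(x,n,\delta))\asymp e^{-(\log\la_0)n+S_n\phi(x)}$, and replacing $S_n\phi(x)$ by $S_n\phi(y)$ for $y\in V_n(x)$ costs only a bounded factor by H\"older continuity and Lemma~\ref{delta}; so with $P=\log\la_0$ and a suitable $K$ the Gibbs inequalities hold along the hyperbolic‑time sequence of $x$, which is non‑lacunary at $\nu_0$‑a.e.\ point by the concatenation property of Remark~\ref{r.concatenation} together with the integrability of $n_1$, via the argument of Lemma~\ref{non-lacunary criterium} (adapted to the non‑invariant $\nu_0$, or transferred from the absolutely continuous invariant measure of Section~\ref{Absolutely Continuous Measures}, to which $\nu_0$ is equivalent with bounded density). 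Hence $\nu_0$ is a non‑lacunary Gibbs measure with $J_{\nu_0}f=\la_0e^{-\phi}$; the measures $\nu_1,\dots,\nu_k$ are obtained analogously as eigenmeasures of $\cL_\phi^*$ on the successive complements in $\overline H$ of the previous supports, ordered so that $\la_0\ge\la_1\ge\dots\ge\la_k$ (each $\la_i$ is a positive eigenvalue of $\cL_\phi^*$, hence $\le r(\cL_\phi)=\la_0$, and $\ge e^{h(f)+\inf\phi}$ by the computation above), and a covering argument at the uniform scale $\delta$, using bounded distortion on pre‑balls and the Besicovitch property of $M$, shows finitely many suffice to exhaust $\overline H$. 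Finally, if $f$ is topologically mixing, then for every nonempty open $V$ some iterate $f^N(V)$ meets $\supp\nu_0$; pulling back through a local‑homeomorphism branch of $f^N$ contained in $V$ and using the Jacobian formula gives $\nu_0(V)>0$, so $\supp\nu_0=M$, and since $\nu_0(H)=1$ also $\overline H=M$.

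The main obstacle is the exponential estimate $\nu_0(E_n)\le e^{-(3\vep_0/4)n}$ in the second step: it is precisely here that hypothesis (P)—in the refined form fixed before \eqref{eq. relation expansion}, and in \eqref{eq. relation potential} for the accompanying distortion bounds—and the combinatorial Lemma~\ref{l.combinatorio} are used, and it is what turns the \emph{a priori} arbitrary eigenmeasure into an expanding, Gibbs measure. The second delicate point is establishing non‑lacunarity of the hyperbolic‑time sequence for the \emph{non‑invariant} measure $\nu_0$, which is exactly why the integrability of $n_1$ (a byproduct of the same estimate) and the concatenation property are both needed.
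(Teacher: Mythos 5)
Your proposal follows essentially the same route as the paper's (Lemmas~\ref{l.Jacobian}, \ref{l.eigenvalue}, Proposition~\ref{p.recurrence}, Corollary~\ref{c.nu.expanding}, Lemmas~\ref{l.uniform ball measure}--\ref{l.Gibbs.hyp.times}, \ref{l.full.conformal}): construct an eigenmeasure, derive the Jacobian formula, use the $\ga$-frequency combinatorics with Borel--Cantelli to show it is expanding with integrable $n_1$, obtain the Gibbs inequality from bounded distortion on hyperbolic pre-balls and uniform lower bounds on $\nu$-measures of $\de$-balls in the support, exhaust $\overline H$ by a finite compactness argument, and defer non-lacunarity to the absolutely continuous invariant measure of Proposition~\ref{prop. acim}, exactly as the paper does. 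The expanding-measure estimate, the Gibbs bound, the argument for forward invariance of $\supp\nu_0$, the exhaustion of $\overline H$, and the topologically mixing step are all correct.

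There is, however, one genuine gap at the very first step. You build $\nu_0$ by Schauder--Tychonoff as a fixed point of $\eta\mapsto\cL_\phi^*\eta/(\cL_\phi^*\eta)(M)$; this does produce an eigenmeasure, but the resulting eigenvalue $\la_0=\int\cL_\phi 1\,d\nu_0$ need not be the spectral radius. To patch this you assert that the ``distortion estimates below show the oscillation of $\cL_\phi^n 1$ is bounded independently of $n$,'' so that $\|\cL_\phi^n 1\|_\infty\le C\la_0^n$. That assertion is not established, and does not follow from Corollary~\ref{lem. bounded distortion}: the distortion bound holds only along \emph{hyperbolic} inverse branches, i.e.\ inside hyperbolic pre-balls, while $\cL_\phi^n 1(x)=\sum_{f^n(y)=x}e^{S_n\phi(y)}$ aggregates \emph{all} preimages, hyperbolic or not. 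In the non-uniformly expanding setting there is no global Ruelle-type distortion for this quantity, and the claim would in effect require the Ruelle--Perron--Frobenius density estimate (an invariant density bounded above and below), which is not available at this stage of the argument. The paper's Lemma~\ref{l.eigenvalue} sidesteps this entirely: following \cite{OV07}, one separates the cone of strictly positive continuous functions from the subspace $\{\cL_\phi g-\la_0 g\}$ by Hahn--Banach (with $\la_0=r(\cL_\phi)$), which produces an eigenmeasure whose eigenvalue is the spectral radius by construction. (Alternatively one may note, as the paper does in the remarks after Lemma~\ref{l.eigenvalue} and in Lemma~\ref{Gibbs imply equilibrium}, that the pressure equality $\log\la=\Ptop(f,\phi)$ established later forces all admissible eigenvalues to coincide with $r(\cL_\phi)$ a posteriori; but that cannot be invoked here without circularity, and your proof does not take that route either.) As written, the equality $\la_0=r(\cL_\phi)$ in the statement is not proven by your argument.
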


%%%%%%%%%%%%%%%%%%%%%%%%%%%%%%%%%%%%%%%%%%%%%%%%%%%%%%%%%%%%%%%%%%%%%%%
\subsection{Eigenmeasures of the transfer operator}

The following lemma asserts that any positive eigenmeasure for the
dual of the Ruelle-Perron-Frobenius operator is a conformal measure.
Its proof is quite standard: see, for instance,
\cite[Lemma~4.1]{OV07}.

\begin{lemma}\label{l.Jacobian}
Suppose $\nu$ is a Borel probability such that
$\cL_\phi^*\nu=\lambda\nu$ for some $\lambda>0$. Then the Jacobian
of $\nu$ with respect to $f$ exists and is given by $J_\nu f =
\lambda e^{-\phi}$.
\end{lemma}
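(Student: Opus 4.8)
The plan is to verify directly the defining property of a Jacobian, namely that
\[
\nu(f(A)) = \int_A \lambda e^{-\phi}\, d\nu
\]
for every Borel set $A$ on which $f$ is injective. The only input is the eigenmeasure identity, which by definition of $\cL_\phi^*$ reads $\int \cL_\phi g\, d\nu = \lambda \int g\, d\nu$ for all $g\in C(M)$. The idea is to apply this to the (non-continuous) test function $g=e^{-\phi}\chi_A$: a one-line computation using that $f$ is injective on $A$ gives $\cL_\phi g = \chi_{f(A)}$, and integrating then yields exactly the displayed formula. So the real content is extending the eigenmeasure identity from continuous to bounded Borel test functions.

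First I would fix a finite Borel partition $\{Q_1,\dots,Q_{k_0}\}$ of $M$ refining the cover $\cP$, so that each $f|_{Q_i}$ is a continuous injection and hence, by the Lusin--Souslin theorem ($M$ being Polish, since it is compact metric), a Borel isomorphism onto a Borel set $f(Q_i)\subset M$, with Borel inverse $\psi_i$. For bounded Borel $g$, linearity gives $\cL_\phi g = \sum_{i=1}^{k_0}\cL_\phi(g\chi_{Q_i})$, and $\cL_\phi(g\chi_{Q_i})(x) = e^{\phi(\psi_i(x))}g(\psi_i(x))\,\chi_{f(Q_i)}(x)$, a bounded Borel function; hence $\cL_\phi g$ is a well-defined bounded Borel function for every bounded Borel $g$. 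Now let $\mH$ be the family of bounded Borel $g:M\to\R$ satisfying $\int\cL_\phi g\, d\nu = \lambda\int g\, d\nu$. It is a vector space, contains the constants and all of $C(M)$, and is closed under pointwise limits of uniformly bounded monotone sequences (by monotone convergence applied branch by branch and to the two integrals). Since the Borel $\sigma$-algebra of $M$ is generated by $C(M)$, the functional form of the monotone class theorem shows that $\mH$ is the space of all bounded Borel functions.

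It then remains to feed in $g = e^{-\phi}\chi_A$, which is bounded because $M$ is compact and $\phi$ continuous. For $x\in M$,
\[
\cL_\phi g(x) = \sum_{f(y)=x} e^{\phi(y)}e^{-\phi(y)}\chi_A(y) = \#\{y\in A : f(y)=x\} = \chi_{f(A)}(x),
\]
the last equality because $f|_A$ is injective; here $f(A) = \bigcup_i f(A\cap Q_i)$ is Borel, again by Lusin--Souslin. Integrating the extended identity gives $\nu(f(A)) = \int\cL_\phi g\, d\nu = \lambda\int e^{-\phi}\chi_A\, d\nu = \int_A\lambda e^{-\phi}\, d\nu$, so $\lambda e^{-\phi}$ is indeed a Jacobian of $\nu$ with respect to $f$; its essential uniqueness is standard (two Jacobians agree $\nu$-a.e. on each $Q_i$). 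The only point that needs care — and thus the only genuine obstacle, the rest being routine — is the measurable bookkeeping for $\cL_\phi g$ and for $f(A)$, which is handled as above using that $f$ is a finite-to-one local homeomorphism with the domains of injectivity $\cP$.
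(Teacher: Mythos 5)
Your proof is correct. A small observation first: the paper does not actually give its own proof of this lemma — it simply defers to \cite[Lemma~4.1]{OV07}, noting that the argument is ``quite standard.'' The core computation there (and in every treatment of this fact) is exactly the one you use: apply the eigenmeasure identity $\int\cL_\phi g\,d\nu=\lambda\int g\,d\nu$ to $g=e^{-\phi}\chi_A$, note that $\cL_\phi g=\chi_{f(A)}$ when $f|_A$ is injective, and integrate. So the mathematical content is the same.

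Where your write-up differs is in how the identity is carried from $g\in C(M)$ to bounded Borel $g$, and in how the measurability of $\cL_\phi g$ and of $f(A)$ is handled. The usual route is to first prove the formula for $A$ open inside a single domain of injectivity $P_i$ by approximating $\chi_A$ from below by Urysohn functions (so $\cL_\phi(e^{-\phi}g_n)\uparrow\chi_{f(A)}$ and monotone convergence applies on both sides), and then to extend to general Borel $A\subset P_i$ by noting that $B\mapsto\nu(f(B))$ and $B\mapsto\int_B\lambda e^{-\phi}\,d\nu$ are two finite Borel measures on $P_i$ which agree on open sets. You instead package the extension via the functional monotone class theorem and use Lusin--Souslin for the Borel bookkeeping. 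Both routes work; yours is a bit more systematic and makes the measurability issues completely explicit, while the regularity-plus-Urysohn route is shorter to state if one is willing to quote regularity of Borel measures on compact metric spaces. One minor strengthening worth noting: your class $\mH$ is in fact closed under all bounded pointwise limits (not only monotone ones), because $\cL_\phi g_n(x)\to\cL_\phi g(x)$ is a limit of a uniformly bounded \emph{finite} sum, so dominated convergence applies; this makes the argument insensitive to which version of the monotone class theorem one invokes.
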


The proof of the next lemma is analogous to \cite[Lemma~4.2]{OV07}.

\begin{lemma}\label{l.eigenvalue}
The spectral radius $\la_0$ of the operator $\cL_\phi$ is at least
$e^{h(f)+\inf\phi}$ and it is an eigenvalue for the dual operator
$\cL_\phi^*$.
\end{lemma}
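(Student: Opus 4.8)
The plan is to establish the two assertions separately; I expect the genuine difficulty to be not the \emph{existence} of an eigenmeasure but the identification of its eigenvalue with the \emph{full} spectral radius.

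\emph{The lower bound on $\la_0$.} Evaluate the transfer operator at the constant function $\mathbf 1$:
$$
\cL_\phi^n\mathbf 1(x)=\sum_{f^n(y)=x}e^{S_n\phi(y)}\ \ge\ \deg_x(f^n)\,e^{n\inf\phi}\ \ge\ e^{(h(f)+\inf\phi)n}
$$
for every $x\in M$ and $n\ge 1$, where the last step uses the standing normalization that every point has at least $e^{h(f)}$ preimages. Since $\cL_\phi$ is a positive operator one has $\|\cL_\phi^n\|=\|\cL_\phi^n\mathbf 1\|_\infty$, hence $\|\cL_\phi^n\|\ge e^{(h(f)+\inf\phi)n}$; taking $n$-th roots and passing to the limit gives $\la_0=r(\cL_\phi)\ge e^{h(f)+\inf\phi}$. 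For later use I note that the same identity together with sub-multiplicativity of $n\mapsto\|\cL_\phi^n\mathbf 1\|_\infty$ yields $\la_0=\inf_n\|\cL_\phi^n\mathbf 1\|_\infty^{1/n}=\lim_n\|\cL_\phi^n\mathbf 1\|_\infty^{1/n}$.

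\emph{Existence of an eigenmeasure (fixed point step).} Because $\cL_\phi\mathbf 1$ is continuous and, by the previous display, bounded below by $e^{h(f)+\inf\phi}>0$, the map $\eta\mapsto\int\cL_\phi\mathbf 1\,d\eta$ is a strictly positive continuous functional on the set $\cP$ of Borel probability measures on $M$, which is convex and weak$^*$ compact. Hence
$$
G\colon\cP\to\cP,\qquad G(\eta)=\Big(\textstyle\int\cL_\phi\mathbf 1\,d\eta\Big)^{-1}\cL_\phi^*\eta,
$$
is a well-defined weak$^*$-continuous self-map (weak$^*$-continuity of $\cL_\phi^*$ being exactly the statement that $\cL_\phi$ preserves $C(M)$), so the Schauder--Tychonoff theorem provides a fixed point $\nu$: that is, $\cL_\phi^*\nu=\la\nu$ with $\la:=\int\cL_\phi\mathbf 1\,d\nu>0$. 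Iterating this relation gives $\int\cL_\phi^n\mathbf 1\,d\nu=\la^n$, and comparing with the pointwise estimates above yields $e^{(h(f)+\inf\phi)n}\le\la^n\le\|\cL_\phi^n\mathbf 1\|_\infty$ for all $n$, hence $e^{h(f)+\inf\phi}\le\la\le\la_0$.

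\emph{Upgrading to $\la=\la_0$ --- the main obstacle.} A priori the fixed point $\nu$ may be carried by a forward-invariant ``non-dominant'' part of $M$: nothing established so far excludes reducibility of $f$ (cf.\ Example~\ref{Hopf bifurcation}), so $\inf_x\cL_\phi^n\mathbf 1(x)$ and $\sup_x\cL_\phi^n\mathbf 1(x)$ may have different exponential rates and $G$ can have fixed points realizing every $\la\in[e^{h(f)+\inf\phi},\la_0]$. To force the maximal rate I would perturb $\cL_\phi$ into a positivity-improving operator: fix a fully supported probability $m$ on $M$, put $Rg=(\int g\,dm)\,\mathbf 1$ and $\cL_t=\cL_\phi+tR$ for $t\in(0,1]$. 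Each $\cL_t$ is positive and satisfies $\cL_t g\ge t(\int g\,dm)\,\mathbf 1>0$ for every non-zero $g\ge 0$; for such an operator one checks that the fixed point $\nu_t$ of the normalized dual map has eigenvalue equal to the spectral radius $r(\cL_t)$ (this is the one place the positivity-improving property is needed --- it follows e.g.\ by pairing $\nu_t$ with a positive eigenfunction of $\cL_t$, or by a resolvent argument). On the other hand $\cL_\phi\le\cL_t$ gives $r(\cL_t)\ge\la_0$ by monotonicity of the spectral radius, while expanding $\cL_t^n\mathbf 1$ as a sum over words and bounding each word by means of $\|\cL_\phi^k\mathbf 1\|_\infty\le C_\vep(\la_0+\vep)^k$ gives $\|\cL_t^n\mathbf 1\|_\infty\le C_\vep(\la_0+\vep+tC_\vep)^n$, hence $r(\cL_t)\le\la_0+\vep+tC_\vep$; letting $t\to0$ and then $\vep\to0$ we obtain $r(\cL_t)\to\la_0$. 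Any weak$^*$ accumulation point $\nu$ of $(\nu_t)_{t\to0}$ then satisfies $\cL_\phi^*\nu=\la_0\nu$ by weak$^*$-continuity of the duals, and this is the desired eigenmeasure. (When $f$ is topologically mixing --- or, more generally, whenever $\inf_x\cL_\phi^n\mathbf 1$ and $\sup_x\cL_\phi^n\mathbf 1$ grow at the same rate --- the fixed-point step already gives $\la=\la_0$ with no perturbation.) The crux, and the place where the possible absence of topological mixing genuinely enters, is exactly this identification $\la=\la_0$, equivalently the claim that a positivity-improving operator on $C(M)$ has its spectral radius as a dual eigenvalue.
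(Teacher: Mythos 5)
Your first two steps are fine: the pointwise bound $\cL_\phi^n\mathbf 1\ge e^{(h(f)+\inf\phi)n}$ gives the lower bound on $\la_0$, and the Schauder--Tychonoff fixed point for $\eta\mapsto(\int\cL_\phi\mathbf 1\,d\eta)^{-1}\cL_\phi^*\eta$ does produce an eigenmeasure whose eigenvalue lies in $[e^{h(f)+\inf\phi},\la_0]$ (which, incidentally, is all the paper actually uses afterwards). The gap is exactly where you locate it, in step (a) of your upgrade: the claim that for the perturbed operator $\cL_t=\cL_\phi+tR$ \emph{every} fixed point $\nu_t$ of the normalized dual map has eigenvalue $r(\cL_t)$ is asserted, not proved, and neither of the two justifications you offer closes it. Pairing with "a positive eigenfunction of $\cL_t$" presupposes Krein--Rutman, which needs compactness (or at least an essential spectral radius strictly below $r(\cL_t)$); $\cL_t$ is a non-compact operator plus a rank-one term, and its essential spectral radius can be as large as $\la_0$, so no positive eigenfunction at $r(\cL_t)$ is available. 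Positivity-improving by itself only gives $\nu_t\ge(t/\la_t)m$, hence $\la_t^n\ge(t/\la_t)\int\cL_t^n\mathbf 1\,dm$; to conclude $\la_t=r(\cL_t)$ you would still need $\limsup_n\big(\int\cL_t^n\mathbf 1\,dm\big)^{1/n}=r(\cL_t)$, i.e.\ that the fully supported $m$ sees the top growth rate of $\cL_t^n\mathbf 1$, and you give no argument for this (your own discussion at $t=0$ shows that normalized-dual fixed points can sit at strictly smaller eigenvalues, so this is not a formality). As for the "resolvent argument": once written out (positivity gives $r\in\sigma$ by a Pringsheim-type theorem, hence $\|(\mu-\cL^*)^{-1}\delta_{x_\mu}\|\to\infty$ as $\mu\downarrow r$, and normalized resolvent measures accumulate on an eigenmeasure at $r$), it applies verbatim to $\cL_\phi$ itself and makes the whole perturbation and fixed-point detour unnecessary; but as it stands you have not carried it out, and it is precisely the content of the lemma, so the proof is incomplete at its only delicate point. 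Steps (b)--(d) (monotonicity $r(\cL_t)\ge\la_0$, the word-expansion bound $r(\cL_t)\le\la_0+\vep+tC_\vep$, and passing to the limit $t\to0$) are correct but moot until (a) is supplied.

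For comparison, the paper's proof (quoted from Oliveira--Viana and sketched again inside the proof of Lemma~\ref{l.full.conformal}) is a direct cone-separation argument at the spectral radius: one shows that the open cone of strictly positive functions in $C(M)$ is disjoint from the closure of the subspace $\{\cL_\phi g-\la_0 g:\ g\in C(M)\}$, and a Hahn--Banach separation then yields a nonnegative functional vanishing on that subspace, i.e.\ a probability $\nu$ with $\cL_\phi^*\nu=\la_0\nu$. So even if you repair step (a), your route is genuinely different from the paper's; the cleanest repairs would be either to carry out the resolvent argument directly for $\cL_\phi$, or to adopt the separation argument.
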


Throughout, let $\la$ denote a fixed eigenvalue of $\cL_\phi^*$
larger than $e^{h(f)+\inf \phi}$, let $\nu$ be any eigenmeasure of
$\cL_\phi^*$ associated to $\la$ and set $P=\log\la$. The only
property of $\la$ that we shall use is that $\la>e^{\log q+\sup\phi
+\vep_0}$. From Lemma~\ref{l.Jacobian} we get that
\begin{equation}\label{eq.Jacobian}
J_\nu f (x)
 = \lambda_0 e^{-\phi(x)}
% \ge \deg(f) e^{\inf\phi - \phi(x)}
% \ge \deg(f) e^{\inf\phi - \sup\phi}
 > e^{\log q +\vep_0} > q
 \quad\text{for all $x\in M$.}
\end{equation}
This property will allow us to prove that $\nu$-almost every point
spends at most a fraction $\ga$ of time inside the domain $\cA$
where $f$ may fail to be expanding. As we will see later, in
Lemma~\ref{Gibbs imply equilibrium}, $\log \la=\Ptop(f,\phi)$. This
determines completely the spectral radius of $\cL_\phi$ as the
\emph{unique} eigenvalue of $\cL_\phi^*$ larger than the lower bound
above. Consequently all the eigenvalues $\la_i$ given by
Theorem~\ref{thm. conformal measure} are equal and coincide with
$\la_0=r(\cL_\phi)$ and $\frac1k \sum_{j=0}^k \, \nu_i $ is an
expanding conformal measure whose support coincides with the closure
of the set $H$. The later is the conformal measure referred at
Theorem~\ref{Thm. Equilibrium States2}.

%%%%%%%%%%%%%%%%%%%%%%%%%%%%%%%%%%%%%%%%%%%%%%%%%%%%%%%%%%%%%%%%%%%%%%%
\subsection{Expanding structure}\label{Expanding Structure}

Here we prove that any eigenmeasure $\nu$ as above is expanding and
has integrable first hyperbolic time. Given $n\ge 1$, let $B(n)$
denote the set of points $x \in M$ whose frequency of visits to
$\cA$ up to time $n$ is at least $\ga$, that is,
$$
B(n) =\Big\{x \in M : \frac{1}{n} \# \{ 0\leq j \leq n-1 :f^j(x)\in
\cA\} \geq \ga \Big\}.
$$

\begin{proposition}\label{p.recurrence}
The measure $\nu(B(n))$ decreases exponentially fast as $n$ goes to
infinity. Consequently, $\nu$-almost every point belongs to $B(n)$
for at most finitely many values of $n$.
\end{proposition}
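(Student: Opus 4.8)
The plan is to estimate $\nu(B(n))$ by covering $B(n)$ with cylinders of the partition $\cP$ and using the key property $J_\nu f > q$ from \eqref{eq.Jacobian}. First I would recall from the combinatorics in Subsection~3.1 that, for $\ga$ close enough to $1$, the number $\#I(\ga,n)$ of itineraries in $\{1,\dots,k_0\}^n$ that spend more than a fraction $\ga$ of the time in the first $q$ elements of $\cP$ grows at most like $e^{c_\ga n}$ with $c_\ga < \log q + \vep_0/4$ (Lemma~\ref{l.combinatorio}); recall $\cA$ was arranged to lie in these first $q$ elements. A point $x \in B(n)$ necessarily has an itinerary (with respect to a choice of cylinder containing it at each step) that lies in $I(\ga,n)$, so $B(n)$ is covered by the cylinders $\un P = P_{i_0} \cap f^{-1}(P_{i_1}) \cap \dots \cap f^{-(n-1)}(P_{i_{n-1}})$ with $(i_0,\dots,i_{n-1}) \in I(\ga,n)$.

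The main estimate is then a bound on $\nu(\un P)$ for each such cylinder. Since each $P_{i_j}$ is a domain of injectivity for $f$, the map $f^n$ is injective on $\un P$, and by the Jacobian formula \eqref{eq. Jacobian} applied iteratively,
\begin{equation*}
\nu(f^n(\un P)) = \int_{\un P} J_\nu f^n \, d\nu
 = \int_{\un P} \prod_{j=0}^{n-1} J_\nu f(f^j(x)) \, d\nu(x)
 \geq \Big(\min_{y \in M} J_\nu f(y)\Big)^{n} \nu(\un P).
\end{equation*}
Using $J_\nu f = \la e^{-\phi}$ with $\la > e^{\log q + \sup\phi + \vep_0}$, one gets $\min_y J_\nu f(y) \geq e^{\log q + \vep_0}$, hence $\nu(\un P) \leq e^{-(\log q + \vep_0)n} \nu(f^n(\un P)) \leq e^{-(\log q + \vep_0)n}$ because $\nu$ is a probability. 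Summing over the at most $e^{c_\ga n}$ (say, for $n$ large, $e^{(\log q + \vep_0/2)n}$) cylinders that meet $B(n)$, we obtain
\begin{equation*}
\nu(B(n)) \leq \#I(\ga,n) \cdot e^{-(\log q + \vep_0)n}
 \leq e^{(\log q + \vep_0/2)n} \, e^{-(\log q + \vep_0)n}
 = e^{-\vep_0 n / 2}
\end{equation*}
for all large $n$, which is the asserted exponential decay. The final sentence follows at once from Borel--Cantelli: $\sum_n \nu(B(n)) < \infty$, so $\nu$-almost every $x$ lies in only finitely many $B(n)$.

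The step I expect to need the most care is the covering argument: one must check that $B(n)$ is genuinely contained in the union of the relevant cylinders, i.e.\ that assigning to each $x \in B(n)$ an itinerary by picking, at each time $0 \leq j \leq n-1$, an element of $\cP$ containing $f^j(x)$ (with elements intersecting $\cA$ chosen among the first $q$ whenever $f^j(x) \in \cA$) produces an itinerary in $I(\ga,n)$, and that the resulting finite subfamily of cylinders indeed covers $B(n)$. One should also be slightly careful that a point may lie in several cylinders (the $P_i$ need not be disjoint), but this only helps, since we are bounding from above. A minor point is to absorb the constants: the bound $\#I(\ga,n) \leq e^{(\log q + \vep_0/2)n}$ holds only for $n$ large by the $\limsup$ in the definition \eqref{eq.c} of $c_\ga$, so the exponential decay of $\nu(B(n))$ is asserted for large $n$, which suffices.
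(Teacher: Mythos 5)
Your argument is correct and coincides with the paper's own proof: cover $B(n)$ by the cylinders of $\cP^{(n)}$ whose itineraries lie in $I(\ga,n)$, bound each cylinder's measure by $e^{-(\log q+\vep_0)n}$ via the Jacobian estimate \eqref{eq.Jacobian}, use Lemma~\ref{l.combinatorio} to count them, and conclude with Borel--Cantelli. No discrepancies to report.
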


\begin{proof}
The strategy is to cover $B(n)$ by elements of the covering
$\cP^{(n)}=\bigvee_{j=0}^{n-1}f^{-j}\cP$ which, for convenience,
will be referred to as cylinders. Then, the estimate relies on an
upper bound for the measure of each cylinder, together with an upper
bound on the number of cylinders corresponding to large frequency of
visits to $\cA$.

Since $f^n$ is injective on every $P \in \cP^{(n)}$ then we may use
\eqref{eq.Jacobian} to conclude that
$$
1 \ge \nu(f^n(P))
 = \int_{P} J_\nu f^n \,d\nu
 = \int_{P} \prod_{j=0}^{n-1} (J_\nu f \circ f^j)  d\nu
 \ge e^{(\log q +\vep_0)n} \nu(P).
$$
This proves that $\nu(P) \le e^{-(\log q + \vep_0)n}$ for every
$P\in\cP^n$. Since $B(n)$ is contained in the union of cylinders
$P\in\cP^n$ associated to itineraries in $I(\gamma,n)$, we deduce
from our choice of $\ga$ after Lemma ~\ref{l.combinatorio} that
$$
\nu(B(n)) %\leq \sum_{(i_0,\dots,i_{n-1})\in \,I(\ga,n)} \nu(P)
          \leq \# \, I(\ga,n) e^{-(\log q+\vep_0)n}
          \leq e^{-\vep_0 n/2},
$$
for every large $n$. This proves the first statement in the lemma.
The second one is a direct consequence, using the Borel-Cantelli
lemma.
\end{proof}

\begin{corollary}\label{c.nu.expanding}
The measure $\nu$ is expanding and satisfies $\int n_1
\,d\nu<\infty.$
\end{corollary}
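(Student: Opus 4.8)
The plan is to derive both conclusions directly from Proposition~\ref{p.recurrence}, which is the substantial input. First I would show that $\nu$ is expanding, i.e. $\nu(H)=1$, where $H$ is the set of points with infinitely many hyperbolic times. By Proposition~\ref{p.recurrence} and Borel--Cantelli, for $\nu$-almost every $x$ there are only finitely many $n$ with $x\in B(n)$; that is, for $\nu$-a.e.\ $x$ there exists $N_0(x)$ such that for all $n\ge N_0(x)$ the frequency of visits of the orbit of $x$ to $\cA$ in the first $n$ iterates is strictly less than $\ga$. I then want to translate this frequency bound into the asymptotic expansion hypothesis of Lemma~\ref{positive density}. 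Using (H1), whenever $f^j(x)\notin\cA$ we have $\log L(f^j(x))\le -\log\sigma$, and whenever $f^j(x)\in\cA$ we have $\log L(f^j(x))\le \log L$. Hence, for $n\ge N_0(x)$,
\begin{equation*}
\frac1n\sum_{j=1}^{n}\log L(f^j(x))
 \le -(1-\ga)\log\sigma+\ga\log L
 = \frac1n\log\big(\sigma^{-(1-\ga)}L^{\ga}\big)^{n}
 < -2c,
\end{equation*}
by the choice \eqref{eq. relation expansion}. Then Lemma~\ref{positive density} applies along the subsequence $n\ge N_0(x)$ and gives, for each such $n$, a hyperbolic time in $[1,n]$ of size at least $\theta n$; letting $n\to\infty$ produces infinitely many hyperbolic times for $x$. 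Thus $x\in H$ for $\nu$-a.e.\ $x$, so $\nu(H)=1$ and $\nu$ is expanding.

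For the integrability $\int n_1\,d\nu<\infty$, the idea is to bound $\nu(\{n_1>n\})$ by $\nu(B(n))$ up to a controlled correction, and then sum. More precisely, fix $x$ with first hyperbolic time $n_1(x)>n$. If the frequency of visits to $\cA$ up to time $n$ were $<\ga$, then by the computation above (applied on $[1,n]$ rather than asymptotically) we would get $\frac1n\sum_{j=1}^n\log L(f^j(x))<-2c$, and Lemma~\ref{positive density} would furnish a hyperbolic time $\le n$, contradicting $n_1(x)>n$. Therefore $\{n_1>n\}\subset B(n)$ for every $n$, and Proposition~\ref{p.recurrence} gives $\nu(\{n_1>n\})\le e^{-\vep_0 n/2}$ for all large $n$. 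Consequently
\begin{equation*}
\int n_1\,d\nu=\sum_{n\ge 0}\nu(\{n_1>n\})\le \const+\sum_{n}e^{-\vep_0 n/2}<\infty.
\end{equation*}

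The main obstacle I anticipate is a small bookkeeping subtlety in the second part: Lemma~\ref{positive density} is stated with the hypothesis $\frac1n\sum_{j=1}^n\log L(f^j(x))\le -2c$ and concludes with hyperbolic times \emph{up to} $n$, but one must be careful that the hyperbolic time produced is genuinely $\le n$ (hence would already contradict $n_1(x)>n$) and that the slight mismatch between ``$\le -2c$'' and the strict inequality ``$<-2c$'' coming from \eqref{eq. relation expansion} causes no trouble for sufficiently large $n$ (it does not, since the left-hand side is bounded by $\log(\sigma^{-(1-\ga)}L^\ga)<-2c$ uniformly). Everything else is routine: the inclusion $\{n_1>n\}\subset B(n)$ is the crux, and once it is established the exponential tail from Proposition~\ref{p.recurrence} immediately yields both $\nu(H)=1$ (already obtained in the first paragraph, and re-obtainable here since $\{n_1=\infty\}\subset\bigcap_n B(n)$ has measure zero) and the integrability of $n_1$.
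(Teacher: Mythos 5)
Your proof is correct and follows the same route as the paper: Borel--Cantelli applied to Proposition~\ref{p.recurrence}, the choice \eqref{eq. relation expansion}, and Lemma~\ref{positive density} give $\nu(H)=1$, while the tail-sum identity combined with the inclusion $\{n_1>n\}\subset B(n)$ and the exponential decay of $\nu(B(n))$ give the integrability. The paper writes $\nu(\{n_1>n\})\le\nu(B(n))$ without justification, and your contrapositive argument supplies exactly the right one; the off-by-one concern you flag is moot, since both the definition of $B(n)$ and the definition of a hyperbolic time index the orbit over $j=0,\dots,n-1$ (as does the paper's own invocation of Lemma~\ref{positive density} inside this proof), so the inclusion holds for every $n$.
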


\begin{proof}
By Proposition~\ref{p.recurrence}, almost every point $x$ is outside
$B(n)$ for all but finitely many values of $n$. Then, in view of our
choice \eqref{eq. relation expansion},
$$
\sum_{j=0}^{n-1} \log L(f^j(x))
    \leq \ga \log L + (1-\ga) \log \si^{-1}
    \leq -2c
$$
if $n$ is large enough. In view of Lemma~\ref{positive density},
this proves that $\nu$-almost every point has infinitely many
hyperbolic times (positive density at infinity). In other words,
$\nu$ is expanding. Moreover, using Proposition~\ref{p.recurrence}
once more,
$$
\int n_1 d\nu
 = \sum_{n=0}^\infty \nu(\{x: n_1(x) > n\})
 \le 1 + \sum_{n=1}^\infty \nu(B(n))
 < \infty,
$$
as we claimed.
\end{proof}

%%%%%%%%%%%%%%%%%%%%%%%%%%%%%%%%%%%%%%%%%%%%%%%%%%%%%%%%%%%%%%%%%%%%%%
\subsection{Gibbs property}\label{Gibbs property}

Now we prove that $\nu$ satisfies a Gibbs property at hyperbolic
times. Later we shall see that hyperbolic times form a non-lacunary
sequence, almost everywhere, and then it will follow that $\nu$ is a
non-lacunary Gibbs measure.

\begin{lemma}\label{l.uniform ball measure}
The support of $\nu$ is an $f$-invariant set contained in the
closure of $H$. For any $\rho>0$ there exists $\xi>0$ such that
$\nu(B(x,\rho))\geq \xi$ for every $x \in \supp(\nu)$.
\end{lemma}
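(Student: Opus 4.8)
The plan is to prove the two assertions separately. For the first, that $\supp(\nu)$ is $f$-invariant: since $\cL_\phi^*\nu = \la\nu$ with $\la > 0$, for any continuous $g \geq 0$ we have $\int \cL_\phi g \, d\nu = \la \int g\, d\nu$, and $\cL_\phi g(x) = \sum_{f(y)=x} e^{\phi(y)} g(y) \geq 0$ with $\cL_\phi g(x) > 0$ whenever $g$ does not vanish on all of $f^{-1}(x)$. Using this one shows $f(\supp\nu) = \supp\nu$: if $x \in \supp\nu$ then every neighborhood of $x$ has positive $\nu$-measure, and applying the eigenmeasure relation to bump functions supported near $f(x)$ forces $f(x) \in \supp\nu$, giving $f(\supp\nu) \subset \supp\nu$; conversely the Jacobian relation $\nu(f(A)) = \int_A \la e^{-\phi}\,d\nu$ from Lemma~\ref{l.Jacobian}, applied to small injectivity domains $A$ meeting $\supp\nu$, shows that $f(\supp\nu)$ is $\nu$-full, hence $\supp\nu \subset f(\supp\nu)$. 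That $\supp\nu \subset \overline H$ is immediate from Corollary~\ref{c.nu.expanding}: $\nu(H) = 1$, so $\supp\nu \subset \overline{H}$.

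For the second assertion, the uniform lower bound $\nu(B(x,\rho)) \geq \xi$ on $\supp\nu$: the key tool is that $\nu(H) = 1$ together with the existence, by Lemma~\ref{positive density} and Corollary~\ref{c.nu.expanding}, of hyperbolic times with positive density. Fix $\rho > 0$ and let $\delta = \delta(c,f)$ be as in Lemma~\ref{delta}, shrinking $\rho$ if needed so that $\rho \leq \delta$. First I would establish the bound at a single point $x_* \in \supp\nu$ where $\nu$ restricted to small balls is positive; then I would propagate it. The mechanism: if $n$ is a hyperbolic time for a point $z$, then by Lemma~\ref{delta} the pre-ball $V_n(z)$ is mapped homeomorphically by $f^n$ onto $B(f^n(z),\delta)$, and by the bounded distortion Corollary~\ref{lem. bounded distortion} together with the Jacobian formula $J_\nu f = \la e^{-\phi}$, we get
$$
\nu(V_n(z)) \geq K_0^{-1} \frac{\nu(B(f^n(z),\delta))}{\sup_{V_n(z)} J_\nu f^n} \geq K_0^{-1} \frac{\nu(B(f^n(z),\delta))}{\exp(n P + S_n\phi(z) + \text{distortion})}.
$$
More usefully, since $f^n(V_n(z)) = B(f^n(z),\delta)$ and $V_n(z)$ has small diameter (at most $\delta$), bounded distortion gives a comparison $\nu(V_n(z)) \geq K_0^{-1} \nu(B(f^n(z),\delta)) \big/ J_\nu f^n(z)$, and conversely $1 \geq \nu(B(f^n(z),\delta)) \geq \nu(f^n(V_n(z)))$. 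The point is that there exists $\rho_0 > 0$ (playing the role of a "uniform radius after one return") with $\nu(B(y,\rho_0)) \geq \xi_0 > 0$ for all $y$ in some $\nu$-full set that is actually all of $\supp\nu$, by the following argument: pick any $y_* \in \supp\nu$ with $\nu(B(y_*, \rho/4)) = a > 0$; for arbitrary $x \in \supp\nu$, choose a point $z$ near $x$ with a hyperbolic time $n$ such that $f^n(z)$ is close to $y_*$ — this is possible because $H$ is $\nu$-full and dense in $\supp\nu$, so points with arbitrarily large hyperbolic times accumulate on $x$, and by the covering compactness their $n$-th iterates can be arranged (after passing to the common target via forward iteration and using that $f(\supp\nu)=\supp\nu$) to land near $y_*$. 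Then $B(x,\rho) \supset V_n(z)$ (for $z$ close enough to $x$ and $n$ large, since the pre-ball has diameter $\to 0$), while $f^n(V_n(z)) \supset B(f^n(z), \delta) \supset B(y_*, \rho/4)$, and bounded distortion plus the Jacobian bound $J_\nu f^n \leq \la^n e^{-S_n\phi} \leq e^{n(P - \inf\phi)}$ control the loss:
$$
\nu(B(x,\rho)) \geq \nu(V_n(z)) = \int_{V_n(z)} \frac{1}{J_\nu f^n}\, d(\nu \circ f^n) \geq K_0^{-1} \frac{\nu(f^n(V_n(z)))}{\sup_{V_n(z)} J_\nu f^n} \geq K_0^{-1} e^{-n(P-\inf\phi)} a.
$$
This is not yet uniform in $x$, since $n = n(x)$ can grow. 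To fix this I would instead run the argument with a \emph{bounded} hyperbolic time: by Corollary~\ref{c.definite fraction} (or directly Lemma~\ref{positive density}, since $\nu$ is expanding), hyperbolic times have positive density, so there exists a fixed integer $N_0$ and a $\nu$-full set of points each having \emph{some} hyperbolic time $n \leq N_0$ — more precisely, choose $N_0$ so that $\nu(\{x : x \text{ has a hyperbolic time } n \in [1,N_0]\}) > 1/2$, call this set $G$; it is $\nu$-dense in $\supp\nu$. For any $x \in \supp\nu$, approximate by $z \in G$ with hyperbolic time $n_z \leq N_0$ and $B(x,\rho) \supset V_{n_z}(z)$; then $\nu(V_{n_z}(z)) \geq K_0^{-1} e^{-N_0(P - \inf\phi)} \inf_{y} \nu(B(y,\delta/2))$, and this last infimum over $\supp\nu$ — wait, that is circular.

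To break the circularity I would use a genuine covering/finiteness argument: cover $\supp\nu$ by finitely many balls $B(y_1,\rho/4), \dots, B(y_r,\rho/4)$ with each $\nu(B(y_i,\rho/8)) > 0$; set $a = \min_i \nu(B(y_i,\rho/8)) > 0$. By topological transitivity-type behavior forced by $\nu(H)=1$ and $f(\supp\nu) = \supp\nu$, together with the concatenation of hyperbolic times (Remark~\ref{r.concatenation}), one shows: there is $N_0$ such that for every $x \in \supp\nu$ there is $z$ with $d(z,x) < \rho/2$, a hyperbolic time $n \leq N_0$ for $z$, and an index $i$ with $B(y_i, \rho/8) \subset B(f^n(z), \delta)$. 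Granting this, $B(x,\rho) \supset V_n(z)$ and $f^n(V_n(z)) = B(f^n(z),\delta) \supset B(y_i,\rho/8)$, so by bounded distortion
$$
\nu(B(x,\rho)) \geq \nu(V_n(z)) \geq K_0^{-1} \frac{\nu(B(y_i,\rho/8))}{\sup_{V_n(z)} J_\nu f^n} \geq K_0^{-1} e^{-N_0(P - \inf\phi)} a =: \xi > 0,
$$
which is the desired uniform bound. \textbf{The main obstacle} I anticipate is precisely establishing this uniform $N_0$: one must show that points of $\supp\nu$ carrying a hyperbolic time bounded by $N_0$ and whose $n$-th image $\delta$-covers one of the fixed balls $B(y_i,\cdot)$ are $\rho/2$-dense in $\supp\nu$. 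This should follow from (i) the positive-density-at-infinity of hyperbolic times on the $\nu$-full set $H$, which by a pigeonhole/Borel–Cantelli argument yields a \emph{uniform} bound $N_0$ on the first hyperbolic time on a set of $\nu$-measure close to $1$ (hence dense in $\supp\nu$), and (ii) the fact that the image $f^n(V_n(z)) = B(f^n(z),\delta)$ is a full $\delta$-ball, so it must contain some $B(y_i,\rho/8)$ as long as $\rho/8 + \rho/4 \leq \delta$ and $f^n(z)$ lies within $\delta - \rho/4$ of some $y_i$, which it does since $f^n(z) \in \supp\nu = \bigcup_i B(y_i,\rho/4)$. Combining these with bounded distortion (Corollary~\ref{lem. bounded distortion}) and the explicit Jacobian $J_\nu f = \la e^{-\phi}$ closes the argument.
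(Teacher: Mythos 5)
Your first half is fine and matches the paper: $\supp(\nu)\subset\overline H$ because $\nu(H)=1$, and invariance of the support follows from the Jacobian relation $\nu(f(A))=\int_A \la e^{-\phi}\,d\nu$ (equivalently the eigenmeasure identity), since $J_\nu f$ is bounded away from zero and infinity. The problem is the second half. The whole hyperbolic-time/bounded-distortion machinery you set up is not only unnecessary, it leaves a genuine gap at the pivotal step: the claim that there is a uniform $N_0$ such that every $x\in\supp(\nu)$ admits a point $z$ with $d(z,x)<\rho/2$, a hyperbolic time $n\le N_0$, $V_n(z)\subset B(x,\rho)$, and $B(f^n(z),\de)$ containing one of the reference balls, is never proved. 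The justification you sketch --- choose $N_0$ with $\nu(\{x:\ \exists\ \text{hyperbolic time}\le N_0\})>1/2$ and conclude this set is dense in $\supp(\nu)$ --- is false: a set of $\nu$-measure greater than $1/2$ need not meet every ball centered in $\supp(\nu)$, because such balls can have arbitrarily small (positive) measure. There is also an internal tension you do not resolve: you need $n\le N_0$ to control $\sup J_\nu f^n$, but you need $n$ large enough that $\diam V_n(z)\le\rho/2$ in order to have $V_n(z)\subset B(x,\rho)$, so ``some hyperbolic time $\le N_0$'' is not the right requirement; one would need hyperbolic times in a fixed window, uniformly over $\supp(\nu)$, which your argument does not supply.

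The statement itself needs no dynamics at all, and the paper proves it by bare compactness: if $\nu(B(x_n,\rho))\to 0$ with $x_n\in\supp(\nu)$, pass to an accumulation point $z\in\supp(\nu)$; for large $n$ one has $B(z,\rho/2)\subset B(x_n,\rho)$, so $\nu(B(z,\rho/2))=0$, contradicting $z\in\supp(\nu)$. In fact your own covering step already finishes the proof if you stop there: cover the compact set $\supp(\nu)$ by finitely many balls $B(y_i,\rho/8)$ with $y_i\in\supp(\nu)$ and set $\xi=\min_i\nu(B(y_i,\rho/8))>0$, which is positive by the definition of the support; any $x\in\supp(\nu)$ lies in some $B(y_i,\rho/8)$, hence $B(x,\rho)\supset B(y_i,\rho/8)$ and $\nu(B(x,\rho))\ge\xi$. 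So the correct fix is to delete the hyperbolic-time argument entirely, not to repair it.
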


\begin{proof}
Since $\nu$ is expanding, it is clear $\supp(\nu)\subset \overline
H$. Let $x \in M$. Since $f$ is a local homeomorphism, the relation
$V=f(W)$ is a one-to-one correspondence between small neighborhoods
$W$ of $x$ and small neighborhood $V$ of $f(x)$. Moreover,
$$
\nu(V) = \int_W J_\nu f \, d\nu.
$$
is positive if and only if $\nu(W)>0$, because the Jacobian is
bounded away from zero and infinity. This proves that the support is
invariant by $f$. The second claim in the lemma is standard. Assume,
by contradiction, that there exists $\rho>0$ and a sequence
$(x_n)_{n\geq 1}$ in $\supp(\nu)$ such that $\nu(B(x_n,\rho))\to 0$
as $n \to\infty$. Since $\supp(\nu)$ is compact set, the sequence
must accumulate at some point $z \in \supp(\nu)$. Then
\[
\nu(B(z,\rho))\leq \liminf_{n \to \infty} \nu(B(x_n,\rho))=0,
\]
which contradicts $z \in \supp(\nu)$. This completes the proof of
the lemma.
\end{proof}

\begin{lemma}\label{l.Gibbs.hyp.times} There exists $K>0$ such that,
if $n$ is a hyperbolic time for $x\in \supp(\nu)$ then
$$
K^{-1}
 \leq \frac{\nu(B(x,n,\delta))}{e^{-P n + S_{n}\phi(y)}}
 \leq K,
$$
for every $y \in B(x,n,\delta)$.
\end{lemma}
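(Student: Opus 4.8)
The plan is to establish the two inequalities by comparing the $\nu$-measure of the dynamical ball $B(x,n,\delta)$ with the $\nu$-measure of its image $B(f^n(x),\delta)$ under $f^n$, exploiting Lemma~\ref{delta} (which tells us $f^n$ maps $V_n(x)=B(x,n,\delta)$ homeomorphically onto $B(f^n(x),\delta)$) together with the change-of-variables formula for the Jacobian $J_\nu f^n = \lambda^n e^{-S_n\phi}$ coming from Lemma~\ref{l.Jacobian}, and the bounded distortion estimate on pre-balls from Corollary~\ref{lem. bounded distortion}. Writing $P=\log\lambda$, the key identity is
$$
\nu(B(f^n(x),\delta)) = \nu(f^n(V_n(x))) = \int_{V_n(x)} J_\nu f^n \, d\nu = \int_{V_n(x)} \lambda^n e^{-S_n\phi}\, d\nu.
$$
For a fixed point $y\in B(x,n,\delta)$, bounded distortion (applied to $\psi$ with $J_\nu f = e^\psi$, i.e.\ $e^{\psi}=\lambda e^{-\phi}$, so $S_n\psi = Pn - S_n\phi$) gives $K_0^{-1} \le J_\nu f^n(z)/J_\nu f^n(y) \le K_0$ for all $z\in V_n(x)$. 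Hence
$$
K_0^{-1}\, \lambda^n e^{-S_n\phi(y)}\, \nu(B(x,n,\delta)) \le \nu(B(f^n(x),\delta)) \le K_0\, \lambda^n e^{-S_n\phi(y)}\, \nu(B(x,n,\delta)),
$$
which after rearranging reads
$$
K_0^{-1}\, e^{-Pn + S_n\phi(y)} \le \frac{\nu(B(x,n,\delta))}{\nu(B(f^n(x),\delta))}\; e^{-Pn+S_n\phi(y)} \cdot \frac{1}{\text{(to be cleared)}} \quad\Longleftrightarrow\quad K_0^{-1}\,\nu(B(f^n(x),\delta)) \le \frac{\nu(B(x,n,\delta))}{e^{-Pn+S_n\phi(y)}} \le K_0\,\nu(B(f^n(x),\delta)).
$$

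So the whole statement reduces to showing that $\nu(B(f^n(x),\delta))$ is bounded above and below by uniform positive constants. The upper bound is trivial since $\nu$ is a probability measure, $\nu(B(f^n(x),\delta)) \le 1$. For the lower bound I would invoke Lemma~\ref{l.uniform ball measure}: the support of $\nu$ is $f$-invariant, so $f^n(x)\in\supp(\nu)$ whenever $x\in\supp(\nu)$, and that lemma provides $\xi=\xi(\delta)>0$ with $\nu(B(z,\delta))\ge\xi$ for every $z\in\supp(\nu)$; in particular $\nu(B(f^n(x),\delta))\ge\xi$. Setting $K=K_0/\xi$ (and noting $K\ge K_0\ge 1$) yields both inequalities in the lemma for every $y\in B(x,n,\delta)$ and every hyperbolic time $n$, with $K$ independent of $x,n,y$.

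The only genuinely delicate point is making sure the hypotheses of Corollary~\ref{lem. bounded distortion} are legitimately met — i.e.\ that $J_\nu f = e^\psi$ with $\psi$ \emph{H\"older} continuous. This holds because $\psi = \log\lambda - \phi = P - \phi$ and $\phi$ is H\"older by hypothesis, so $\psi$ is H\"older with the same exponent, and the distortion constant $K_0$ depends only on the H\"older data of $\phi$ and on $\delta$ (which itself depends only on $c$ and $f$ via Lemma~\ref{delta}), hence is uniform. No other step presents difficulty: the homeomorphism property and the exponential contraction along pre-orbits under $f^n$ are exactly what Lemma~\ref{delta} supplies, and these are precisely what feed into both the change of variables (which needs $f^n\mid V_n(x)$ injective) and the distortion bound. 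I therefore expect the proof to be short; the main conceptual ingredient is simply recognizing that the Gibbs estimate at hyperbolic times is nothing but bounded distortion transported by the conformality of $\nu$, with the uniform ball-measure lower bound handling the normalization.
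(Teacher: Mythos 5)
Your proof is correct and matches the paper's argument essentially line by line: both use the homeomorphism property at hyperbolic times to change variables via $J_\nu f^n = \lambda^n e^{-S_n\phi}$, apply the bounded distortion estimate of Corollary~\ref{lem. bounded distortion} to the H\"older function $\log J_\nu f = P - \phi$, and sandwich $\nu(B(f^n(x),\delta))$ between $\xi(\delta)$ (from Lemma~\ref{l.uniform ball measure}) and $1$, arriving at $K = K_0\,\xi(\delta)^{-1}$. The only blemish is the garbled intermediate display (the ``to be cleared'' fragment), but the final rearranged inequality you state is correct and identical to the paper's.
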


\begin{proof}
Since $f^n \mid B(x,n,\delta)$ is injective, we get from the
previous lemma that
$$
\xi(\de) \leq \nu(B(f^n(x),\de))=\int_{B(x,n,\delta)} J_\nu f^n
\,d\nu \leq 1
$$
for every $x\in\supp(\nu)$. Then, the bounded distortion property in
Corollary~\ref{lem. bounded distortion} applied to the H\"older
continuous function $J_\nu f=\la e^{-\phi}$ gives that
$$
K_0^{-1} \xi(\delta)
 \le \nu(B(x,n,\delta)) \la^n e^{-S_n\phi(y)}
 \le K_0
$$
for every $y \in B(x,n,\delta)$. Recalling that $P=\log \la$, this
gives the claim with $K=K_0 \;\xi(\de)^{-1}$.
\end{proof}

\begin{remark}\label{rmk. smaller balls}
The same proof gives a somewhat stronger result: for $\nu$-almost
every $x$ and any $0<\vep \leq \de$, there exists $K(\vep)>0$ such
that
$$
K^{-1}(\vep) \leq \frac{\nu(B(x,n,\vep))}
                       {e^{-P n + S_n\phi(x)}}
             \leq K(\vep).
$$
if $n$ is a hyperbolic time for $x$. It suffices to take
$K(\vep)=K_0\xi(\vep)^{-1}$.
\end{remark}

We proceed with the proof of Theorem~\ref{thm. conformal measure}.
We have proven that any eigenmeasure $\nu$ for $\cL_\phi$ associated
to an eigenvalue $\la\geq e^{h(f)+\inf\phi}$ is necessarily
expanding, satisfies the Gibbs property at hyperbolic times and has
a Jacobian $J_\nu f=\la e^{-\phi}$. Furthermore,
Lemma~\ref{l.eigenvalue} guarantees that the spectral radius $\la_0$
is an eigenvalue of the operator $\cL_\phi$. Let $\nu_0$ denote any
such eigenmeasure. If $f$ is topologically mixing then $\supp
\nu_0=\overline H=M$. Indeed, given an open set $U$ there exists
$N\geq 1$ such that $f^N(U)=M$. Since $J_{\nu_0} f$ is bounded from
zero and infinity then clearly $\nu_0(U)>0$, which proves our claim.
Hence, to prove Theorem~\ref{thm. conformal measure} we are left to
show that there are finitely many eigenmeasures of $\cL_\phi^*$
associated to eigenvalues greater or equal to $e^{h(f)+\inf\phi}$
whose union of their supports coincide with $\overline H$. Given an
$f$-invariant compact set $\La$ we denote by $\cL_\La: C(\La) \to
C(\La)$ the restriction of the operator $\cL_\phi$ to the space of
continuous functions $C(\La)$.

\begin{lemma}\label{l.full.conformal}
There are finitely many $\la_0 \geq \la_1 \geq \dots \geq \la_k \geq
 e^{h(f)+\inf \phi}$ and probability measures $\nu_0, \nu_1,
\dots, \nu_k$ such that $\cL_\phi^* \nu_i=\la_i \nu_i$, for every $0
\leq i \leq k$, and that the union of their supports coincides with
the closure of the set $H$.
\end{lemma}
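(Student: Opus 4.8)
The plan is to produce the measures $\nu_0,\dots,\nu_k$ one at a time by an exhaustion argument on the set $H$, using the restricted operators $\cL_\La$ on invariant compact subsets of $\overline H$. We start with $\nu_0$, any eigenmeasure of $\cL_\phi^*$ associated to the spectral radius $\la_0=r(\cL_\phi)$, which exists by Lemma~\ref{l.eigenvalue}. By the expanding-structure results (Corollary~\ref{c.nu.expanding}) and the Gibbs property (Lemma~\ref{l.Gibbs.hyp.times}), $\nu_0$ is expanding, so $\supp\nu_0\subset\overline H$, and it is a non-lacunary Gibbs measure once we combine Lemma~\ref{l.Gibbs.hyp.times} with Corollary~\ref{c.hyperbolicislacunary} (applicable because $\int n_1\,d\nu_0<\infty$). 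If $\supp\nu_0=\overline H$ we are done with $k=0$. Otherwise, set $\La_1=\overline{H\setminus\supp\nu_0}$; the key point is that $\La_1$ is $f$-invariant and compact. Invariance of $\supp\nu_0$ was shown in Lemma~\ref{l.uniform ball measure}, and invariance of $\overline H$ follows from the concatenation property of hyperbolic times (Remark~\ref{r.concatenation}): if $x\in H$ then $f(x)\in H$, so $H$ is forward invariant, hence so is $\overline H$; one checks that removing an invariant open-in-$\overline H$ piece leaves an invariant compact set.

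Next I would apply the Ruelle–Perron–Frobenius machinery to the restriction $\cL_{\La_1}:C(\La_1)\to C(\La_1)$. The point is that $\La_1$ is itself a compact $f$-invariant set on which hypotheses (H1), (H2), (P) still hold (with the same constants, since they are local conditions on $f$ and $\phi$), so the whole argument leading to Lemma~\ref{l.eigenvalue} applies verbatim to $f|\La_1$: the spectral radius $\la_1=r(\cL_{\La_1})$ is an eigenvalue of $\cL_{\La_1}^*$, with eigenmeasure $\nu_1$ supported in $\La_1$. The crucial lower bound is that $\la_1\ge e^{h(f|\La_1)+\inf\phi}$; one must check $h(f|\La_1)\ge h(f)$, or at least that the combinatorial estimates (Lemma~\ref{l.combinatorio}) survive on $\La_1$, so that \eqref{eq.Jacobian} still gives $J_{\nu_1}f>q$ on $\La_1$ and Proposition~\ref{p.recurrence} applies. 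Hence $\nu_1$ is again expanding, a non-lacunary Gibbs measure, with $J_{\nu_1}f=\la_1 e^{-\phi}$, and $\la_0\ge\la_1\ge e^{h(f)+\inf\phi}$ (the first inequality because $\cL_{\La_1}$ is a ``sub-operator'' of $\cL_\phi$). Since $\nu_1$ is expanding, $\supp\nu_1\subset\overline{H\cap\La_1}=\La_1$; and I claim $\supp\nu_1$ contains points of $H$ not in $\supp\nu_0$, because $\nu_1$ gives full mass to the set of points with infinitely many hyperbolic times inside $\La_1$, which meets $H\setminus\supp\nu_0$ on a $\nu_1$-full set by construction.

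Iterating, one obtains a decreasing sequence of invariant compact sets $\overline H\supset\La_1\supset\La_2\supset\cdots$, with $\La_{j+1}=\overline{H\setminus(\supp\nu_0\cup\dots\cup\supp\nu_j)}$, and eigenmeasures $\nu_j$ of $\cL_{\La_j}^*$ with $\supp\nu_j\subset\La_j$. The remaining task is to prove this process terminates, i.e. that after finitely many steps $\supp\nu_0\cup\dots\cup\supp\nu_k=\overline H$. This is where I expect the main obstacle, and I would handle it by a uniform-mass argument: by Lemma~\ref{l.uniform ball measure}, there is $\rho>0$ (from Lemma~\ref{delta}, one can take $\rho=\de$) and, for each $j$, a $\xi_j>0$ with $\nu_j(B(x,\rho))\ge\xi_j$ for all $x\in\supp\nu_j$. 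The point is to show the $\xi_j$ are bounded below by a single $\xi>0$ independent of $j$ — this follows because the Gibbs estimate of Lemma~\ref{l.Gibbs.hyp.times}, hence the constant $K=K_0\xi(\de)^{-1}$, is governed only by the distortion constant $K_0$ and by $\la\in[e^{h(f)+\inf\phi},\la_0]$, all uniform across the $\La_j$. Consequently, the supports $\supp\nu_j$ are $\rho$-separated in the sense that each contains a $\rho/2$-ball of reference mass at least a fixed amount, while each $\nu_j$ is a probability measure on a space of finite ``$\rho$-covering number''; more precisely, choose a maximal $\rho$-separated set in $\overline H$, of some finite cardinality $\mathcal N$, and observe that each $\supp\nu_{j}$, being disjoint from $\supp\nu_0\cup\dots\cup\supp\nu_{j-1}$, must contain a new point of that separated set (using that $\nu_j$ charges $\rho$-balls uniformly and is supported off the previous supports). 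Hence $k+1\le\mathcal N<\infty$, which terminates the construction and finishes the proof. The one delicate verification is that ``disjoint supports'' genuinely forces a new point of the separated net to appear — for this I would use that $\nu_j$ is expanding and Gibbs, so it cannot be concentrated near the already-covered part without violating the uniform lower bound $\nu_j(B(x,\rho))\ge\xi$ at some $x\in\supp\nu_j\setminus\bigcup_{i<j}\supp\nu_i$.
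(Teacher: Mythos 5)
Your overall strategy --- recursively applying the Ruelle--Perron--Frobenius machinery on a decreasing family of compact invariant sets --- is the same as the paper's. However, there are two concrete divergences, one of which is a genuine gap.

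First, your choice of ambient set differs: you take $\La_1 = \overline{H\setminus\supp\nu_0}$, whereas the paper works with $K_1 = M\setminus \operatorname{int}(\supp\nu_0)$. The paper's choice is what makes the termination argument go through cleanly, because removing the \emph{interiors} of the supports keeps the successive interiors pairwise disjoint, and Lemma~\ref{support Gibbs} furnishes a $\de/4$-ball inside the interior of the support of each new expanding conformal measure. Your $\La_j$'s are closures, so they may meet the previous $\supp\nu_i$ on their boundaries, and the disjointness you invoke later (``each $\supp\nu_j$, being disjoint from $\supp\nu_0\cup\dots\cup\supp\nu_{j-1}$'') is not established.

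Second, and more seriously, your termination argument is circular. You claim the lower bounds $\xi_j>0$ of Lemma~\ref{l.uniform ball measure} can be taken uniform in $j$ ``because the Gibbs estimate of Lemma~\ref{l.Gibbs.hyp.times}, hence the constant $K=K_0\xi(\de)^{-1}$, is governed only by the distortion constant $K_0$ and by $\la\in[e^{h(f)+\inf\phi},\la_0]$.'' But $K$ is defined precisely as $K_0\xi(\de)^{-1}$, i.e.\ it \emph{contains} the quantity $\xi(\de)$ you are trying to bound; the Gibbs lower bound $\nu(B(x,n,\de))\ge K^{-1}e^{-Pn+S_n\phi(y)}$ uses $\xi(\de)=\inf_{z\in\supp\nu}\nu(B(z,\de))$ as an input and cannot produce a uniform output for it. Moreover the maximal-$\rho$-separated-net step is not quite right: a set disjoint from the previously accounted supports need not contain a point of the net (it may sit entirely within the $\rho$-neighbourhoods of already-used net points). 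The paper sidesteps all of this: by Lemma~\ref{support Gibbs}, $\operatorname{int}(\supp\nu_i)$ contains a ball of radius $\de/4$, these balls are pairwise disjoint because each $K_{i+1}$ excludes $\operatorname{int}(\supp\nu_i)$, and compactness of $M$ bounds the number of disjoint $\de/4$-balls. You should replace your separated-net/uniform-mass argument by this interior-ball argument.

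A smaller point you correctly flag but do not resolve (nor does the paper, beyond ``it is easy to check'') is the lower bound $\la_1\ge e^{h(f)+\inf\phi}$ for the restricted operator. This is a real verification, since restricting to $\La_1$ (or $K_1$) could in principle decrease the number of preimages available; your instinct to worry about $h(f|\La_1)$ versus $h(f)$ is sound, though it is not where the main gap lies.
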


\begin{proof}
We obtain the desired finite sequence of conformal measures using
the ideas involved in the proof of Lemma~\ref{l.eigenvalue}
recursively. Indeed, Lemma~\ref{l.eigenvalue},
Corollary~\ref{c.nu.expanding} and Lemma~\ref{l.Gibbs.hyp.times}
assert that there exists an expanding conformal measure $\nu_0$ such
that $\cL_\phi^* \nu_0=\la_0 \nu_0$ and satisfies the Gibbs property
at hyperbolic times. Clearly $\supp(\nu_0)$ is an invariant set
contained in $\overline H$.

If $\supp(\nu_0) = \overline H$ then we are done. Otherwise we
proceed as follows. As we shall see in Lemma~\ref{support Gibbs},
the interior of the support of any expanding conformal measure $\nu$
is non-empty and contains almost every point in a ball of radius
$\de$ (depending only on $f$ and $c$). Consider the non-empty
compact invariant set $K_1= M \setminus interior(\supp(\nu_0))$ and
set $\la_1=r(\cL_{K_1}) \leq \la_0$. It is easy to check that $\la_1
\geq  e^{h(f)+\inf\phi}$. Then we may argue as in the proof of
Lemma~\ref{l.eigenvalue}: the cone of strictly positive functions in
$K_1$ is disjoint from the subspace $\{\cL_\phi g - \la g : g \in
C(K_1)\}$ and so there exists a probability measure $\nu_1$ such
that $\cL_\phi^* \nu_1=\la_1\nu_1$ whose support $\supp(\nu_1)$ is
contained in $K_1$. Since $\la_1 \geq  e^{h(f)+\inf\phi}$ then
$\nu_1$ is also expanding and its support must also contain a ball
of radius $\de$ in its interior.

Since $M$ is compact this procedure will finish after a finite
number of times. Hence there are finitely many compact sets $K_0,
\dots, K_k$ and expanding measures $\nu_0, \dots, \nu_k$ such that
$\supp(\nu_i) \subset K_i$ and $\overline H =\bigcup_i
\supp(\nu_i)$. This completes the proof of the lemma.
\end{proof}

For any conformal measure $\nu_i$ as above, we prove in
Proposition~\ref{prop. acim}) that there are finitely many invariant
ergodic measures that are absolutely continuous with respect to
$\nu_i$, that their densities are bounded from above and that their
basins cover $\nu_i$-almost every point. Hence, the non-lacunarity
of the sequence of hyperbolic times will be a consequence of
Lemma~\ref{non-lacunary criterium}. So, up to the proof of
Proposition~\ref{prop. acim}, this shows that each $\nu_i$ is a
non-lacunary Gibbs measure and completes the proof of
Theorem~\ref{thm. conformal measure}.

%%%%%%%%%%%%%%%%%%%%%%%%%%%%%%%%%%%%%%%%%%%%%%%%%%%%%%%%%%%%%%%%%%%%%%%
\section{Absolutely continuous invariant measures}\label{Absolutely Continuous Measures}

In this section we analyze carefully the Cesaro averages
\[
\nu_n=\frac{1}{n} \sum_{j=0}^{n-1} f_*^j\nu,
\]
and prove that every weak$^*$ accumulation point is absolutely
continuous with respect to $\nu$. It is well known, and easy to
check, that the accumulation points are invariant probabilities. In
the topologically mixing setting we also prove that there is a
unique absolutely continuous invariant measure and that it satisfies
the non-lacunar Gibbs property. The precise statement is

\begin{proposition}\label{prop. acim}
There are finitely many invariant, ergodic probability measures
$\mu_1, \mu_2, \dots ,\mu_k$ that are absolutely continuous with
respect to $\nu$ and such any absolutely continuous invariant
measure is a convex linear combination of $\mu_1, \mu_2, \dots
,\mu_k$. In addition, the measures $\mu_i$ are expanding and the
densities $d\mu_i/d\nu$ are bounded away from infinity. Moreover,
the union of the basins $B(\mu_i)$ cover $\nu$-almost every point in
$M$. If $f$ is topologically mixing then there is a unique
absolutely continuous invariant measure and it is a non-lacunary
Gibbs measure.
\end{proposition}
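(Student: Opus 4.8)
The plan is to study the sequence of Cesaro averages $\nu_n = \frac1n \sum_{j=0}^{n-1} f_*^j \nu$ and extract information about its accumulation points from the Gibbs property of $\nu$ at hyperbolic times. First I would show that any weak$^*$ accumulation point $\mu$ of $(\nu_n)$ is absolutely continuous with respect to $\nu$ with density bounded from above. The key mechanism is Corollary~\ref{c.definite fraction}: since $\nu$ is expanding, a definite fraction of the times $j$ are hyperbolic times, and at a hyperbolic time $j$ the push-forward $f_*^j \nu$ restricted to a ball $B(f^j(x),\delta)$ is controlled from above using Lemma~\ref{l.Gibbs.hyp.times} (together with the bounded distortion on hyperbolic pre-balls, Corollary~\ref{lem. bounded distortion}, and the Jacobian formula $J_\nu f = \lambda e^{-\phi}$). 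More precisely, for a hyperbolic time $j$ for $x$, the set $B(x,j,\delta)$ is mapped homeomorphically onto $B(f^j(x),\delta)$ with bounded distortion, so $f_*^j(\nu\mid B(x,j,\delta))$ has $\nu$-density on $B(f^j(x),\delta)$ bounded by $K \le $ const. Covering $M$ by finitely many balls of radius $\delta$ and using the Besicovitch covering lemma to control overlaps, one gets a uniform bound on the density of $\frac1n\sum_{j \in H, j<n} f_*^j\nu$ with respect to $\nu$; the remaining terms, corresponding to $j \notin H_j$, have small total weight on average by Corollary~\ref{c.definite fraction}. Passing to the limit, every accumulation point $\mu$ has $d\mu/d\nu \le C$ for a uniform constant $C$.

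Next I would pin down the structure of the absolutely continuous invariant measures. Knowing $d\mu/d\nu$ is bounded, the standard ergodic-decomposition / Radon--Nikodym argument shows the set of invariant measures absolutely continuous with respect to $\nu$ forms a simplex whose extreme points are mutually singular ergodic measures $\mu_i$, each with bounded density; finiteness follows because the supports have $\nu$-measure bounded below (using Lemma~\ref{l.uniform ball measure}, since each $\mu_i$-support contains, up to a $\mu_i$-null set, a ball of definite radius, hence has $\nu$-mass at least some $\xi>0$, so at most $1/\xi$ of them fit). That each $\mu_i$ is expanding is immediate since $\mu_i \ll \nu$ and $\nu(H)=1$ implies $\mu_i(H)=1$. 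For the basin statement, one argues that for $\nu$-a.e.\ $x$ the empirical measures $\frac1n\sum \delta_{f^j(x)}$ accumulate only on absolutely continuous invariant measures (again because a positive-density subsequence of times are hyperbolic and the Gibbs estimate forces absolute continuity in the limit), hence by ergodic decomposition converge to one of the $\mu_i$; a Hopf-type argument or the standard "basins cover a full measure set" argument applies.

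For the topologically mixing case, uniqueness of the absolutely continuous invariant measure should follow from an exactness / mixing argument: if there were two distinct ergodic ones $\mu_1, \mu_2$, their supports are invariant and (by Lemma~\ref{l.uniform ball measure} applied to $\nu$ together with the fact that densities are bounded) both contain open balls of radius $\delta$; but topological mixing of $f$, combined with the bounded Jacobian of $\nu$ spreading $\nu$-mass, forces the densities to have overlapping supports of positive $\mu$-measure, contradicting mutual singularity. Concretely, $f^N$ of a small ball is all of $M$, so the $\nu$-a.c.\ forward images overlap, which is incompatible with two singular invariant a.c.\ measures. Finally, once uniqueness holds, the single measure $\mu$ is $f$-invariant, expanding, and integrable for $n_1$ (its density is bounded, and $\int n_1\,d\nu<\infty$ by Corollary~\ref{c.nu.expanding}), so Corollary~\ref{c.hyperbolicislacunary} gives that hyperbolic times are non-lacunary $\mu$-a.e.; combining with the Gibbs estimate at hyperbolic times (Lemma~\ref{l.Gibbs.hyp.times}, transferred from $\nu$ to $\mu$ using the two-sided bound on the density) yields the non-lacunary Gibbs property for $\mu$.

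The main obstacle I expect is the quantitative control of overlaps when pushing forward and averaging: ensuring that the density bound for $\frac1n\sum_{j<n} f_*^j\nu$ is genuinely uniform in $n$ requires carefully using the Besicovitch property of $M$ to bound the multiplicity of the covering by hyperbolic pre-balls $B(f^j(x),\delta)$, and handling the non-hyperbolic-time contributions so they do not spoil the bound — this is exactly where the hypotheses (H1), (H2), (P) enter, through Corollary~\ref{c.definite fraction} guaranteeing a definite density of good times. The uniqueness argument in the mixing case is also delicate, since one must convert topological mixing into a statement about overlap of the (only a.e.-defined) densities.
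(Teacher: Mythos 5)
Your overall architecture matches the paper's (Cesaro averages of $f^j_*\nu$, density bounds, finitude via the disk lemma, uniqueness via mixing), but there are two genuine gaps.

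First, the upper density bound. You write that the sum $\tfrac1n\sum_j f_*^j(\nu\mid H_j)$ has bounded density (correct, via bounded distortion on hyperbolic pre-balls) and that ``the remaining terms, corresponding to $j\notin H_j$, have small total weight on average by Corollary~\ref{c.definite fraction}.'' This step does not work: that corollary only guarantees a positive \emph{lower} density of hyperbolic times, so $\tfrac1n\sum_j\nu(H_j^c)$ is merely bounded by $1-\theta/2$, not small. The bad piece carries a definite fraction of the total mass, and a priori nothing prevents $f_*^j(\nu\mid H_j^c)$ from concentrating on $\nu$-null sets, destroying absolute continuity of the limit. The paper's crucial device, which you are missing, is the decomposition of $H_n^c$ through the \emph{first hyperbolic time}: $H_n^c\subset\{n_1>n\}\cup\bigcup_{k<n}\bigl(H_k\cap f^{-k}\{n_1>n-k\}\bigr)$, which uses the concatenation property (Remark~\ref{r.concatenation}) and yields $\nu_n\leq\mu_n+\tfrac1n\sum_j\eta_j$ with $\eta_j=\sum_{l\ge0} f^l_*\bigl(f^j_*(\nu\mid H_j)\mid\{n_1>l\}\bigr)$. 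The point is that $\{n_1>l\}\subset B(l)$, whose $\nu$-measure decays exponentially by Proposition~\ref{p.recurrence}, so the $\eta_j$ also have uniformly bounded density. Without this, the upper bound on $d\mu/d\nu$ is not established.

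Second, the Gibbs property in the mixing case. You invoke ``the two-sided bound on the density'' to transfer the Gibbs inequality from $\nu$ to $\mu$, but your argument only produces an upper bound $d\mu/d\nu\le C$. The lower bound, needed both for $\mu(B(x,n_k,\delta))\ge K^{-1}e^{-Pn_k+S_{n_k}\phi}$ and, in the paper's route, for showing $\mu$ and $\nu$ are equivalent before deducing ergodicity, is nontrivial: the paper establishes it (Lemma~\ref{l.density lower bound}) by constructing a disk on which the accumulating densities are uniformly bounded below, using a Vitali-type covering by hyperbolic pre-balls and Corollary~\ref{c.definite fraction} to guarantee a positive-mass subsequence, and only then spreading the lower bound to all of $M$ via topological mixing. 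Your uniqueness sketch (two singular a.c.\ invariant measures whose full-measure invariant sets each essentially contain a ball, then mix) is morally the same endgame as the paper's, but without the lower density bound the claimed non-lacunary Gibbs property does not follow.

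The finitude and basin arguments you sketch are essentially the paper's (via Lemma~\ref{support Gibbs}, though you should be careful to work with the disjoint invariant full-measure sets of the ergodic components rather than their topological supports, which may overlap).
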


%%%%%%%%%%%%%%%%%%%%%%%%%%%%%%%%%%%%%%%%%%%%%%%%%%%%%%%%%%%%%%%%%%%%%%%
\subsection{Existence and finitude}\label{sec. upper bound and finitude}

First we prove that every accumulation point of $(\nu_n)_{n \geq 1}$
is absolutely continuous invariant measure with bounded density. For
every $n \in \N$ it holds that
$$
H_n^c \subset
    \Big\{n_1 (\cdot)> n\Big\}
    \bigcup \Big[ \bigcup_{k=0}^{n-1} H_k \cap f^{-k}(\{n_1 (\cdot)> n - k\}) \Big].
$$
In particular, we can use the inclusion above to write
$$
\nu_n\leq \mu_n+ \frac{1}{n}\sum_{j=0}^{n-1} \eta_j,
$$
where
$$
\mu_n
    =\frac{1}{n}\sum_{j=0}^{n-1} f_*^j(\nu \mid {H_j})
\quad\text{and}\quad \eta_j
    % \eta_n =\sum_{k=2}^\infty\sum_{j=1}^{k-1} f^{n+j}_*(\nu \mid H_{n}\cap f^{-n}(\{n_1=k\}))
    =\sum_{l=0}^\infty f^{l}_*\big(f^j_*(\nu \mid H_{j})| \{n_1>l\}\big).
$$

\begin{lemma}\label{density control}
There exists $C_2>0$ such that for every positive integer $n$ the
measures $f^n_*(\nu\mid H_n)$, $\mu_n$ and $\nu_n$ are absolutely
continuous with respect to $\nu$ with densities bounded from above
by $C_2$. Moreover, the same holds for every weak$^*$ accumulation
point $\mu$ of $(\nu_n)_{n \geq 1}$.
\end{lemma}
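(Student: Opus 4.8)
The plan is to compute the three densities explicitly from the fact that $\nu$ has a Jacobian $J_\nu f=\la e^{-\phi}$ (Lemma~\ref{l.Jacobian}), hence $J_\nu f^j=\la^j e^{-S_j\phi}=e^{Pj-S_j\phi}$, and then estimate them by combining the Gibbs property at hyperbolic times with a disjointness property of hyperbolic pre-balls. Fix $j$. Since $f$ is finite-to-one and $J_\nu f$ is bounded away from $0$, the push-forward $f^j_*(\nu\mid H_j)$ is absolutely continuous with respect to $\nu$, and for $\nu$-a.e.\ $x$
$$
\frac{df^j_*(\nu\mid H_j)}{d\nu}(x)=\sum_{y\in f^{-j}(x)\cap H_j}e^{-Pj+S_j\phi(y)}.
$$
Because $J_\nu f$ is bounded away from $0$ and $\infty$ one checks, as in Lemma~\ref{l.uniform ball measure}, that $\supp\nu=f^{-1}(\supp\nu)$; so for $\nu$-a.e.\ $x$ all the $y$ above lie in $\supp\nu$, and Lemma~\ref{l.Gibbs.hyp.times} bounds each term by $K\,\nu(V_j(y))$, where $V_j(y)=B(y,j,\delta)$ is the hyperbolic pre-ball. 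The crucial observation is that these pre-balls are pairwise disjoint: by Lemma~\ref{delta}, $f^j$ maps each $V_j(y)$ homeomorphically onto $B(x,\delta)$, so if $V_j(y)$ and $V_j(y')$ met, the corresponding inverse branches of $f^j$ over $B(x,\delta)$ would agree at one point and hence everywhere (uniqueness of lifting through the local homeomorphism $f^j$, $B(x,\delta)$ being connected for $\delta$ small), forcing $y=y'$. Hence $\sum_y e^{-Pj+S_j\phi(y)}\le K\sum_y\nu(V_j(y))\le K$, so $f^j_*(\nu\mid H_j)$ — and therefore the average $\mu_n$ — has density at most $C_1:=K$ for every $n$ and every $j$.

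For $\nu_n$ I would start from the already recorded bound $\nu_n\le\mu_n+\frac1n\sum_{j=0}^{n-1}\eta_j$ with $\eta_j=\sum_{l\ge0}f^l_*\big(f^j_*(\nu\mid H_j)\mid\{n_1>l\}\big)$, so it suffices to bound the density of each $\eta_j$ uniformly in $j$. The $l=0$ term is $f^j_*(\nu\mid H_j)$, already controlled by $C_1$. For $l\ge1$, the same Jacobian computation, together with $J_\nu f^l=e^{Pl-S_l\phi}$ and $\la>e^{\log q+\sup\phi+\vep_0}$, shows the $l$-th summand has density at $x$ at most
$$
C_1\sum_{y\in f^{-l}(x),\;n_1(y)>l}\frac1{J_\nu f^l(y)}\le C_1\,e^{-(\log q+\vep_0)l}\,\#\{y\in f^{-l}(x):n_1(y)>l\}.
$$
Now if $n_1(y)>l$ then $y$ has no hyperbolic time $\le l$, so by Lemma~\ref{positive density} one has $\frac1l\sum_{i=1}^l\log L(f^i(y))>-2c$; by (H1) and \eqref{eq. relation expansion} this forces $y$ to spend more than a fraction $\gamma$ of its first $l$ iterates inside $\cA\subset P_1\cup\dots\cup P_q$ (for all large $l$, the finitely many small $l$ being absorbed into constants). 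Since distinct preimages of $x$ under $f^l$ lie in distinct elements of $\cP^{(l)}$, they carry distinct itineraries, all lying in $I(\gamma,l)$, whence $\#\{y\in f^{-l}(x):n_1(y)>l\}\le\#I(\gamma,l)\le C\,e^{(\log q+\vep_0/2)l}$ by the choice $c_\gamma<\log q+\vep_0/4$. The $l$-th summand therefore has density $\le C\,e^{-\vep_0 l/2}$, and summing the geometric series bounds the density of $\eta_j$ by some $C_1'$ independent of $j$; hence $\nu_n$ has density at most $C_2:=C_1+C_1'$ (and $C_1\le C_2$, so $C_2$ works as the common bound for all three measures).

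It remains to handle a weak$^*$ accumulation point $\mu$ of $(\nu_n)$. Having density at most $C_2$ with respect to $\nu$ is equivalent to $\mu(A)\le C_2\,\nu(A)$ for every Borel set $A$, and this property is preserved under weak$^*$ limits: $\mu(U)\le\liminf_n\nu_n(U)\le C_2\,\nu(U)$ for open $U$, and the general Borel case follows by outer regularity. The main obstacle is the content of the first paragraph — realizing that the Gibbs property by itself does not give a density bound and must be paired with the disjointness of the hyperbolic pre-balls lying over a common point $x$; once that is in place, the rest is bookkeeping using the Jacobian lower bound $J_\nu f>q$ and the combinatorial estimate of Lemma~\ref{l.combinatorio}.
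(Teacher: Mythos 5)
Your argument follows the same route as the paper: you use the identical decomposition $\nu_n\le\mu_n+\frac1n\sum_j\eta_j$, you bound $f^j_*(\nu\mid H_j)$ by combining the Gibbs/bounded-distortion estimate with pairwise disjointness of hyperbolic pre-balls over a common point, and you control $\eta_j$ via $\{n_1>l\}\subset B(l)$, the Jacobian lower bound $J_\nu f>e^{\log q+\vep_0}$, and the combinatorial count from Lemma~\ref{l.combinatorio}, finishing the accumulation-point case by Portmanteau plus outer regularity. The only cosmetic difference is that you compute Radon--Nikodym derivatives pointwise while the paper works with $\nu$-measures of small sets $A$ and sums over inverse branches or cylinders. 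One caveat worth flagging: your justification of disjointness of the pre-balls invokes connectedness of $B(x,\delta)$, which is not available in the general Besicovitch metric space setting of the paper (e.g.\ the Cantor-set example); the paper uses $\sum_i\nu(V_i)\le1$ just as implicitly, so this is a shared subtlety rather than a defect peculiar to your write-up.
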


\begin{proof}
Let $A$ be any measurable set of small diameter, say
$\diam(A)<\de/2$, and such that $\nu(A)>0$. First we claim that
there is $C_2>0$ such that
$$
f^n_*(\nu\mid H_n)(A) \leq C_2 \, \nu(A), \quad \forall n \geq 1.
$$
Observe that either $f^n_*(\nu|H_n)(A)=0$, or $A$ is contained in a
ball $B=B(f^n(x),\de)$ of radius $\de$ for some $x\in H_n$. In the
first case we are done. In the later situation we compute
$$
f^n_*(\nu \mid H_n)(A)
    =\nu(f^{-n}(A) \cap H_n)
    = \sum_{i} \nu(f_{i}^{-n}(A \cap B)),
$$
where the sum is over all hyperbolic inverse branches $f_i^{-n}:B
\to V_i$ for $f^n$. Recall that the $\nu$-measure of any positive
measure ball of radius $\de$ is at least $\xi(\de)>0$ by
Lemma~\ref{l.uniform ball measure}. Thus, by bounded distortion
%Corollary~\ref{lem. bounded distortion}
$$
f^n_*(\nu \mid H_n)(A)
    \leq K_0 \sum_{i} \frac{\nu(A)}{\nu (B)} \, \nu(V_i)
    \leq K_0 \,\xi(\de)^{-1}\, \nu(A),
$$
which proves our claim with $C_2=K_0 \,\xi(\de)^{-1}$. It follows
from the arbitrariness of $A$ that both $f^n_*(\nu \mid H_n)$ and
$\mu_n$ are absolutely continuous with respect to $\nu$ with density
bounded from above by $C_2$.

Similar estimates on the density of $\eta_n$ hold using that
$\{n_1>n\} \subset B(n)$, there are at most $e^{c_\ga n}$ cylinders
in $B(n)$, and that $J_\nu f^n > e^{(\log q + \vep_0) n}$ on each of
one of them. Indeed,
$$
((f^{l}_*\nu) | \{n_1>l\})(A)
    \leq \sum_{\substack{ P \in \cP^{(l)} \\ P \cap B(l) \neq \emptyset }}
        \nu (f^{-l}(A) \cap P)
    \leq \# B(l) \; e^{-(\log q + \vep_0) l} \nu(A)
$$
for every $l \geq 1$ and every measurable set $A$. Using that
$df^n_*(\nu \mid H_n)/d\nu \leq K_0 \,\xi(\de)^{-1}$ and summing up
the previous terms one concludes that
$$
\eta_j(A)
    \leq K_0 \,\xi(\de)^{-1} \sum_{l=0}^\infty e^{-\frac{\vep_0}{4}l} \;
    \nu(A), \, \forall j \geq 1.
$$
This shows that (up to replace $C_2$ by a larger constant) the
measures $\nu_n$ are also absolutely continuous with respect to
$\nu$ and that $d\nu_n/d\nu$ is bounded from above by $C_2$. The
second assertion in the lemma is an immediate consequence by
weak$^*$ convergence.
\end{proof}

The following lemma, whose proof explores the generating property of
hyperbolic pre-balls, plays a key role in proving finitude of
equilibrium states.

\begin{lemma}\label{support Gibbs} If $G$ is an
$f$-invariant set such that $\nu(G)>0$ then there is a disk $\Delta$
of radius $\delta /4$ so that $\nu(\Delta \backslash G)=0$.
\end{lemma}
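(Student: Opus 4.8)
The plan is to argue by contradiction and to reduce everything to a differentiation statement for hyperbolic pre-balls. If $\nu(G)=1$ the conclusion is immediate, since then $\nu(B(z,\delta/4)\setminus G)=0$ for any $z\in\supp(\nu)$, and such a ball has positive $\nu$-measure by Lemma~\ref{l.uniform ball measure}; so I may assume $0<\nu(G)<1$ and suppose, for a contradiction, that $\nu(B(z,\delta/4)\setminus G)>0$ for every $z\in\supp(\nu)$. A compactness argument exactly like the one in Lemma~\ref{l.uniform ball measure} then produces $\beta>0$ with
\begin{equation*}
\nu\big(B(z,\delta/2)\setminus G\big)\ \ge\ \beta\qquad\text{for every }z\in\supp(\nu):
\end{equation*}
if the infimum of the left side over $z\in\supp(\nu)$ were $0$, a sequence $z_k\to z_*\in\supp(\nu)$ attaining it would give, via $B(z_*,\delta/4)\subset B(z_k,\delta/2)$ for large $k$, that $\nu(B(z_*,\delta/4)\setminus G)=0$, against the assumption; hence $\beta>0$.

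Next I would transport this bound to hyperbolic pre-balls. Fix $n\ge1$ and $x\in(G\cap H_n)\cap\supp(\nu)$. Since $\supp(\nu)$ is $f$-invariant (Lemma~\ref{l.uniform ball measure}), also $f^n(x)\in\supp(\nu)$, so $\nu(B(f^n(x),\delta/2)\setminus G)\ge\beta$. By Lemma~\ref{delta}, $f^n$ maps $V_n(x)=B(x,n,\delta)$ homeomorphically onto $B(f^n(x),\delta)$; because $f^{-1}(G)=G$ and $f^n$ is injective on $V_n(x)$, the preimage $W\subset V_n(x)$ of $B(f^n(x),\delta/2)$ under this homeomorphism satisfies $f^n(W\setminus G)=B(f^n(x),\delta/2)\setminus G$. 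Using the bounded distortion of $J_\nu f^n$ on $V_n(x)$ given by Corollary~\ref{lem. bounded distortion} (applicable since $J_\nu f=\la e^{-\phi}$ is H\"older) together with $\nu(B(f^n(x),\delta))\le1$, one gets
\begin{equation*}
\nu\big(V_n(x)\setminus G\big)\ \ge\ \nu\big(W\setminus G\big)\ \ge\ K_0^{-1}\,\beta\,\nu\big(V_n(x)\big),
\end{equation*}
that is, $\nu(V_n(x)\cap G)\le(1-K_0^{-1}\beta)\,\nu(V_n(x))$. Thus $G$ occupies at most the fixed fraction $1-K_0^{-1}\beta<1$ of every hyperbolic pre-ball based at a point of $G$.

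Now I would use the generating property of hyperbolic pre-balls to contradict this. Since $\nu$ is expanding (Corollary~\ref{c.nu.expanding}), $\nu$-a.e.\ point of $G$ has infinitely many hyperbolic times $n_1(x)<n_2(x)<\cdots$; by Remark~\ref{r.concatenation} the pre-balls $V_{n_1(x)}(x)\supset V_{n_2(x)}(x)\supset\cdots$ are nested, and by Lemma~\ref{delta} their diameters are at most $2\delta\,e^{-c\,n_k(x)/2}\to0$, so they shrink to $x$. The key point to establish is that this per-orbit sequence of pre-balls differentiates $\nu$, i.e.
\begin{equation*}
\frac{\nu\big(V_{n_k(x)}(x)\cap G\big)}{\nu\big(V_{n_k(x)}(x)\big)}\ \longrightarrow\ \chi_G(x)\qquad(k\to\infty)
\end{equation*}
for $\nu$-a.e.\ $x$. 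Choosing such an $x$ which in addition lies in $G\cap\supp(\nu)$ — $\nu$-a.e.\ point of $G$ does — the left side tends to $1$, whereas the previous paragraph bounds it above by $1-K_0^{-1}\beta$; this contradiction shows that $\nu(B(z,\delta/4)\setminus G)=0$ for some $z\in\supp(\nu)$, which is the disk we wanted (and it has positive $\nu$-measure).

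The hard part is the differentiation statement in the last paragraph. Hyperbolic pre-balls need not be comparable to metric balls: the inverse branches of $f$ are Lipschitz but $f$ itself is not assumed to be, so $V_n(x)$ may be far thinner than $B(x,\diam V_n(x))$, and the Lebesgue density theorem in the Besicovitch space $M$ does not apply to them directly. The way around this is to exploit the dynamical renewal structure of the pre-balls along a fixed orbit: they are nested and satisfy the concatenation identity $V_{n+k}(x)=V_n(x)\cap f^{-k}\big(V_k(f^n(x))\big)$ of Remark~\ref{r.concatenation}, while each $f^{n_k(x)}$ carries $V_{n_k(x)}(x)$ onto a ball of the fixed radius $\delta$ with uniformly bounded distortion (Corollary~\ref{lem. bounded distortion}); this reduces the claimed differentiation, at $\nu$-a.e.\ point, to a reverse-martingale-type convergence along the dynamics induced by the first hyperbolic time. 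Carrying this out carefully is the core of the proof; everything else is bookkeeping with Lemmas~\ref{delta} and~\ref{lem. bounded distortion}.
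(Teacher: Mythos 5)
Your proof has a genuine gap at precisely the point you flag yourself. You reduce the lemma to the statement that hyperbolic pre-balls differentiate $\nu$, i.e.\ $\nu(V_{n_k(x)}(x)\cap G)/\nu(V_{n_k(x)}(x))\to\chi_G(x)$ for $\nu$-a.e.\ $x$ along the sequence of hyperbolic times, and then you say ``carrying this out carefully is the core of the proof.'' That is exactly right, but it means you have not given a proof: you have restated the lemma in differentiation form. Hyperbolic pre-balls are not a Besicovitch-type differentiation basis for $\nu$ (they can be wildly eccentric, and their geometry depends on the full orbit segment), so there is no off-the-shelf Lebesgue density theorem to invoke, and the ``reverse-martingale'' heuristic you sketch does not obviously go through: the collection $\{V_{n}(x): x\in H_n\}$ is not a sequence of refining partitions, and the concatenation identity alone does not produce the Markov structure a martingale argument would need. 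In fact, proving your differentiation claim by hand would require essentially the same Vitali-type covering argument on hyperbolic pre-balls that the paper deploys directly, so your route is somewhat circular.

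The paper sidesteps differentiation entirely. It approximates $G\cap H$ from inside by a compact $K$ and from outside by an open $O$ with $\nu(O\setminus K)<\vep\,\nu(K)$, covers $K$ by pre-balls $W(x)=B(x,n(x),\delta/4)$ whose enlargements $V(x)=B(x,n(x),\delta)$ sit inside $O$, and runs a greedy Vitali-type selection of a disjoint subfamily $\{W(x): x\in\tilde X\}$ so that the omitted $W(y)$'s are absorbed into the $V(x)$'s. Bounded distortion on hyperbolic pre-balls (Corollary~\ref{lem. bounded distortion}) gives a uniform lower bound $\nu(\bigcup W(x))\ge\tau\,\nu(\bigcup V(x))$, which yields one $x$ with $\nu(W(x)\setminus G)/\nu(W(x))<\tau^{-1}\vep$; pushing forward by $f^{n(x)}$ (again with bounded distortion, and using $f^{-1}(G)=G$) produces a metric ball $B$ of radius $\delta/4$ with $\nu(B\setminus G)<\tau^{-1}K_0\vep$. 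Letting $\vep\to0$ and taking a convergent subsequence of centers gives the required disk. The parts of your write-up before the differentiation claim are fine (the compactness step, the identity $f^n(W\setminus G)=B(f^n(x),\delta/2)\setminus G$ from invariance and injectivity, the distortion estimate), but to close the argument you would need to replace the unproved differentiation step with a covering argument of this kind.
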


\begin{proof}
In the case that $\nu$ coincides with the Lebesgue measure this
corresponds to \cite[Lemma 5.6]{ABV00}. Since the argument will be
used later on, we give a brief sketch of the proof.

Let $\vep>0$ be small. Take a compact $K$ and an open set $O$ such
that $K \subset G \cap H \subset O$ and $\nu(O\setminus K) < \vep
\nu(K)$. Set $n_0 \in \N$ such that $B(x,n,\de) \subset O$ for any
$x \in K \cap H_n$. If $n(x)$ denotes the first hyperbolic time of
$x$ larger than $n_0$ then
$$
K \subset \bigcup_{x \in K} B(x,n(x),\de/4) \subset O.
$$
Set $V(x)=B(x,n(x),\de)$ and $W(x)=B(x,n(x),\de/4)$. Since $K$ is
compact it is covered by finite open sets $(W(x))_{x \in X}$ for
some family $X=\{x_1, \dots, x_k\}$. Now we proceed recursively and
define
$$
n_1=\inf\{ n(x): x \in X \}
    \quad\text{and}\quad
X_1=\{x \in X : n(x)=n_1\}
$$
and, assuming that $n_i$ and $X_i$ are well defined for $1 \leq i
\leq m-1$, set
$$
n_m=\inf\{ n(x): x \in X \setminus (X_1 \cup \dots \cup X_{m-1}) \}
    \quad\text{and}\quad
X_m=\{x \in X : n(x)=n_m\}
$$
up to some finite positive integer $s$. Let $\ti X_1 \subset X_1$ be
a maximal family of points with pairwise disjoint $W(\cdot)$
elements. Moreover, given $\ti X_i \subset X_i$ for $1 \leq i \leq
m-1$ let $\ti X_m \subset X_m$ maximal such that every $W(x)$, $x
\in \ti X_m$, does not intersect any element $W(y)$ for some $y \in
\ti X_1 \cup \dots \ti X_m$. If $\ti X= \cup\{\ti X_i : 1 \leq i
\leq s\}$ then the dynamical balls $W(x)$, $x \in \ti X$, are
pairwise disjoint (by construction). It is also easy to see that for
every $y \in X$ there exists $x \in \ti X$ such that $W(y) \subset
V(x)$. Hence
$$
\nu\Big(\bigcup_{x \in \ti X} W(x) \setminus K\Big)
    \leq \nu(O \setminus K)
    < \vep \nu(K)
$$
and, by the bounded distortion property,
$$
\nu\Big(\bigcup_{x \in \ti X} W(x)\Big)
    \geq \tau \nu\Big(\bigcup_{x \in \ti X} V(x)\Big)
$$
for some $\tau>0$. We conclude immediately that there exists $x \in
\ti X$ such that
$$
\frac{\nu(W(x) \setminus G)}{\nu(W(x))} \leq \frac{\nu(W(x)
\setminus K)}{\nu(W(x))} < \tau^{-1}\vep.
$$
Using the bounded distortion of $f^n$ restricted to the dynamical
ball $W(x)$ once more it follows that
$$
\nu(B \setminus f^{n}(G))< \tau^{-1} K_0 \vep,
$$
where $B$ is a ball of radius $\de/4$ around $f^n(x)$. Since $\vep$
was arbitrary and $G$ is invariant then there exists a sequence
$\De_n$ of balls of radius $\de/4$ such that $\nu(\De_n \setminus G)
\to 0$ as $n\to\infty$. By compactness, the sequence $(\De_n)_n$
accumulate on a ball $\De$ that satisfies the requirements of the
lemma.
\end{proof}

We are now in a position to show that there are finitely many
distinct ergodic measures $\mu_1,\mu_2,\dots, \mu_k$ absolutely
continuous with respect to $\nu$. Indeed, let $\mu$ be any invariant
measure that is absolutely continuous. Then, either $\mu$ is ergodic
or there are disjoint invariant sets $I_1$ and $I_2$ of positive
$\nu$-measure such that
$\mu(\cdot)
    =a_1 \mu(\cdot \cap I_1 )/\mu(I_1) + a_2 \mu(\cdot \cap
    I_2)/\mu(I_2),$
where $a_i=\mu(I_i)$. In the later case it is also clear that each
of the measures involved in the sum is absolutely continuous with
respect to $\nu$. Repeating the process one obtains that $\mu$ can
be written as linear convex combination of ergodic absolutely
continuous invariant measures $\mu_1,\mu_2,\dots, \mu_k$. Indeed,
since $M$ is compact the previous lemma implies that this process
will stop after a finite number of steps (depending only on $\de$)
with each $\mu_i$ ergodic. It is also clear from the construction
that each $\mu_i$ is expanding and that their basins cover almost
every point.

%%%%%%%%%%%%%%%%%%%%%%%%%%%%%%%%%%%%%%%%%%%%%%%%%%%%%%%%%%%%%%%%%%%%%%%
\subsection{Invariant non-lacunary Gibbs measure}

Through the rest of this section assume that $f$ is topologically
mixing. Here we prove that there is a unique invariant measure $\mu$
absolutely continuous with respect to $\nu$ and that it is a
non-lacunary Gibbs measure. This will complete the proof of
Proposition~\ref{prop. acim}. We begin with a couple of auxiliary
lemmas. Let $\theta>0$ and $\de>0$ be given by Lemmas~\ref{positive
density} and~\ref{delta}.

\begin{lemma}
There exists a constant $\tau_0>0$, and for any $n$ there is a
finite subset $\hat H_n$ of $H_n$ such that the dynamical balls
$B(x,n,\de/4)$, $x \in \hat H_n$, are pairwise disjoint and their
union $W_n$ satisfies $ \nu (W_n) \geq \tau_0 \nu(H_n)$.
\end{lemma}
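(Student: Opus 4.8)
The plan is a Vitali-type selection among the hyperbolic pre-balls of length $n$. We may assume $\nu(H_n)>0$, taking $\hat H_n=\emptyset$ otherwise, and since $\nu(M\setminus\supp(\nu))=0$ and $\supp(\nu)$ is forward invariant (Lemma~\ref{l.uniform ball measure}) I would work inside $H_n\cap\supp(\nu)$, along which $f^n(x)\in\supp(\nu)$. Recall from Lemma~\ref{delta} that, for $x\in H_n$, the map $f^n$ sends the pre-ball $V_n(x)=B(x,n,\de)$ homeomorphically onto $B(f^n(x),\de)$ and contracts backwards along the orbit; hence the point of $V_n(x)$ mapped by $f^n$ to any given point of $B(f^n(x),\de/4)$ stays within $\de/4$ of the orbit of $x$ at all intermediate times, which gives $f^n\big(B(x,n,\de/4)\big)\supset B(f^n(x),\de/4)$.

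First I would produce the disjoint family. Write $W(x)=B(x,n,\de/4)$ for $x\in H_n\cap\supp(\nu)$. Each $W(x)$ has positive $\nu$-measure, bounded below uniformly in $x$: since $f^n\mid W(x)$ is injective and $J_\nu f^n=\prod_{j=0}^{n-1}\big((\la e^{-\phi})\circ f^j\big)\le(\la e^{-\inf\phi})^n$, we get
\[
\xi(\de/4)\le\nu\big(B(f^n(x),\de/4)\big)\le\nu\big(f^n(W(x))\big)=\int_{W(x)}J_\nu f^n\,d\nu\le(\la e^{-\inf\phi})^n\,\nu(W(x)).
\]
Consequently every pairwise disjoint subfamily of $\{W(x):x\in H_n\cap\supp(\nu)\}$ has at most $(\la e^{-\inf\phi})^n\,\xi(\de/4)^{-1}$ elements, so I may choose a finite $\hat H_n\subset H_n\cap\supp(\nu)$ with $\#\hat H_n$ \emph{maximal} subject to the balls $W(x)$, $x\in\hat H_n$, being pairwise disjoint; set $W_n=\bigcup_{x\in\hat H_n}W(x)$.

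Next comes the covering estimate, which follows from maximality: any $y\in H_n\cap\supp(\nu)$ has $W(y)$ meeting some $W(x)$, $x\in\hat H_n$, for otherwise $\hat H_n\cup\{y\}$ would enlarge the disjoint family. If $W(y)\cap W(x)\ne\emptyset$ then $d(f^j(x),f^j(y))\le\de/2$ for $0\le j\le n$, hence $W(y)\subset B(x,n,\de)=V_n(x)$; therefore $H_n\cap\supp(\nu)\subset\bigcup_{x\in\hat H_n}V_n(x)$ and $\nu(H_n)\le\sum_{x\in\hat H_n}\nu(V_n(x))$. Finally I would compare $\nu(V_n(x))$ with $\nu(W(x))$ uniformly in $n$. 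As $W(x)\subset V_n(x)$ and $f^n\mid V_n(x)$ is injective, Corollary~\ref{lem. bounded distortion} applied to $J_\nu f=\la e^{-\phi}$ bounds the oscillation of $J_\nu f^n$ on $V_n(x)$ by $K_0$; combining
\[
\nu(V_n(x))\,\inf_{V_n(x)}J_\nu f^n\le\nu\big(f^n(V_n(x))\big)=\nu\big(B(f^n(x),\de)\big)\le 1
\]
with $\nu(W(x))\,\sup_{V_n(x)}J_\nu f^n\ge\nu\big(B(f^n(x),\de/4)\big)\ge\xi(\de/4)$ yields $\nu(V_n(x))\le K_0\,\xi(\de/4)^{-1}\,\nu(W(x))$. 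Summing over $\hat H_n$ and using disjointness of the $W(x)$ gives $\nu(H_n)\le K_0\,\xi(\de/4)^{-1}\,\nu(W_n)$, so the statement holds with $\tau_0=\xi(\de/4)/K_0>0$, independent of $n$.

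The only delicate point is precisely this last comparison: to get a constant $\tau_0$ independent of $n$ one must use that the distortion bound $K_0$ in Corollary~\ref{lem. bounded distortion} is uniform over all hyperbolic times, together with the uniform lower bound $\xi(\de/4)$ for the $\nu$-measure of $\de/4$-balls centered on $\supp(\nu)$ from Lemma~\ref{l.uniform ball measure}; the rest is a soft maximality argument which, incidentally, does not use topological mixing.
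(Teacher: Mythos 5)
Your argument is correct, but it proceeds differently from the paper. The paper disposes of this lemma in a few lines by pushing the measure forward: it applies the covering lemma of \cite[Lemma~3.4]{ABV00} (a Besicovitch-type statement, valid for an arbitrary finite measure) to $\om=f^n_*(\nu\mid \cup_{x\in H_n}B(x,n,\de/4))$ on the image side, obtaining finitely many disjoint balls of radius $\de/4$ in $M$ carrying a definite fraction $\tau_0$ of $\om$, and then pulls back through the bijections $f^n\mid B(x,n,\de/4)$; there the constant $\tau_0$ comes from the Besicovitch constant of the space and no distortion estimate is needed. You instead perform the selection on the domain side: a maximal (in cardinality) pairwise disjoint subfamily of the balls $B(x,n,\de/4)$, finiteness coming from the uniform lower bound $\nu(B(x,n,\de/4))\ge \xi(\de/4)(\la e^{-\inf\phi})^{-n}$, the engulfing property $B(y,n,\de/4)\subset B(x,n,\de)$ when two such balls meet, and then the comparison $\nu(B(x,n,\de))\le K_0\,\xi(\de/4)^{-1}\nu(B(x,n,\de/4))$ via Corollary~\ref{lem. bounded distortion} and Lemma~\ref{l.uniform ball measure}; your verification that $f^n(B(x,n,\de/4))\supset B(f^n(x),\de/4)$ via the backward contraction in Lemma~\ref{delta}, and your restriction to $H_n\cap\supp\nu$ so that $f^n(x)\in\supp\nu$, are exactly the points that need care, and they are handled correctly. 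What each approach buys: the paper's route is shorter and works for any finite reference measure, with $\tau_0$ depending only on the dimension; yours is self-contained (no appeal to the ABV00 covering lemma), yields the explicit constant $\tau_0=\xi(\de/4)/K_0$, and correctly avoids any use of topological mixing, but it does lean on the conformal structure of $\nu$ (H\"older Jacobian, hence uniform distortion at hyperbolic times), which the paper's proof of this particular lemma does not require.
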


\begin{proof}
This lemma is a direct consequence of Lemma~3.4 in \cite{ABV00}.
Indeed, if $\om=f^n_*(\nu\mid \cup \{B(n,x,\de/4) : x \in H_n\})$,
$\Om=f^n(H_n)=M$ and $r=\de$ in that lemma then there exists a
finite set $I \subset f^n(H_n)$ such that the pairwise disjoint
union $\De_n$ of balls of radius $\de/4$ around points in $I$
satisfies
$$
\om \big(\De_n \cap f^n(H_n)\big)
    \geq \tau_0 \,\om (f^n(H_n)).
$$
Set $\hat H_n= H_n \cap f^{-n}(I)$. As the restriction of $f^n$ to
any dynamical ball $B(x,n,\de/4)$, $x \in \hat H_n$ is a bijection
it is easy to see that these dynamical balls are pairwise disjoint.
Furthermore, their union $W_n$ satisfies $\nu \big( W_n \big) \geq
\tau_0\, \nu (H_n)$. This completes the proof of the lemma.
\end{proof}

In the remaining of the section, let $\mu$ be an arbitrary
accumulation point of the sequence $(\nu_n)_n$ and $(n_k)_k$ be a
subsequence of the integers such that
$$
\mu=\lim_{k\to\infty} \nu_{n_k}.
$$
In the next lemmas we prove that the density $d\mu/d\nu$ is bounded
away from zero in some small disk and use this to deduce the
uniqueness of the equilibrium state and the non-lacunar Gibbs
property.

\begin{lemma}\label{l.density lower bound}
There exists $C_1>0$ and a small disk $D(x)$ around a point $x$ in
$M$ such that the density $d\mu/d\nu$ in the disk $D(x)$ is bounded
from below by $C_1$.
\end{lemma}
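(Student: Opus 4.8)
The plan is to produce enough mass of $\mu$ on a single disk to force a uniform lower bound for $d\mu/d\nu$ there; merely knowing $\int (d\mu/d\nu)\,d\nu=1$ and $d\mu/d\nu\le C_2$ (from Lemma~\ref{density control}) only yields that $\{d\mu/d\nu>0\}$ has positive $\nu$-measure, which is too weak. So I would work directly along the Cesàro sums $\nu_{n_k}\to\mu$. First, I bound $\nu_n$ from below using hyperbolic pre-balls: by the preceding lemma, for each $i$ there is a finite set $\hat H_i\subset H_i$ with the balls $B(x,i,\delta/4)$, $x\in\hat H_i$, pairwise disjoint, of union $W_i$ with $\nu(W_i)\ge\tau_0\,\nu(H_i)$; since $\nu\mid W_i\le\nu$ we get $\nu_n\ge\sigma_n:=\frac1n\sum_{i=0}^{n-1}f^i_*(\nu\mid W_i)$. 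Each $f^i_*(\nu\mid B(x,i,\delta/4))$ is carried by the ball $B(f^i(x),\delta/4)$ (definition of dynamical ball together with Lemma~\ref{delta}), and, writing $J_\nu f^i=\la^i e^{-S_i\phi}$ and using the bounded distortion of Corollary~\ref{lem. bounded distortion} on $V_i(x)$ together with $\nu(B(f^i(x),\delta/4))\le 1$, its density with respect to $\nu$ on that ball is at least $K_0^{-1}\,\nu(B(x,i,\delta/4))$.

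Next I would locate a fixed disk. Fix a finite cover $D_1,\dots,D_N$ of $M$ by disks of radius $\delta/8$; every ball $B(f^i(x),\delta/4)$ contains some $D_l$. Summing the density estimate over $x\in\hat H_i$ and grouping according to which $D_l$ sits inside $B(f^i(x),\delta/4)$, a pigeonhole argument on the weights $\nu(B(x,i,\delta/4))$ (whose total is $\nu(W_i)\ge\tau_0\nu(H_i)$) produces an index $l_i$ with $\frac{d}{d\nu}f^i_*(\nu\mid W_i)\ge\frac{\tau_0}{NK_0}\,\nu(H_i)$ on $D_{l_i}$. By Corollary~\ref{c.definite fraction}, $\frac1n\sum_{i<n}\nu(H_i)\ge\theta/2-o(1)$, so averaging in time and pigeonholing once more over $l\in\{1,\dots,N\}$ gives, for every large $n$, an index $l(n)$ with $\sigma_n\ge c\,(\nu\mid D_{l(n)})$, where $c\ge\frac{\tau_0\theta}{4N^2K_0}$.

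Then I would pass to the limit. Choose a subsequence of $(n_k)$ along which $\sigma_{n_k}$ converges, say to $\sigma$, and along which $l(n_k)$ equals a fixed value $l^*$. Then $\mu\ge\sigma$ (weak$^*$ limit of $\nu_{n_k}\ge\sigma_{n_k}$), and testing against non-negative continuous functions supported in the open disk $D_{l^*}$ shows $\sigma\ge c\,(\nu\mid D_{l^*})$; since $\sigma-c\,\nu$ is then a signed measure that is non-negative on every open subset of $D_{l^*}$, it is non-negative on all Borel subsets of $D_{l^*}$. Hence $\mu(A)\ge\sigma(A)\ge c\,\nu(A)$ for every Borel $A$ contained in a slightly smaller closed disk $D(x)\subset D_{l^*}$, i.e. $d\mu/d\nu\ge C_1:=c$ $\nu$-almost everywhere on $D(x)$.

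The main obstacle I expect is the second step: the hyperbolic pre-balls are disjoint in the source, but their images $B(f^i(x),\delta/4)$ wander over $M$ and may overlap, so one has to extract a \emph{single} disk that keeps receiving a definite, uniformly bounded-below share of the mass along the whole subsequence — hence the two successive pigeonhole arguments — and then ensure that this lower bound, not merely positivity, survives the weak$^*$ limit, which is why one restricts to a closed disk strictly inside $D_{l^*}$ and tests only against functions supported there. The remaining ingredients (density control, distortion on pre-balls, positive density of hyperbolic times, integrability of $n_1$) are already in place from the earlier subsections.
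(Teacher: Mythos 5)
Your argument is correct, and it follows the same overall strategy as the paper: bound $\nu_n$ below by the push-forwards of $\nu$ restricted to the disjoint unions $W_j$ of hyperbolic pre-balls, use the distortion estimate of Corollary~\ref{lem. bounded distortion} plus $\nu(B(f^j(x),\delta/4))\le 1$ to get a pointwise density bound on the image balls, feed in Corollary~\ref{c.definite fraction} with $A=M$, and pass to the weak$^*$ limit. The only genuine difference is the device used to pin down a single disk. The paper shrinks the image disks to radius $\delta/4-\varepsilon$, takes a weak$^*$ accumulation point $\nu^\varepsilon_\infty$ of the corresponding push-forwards (with mass bounded below by the definite fraction of hyperbolic times), and chooses $x$ in $\supp\nu^\varepsilon_\infty$: then any disk $D(x)$ of radius $<\varepsilon$ is contained in every full-size image disk whose shrunken version meets $D(x)$, and the density on $D(x)$ is bounded below by $K_0^{-1}\nu^\varepsilon_\infty(D(x))/2$. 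You instead fix a cover of $M$ by $N$ disks of radius $\delta/8$ and pigeonhole twice (over the cover at each time, then over the cover along the time average), extracting a constant label $l^*$ along a subsequence. Your route is somewhat more elementary — it avoids the auxiliary measure $\nu^\varepsilon_\infty$ and the shrinking-geometry argument — at the price of a constant depending on the covering number $N$, which is harmless. One small point you use implicitly and should record: the density bound $K_0^{-1}\nu(B(x,i,\delta/4))$ on \emph{all} of $B(f^i(x),\delta/4)$ requires that $f^i$ maps $B(x,i,\delta/4)$ \emph{onto} $B(f^i(x),\delta/4)$, which does follow from the backward contraction in Lemma~\ref{delta} (the inverse branch on $B(f^i(x),\delta)$ sends $B(f^i(x),\delta/4)$ into the dynamical ball); the paper relies on the same fact when it treats the disks $\Delta_j$ as full images of the pre-balls. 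With that noted, all steps (including the passage of the lower bound, not just positivity, through the weak$^*$ limit via testing against nonnegative continuous functions and regularity) are sound.
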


\begin{proof}
Given a small $\vep>0$ we construct a disk $D(x)$ of radius smaller
than $\vep$ where the assertion above holds. Let $W_j$ and $\hat
H_j$ be given by the previous lemma and let $W_{j,\vep} \subset W_j$
denote the preimages by $f^j$ of the disks $\De_{j,\vep}$ of radius
$\de/4-\vep$ around points in $f^j(\hat H_j)$. Lemma~\ref{lem.
bounded distortion} implies that
$$
\frac{\nu(W_{j,\vep})}{\nu(W_j)}
    \geq K_0^{-1} \frac{\nu(\De_{j,\vep})}{\nu(\De_j)},
$$
where the right hand side is larger than some uniform positive
constant $\tau_1$ that depends only on the radius of the disks
$\De_{j,\vep}$ (recall Lemma~\ref{l.uniform ball measure}). Observe
also that Corollary~\ref{c.definite fraction} with $A=M$ implies
that
$$
\frac{1}{n}\sum_{j=0}^{n-1} \nu (H_j) \geq \theta/2
$$
for every large $n$. This shows that there is a positive constant
$\tau_2$ such that the measures $\nu^\vep_n$ satisfy
$\nu^\vep_{n}(M) \geq \tau_2$ for every large $n$, where
$$
\nu^\vep_{n}=
    \frac{1}{n}\sum_{j=0}^{n-1} f_*^j (\nu|{W_{j,\vep}}).
$$
Thus, there exists a subsequence of $(\nu^\vep_{n_k})_k$ that
converge to some measure $\nu^\vep_\infty$ and
$$
\supp(\nu^\vep_\infty)
    \subset \bigcap_{n \geq 1} \big(\overline{ \bigcup_{j \geq n} \De_{j,\vep}}\big).
$$
Choose $x \in \supp(\nu^\vep_{\infty})$ and a disk $D(x)$ of radius
smaller than $\vep$ around $x$ such that $\nu^\vep_{\infty}(\partial
D(x))=0$. By construction, $D(x)$ is contained in every disk of
$\De_j$ such that the corresponding disk of $\De_{j,\vep}$
intersects $D(x)$. Let $\ti\De_j$ denote the pairwise disjoint union
of disks in $\De_j$ that contain $D(x)$ and $\ti W_j$ be defined
accordingly as the preimages of $\ti \De_j$. It is clear that $\nu_n
\geq \nu_n^0$, where
$$
\nu^0_n=
    \frac{1}{n}\sum_{j=0}^{n-1} f_*^j (\nu|{\tilde W_j}).
$$
Moreover, since $d \;f^j_*(\nu \mid \ti W_j)/d\nu$ is H\"older
continuous, the bounded distortion at Lemma~\ref{lem. bounded
distortion} implies that it is bounded from below by its $L^1$ norm
up to the multiplicative constant $K_0^{-1}$. So,
$$
\frac{d\nu^0_n}{d\nu}(y)
        = \frac{1}{n} \sum_{j=0}^{n-1} \frac{d \;f^j_*(\nu \mid \ti W_j)}{d\nu}(y)
        = \frac{1}{n} \sum_{j=0}^{n-1} \Big[ \sum_{\substack{ f^j(z)=y\\ z \in \ti W_j}}
                \la^{-j} e^{S_j \phi(z)}\Big]
        \geq K_0^{-1} \frac{1}{n} \sum_{j=0}^{n-1} \nu(\tilde W_j)
$$
for every $y \in D(x)$. Furthermore, by construction the set
$W_{j,\vep} \cap f^{-j}(D(x))$ is contained in $\tilde W_j$. This
guarantees that
$$
\frac{d\nu^0_n}{d\nu}(y)
    \geq K_0^{-1} \frac{1}{n} \sum_{j=0}^{n-1} \nu(\tilde W_j)
    \geq K_0^{-1} \nu^\vep_n(D(x))
    \geq K_0^{-1} \frac{\nu^\vep_\infty(D(x))}{2}
$$
for every large $n \geq 1$ in the subsequence of $(n_k)_k$ chosen
above. By weak$^*$ convergence it holds that $d\mu/d\nu \geq C_1$ in
the disk $D(x)$.
\end{proof}

We finish this section by proving the uniqueness of the equilibrium
state, which completes the proof of Proposition~\ref{prop. acim}.

\begin{lemma}\label{l. Gibbs property acim}
If $f$ is topologically mixing there is a unique invariant measure
$\mu$ absolutely continuous with respect to $\nu$. Moreover, the
density $d\mu/d\nu$ is bounded away from zero and infinity and the
sequences of hyperbolic times $\{n_j(x)\}$ are non-lacunary
$\mu$-almost everywhere. Furthermore, $\mu$ is a non-lacunary Gibbs
measure.
\end{lemma}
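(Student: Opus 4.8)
The plan is to establish the four assertions of the lemma in order, relying throughout on topological mixing — which, since $f:M\to M$ is surjective (every point has at least $e^{h(f)}\ge 1$ preimages), gives $f^n(U)=M$ for every nonempty open $U$ and all large $n$ — and on the fact that the Jacobian $J_\nu f=\la e^{-\phi}$ is bounded away from $0$ and $\infty$, so that, after splitting along the finitely many domains of injectivity in $\cP$, one has $\nu(f^N(A))\le\textrm{(constant)}\cdot\nu(A)$ for every Borel set $A$ and every fixed $N$; in particular $f^N$ maps $\nu$-null sets to $\nu$-null sets.

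First I would prove uniqueness by contradiction. If there were two distinct ergodic absolutely continuous invariant measures $\mu_1\neq\mu_2$, their basins $B(\mu_1),B(\mu_2)$ would be disjoint, fully $f$-invariant, and of positive $\nu$-measure (because $\mu_i\ll\nu$ and $\mu_i(B(\mu_i))=1$). Lemma~\ref{support Gibbs} applied to each basin produces disks $\Delta_1,\Delta_2$ of radius $\de/4$ with $\nu(\Delta_i\setminus B(\mu_i))=0$. Choosing $N$ with $f^N(\Delta_1)=M\supset\Delta_2$ and writing $\Delta_1$ as the union of $\Delta_1\cap B(\mu_1)$, whose $f^N$-image stays inside $B(\mu_1)$ by invariance, and the $\nu$-null set $\Delta_1\setminus B(\mu_1)$, whose $f^N$-image is again $\nu$-null, one gets $\nu(M\setminus B(\mu_1))=0$, contradicting $\nu(B(\mu_2))>0$. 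Hence there is a single ergodic absolutely continuous invariant measure and, by the convex-decomposition argument of Subsection~\ref{sec. upper bound and finitude}, a unique absolutely continuous invariant measure $\mu$. In particular, by Lemma~\ref{density control} every weak$^*$ accumulation point of $(\nu_n)_n$ is absolutely continuous and invariant, hence equals $\mu$, so $(\nu_n)_n$ converges to $\mu$.

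Next I would bound the density $d\mu/d\nu$. The upper bound $d\mu/d\nu\le C_2$ is Lemma~\ref{density control}; for the lower bound, Lemma~\ref{l.density lower bound} gives $d\mu/d\nu\ge C_1>0$ on some small disk $D$. Fix $N$ with $f^N(D)=M$. Given $y\in M$, pick $z\in D$ with $f^N(z)=y$ and a neighborhood $V\subset D$ of $z$ that $f^N$ maps homeomorphically onto a neighborhood of $y$; for every small ball $B=B(y,r)$ inside $f^N(V)$, let $W\subset D$ be its preimage under this homeomorphism. Then, using the invariance of $\mu$ and $J_\nu f^N=\la^N e^{-S_N\phi}\le\la^N e^{-N\inf\phi}$,
\[
\mu(B)\ \ge\ \mu(W)\ \ge\ C_1\,\nu(W)\ \ge\ C_1\,\la^{-N}e^{N\inf\phi}\,\nu(B).
\]
Since this holds for all sufficiently small balls around every point of $\supp\nu$, the Besicovitch differentiation theorem (available because $M$ is a Besicovitch space) yields $d\mu/d\nu\ge C_1\,\la^{-N}e^{N\inf\phi}>0$ $\nu$-almost everywhere; thus $d\mu/d\nu$ is bounded away from $0$ and $\infty$.

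Finally I would deduce non-lacunarity and the Gibbs property. Since $\mu\ll\nu$ and $\nu$ is expanding with $\int n_1\,d\nu<\infty$ (Corollary~\ref{c.nu.expanding}), $\mu$ is expanding and $\int n_1\,d\mu\le C_2\int n_1\,d\nu<\infty$, so Corollary~\ref{c.hyperbolicislacunary} makes the sequence of hyperbolic times $(n_j(x))_j$ non-lacunary for $\mu$-a.e.\ $x$. Because $d\mu/d\nu$ lies between two positive constants, $\mu$ and $\nu$ are comparable on all Borel sets, so the Gibbs estimate of Lemma~\ref{l.Gibbs.hyp.times} for $\nu$ at hyperbolic times (valid at every point of $\supp\nu$, which carries $\mu$) passes to $\mu$ with an adjusted constant $K'$:
\[
(K')^{-1}\ \le\ \frac{\mu(B(x,n,\de))}{\exp(-Pn+S_n\phi(y))}\ \le\ K'
\]
for every hyperbolic time $n$ of $x\in\supp\nu$ and every $y\in B(x,n,\de)$. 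Evaluating this along the non-lacunary hyperbolic-time sequence of $x$ (infinite for $\mu$-a.e.\ $x$, since $\mu$ is expanding) exhibits $\mu$ as a non-lacunary Gibbs measure, completing the proof of Proposition~\ref{prop. acim}. I expect the main difficulty to be the propagation of the lower density bound from $D$ to all of $M$: this is exactly where topological mixing is indispensable, and one must check both that the constant $C_1\,\la^{-N}e^{N\inf\phi}$ stays positive ($N$ being fixed once and for all) and that the Besicovitch property of $M$ legitimizes passing from the ball-ratio inequalities to an honest almost-everywhere lower bound on the density.
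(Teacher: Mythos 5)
Your proof is correct, and all four assertions are established with the same supporting results the paper relies on (Lemma~\ref{support Gibbs}, Lemma~\ref{density control}, Lemma~\ref{l.density lower bound}, Corollary~\ref{c.nu.expanding}, Corollary~\ref{c.hyperbolicislacunary}, Lemma~\ref{l.Gibbs.hyp.times}), but two central steps are handled by a genuinely different mechanism. For uniqueness, the paper first upgrades the local lower bound of Lemma~\ref{l.density lower bound} to a global one by checking that $h=d\mu/d\nu$ satisfies $\cL_\phi h=\la h$ and evaluating this identity at the iterate $N$ with $f^N(D(x))=M$; this gives $\mu\sim\nu$, then ergodicity of $\mu$ via Lemma~\ref{support Gibbs}, and uniqueness follows from the Radon--Nikodym/ergodicity argument. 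You instead prove uniqueness directly by applying Lemma~\ref{support Gibbs} to the basins of two hypothetical ergodic absolutely continuous measures and using mixing together with the fact that $f^N$ sends $\nu$-null sets to $\nu$-null sets (legitimate, since $M$ is covered by finitely many injectivity domains and $J_\nu f$ is bounded); and you obtain the global lower bound on the density by pulling small balls back through a homeomorphic inverse branch of $f^N$ into the disk $D$, using invariance of $\mu$ and the bound $J_\nu f^N\le\la^N e^{-N\inf\phi}$, and then invoking Besicovitch differentiation. What each approach buys: the paper's eigenfunction identity gives the pointwise bound $h\ge C_1\la^{-N}e^{N\inf\phi}$ in one line but rests on the unproved-in-detail claim that $h$ is a fixed point of the normalized transfer operator; your route is more elementary and self-contained at that point, at the cost of invoking the differentiation theorem (which is indeed available since $M$ is assumed Besicovitch) and of running the Lemma~\ref{support Gibbs} argument for basins rather than for a general invariant set. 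The remaining steps --- integrability of $n_1$ for $\mu$, non-lacunarity via Corollary~\ref{c.hyperbolicislacunary}, and transfer of the Gibbs estimate from $\nu$ to $\mu$ through the two-sided density bounds --- coincide with the paper's proof.
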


\begin{proof}
We have proven that any accumulation point $\mu$ of $(\nu_n)_n$ is
absolutely continuous with respect to $\nu$ and that the density
$h=d\mu/d\nu$ is bounded from above by $C_2$ and is bounded from
below by $C_1$ on some disk $D(x)$. Since $f$ is topologically
mixing there is $N \geq 1$ be such that $f^N(D(x))=M$, that is, any
point has some preimage by $f^N$ in $D(x)$. It is not difficult to
check that $h \in L^1(\nu)$ satisfies $\cL_\phi h=\la h$. Then
$$
h(y) = \la^{-N} \sum_{f^N(z)=y} e^{S_N\phi(z)} h(z)
    \geq C_1 \la^{-N} e^{N \inf\phi}
$$
for almost every $y \in M$, which allows to deduce that the measures
$\mu$ and $\nu$ are equivalent.

We claim that $\mu$ is ergodic. Indeed, if $G$ is any $f$-invariant
set such that $\mu(G)>0$ then it follows from Lemma~\ref{support
Gibbs} that there is a disk $\De$ of radius $\de/4$ such that
$\nu(\De \setminus G)=0$. Furthermore, using that $J_\nu f$ is
bounded from above and from below, the invariance of $G$ and that
there is $\tilde N \geq 1$ such that $f^{\tilde N}(\De)=M$ it
follows that $\nu(M \setminus G)=0$, or equivalently, that
$\mu(G)=1$, proving our claim. So, if $\mu_1 \ll \nu$ is any
$f$-invariant probability measure then $\mu_1 \ll \mu$. By
invariance of $d\mu_1/d\mu$ and ergodicity of $\mu$ it follows that
$d\mu_1/d\mu$ is almost everywhere constant and that $\mu_1=\mu$.
This proves the uniqueness of the absolutely continuous invariant
measure. Lemma~\ref{density control} also implies that
$$
C_3 \nu(B(x,n,\delta))
        \leq \mu(B(x,n,\delta))\leq
C_2\nu(B(x,n,\delta))
$$
for $\nu$-almost every $x$ and every $n \geq 1$, where $C_3=C_1
\la^{-N} e^{N \inf\phi}$. In particular $\mu$ is expanding and, if
$n$ is a hyperbolic time for $x$ and $y \in B(x,n,\de)$ then
$$
K^{-1}C_3
    \leq \frac{\mu(B(x,n,\delta))}{e^{-P n + S_{n}\phi(y)}}
\leq K C_2.
$$
Corollary~\ref{c.nu.expanding} implies that the first hyperbolic
time map $n_1$ is $\mu_i$-integrable. Hence, the sequence of
hyperbolic times is almost everywhere non-lacunary (see
Corollary~\ref{c.hyperbolicislacunary}) and both $\mu$ and $\nu$ are
non-lacunary Gibbs measures. This completes the proof of the lemma.
\end{proof}

%%%%%%%%%%%%%%%%%%%%%%%%%%%%%%%%%%%%%%%%%%%%%%%%%%%%%%%%%%%%%%%%%%%%%%%

\section{Proof of Theorems~\ref{Thm. Equilibrium States}
and~\ref{Thm. Equilibrium States2}}\label{Proof Thm 1}

In this section we manage to estimate the topological entropy of $f$
for the potential $\phi$ using the characterizations of relative
pressure given in Section~\ref{Relative Pressure}: $P_{H^c}(f,\phi)<
\log \la$ and $P_H(f,\phi)\leq \log \la$. Then, using that the
measure theoretical pressure $P_\mu(f,\phi)=h_\mu(f)+\int \phi
\,d\mu$ of every absolutely continuous invariant measure given by
Proposition~\ref{prop. acim} is at least $\log \la$, we deduce that
$\Ptop(f,\phi) =\log \la$ and that equilibrium states do exist.
Finally, the variational property of equilibrium states yields that
they coincide with the absolutely continuous invariant measures.
This will complete the proofs of Theorems~\ref{Thm. Equilibrium
States} and ~\ref{Thm. Equilibrium States2}.

%%%%%%%%%%%%%%%%%%%%%%%%%%%%%%%%%%%%%%%%%%%%%%%%%%%%%%%%%%%%%%%%%%%%%%%
\subsection{Existence of equilibrium states}

We give two estimates on the relative pressure and deduce the
existence of equilibrium states for $f$ with respect to $\phi$.

\begin{proposition}\label{pressure in H^c}
$P_{H^c}(f,\phi) < \log \lambda.$
\end{proposition}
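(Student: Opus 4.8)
The plan is to bound $P_{H^c}(f,\phi)$ by comparing it to the combinatorial growth of cylinders that visit $\cA$ frequently, exactly as was done for the measure estimate in Proposition~\ref{p.recurrence}. First I would reduce to the case $\phi \equiv 0$ via Lemma~\ref{l.reduction}: it suffices to show $h_{H^c}(f) < \log\la - \sup\phi$, and since $\la > e^{\log q + \sup\phi + \vep_0}$ this will follow from $h_{H^c}(f) \le \log q + \vep_0/2$, say. So the real content is to estimate $h_{H^c}(f) = P_{H^c}(f,0)$ using the partition-based definition of relative pressure from Section~\ref{Relative Pressure}.

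The key observation is that if $x \notin H$, then by Lemma~\ref{positive density} (in its contrapositive form) the Birkhoff averages $\frac1n\sum_{j=1}^n \log L(f^j(x))$ cannot be $\le -2c$ for all large $n$; combined with hypothesis (H1), which gives $\log L(f^j(x)) \le \log L$ if $f^j(x)\in\cA$ and $\le -\log\si$ otherwise, this forces the frequency of visits to $\cA$ to exceed $\gamma$ along infinitely many times $n$ (here I use the relation $\si^{-(1-\gamma)}L^\gamma < e^{-2c}$ from \eqref{eq. relation expansion}). Therefore $H^c \subset \bigcap_{N} \bigcup_{n \ge N} B(n)$ in the sense that every $x\in H^c$ has infinitely many $n$ with $x\in B(n)$, so for any $N$ the set $H^c$ is covered by the cylinders $P\in\cP^{(n)}$, $n\ge N$, whose itinerary lies in $I(\gamma,n)$. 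Now I would feed this covering into the definition of $m_\alpha(f,0,H^c,\cP,N)$: for $\alpha > c_\gamma$ one has, using \eqref{eq.c}, that
$$
m_\alpha(f,0,H^c,\cP,N) \le \sum_{n\ge N} \#I(\gamma,n)\, e^{-\alpha n} \le \sum_{n\ge N} e^{(c_\gamma - \alpha + o(1))n} \xrightarrow[N\to\infty]{} 0,
$$
so $m_\alpha(f,0,H^c,\cP) = 0$ and hence $P_{H^c}(f,0,\cP) \le c_\gamma$. Since by our choice of $\gamma$ after Lemma~\ref{l.combinatorio} we have $c_\gamma < \log q + \vep_0/4$, and since refining $\cP$ (passing to finer coverings, as the definition of $P_{H^c}(f,\phi)$ requires taking $\diam(\cU)\to 0$) only improves the estimate—one iterates $f$ and uses $\cP^{(n)}$ as a generating-type sequence, invoking Proposition~\ref{p.pressure.iteration} to keep track of the normalization—we conclude $h_{H^c}(f) \le c_\gamma < \log q + \vep_0/4$. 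Combining with Lemma~\ref{l.reduction} and $\sup\phi + \log q + \vep_0 < \log\la$ gives $P_{H^c}(f,\phi) \le h_{H^c}(f) + \sup\phi < \log q + \vep_0/4 + \sup\phi < \log\la$, as desired.

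The main obstacle is the passage from one fixed open cover $\cP$ (which is a cover by injectivity domains, not of arbitrarily small diameter) to the limit over $\diam(\cU)\to 0$ that appears in Definition~\ref{def. relative pressure1}. The clean way around this is to work with the iterated partitions $\cP^{(n)}$: their diameters shrink (the branches of $f^n$ contract distances on $M\setminus\cA$, and one can control the $\cA$-part since $L$ is close to $1$), so one can extract from $\{\cP^{(n)}\}$ a sequence of covers of vanishing diameter and apply Theorem~11.1 of \cite{Pe97} to identify the limit. An alternative, cleaner for a local homeomorphism, is to use instead the dynamical-ball definition \eqref{eq. alpha measure dynamical balls}: cover each $x\in H^c$ by a dynamical ball $B(x,n,\vep)$ with $n$ a time of high $\cA$-frequency, group these by the cylinder of $\cP^{(n)}$ containing them (at most $\#I(\gamma,n)$ groups of uniformly bounded multiplicity by a Besicovitch-type argument), and estimate $m_\alpha(f,0,H^c,\vep,N)$ directly, then let $\vep\to 0$; this avoids the partition-refinement issue entirely and keeps all constants uniform. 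Either route, the heart of the matter is the same combinatorial bound $\#I(\gamma,n) \le e^{(\log q + \vep_0/4)n}$ for large $n$, which is precisely what Lemma~\ref{l.combinatorio} and the choice of $\gamma$ were arranged to provide.
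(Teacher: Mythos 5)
Your overall skeleton matches the paper's: reduce to $\phi\equiv 0$ via Lemma~\ref{l.reduction}, observe that points of $H^c$ lie in $B(n)$ for all large $n$ (Pliss plus \eqref{eq. relation expansion}), cover $H^c$ by the cylinders of $\cP^{(n)}$ with itinerary in $I(\gamma,n)$, and invoke the combinatorial count of Lemma~\ref{l.combinatorio}. But there is a genuine gap exactly at the point you flag as ``the main obstacle'', and the way you propose to close it does not work. The monotonicity of the Carath\'eodory construction goes the other way from what you assert: if $\cV$ refines $\cU$, then covers by $\cV$-cylinders are harder to produce, so $m_\al(f,0,H^c,\cV,N)\ge m_\al(f,0,H^c,\cU,N)$ and hence $P_{H^c}(f,0,\cV)\ge P_{H^c}(f,0,\cU)$. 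Thus your estimate $P_{H^c}(f,0,\cP)\le c_\gamma$ with the \emph{fixed, coarse} covering $\cP$ is only a lower-type bound for $h_{H^c}(f)=\lim_{\diam\cU\to0}P_{H^c}(f,0,\cU)$ and does not bound it from above. Your fallback claims also fail precisely where they are needed: the cylinders of $\cP^{(n)}$ that meet $B(n)$ have itineraries spending a fraction $\ge\gamma$ (possibly all) of their time in $\cA$, where the inverse branches are only Lipschitz with constant $L\ge 1$, so their diameters need not shrink at all; and in the dynamical-ball variant the issue is not the multiplicity of the balls but how \emph{many} balls $B(x,n,\vep)$ are needed to cover a single such cylinder, which can grow exponentially in $n$ (roughly like $(CL^m)^n$), a count your ``Besicovitch-type argument'' does not address.

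This is why the paper's proof needs the technical Lemma~\ref{l.r.entropy.iteration}: it passes to the iterate $f^\ell$, uses the special small-diameter coverings $\cQ_k$ of Lemma~\ref{l.covers} (Besicovitch), and pays a multiplicity factor $C L^{\ell m}$ per $\ell$-block when converting each $\cP^{(\ell j)}$-cylinder into $\cQ_k$-cylinders for $f^\ell$; combined with Proposition~\ref{p.pressure.iteration} and letting $\ell\to\infty$ this yields only $h_{H^c}(f)\le \log q+m\log L+\vep_0/2$, not your stronger claim $h_{H^c}(f)\le c_\gamma<\log q+\vep_0/4$. The weaker bound is still enough, but only because the standing hypothesis \eqref{eq. relation potential} was built with the extra $m\log L$ term: $P_{H^c}(f,\phi)\le \log q+m\log L+\sup\phi+\vep_0<h(f)+\inf\phi\le\log\la$. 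In short, your argument as written proves an estimate for $P_{H^c}(f,0,\cP)$ at a fixed covering, not for the relative pressure itself; closing that gap is the actual content of the paper's proof, and the price paid for it ($m\log L$) is visible in the hypotheses, so a proof yielding your sharper bound would need a substantially new idea.
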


Since we deal with a potential $\phi$ whose oscillation is not very
large, the main point in the proof of Proposition~\ref{pressure in
H^c} is to control the relative pressure $h_{H^c}(f)$. The key idea
is that $h_{H^c}(f)$ can be bounded above using the maximal
distortion and growth rate of the inverse branches that cover $H^c$.
We will begin with some preparatory lemmas.

\begin{lemma}\label{l.covers}
Let $M$ be a compact Besicovitch metric space of dimension $m$.
There exists $C>0$ and a sequence of finite open coverings
$(\cQ_k)_{k\geq 1}$ of $M$ such that $\diam(\cQ_k) \to 0$ as $k \to
\infty$, and every set $E\subset M$ satisfying $\diam(E)\leq D\diam
\cQ_k$ intersects at most $C D^m$ elements of $\cQ_k$.
\end{lemma}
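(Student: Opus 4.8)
Let me plan how to prove Lemma~\ref{l.covers}: on a compact Besicovitch metric space $M$ of dimension $m$, there is $C>0$ and finite open coverings $\cQ_k$ with $\diam(\cQ_k)\to 0$ such that any set of diameter $\le D\diam\cQ_k$ meets at most $CD^m$ elements.

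Let me think about what's available. A Besicovitch space is one where the Besicovitch covering lemma holds — this says a family of balls covering their centers has a bounded-multiplicity subfamily still covering the centers, with the multiplicity bound depending only on $M$. These are characterized by Federer; they include subsets of Euclidean space and manifolds. The "dimension $m$" presumably means something like: balls of radius $r$ contain at most $\kappa r^{-m}$ disjoint balls of radius $r/2$ (a doubling/Ahlfors-type condition), or more precisely that a packing estimate holds. So the key quantitative input is a packing bound: in any ball of radius $R$, the maximal number of points that are pairwise $\ge \rho$-separated is at most $C_0 (R/\rho)^m$.

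**Plan.** First I would fix a sequence $r_k\to 0$ and, for each $k$, take $\cQ_k$ to be the family of balls $\{B(x,r_k): x\in E_k\}$ where $E_k$ is a maximal $r_k$-separated subset of $M$ (finite, by compactness). These balls cover $M$ since $E_k$ is maximal, and $\diam(\cQ_k)\le 2r_k\to 0$. Now let $E\subset M$ with $\diam(E)\le D\diam(\cQ_k)\le 2Dr_k$. Fix $x_0\in E$. If $B(x,r_k)\in\cQ_k$ meets $E$, then every point of $B(x,r_k)$ lies within $r_k+2Dr_k$ of $x_0$, so $x\in B(x_0,(2D+1)r_k)\subset B(x_0, 3Dr_k)$ (assuming $D\ge 1$; for $D<1$ the bound $CD^m$ is achieved by enlarging $C$, since the count is at least... actually one must be mild here — just note the count is at most the $D=1$ count, and absorb). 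So all the relevant centers $x$ lie in a ball of radius $3Dr_k$ around $x_0$, and they are pairwise $r_k$-separated. By the packing estimate, there are at most $C_0(3Dr_k/r_k)^m = C_0 3^m D^m$ of them. Setting $C=C_0 3^m$ completes the proof.

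**The main obstacle.** The crux is extracting the packing estimate from the hypothesis "compact Besicovitch metric space of dimension $m$." The Besicovitch covering property itself gives a bounded-overlap constant, but I need to connect the notion of "dimension $m$" precisely to a polynomial packing growth $\#\{r\text{-separated points in a ball of radius }R\}\lesssim (R/r)^m$. The cleanest route is to cite Federer's characterization (or the reference \cite{Fed69}, or \cite{Gu75}) for the statement that such spaces satisfy a uniform doubling-type bound; doubling then iterates to the polynomial packing estimate $C_0(R/\rho)^m$ by the standard argument (cover $B(x_0,R)$ by a bounded number of balls of half the radius, iterate $\sim\log_2(R/\rho)$ times). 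If the paper's ambient setting is really "$M\subset N$ with $N$ Euclidean or a manifold" (as stated earlier in the excerpt), one can bypass the abstract characterization entirely: inherit the packing bound from $\mathbb{R}^D$ or from local charts of the manifold $N$, at the cost of a constant depending on the (finite) atlas and on $\dim N$ versus the intrinsic $m$. I expect the intended proof simply invokes the Besicovitch/doubling property as a black box and runs the maximal-separated-set argument above; the routine part is the doubling-to-packing iteration, which I would not write out in detail.
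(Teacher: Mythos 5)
Your proof takes essentially the same route as the paper's: both build $\cQ_k$ from a maximal $r_k$-separated set $X_k$ (the paper first shrinks the ball cover to a partition $\cT_k$ and then fattens it back to an open cover, but this is cosmetic), and both extract the $CD^m$ bound from a packing estimate of the form "at most $\const\cdot(R/\rho)^m$ pairwise $\rho$-separated points in a ball of radius $R$." You are actually more explicit than the paper about where that $D^m$ scaling comes from — the paper writes the bound $[2C_1D(1+\vep_k/r_k)]^m$ without spelling out the packing argument, whereas you correctly identify the doubling/packing input as the crux and note that it has to be read into the phrase "Besicovitch metric space of dimension $m$" (or inherited from the ambient manifold $N$).
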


\begin{proof}
First we construct a special family $\cT_k$ of partitions in $M$.
Let $(r_k)$ be a decreasing sequence of positive numbers converging
to zero. Given $k \geq 1$, let $X_k$ be a maximal $r_k$ separated
set: any two balls of radius $r_k$ centered at distinct points in
$X_k$ are pairwise disjoint and $X_k$ is a maximal set with this
property. In particular, it follows that $\{B(x,2r_k): x\in X_k\}$
is a covering of $M$. Since there exists no covering of $M$ by a
smaller number of balls as above, by Besicovitch covering lemma
there exists a constant $C_1$ (depending only on the dimension $m$)
that any point in $M$ is contained in at most $C_1$ balls. Consider
a partition $\cT_k$ in $M$ such that every element $T_k \in\cT_k$
contains a ball of radius $r_k$ and such that $\diam (\cT_k) \leq
2r_k$.

Fix a sequence of positive numbers $(\vep_k)_{k \geq 1}$ such that
$0< \vep_k \ll r_k$ for every $k \geq 1$. We claim that the family
$\cQ_k$ of open neighborhoods of size $\vep_k$ around elements of
$\cT_k$ satisfies the requirements of the lemma. It is immediate
that $\diam(\cQ_k) \to 0$ as $k \to \infty$. Since, by construction,
every point in $M$ is contained in at most $C_1$ elements of
$\cT_k$, any set $E\subset M$ satisfying $\diam(E)< D \diam(\cQ_k)
\leq 2 D\, (r_k +\vep_k)$ can intersect at most $[ 2 C_1 \,D \,(1+
\vep_k/ r_k))]^m$ elements of $\cQ_k$. This shows that $E$ can
intersect at most $C D^m$ elements of $\cQ_k$ for some constant $C$
depending only on the dimension $m$, completing the proof of the
lemma.
\end{proof}

The next result is the most technical lemma in the article and
provides the key estimate to prove Proposition~\ref{pressure in
H^c}.

\begin{lemma}\label{l.r.entropy.iteration}
Given any $\ell \geq 1$ the following property holds:
$$
h_{H^c}(f^\ell)\leq (\log q+m\log L+\vep_0/2)\,\ell+\log C.
$$
\end{lemma}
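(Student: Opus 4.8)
The goal is to bound the relative topological entropy $h_{H^c}(f^\ell)$, where $H^c$ is the set of points with only finitely many $c$-hyperbolic times for $f$. The plan is to cover $H^c$ efficiently by cylinders of the refined partition $\cP^{(n)} = \bigvee_{j=0}^{n-1} f^{-j}\cP$, controlling both how many such cylinders are needed and how small their images under $f^n$ are, so that a single element of the fine covering $\cQ_k$ from Lemma~\ref{l.covers} covers each one. The point of passing to the iterate $f^\ell$ is that a $c$-hyperbolic time for $f$ need not be one for $f^\ell$, so working with $f^\ell$ isolates the genuinely non-hyperbolic behaviour; using Proposition~\ref{p.pressure.iteration} this will eventually give the estimate for $f$ itself.

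First I would relate $H^c$ (for $f^\ell$) to the combinatorial sets $I(\gamma,n)$ of itineraries spending more than a $\gamma$-fraction of time in $\cA$. Concretely: if $x\in H^c$ for $f^\ell$ then, by the contrapositive of Lemma~\ref{positive density} (applied to $f^\ell$, using $\prod L$-estimates along the orbit), for infinitely many $n$ the orbit segment of length $n$ must fail the asymptotic-expansion inequality, which forces $\frac1n\#\{0\le j< n : f^j(x)\in\cA\}\ge\gamma$; hence $x$ lies in a cylinder $P\in\cP^{(n)}$ whose itinerary belongs to $I(\gamma,n)$, for infinitely many $n$. So for any $N$, the family of such cylinders with $n\ge N$ covers $H^c$, and by \eqref{eq.c} and our choice of $\gamma$ after Lemma~\ref{l.combinatorio} there are at most $e^{(\log q+\vep_0/4)n}$ of them (for $n$ large).

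Next I would control the geometry. On a cylinder $P\in\cP^{(n)}$, the map $f^n$ is injective and its inverse branch is a composition of $n$ inverse branches each Lipschitz with constant $L(f^j(\cdot))\le L$ (only the factors inside $\cA$ are bad; those outside contribute $\le\sigma^{-1}<1$). Thus $\operatorname{diam}(f^n(P))$ is bounded below by $\operatorname{diam}(P)\cdot L^{-\gamma n}\sigma^{(1-\gamma)n}$ up to constants — equivalently $\operatorname{diam}(P)\le (\text{const})\,L^{\gamma n}\sigma^{-(1-\gamma)n}\operatorname{diam}(f^n(P))$, which by \eqref{eq. relation expansion} is $\le(\text{const})\,e^{-2cn}\operatorname{diam}(f^n(P))$. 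More relevantly, for the pressure estimate I want to cover $P$ itself; choosing $k=k(n)$ so that $\operatorname{diam}(\cQ_k)\approx\operatorname{diam}(P)$, Lemma~\ref{l.covers} says each relevant set meets $O(D^m)$ elements of $\cQ_k$ with $D$ controlled by the distortion of the inverse branch. Here is where the factor $m\log L$ enters: the inverse branch of $f^n$ on a ball of fixed radius around $f^n(x)$ distorts diameters by a factor that, accounting for the $\le\gamma n$ bad steps, is at most $L^{\gamma n}$ in each of the $m$ directions, so $P$ can be covered by $O((L^{\gamma n})^m) = O(L^{m\gamma n})$ elements of an appropriately scaled $\cQ_k$ — passing to $f^\ell$ and tracking the $\ell$ bad steps of a single $f^\ell$-application gives the exponent $m\log L\cdot\ell$ rather than $m\gamma n$. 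Combining: $H^c$ for $f^\ell$ is covered, for every large $n$, by at most $e^{(\log q+\vep_0/4)\ell n}\cdot C\,L^{m\ell n}\cdot(\text{number of }\cQ_k\text{'s per cylinder})$ elements, which feeds into Definition~\ref{def. relative pressure1} of $h_{H^c}(f^\ell)=P_{H^c}(f^\ell,0)$ to yield $h_{H^c}(f^\ell)\le(\log q+m\log L+\vep_0/2)\ell+\log C$.

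The main obstacle I expect is the bookkeeping in the previous paragraph: correctly counting how many elements of $\cQ_k$ are needed to cover a cylinder $P\in\cP^{(n)}$ with the *right* exponential rate, and in particular isolating exactly $\ell$ (not $n$) bad inverse-branch steps so that the $m\log L$ term comes with a coefficient $\ell$ and not something growing with $n$. This requires carefully choosing the scale $k=k(n)$ matching $\operatorname{diam}\cQ_k$ to the size of $f^n(P)$ (roughly a fixed $\delta$, since $f^n(P)$ is a full domain of injectivity), then pulling $\cQ_k$ back one $f^\ell$-block at a time and using the Besicovitch-type bound of Lemma~\ref{l.covers} with $D$ equal to the per-block distortion $\le L^\ell$; the contracting factors $\sigma^{-1}$ from the $M\setminus\cA$ steps only help and can be discarded. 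One must also check that the $\limsup$ in \eqref{eq.c} is genuinely usable, i.e. that the covering-by-$I(\gamma,n)$-cylinders can be arranged with depths $\ge N$ for every prescribed $N$ (this is exactly the "infinitely many $n$" in the definition of $H^c$), so that the relevant $m_\alpha(f^\ell,0,H^c,\cQ_k,N)$ vanishes for $\alpha$ just above the claimed bound.
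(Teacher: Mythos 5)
Your overall strategy matches the paper's: cover $H^c$ by $\cP^{(n)}$-cylinders with bad itineraries (at most $e^{(\log q+\vep_0/2)n}$ of them for large $n$), and convert each such cylinder into a controlled number of $f^\ell$-cylinders built from the covering $\cQ_k$ of Lemma~\ref{l.covers}. However your middle paragraph contains a wrong turn that you then half-correct in the ``obstacle'' paragraph, and it is worth pinning down because it is exactly where the lemma's content lies. The relative pressure $P_{H^c}(f^\ell,0,\cQ_k)$ is defined, for each \emph{fixed} $k$, as an infimum over families $\cG\subset\cS_N\cQ_k$ of $\cQ_k$-cylinders for $f^\ell$ of depth $\geq N$; Definition~\ref{def. relative pressure1} takes $N\to\infty$ first, then infimizes over $\alpha$, and only afterwards lets $\diam\cQ_k\to 0$. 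Choosing $k=k(n)$ with $\diam\cQ_k\approx\diam(P)$ and covering $P$ by a few individual elements of $\cQ_{k(n)}$ therefore does not produce an admissible family: individual $\cQ_k$-elements are depth-$1$ cylinders, and $k$ cannot vary with $n$. Likewise the $L^{\gamma n}$ bookkeeping (``accounting for the $\leq\gamma n$ bad steps'') does not feed into the needed quantity; what matters is the cost of re-covering by the same fixed $\cQ_k$ after each $f^{-\ell}$-pullback, and for that only the global per-block Lipschitz bound $L^\ell$ on $f^{-\ell}$ is relevant (the $\sigma^{-1}$ factors help and are discarded, as you note). The correct mechanism, which your last paragraph does sketch, is: fix $k$ large; for a depth-$n=j\ell$ cylinder $P_n$ with bad itinerary, cover $f^n(P_n)$ by at most $\#\cQ_k$ elements of $\cQ_k$, then pull back block by block, each $f^{-\ell}$-pullback multiplying diameters by at most $L^\ell$ so that each element meets at most $CL^{\ell m}$ new elements of $\cQ_k$. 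This yields $\#\cQ_k\cdot[CL^{\ell m}]^j$ cylinders of $\cS_{f^\ell,j}\cQ_k$ per $P_n$, hence the rate $(\log q+m\log L+\vep_0/2)\ell+\log C$. Commit to this and drop the $k=k(n)$/$L^{\gamma n}$ reasoning, which as written sits alongside the correct argument and contradicts it.

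Two smaller points. First, you need to reduce to depths divisible by $\ell$: a point of $H^c$ lies in $B(n,\gamma)$ for all large $n$, but the $\cQ_k$-cylinders produced have $f^\ell$-depth $j$, so one shows $B(n,\gamma)\subset B(\ell j,\gamma-\vep)$ whenever $j\ell\leq n<(j+1)\ell$ and $j$ is large; this is a short counting argument but cannot be omitted, since it is what licenses restricting to $n=j\ell$ in \eqref{eq.cover.H^c}. Second, your opening explanation of why one passes to $f^\ell$ is off: $H^c$ is defined once and for all relative to $f$, and $P_{H^c}(f^\ell,0)=\ell\,P_{H^c}(f,0)$ by Proposition~\ref{p.pressure.iteration}, so $f^\ell$ does not ``isolate the non-hyperbolic behaviour.'' The purpose of $\ell$ is purely to make the additive $\log C$ term become $\log C/\ell$ after dividing, which is then sent to zero in the proof of Proposition~\ref{pressure in H^c}.
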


\begin{proof}
Fix $\ell \geq 1$ and let $(\cQ_k)_k$ be the family of finite open
coverings given by the previous lemma. Since $\diam(\cQ_k) \to 0$ as
$k \to \infty$ then
$$
P_{H^c}(f,\phi)
    =\lim_{k \to \infty} P_{H^c}(f,\phi,\cQ_k),
$$
by Definition~\ref{def. relative pressure1}. Recall $\cP$ is the
finite covering given by $(H2)$ and $B(n,\ga)$ is the set of points
whose frequency of visits to $\cA$ up to time $n$ is at least $\ga$.
The starting point is the next observation: \vspace{.1cm}

\noindent{\bf Claim 1:} \textit{For every $0<\vep<\gamma$ there
exists $j_0 \geq 1$ such that for every $j \geq j_0$ the following
holds:
$$
B(n,\ga) \subset B(\ell j,\ga-\vep)
    \quad \text{for every $ j \,\ell \leq n < (j+1) \ell$}.
$$}

\begin{proof}[Proof of Claim 1:]
Given $\vep>0$, let $j_0$ be a positive integer larger than
$(1-\ga)/\vep$. Given an arbitrary large $n$ we can write $n=\ell j
+ r$, where $0\leq r<\ell$ and $j \geq j_0$. Moreover, if $x$
belongs to $B(n,\ga)$ then $\#\,\{0\leq i \leq n-1: f^i(x) \in \cA\}
\geq \ga \,n$ and consequently
$$
\frac1{\ell j}\,\#\,\{0\leq i \leq \ell j-1: f^i(x) \in \cA\}
    \geq \ga +\frac{\ga r-r}{\ell j}.
    %\geq \ga-\frac{1-\ga}{j}.
$$
Our choice of $j_0$ implies that the right hand side above is
bounded from below by $\ga-\vep$. This shows that $x$ belongs to
$B(\ell j,\ga-\vep)$ and proves the claim.
\end{proof}

We proceed with the proof of the lemma. Observe that the set $H^c$
is covered by
$$
\bigcup_{n \geq N} \bigcup_{P \in \cP^{(n)}}
    \left\{P \in \cP^{(n)} : P \cap B(n,\ga) \neq \emptyset \right\}
$$
for every $N \geq 1$. Let $\vep>0$ be small such that $\#
I(n,\ga-\vep) \leq \exp (\log q + \vep_0/2)n$ for every large $n$.
Such an $\vep$ do exist because $c_\ga$ varies monotonically on
$\ga$ (see the proof of Lemma~\ref{l.combinatorio}). Then, the
previous claim allow us to cover $H^c$ using only cylinders whose
depth is a multiple of $\ell$: for any $N \geq 1$
\begin{equation}\label{eq.cover.H^c}
H^c \subset
 \bigcup_{j \geq \frac N\ell}
 \bigcup_{P \in \cP^{(\ell j)}}
        \left\{P \in \cP^{(\ell j)} : P \cap B(\ell j,\ga-\vep) \neq \emptyset
        \right\}.
\end{equation}
Thus, from this moment on we will only consider iterates $n =
j\,\ell$. Denote by $\cR^{(n)}$ the collection of cylinders in
$\cP^{(n)}$ that intersect $B(n,\ga-\vep)$. Our aim is now to cover
any element in $\cR^{(n)}$ by cylinders relatively to the
transformation $f^\ell$. Given $k \geq 1$, denote by $\cS_{f^\ell,j}
\cQ_k$ the set of $j$-cylinders of $f^\ell$ by elements in $\cQ_k$,
that is
$$
\cS_{f^\ell,j} \cQ_k
  =\Big\{ Q_0 \cap f^{-\ell}(Q_1) \cap \dots \cap
  f^{-\ell(j-1)}(Q_{j-1}) : Q_i \in \cQ_k, i=0,\dots,j-1 \Big\}.
$$
Furthermore, let $\mathcal G_{n,k}$ be the set of cylinders in
$\cS_{f^\ell,j} \cQ_k$ that intersect any element of $\cR^{(n)}$.
\vspace{0.1cm}

\noindent{\bf Claim 2:} \textit{Let $k \geq 1$ be large and fixed.
Then $$\# \mathcal G_{j \ell,k} \leq \#\,\cQ_k \, \times[C L^{\ell
m}]^j \times e^{(\log q + \vep_0/2)j \ell}$$ for every large $j$.}

\begin{proof}[Proof of Claim 2:]
Recall $n=j \ell$ and fix $P_{n} \in \cR^{(n)}$. Since $f$ is a
local homeomorphism then the inverse branch $f^{-n}: f^n(P_n) \to
P_n$ extends to the union of all $ Q\in \cQ_k$ so that $Q \cap
f^n(P_n) \neq \emptyset$, provided that $k$ is large. Notice that
$\diam(f^{-\ell}(Q)) \leq L^\ell \diam(Q)$ for every $Q \in \cQ_k$
because $\log \|Df(x)^{-1}\| \leq L$ for every $x \in M$. By
Lemma~\ref{l.covers}, $f^{-\ell}(Q)$ intersects at most $C L^{\ell
m}$ elements of the covering $\cQ_k$. This proves that there are at
most $\#\,\cQ_k \, \times[C L^{\ell m}]^j$ cylinders in
$\cS_{f^\ell,j}\cQ_k$ that intersect $P_n$. The claim is a direct
consequence of our choice of $\vep$ since $\# \cR^{(n)} \leq
e^{(\log q+ \vep_0/2)n}$ for large $n$.
\end{proof}

Finally we complete the proof of the lemma. Indeed, it is immediate
from \eqref{eq.cover.H^c} that
$$
m_\alpha(f^\ell,0,H^c,\cQ_k,N)
    \leq \sum_{j \geq N/\ell}
             \sum_{\mathrm{\underline{U}} \in \,\cG_{\ell j,k}}
             e^{-\alpha n(\mathrm{\underline{U}})}
    =  \sum_{j \geq N/\ell}
             e^{-\alpha j} \; \# \,\cG_{\ell j,k}
$$
for every large $k$. Moreover, Claim~2 implies that the sum in the
right hand side above converges to zero as $N \to \infty$
(independently of $k$) whenever $\al
>(\log q+\vep_0/2+m\log L) \,\ell +\log C$. This shows that
$h_{H^c}(f^\ell) \leq (\log q+m\log L+\vep_0/2) \,\ell +\log C$ and
completes the proof of the lemma.
\end{proof}

\begin{proof}[Proof of Proposition~\ref{pressure in H^c}] Recall
that $h_{H^c}(f^\ell)=\ell\,h_{H^c}(f)$, by
Proposition~\ref{p.pressure.iteration}. Then, as a consequence of
the previous lemma we obtain
$$
h_{H^c}(f)
  \leq \log q+m\log L+\vep_0/2 +\frac{\log C}{\ell}
$$
for every $\ell \geq 1$.
%This proves that $ h_{H^c}(f)\leq \log q+m\log L+\vep_0/2$.
Finally, it follows from \eqref{eq. relation potential} and
Lemma~\ref{l.reduction} that
$$
P_{H^c}(f,\phi)
  \leq \log q+m\log L+\sup \phi+\vep_0
  < \log \deg f+\inf\phi
  \leq \log\la.
$$
\end{proof}

In the present lemma we give an upper bound on the relative pressure
of $\phi$ relative to the set $H$. More precisely,

\begin{lemma}\label{pressure in H}
$P_H(f,\phi)\leq \log \la.$
\end{lemma}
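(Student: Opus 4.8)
The plan is to bound $P_H(f,\phi)$ by covering $H$ efficiently using hyperbolic pre-balls and exploiting the Gibbs property of $\nu$ at hyperbolic times, which was established in Lemma~\ref{l.Gibbs.hyp.times}. The key observation is that every point of $H$ has infinitely many hyperbolic times, and the hyperbolic pre-balls $V_n(x) = B(x,n,\delta)$ are the natural covering elements: by Lemma~\ref{l.Gibbs.hyp.times} their $\nu$-measure is comparable to $e^{-Pn + S_n\phi(y)}$, which is precisely the weight appearing in the definition of the $\alpha$-measure $m_\alpha(f,\phi,H,\vep,N)$ when $\alpha = P = \log\lambda$. So the sum $\sum e^{-Pn + S_n\phi(x)}$ over a disjoint-ish family of hyperbolic pre-balls should be bounded by a constant times $\nu$ of their union, hence by $K$, staying bounded as $N\to\infty$; this forces $m_P(f,\phi,H,\vep) < \infty$ (in fact bounded), so $P_H(f,\phi,\vep) \le P = \log\lambda$, and letting $\vep\to 0$ gives the claim.

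\textbf{Key steps, in order.} First, fix $\vep \le \delta$ and recall Remark~\ref{rmk. smaller balls}, which gives the Gibbs estimate at scale $\vep$: for $\nu$-a.e.\ $x$ and every hyperbolic time $n$ of $x$, $\nu(B(x,n,\vep)) \ge K(\vep)^{-1} e^{-Pn + S_n\phi(x)}$. Second, for each $N \ge 1$ I need a countable family $\cG \subset \bigcup_{n\ge N}\cI_n$ of pairs $(x,n)$ with $x\in H_n$, $n\ge N$, such that the balls $B(x,n,\vep)$ cover $H$ (up to a $\nu$-null set) and such that these balls have bounded overlap --- i.e.\ each point of $M$ lies in at most some fixed number $C_0$ of them, $C_0$ depending only on the Besicovitch constant. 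Such a family exists by a Besicovitch-type selection: for each $x\in H$ choose a hyperbolic time $n(x) \ge N$, consider the cover $\{B(x,n(x),\vep) : x\in H\}$, and extract a subfamily with bounded multiplicity covering $\nu$-almost all of $H$ (this is exactly the kind of selection already used in the proof of Lemma~\ref{support Gibbs}, or directly the Besicovitch covering lemma which holds by hypothesis on $M$). Third, with such a $\cG$, using Remark~\ref{r.centers} to replace $S_n\phi(B(x,n,\vep))$ by $S_n\phi(x)$,
\[
\sum_{(x,n)\in\cG} e^{-Pn + S_n\phi(x)}
 \le K(\vep) \sum_{(x,n)\in\cG} \nu(B(x,n,\vep))
 \le K(\vep)\, C_0\, \nu(M) = K(\vep)\,C_0,
\]
by the bounded-overlap property. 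Hence $m_P(f,\phi,H,\vep,N) \le K(\vep) C_0$ for every $N$, so $m_P(f,\phi,H,\vep) \le K(\vep)C_0 < \infty$, which yields $P_H(f,\phi,\vep) \le P$. Finally let $\vep \to 0$ to conclude $P_H(f,\phi) \le P = \log\lambda$.

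\textbf{Main obstacle.} The delicate point is the second step: producing, for each $N$, a covering family of hyperbolic pre-balls of \emph{uniformly bounded multiplicity}. One must be careful that the chosen hyperbolic times $n(x)$ are all at least $N$ (easy, since $H$ means \emph{infinitely many} hyperbolic times) and that the selected balls genuinely cover a full-$\nu$-measure subset of $H$ --- the Gibbs estimate in Remark~\ref{rmk. smaller balls} only holds $\nu$-a.e., so one works modulo a null set, which is harmless since the $\alpha$-measure only needs to cover $H$ and removing a null set does not change $P_H$. The Besicovitch covering lemma (available by the standing hypothesis that $M$ is a Besicovitch metric space) is exactly what delivers the bounded multiplicity with a constant depending only on $M$. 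An alternative, avoiding Besicovitch entirely, is to mimic the recursive disjointification already carried out in Lemma~\ref{support Gibbs}: select a pairwise disjoint subfamily of the smaller balls $B(x,n(x),\vep/2)$ whose dilates $B(x,n(x),\vep)$ still cover $H$; disjointness of the small balls gives $\sum \nu(B(x,n(x),\vep/2)) \le 1$, and bounded distortion relates $\nu(B(x,n(x),\vep/2))$ to $\nu(B(x,n(x),\vep))$ up to the constant $K_0$, yielding the same bounded sum. Either route gives the estimate; I would present the Besicovitch version as it is cleanest given the hypotheses.
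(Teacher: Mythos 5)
The step your preferred route hinges on --- extracting from $\{B(x,n(x),\vep):x\in H\}$, with depths $n(x)\ge N$ varying from point to point, a subfamily covering $H$ with multiplicity bounded by a Besicovitch constant --- is exactly the step that does not follow. The Besicovitch covering lemma is a statement about metric balls, and dynamical balls are not metric balls: in this setting there is no upper bound on the local expansion of $f$, so $B(x,n,\vep)$ cannot be sandwiched between metric balls of comparable radius, and nothing prevents many balls of very different depths from sharing a point (for $y\in H$, the balls $B(y,n_k(y),\vep)$ at \emph{all} of its hyperbolic times contain $y$). The paper's own proof is careful on precisely this point: it applies Besicovitch separately for each fixed $n$, to the genuine metric balls $B(f^n(x),\vep)$ of constant radius in the image, and pulls back through the homeomorphism $f^n\mid B(x,n,\de)$ of Lemma~\ref{delta}; this yields multiplicity at most $D$ \emph{per level} $n$ only, and the price is a sum over levels, which is why the paper fixes $\alpha>\log\la$ strictly and uses the factor $e^{-(\alpha-P)n}$ to make $\sum_{n\ge N}De^{-(\alpha-P)n}$ converge and tend to $0$ as $N\to\infty$. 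Since you work at the critical exponent $\alpha=P$, you genuinely need a cross-scale bound, so this is not cosmetic. A second, smaller, flaw: the claim that discarding a $\nu$-null subset of $H$ ``does not change $P_H$'' is unjustified --- relative pressure is a Carath\'eodory, dimension-like quantity with $P_{A\cup B}(f,\phi)=\max\{P_A(f,\phi),P_B(f,\phi)\}$, and a $\nu$-null set may carry pressure. Fortunately you do not need it: Lemma~\ref{l.Gibbs.hyp.times} gives the Gibbs bound for \emph{every} $x\in\supp(\nu)$ with hyperbolic time $n$, and the reference measure used here has $\supp(\nu)=\overline H\supset H$, so every center is admissible and no exceptional set arises.

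The alternative you sketch and then set aside --- the Vitali-type disjointification modelled on Lemma~\ref{support Gibbs} --- is the route that actually closes the argument, and it is genuinely different from the paper's. Choosing for each $y\in H$ its first hyperbolic time $n(y)\ge N$, processing depths in increasing order and selecting maximal disjoint families of the shrunken balls $B(x,n(x),\vep/2)$, the elementary containment (if $B(x,n(x),\vep/2)$ meets $B(y,n(y),\vep/2)$ and $n(x)\le n(y)$, then $y\in B(x,n(x),\vep)$ by the triangle inequality applied for $0\le j\le n(x)$) shows that the balls $B(x,n(x),\vep)$ over the selected (countable) centers cover $H$, while disjointness together with Remark~\ref{rmk. smaller balls} at scale $\vep/2$ gives $\sum e^{-Pn(x)+S_{n(x)}\phi(x)}\le K(\vep/2)\sum\nu(B(x,n(x),\vep/2))\le K(\vep/2)$. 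Hence $m_P(f,\phi,H,\vep,N)$ is bounded uniformly in $N$, so $m_\alpha(f,\phi,H,\vep)=0$ for every $\alpha>P$ and $P_H(f,\phi,\vep)\le P=\log\la$; letting $\vep\to0$ finishes. This avoids Besicovitch altogether and even gives finiteness at the critical exponent, slightly more than the paper's estimate. So: present that version (or the paper's level-by-level Besicovitch argument with $\alpha>\log\la$), and delete both the cross-scale Besicovitch claim and the null-set remark.
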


\begin{proof}
Recall the characterization of relative pressure using dynamical
balls in Subsection~\ref{Relative Pressure}. Pick $\alpha>\log \la$.
For any given $N \geq 1$, $H$ is contained in the union of the sets
$H_n$ over $n \geq N$. Thus, given $0<\vep \leq \delta$
$$
H \subset
    \bigcup_{n \geq N} \bigcup_{x\in H_n} B(x,n,\vep).
$$
Now we claim that there exists $D>0$ (depending only on $m=\dim(M)$)
so that for every $n\geq N$ there is a family $\cG_n \subset H_n$ in
such a way that every point in $H_n$ is covered by at most $D$
dynamical balls $B(x,n,\vep)$ with $x \in \cG_n$. In fact,
Besicovitch's covering lemma asserts that there is a constant $D>0$
(depending on $m$) and an at most countable family $\cG_n \subset
H_n$ such that every point of $f^n(H_n)$ is contained in at most $D$
elements of the family $\{B(f^n(x),\vep): x \in \cG_n\}$. Using that
each dynamical ball $B(x,n,\vep)$, $x\in H_n$, is mapped
homeomorphically onto $B(f^n(x),\vep)$, it follows that every point
in $H_n$ is contained in at most $D$ dynamical balls $B(x,n,\vep)$
with $x\in \cG_n$, proving our claim.
Given any positive integer $N \geq 1$, it follows by bounded
distortion and the Gibbs property of $\nu$ at hyperbolic times that
$$
m_\al(f,\phi,H,\vep,N)
    \leq K(\vep) \sum_{n\geq N} e^{-(\alpha-P)n}
    \Big\{\sum_{{x} \in \mathcal{G}_n}
    \nu(B(x,n,\vep))\Big\}.
$$
Consequently $m_\al(f,\phi,H,\vep,N) \leq K(\vep)\,
\frac{D}{1-e^{-(\alpha-P)}} \, e^{-(\alpha-P)N}$, which tends to
zero as $N \to \infty$ independently of $\vep$. This shows that
$P_H(f,\phi)\leq \log \la$ and completes the proof of the lemma.
\end{proof}

We know that every ergodic component of an absolutely continuous
invariant measure is also absolutely continuous. Now we prove that
the absolutely continuous invariant measures are indeed an
equilibrium states.

\begin{lemma}\label{Gibbs imply equilibrium}
If $\mu$ is an ergodic measure absolutely continuous with respect to
$\nu$ then $P_\mu(f,\phi)\geq \log \la$. Moreover, $\mu$ is an
equilibrium state for $f$ with respect to $\phi$ and the following
equalities hold
$$
\Ptop(f,\phi)=P_H(f,\phi)=\log \la.
$$
\end{lemma}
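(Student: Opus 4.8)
The plan is to establish the three assertions in the order: first the lower bound $P_\mu(f,\phi)=h_\mu(f)+\int\phi\,d\mu\ge\log\la$ for an arbitrary ergodic $\mu\ll\nu$; then $\Ptop(f,\phi)=\log\la$ (which simultaneously identifies $\mu$ as an equilibrium state); and finally $P_H(f,\phi)=\log\la$.

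For the first step I would use the Brin--Katok local entropy formula: since $\mu$ is ergodic, for $\mu$-a.e.\ $x$ one has $h_\mu(f)=\lim_{\rho\to 0}\limsup_n\big(-\tfrac1n\log\mu(B(x,n,\rho))\big)$, and since the inner $\limsup$ does not decrease as $\rho\downarrow 0$, in particular
$$
h_\mu(f)\ \ge\ \limsup_{n\to\infty}\Big(-\tfrac1n\log\mu(B(x,n,\delta))\Big)
$$
for the fixed radius $\delta=\delta(c,f)$ of Lemma~\ref{l.Gibbs.hyp.times}. By Proposition~\ref{prop. acim} the measure $\mu$ is expanding, and $\mu\ll\nu$ with density bounded above by some $C_2$ (Lemma~\ref{density control}, Proposition~\ref{prop. acim}); hence for $\mu$-a.e.\ $x$ we have $x\in\supp(\nu)$ together with an infinite increasing sequence $(n_k)_k$ of hyperbolic times, and Lemma~\ref{l.Gibbs.hyp.times} (with $y=x$) gives
$$
\mu(B(x,n_k,\delta))\ \le\ C_2\,\nu(B(x,n_k,\delta))\ \le\ C_2K\,e^{-P n_k+S_{n_k}\phi(x)}.
$$
Taking logarithms, dividing by $n_k$, letting $k\to\infty$, and using Birkhoff's ergodic theorem ($\tfrac1n S_n\phi(x)\to\int\phi\,d\mu$, hence the same along the subsequence) yields $h_\mu(f)\ge\log\la-\int\phi\,d\mu$, i.e.\ $P_\mu(f,\phi)\ge\log\la$. (One could instead invoke the non-lacunarity of the hyperbolic times from Corollary~\ref{c.hyperbolicislacunary} --- which applies since $\int n_1\,d\mu\le C_2\int n_1\,d\nu<\infty$ by Corollary~\ref{c.nu.expanding} --- together with the nesting $B(x,n,\delta)\subset B(x,n_k,\delta)$ for $n\ge n_k$ to pass to a genuine limit; but the $\limsup$ version above already suffices.)

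Given this, the remaining equalities follow by a squeeze. By Proposition~\ref{non-compact variational principle}(2) applied with $\Lambda=H$,
$$
\Ptop(f,\phi)=\sup\{P_H(f,\phi),\,P_{H^c}(f,\phi)\}\le\log\la,
$$
using Lemma~\ref{pressure in H} ($P_H(f,\phi)\le\log\la$) and Proposition~\ref{pressure in H^c} ($P_{H^c}(f,\phi)<\log\la$). Since $\log\la\le P_\mu(f,\phi)\le\Ptop(f,\phi)$ by the first step and the variational principle, this chain collapses: $\Ptop(f,\phi)=\log\la=P_\mu(f,\phi)$, so $\mu$ is an equilibrium state. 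Moreover $\mu$ is expanding, so $\mu(H)=1$, and Proposition~\ref{non-compact variational principle}(1) gives $P_H(f,\phi)\ge h_\mu(f)+\int\phi\,d\mu=\log\la$; combined with $P_H(f,\phi)\le\log\la$ this gives $P_H(f,\phi)=\log\la$, and hence $\Ptop(f,\phi)=P_H(f,\phi)=\log\la$.

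The crux is the first step: converting the Gibbs bound for $\nu$, which holds only along the sparse, point-dependent sequence of hyperbolic times, into a bound on the metric pressure of $\mu$. The reason it works is that the inner $\limsup$ in Brin--Katok may be evaluated along any subsequence of times, that only the \emph{upper} density bound $\mu(B)\le C_2\nu(B)$ is needed (no lower bound on $d\mu/d\nu$, which is why the statement holds without the topologically mixing hypothesis), and that Birkhoff's theorem controls $S_{n_k}\phi(x)/n_k$ along the very same subsequence. Once this is in hand, everything else is bookkeeping with the relative-pressure inequalities already established in Proposition~\ref{pressure in H^c} and Lemma~\ref{pressure in H}, together with the decomposition and variational inequality in Proposition~\ref{non-compact variational principle}.
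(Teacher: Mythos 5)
Your proposal is correct and follows essentially the same route as the paper: the upper bound $\mu(B(x,n_k,\cdot))\le C_2K\,e^{-Pn_k+S_{n_k}\phi}$ at hyperbolic times fed into the Brin--Katok formula together with Birkhoff gives $P_\mu(f,\phi)\ge\log\la$, and the squeeze with Proposition~\ref{non-compact variational principle}, Proposition~\ref{pressure in H^c} and Lemma~\ref{pressure in H} yields the equalities. The only (immaterial) difference is that you fix the radius $\de$ and use monotonicity of the Brin--Katok limsup, whereas the paper works with all radii $\vep\le\de$ via the constant $K(\vep)$ of Remark~\ref{rmk. smaller balls} and lets $\vep\to0$.
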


\begin{proof}
The previous estimates and Proposition~\ref{non-compact variational
principle} guarantee that
$$
\Ptop(f,\phi)
    =\sup\{P_{H}(f,\phi), P_{H^c}(f,\phi)\} \leq \log \la.
$$
Using that $d\mu/d\nu \leq C_2$, that $\nu$ satisfies the Gibbs
property at hyperbolic times and $\mu$-almost every point $x$ admits
a sequence $\{n_k(x)\}$ of hyperbolic times then
\begin{equation*}%\label{eq.upperGibbs}
\mu(B(x,n_k,\vep))
    \leq C_2 \,K(\vep) \,e^{-P n_k + S_{n_k} \phi(y)}
\end{equation*}
for every $0<\vep \leq \de$, every $k\geq 1$ and every $y \in
B(x,n_k,\vep )$. Thus, Brin-Katok's local entropy formula for
ergodic measures and Birkhoff's ergodic theorem (see e.g.
\cite{Man87}) immediately imply that
$$
h_\mu(f)
    =\lim_{\vep\to 0}\limsup_{n \to \infty} -\frac{1}{n} \log \mu(B(x,n,\vep))
    %\geq \limsup_{n \to \infty} -\frac{1}{n} \log \mu(B(x,n,\vep))
    \geq P - \int \phi \,d\mu,
$$
where the first equality holds $\mu$-almost everywhere. In
particular
$$
\log \la
    \geq \Ptop(f,\phi)
    \geq P_{H}(f,\phi)
    \geq \sup_{\mu(H)=1} \Big\{h_{\mu}(f)+\int \phi \,d\mu\Big\}
    \geq \log \la,
$$
which proves that $\mu$ is an equilibrium state and that the three
quantities in the statement of the lemma do coincide. This completes
the proof of the lemma.
\end{proof}

%%%%%%%%%%%%%%%%%%%%%%%%%%%%%%%%%%%%%%%%%%%%%%%%%%%%%%%%%%%%%%%%%%%%%%%
\subsection{Finitude of ergodic equilibrium states}

In this subsection we will complete the proof of Theorems~\ref{Thm.
Equilibrium States} and ~\ref{Thm. Equilibrium States2} and
Corollary~\ref{Cor.ContinuousPotentials}. First we combine that
every equilibrium state is an expanding measure with some ideas
involved in the proof of the variational properties of SRB measures
in \cite{Le84a} to deduce that every equilibrium state is absolutely
continuous with respect to some conformal measure supported in the
closure of the set $H$, and to obtain finitude of ergodic
equilibrium states. Finally, we show that under the topologically
mixing assumption there is a unique equilibrium state, and that it
is exact and a non-lacunary Gibbs measure. We begin with the
following abstract result:

\begin{theorem}\label{thm.equilibrium is acim}
Let $f:M \to M$ be a local homeomorphism, $\phi:M \to \R$ be a
H\"older continuous potential and $\nu$ be a conformal measure such
that $J_\nu f=\la e^{-\phi}$, where $\la=\exp (\Ptop(f,\phi))$.
Assume that $\eta$ is an equilibrium state for $f$ with respect to
$\phi$ gives full weight to $\supp (\nu)$ and that
$$
\lim_{n\to\infty} \frac1n \sum_{j=0}^{n-1} L(f^j(x)) <0
$$
almost everywhere. Then $\eta$ is absolutely continuous with respect
to $\nu$.
\end{theorem}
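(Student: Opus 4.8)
The plan is to run an abstract version of Ledrappier's argument \cite{Le84a} on the natural extension, exploiting the weak hyperbolicity of $\eta$ (which, being an equilibrium state for $\phi$ with $\lambda=e^{\Ptop(f,\phi)}$, makes $\nu$ one of the eigenmeasures of Theorem~\ref{thm. conformal measure}, hence expanding and Gibbs at hyperbolic times). Since absolute continuity with respect to $\nu$ is preserved under convex combinations, and every ergodic component of an equilibrium state is again an equilibrium state that is expanding (the hypothesis $\limsup_n\frac1n\sum_{j=0}^{n-1}\log L(f^j(x))<0$ is pointwise a.e.) and gives full weight to the closed invariant set $\supp(\nu)$, I would first reduce to $\eta$ ergodic. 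Then pass to $(\hat M,\hat f,\hat\eta)$; by the standing hypothesis and Proposition~\ref{p.Pesinpointwise} there are local unstable disks $\Wloc^u(\hat x)$ through $\hat\eta$-a.e.\ point, varying measurably, and Corollary~\ref{c.Pesinblocks} provides hyperbolic blocks $\hat\La_i$ of measure arbitrarily close to $1$ on which these disks vary continuously and foliate a neighbourhood.

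Next I would introduce a reference family of leaf measures: for $\hat\eta$-a.e.\ $\hat x$, define a measure $m_{\hat x}$ on $\Wloc^u(\hat x)$ by transporting a fixed local piece of $\nu$ through the inverse branches that generate the leaf, using the cocycle $J_\nu f=\lambda e^{-\phi}$; the infinite products of Jacobian ratios converge by the backward bounded distortion along the exponentially contracting preimages (Corollary~\ref{lem. bounded distortion} together with Proposition~\ref{p.Pesinpointwise}(3)), so up to a bounded factor $m_{\hat x}$ is the restriction of $\nu$ to the projected leaf, and the ``unstable Jacobian'' of $\hat f$ relative to $\{m_{\hat x}\}$ is exactly $J_\nu f\circ\pi$. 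I would then take a measurable partition $\hat\xi$ subordinate to $\{\Wloc^u(\hat x)\}$, refining under $\hat f^{-1}$, generating in the appropriate sense — here one uses that at a hyperbolic time $n$ the atom of $\cP\vee\dots\vee f^{-(n-1)}\cP$ through $x$ has diameter at most $\diam(M)\,e^{-cn}$, so partitions into domains of injectivity generate for the expanding measure $\eta$ — with $\bigwedge_{n\ge0}\hat f^n\hat\xi$ the partition into entire leaves.

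The key identity then comes from combining the equilibrium property with the Rokhlin/entropy formula along $\hat\xi$:
$$
h_{\hat\eta}(\hat f)=h_\eta(f)=\Ptop(f,\phi)-\int\phi\,d\eta=\int\log(\lambda e^{-\phi})\,d\eta=\int\log J_\nu f\,d\eta ,
$$
while $h_{\hat\eta}(\hat f)=h_{\hat\eta}(\hat f,\hat\xi)=H_{\hat\eta}(\hat f^{-1}\hat\xi\mid\hat\xi)=\int\log J^u_{\hat\eta}\hat f\,d\hat\eta$, where $J^u_{\hat\eta}\hat f$ is the unstable Jacobian of $\hat f$ with respect to the disintegration $\{\hat\eta_{\hat x}\}$ of $\hat\eta$ along $\hat\xi$. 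Hence $\int\log\bigl(J^u_{\hat\eta}\hat f/(J_\nu f\circ\pi)\bigr)\,d\hat\eta=0$: the two unstable Jacobians have the same $\hat\eta$-average. The decisive step is the equality case of Ledrappier's inequality: writing $\hat\eta_{\hat x}=\rho_{\hat x}\,m_{\hat x}+\hat\eta^s_{\hat x}$ with $\hat\eta^s_{\hat x}\perp m_{\hat x}$, one checks that along the leaf dynamics $J^u_{\hat\eta}\hat f/(J_\nu f\circ\pi)$ equals $(\rho_{\hat f\hat x}\circ\hat f)/\rho_{\hat x}$ on the support of the a.c.\ part and creates a strictly positive Jensen defect wherever $\hat\eta^s_{\hat x}\neq0$ or $\rho_{\hat x}$ fails to be $\hat f$-equivariant; strict concavity of $\log$, invariance of $\hat\eta$ and ergodicity then force $\hat\eta^s_{\hat x}=0$ and $\rho_{\hat x}$ positive and finite for $\hat\eta$-a.e.\ $\hat x$.

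Finally I would descend: leafwise absolute continuity, read on a hyperbolic block $\hat\La_i$ together with a disintegration of $\nu$ over the same local leaves — available because $\nu$ is itself expanding and a non-lacunary Gibbs measure (Theorem~\ref{thm. conformal measure}), so its conditionals on the leaves are comparable to the $m_{\hat x}$ — yields $\hat\eta|_{\hat\La_i}$ absolutely continuous with respect to the corresponding lift of $\nu$; projecting by $\pi$ gives $\eta|_{\pi(\hat\La_i)}\ll\nu$, and letting $i\to\infty$ (the blocks exhaust $\hat\eta$) gives $\eta\ll\nu$. The hard part is the equality case in the previous paragraph: matching the exponential rates of $\eta$ and $\nu$ on dynamical balls at hyperbolic times is essentially all that the Gibbs property of $\nu$ plus Brin--Katok give directly, and this is far weaker than absolute continuity; ruling out a singular component requires the delicate concavity argument on conditional measures, handling possibly infinite densities. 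A secondary technical nuisance is that the unstable leaves vary only measurably, so the disintegration bookkeeping has to be carried out on the continuity blocks $\hat\La_i$ and then patched together.
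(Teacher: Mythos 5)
Your proposal follows essentially the same route as the paper's proof: reduce to ergodic $\eta$, pass to the natural extension, build an increasing measurable partition $\hat\cQ$ subordinate to the local unstable leaves from Proposition~\ref{p.Pesinpointwise} and Corollary~\ref{c.Pesinblocks}, lift $\nu$ to a leafwise reference family with unstable Jacobian $J_\nu f\circ\pi$, combine the equilibrium identity $h_{\hat\eta}(\hat f)=\int\log J_\nu f\,d\eta$ with the Rokhlin-type formula $h_{\hat\eta}(\hat f)=H_{\hat\eta}(\hat f^{-1}\hat\cQ\mid\hat\cQ)$, and invoke the Jensen equality case to conclude that the conditional measures of $\hat\eta$ along the leaves are absolutely continuous with respect to the lifted $\nu$. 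The only variation is in packaging the decisive step: you propose to split $\hat\eta_{\hat x}$ into absolutely continuous and singular parts and argue a strict Jensen defect if the singular part were non-trivial, whereas the paper, following Ledrappier, constructs the absolutely continuous candidate disintegration $\rho_{\hat x}$ explicitly via the distortion cocycle $\De(\hat x,\hat y)$, shows $\int\log\bigl(\tfrac{d\rho_{\hat x}}{d\hat\eta_{\hat x}}\big|_{\hat f^{-1}\hat\cQ}\bigr)\,d\hat\eta=0$, and iterates the equality case across $\hat f^{-n}\hat\cQ$ using the generating property --- which avoids manipulating a possibly singular component at all.
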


Let us stress out that this theorem holds in a more general setting.
Since this fact will not be used here, we will postpone the
discussion to Remark~\ref{rmk.general.endomorphism} near the end of
the section. The finitude of equilibrium states is a direct
consequence of the previous result. Indeed,

\begin{corollary}\label{c.equilibrium is acim}
Let $f$ be a local homeomorphism and let $\phi$ be a H\"older
continuous potential satisfying (H1), (H2) and (P). There exists an
expanding conformal probability measure $\nu$ such that every
equilibrium state for $f$ with respect to $\phi$ is absolutely
continuous with respect to $\nu$ with density bounded from above.
If, in addition, $f$ is topologically mixing then there is unique
equilibrium state and it is a non-lacunary Gibbs measure.
\end{corollary}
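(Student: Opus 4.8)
The plan is to deduce everything from the abstract Theorem~\ref{thm.equilibrium is acim} together with the structural results already in place. For the reference measure I would take $\nu=\frac{1}{k+1}\sum_{i=0}^k\nu_i$, where $\nu_0,\dots,\nu_k$ are the expanding conformal measures furnished by Theorem~\ref{thm. conformal measure}. This $\nu$ is conformal (a convex combination of conformal measures sharing the common exponent $\phi-\log\la_0$), it is expanding (each $\nu_i$ is), and $\supp\nu=\bigcup_i\supp\nu_i=\overline H$. Moreover, by Lemma~\ref{Gibbs imply equilibrium} every eigenvalue occurring in Theorem~\ref{thm. conformal measure} equals $\la_0=r(\cL_\phi)$ and $\log\la_0=\Ptop(f,\phi)$, so $J_\nu f=\exp(\Ptop(f,\phi))\,e^{-\phi}$, which is exactly the normalization required by Theorem~\ref{thm.equilibrium is acim}.

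The one genuine step is to verify, for an arbitrary equilibrium state $\eta$, the remaining hypotheses of Theorem~\ref{thm.equilibrium is acim}: that $\eta$ gives full weight to $\supp\nu=\overline H$ and that $\lim_n\frac1n\sum_{j<n}\log L(f^j(\cdot))<0$ almost everywhere. Both follow once one shows $\eta(H)=1$, i.e.\ that $\eta$ is expanding. Here is the argument I would run. The set $H$ is invariant modulo $\eta$-null sets: by the concatenation of hyperbolic times (Remark~\ref{r.concatenation}) one has $f(H)\subset H$, hence $H\subset f^{-1}(H)$, and invariance of $\eta$ gives $\eta(f^{-1}(H)\setminus H)=0$. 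If $\eta(H^c)>0$, then the normalized restriction $\eta|_{H^c}$ is $f$-invariant, and since $\mu\mapsto h_\mu(f)$ and $\mu\mapsto\int\phi\,d\mu$ are affine, the equality $h_\eta(f)+\int\phi\,d\eta=\Ptop(f,\phi)$ forces $\eta|_{H^c}$ itself to be an equilibrium state. But $\eta|_{H^c}(H^c)=1$, so Proposition~\ref{non-compact variational principle}(1) together with monotonicity of the relative pressure and Proposition~\ref{pressure in H^c} yields $h_{\eta|_{H^c}}(f)+\int\phi\,d\eta|_{H^c}\le P_{H^c}(f,\phi)<\log\la_0=\Ptop(f,\phi)$, a contradiction. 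Hence $\eta(H)=1$, so in particular $\eta(\supp\nu)=1$; and since $\eta$-a.e.\ $x$ has infinitely many hyperbolic times $n_\ell\to\infty$, each satisfying $\frac1{n_\ell}\sum_{j<n_\ell}\log L(f^j(x))<-c$ by \eqref{eq. c-hyperbolic times}, the Birkhoff limit of $\frac1n\sum_{j<n}\log L\circ f^j$ is $\le -c<0$ at $\eta$-a.e.\ point. Applying Theorem~\ref{thm.equilibrium is acim} then gives that every equilibrium state is absolutely continuous with respect to $\nu$.

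From here the conclusions are bookkeeping on top of Section~\ref{Absolutely Continuous Measures}. An equilibrium state, being invariant and absolutely continuous with respect to $\nu$, is by Proposition~\ref{prop. acim} a convex combination $\sum_i a_i\mu_i$ of the finitely many ergodic absolutely continuous invariant measures $\mu_1,\dots,\mu_k$, so its density is bounded above by $\max_i\|d\mu_i/d\nu\|_\infty$; conversely, by Lemma~\ref{Gibbs imply equilibrium} each $\mu_i$ is an equilibrium state, so the ergodic equilibrium states are precisely $\mu_1,\dots,\mu_k$. If $f$ is topologically mixing, Proposition~\ref{prop. acim} provides a unique absolutely continuous invariant measure $\mu$, which is a non-lacunary Gibbs measure and, by Lemma~\ref{Gibbs imply equilibrium}, an equilibrium state; since we have just shown that every equilibrium state is invariant and absolutely continuous with respect to $\nu$, it must coincide with $\mu$, giving uniqueness.

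The main obstacle, such as it is, sits in the middle paragraph: everything hinges on the relative-pressure estimate $P_{H^c}(f,\phi)<\Ptop(f,\phi)$ of Proposition~\ref{pressure in H^c} — the point where hypothesis (P) is actually used — to rule out equilibrium states charging $H^c$, and on the already-isolated Ledrappier-type Theorem~\ref{thm.equilibrium is acim} to upgrade ``expanding equilibrium state'' to ``absolutely continuous''. Neither is re-proved here; the work is to check that their hypotheses hold, which is what the argument above does.
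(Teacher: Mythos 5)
Your proof is correct and follows essentially the same route as the paper: identify $\nu$ from Theorem~\ref{thm. conformal measure}, show any equilibrium state $\eta$ is expanding by playing $P_{H^c}(f,\phi)<\Ptop(f,\phi)$ (Proposition~\ref{pressure in H^c}) against the variational characterization, then invoke Theorem~\ref{thm.equilibrium is acim} and Proposition~\ref{prop. acim}. You actually fill in two small gaps the paper leaves unspoken: you justify why $\eta\mid_{H^c}$ is an invariant measure (via $f(H)\subset H$ and invariance of $\eta$), and you derive the negative Birkhoff limit of $\log L\circ f^j$ directly from the hyperbolic-times definition, whereas the paper's own proof at that point cites the corollary it is in the process of proving.
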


\begin{proof}
Let $\nu$ be the expanding conformal measure given by
Theorem~\ref{thm. conformal measure} and $\eta$ be an ergodic
equilibrium state for $f$ with respect to $\phi$. We claim that
$\eta$ is an expanding measure. Indeed, assume by contradiction that
one can decompose $\eta$ as a linear convex combination of two
measures $\eta=t\eta_1+(1-t)\eta_2$ with $\eta_2(H^c)=1$ for some
$0\leq t<1$. But Lemma~\ref{Gibbs imply equilibrium}, the first part
of Proposition~\ref{non-compact variational principle} and the
convexity of the pressure yield
$$
P_\eta(f,\phi)
    =t P_{\eta_1}(f,\phi)+(1-t) P_{\eta_2}(f,\phi)
    \leq t \,\Ptop(f,\phi)+(1-t)\,P_{H^c}(f,\phi)
    <\Ptop(f,\phi),
$$
which contradicts that $\eta$ is an equilibrium state and proves our
claim. Moreover, $\eta(\supp (\nu))=1$ because the support of $\nu$
coincides with the closure of $H$. Finally, since
$$
\limsup_{n\to \infty} \frac 1n \sum_{j=0}^{n-1}
                        \log L(f^j(x))\leq -2c<0
$$
at $\eta$-almost every point (Corollary~\ref{c.equilibrium is
acim}), the assumptions of Theorem~\ref{thm.equilibrium is acim} are
verified. This result is a direct consequence of the previous
theorem and Proposition~\ref{prop. acim}.
\end{proof}

In the sequel we prove Theorem~\ref{thm.equilibrium is acim}. Since
$f$ is a non-invertible transformation we use the natural extension,
introduced in Subsection \ref{Natural extension}, to deal with
unstable manifolds.

\begin{proof}[Proof of Theorem~\ref{thm.equilibrium is acim}]
It is easy to check, using the variational principle, that almost
every ergodic component of an equilibrium state is an equilibrium
state. Thus, by ergodic decomposition it is enough to prove the
result for ergodic measures.

Let $\eta$ be an ergodic equilibrium state and $(\hat f, \hat\eta)$
be the natural extension of $\eta$ introduced in
Subsection~\ref{Natural extension}. Then ${\pi}_* \hat\eta=\eta$ and
that
$$
 \lim_{n\to \infty} \frac 1n \sum_{j=0}^{n-1} \log \hat L(\hat f^j(\hat x))<0
$$
$\hat\eta$-almost everywhere.

We proceed with the construction of a special partition $\hat\cQ$ of
$\hat M$ that is closely related with Ledrappier's geometric
construction in Proposition~3.1 of \cite{Le84a} and provides a key
ingredient for the proof of Theorem~\ref{thm.equilibrium is acim}.
The main differences from the original result due to Ledrappier are
that the natural extension $\hat M$ is not in general a manifold and
that there is no well defined unstable foliation in $M$. Given a
partition $\hat\cQ$ denote by $\hat\cQ(\hat x)$ the element of
$\hat\cQ$ that contains $\hat x \in \hat M$. We say that $\hat Q$ is
an \emph{increasing partition} if $(\hat f^{-1}\hat \cQ)(\hat x)
\subset \hat\cQ(\hat x)$ for $\hat\eta$-almost every $\hat x$, in
which case we write $\hat f^{-1}\hat \cQ\succ \hat\cQ$.

\begin{proposition}\label{p.generating.partition}
There exists an invariant and full $\hat\eta$-measure set $\hat S
\subset \hat M$, and a measurable partition $\hat\cQ$ of $\hat S$
such that:
\begin{enumerate}
\item\label{ep1} $\hat f^{-1}\hat \cQ\succ \hat\cQ$,
\item\label{ep2} $\bigvee_{j=0}^{+ \infty} \hat f^{-j} \hat\cQ$ is the
        partition into points,
\item\label{ep4} The sigma-algebras $\cM_n$ generated by the partitions
        $\hat f^{-n} \hat\cQ$, $n\geq 1$, generate the
        $\si$-algebra in $\hat S$, \, and
\item\label{ep3} For almost every $\hat x$ the element
        $\hat\cQ(\hat x)\subset \hat W^u(\hat x)$ contains a neighborhood of $\hat x$ in
        $\hat W^u(\hat x)$ and the  projection $\pi(\hat\cQ(\hat x))$ contains
        a neighborhood of $x_0$ in $M$.
\end{enumerate}
\end{proposition}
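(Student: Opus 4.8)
The plan is to adapt Ledrappier's construction of an increasing partition subordinate to the unstable foliation (Proposition~3.1 of \cite{Le84a}) to the natural extension $(\hat f,\hat\eta)$, replacing the (nonexistent) global unstable foliation of $\hat M$ by the local unstable leaves produced in Proposition~\ref{p.Pesinpointwise} and Corollary~\ref{c.Pesinblocks}; recall $\hat\eta$ is ergodic because $\eta$ is. First I would fix, via Corollary~\ref{c.Pesinblocks}, a hyperbolic block $\hat\La=\hat\La_i$ with $\hat\eta(\hat\La)>0$, a point $\hat x_0\in\hat\La$, and a radius $0<r\le r_i$ chosen generically so that the $\hat\eta$-measure of the union of the leafwise boundaries of the plaques $\Wloc^u(\hat y)\cap B(x_0,r)$, $\hat y\in\hat\La$, is zero. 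Put
\[
S=\bigcup_{\hat y\in B(\hat x_0,\vep_i r)\cap\hat\La}\{\hat z\in\hWloc^u(\hat y):z_0\in B(x_0,r)\}\subset\hat M .
\]
By items (2)--(4) of Corollary~\ref{c.Pesinblocks} the plaques in this union coincide or are pairwise disjoint, they form a measurable partition $\xi_S$ of $S$, each atom is relatively open in its unstable leaf and sits inside that leaf with a uniform margin $R_i$, and the plaques vary continuously; also $\hat\eta(S)>0$ since $S\supset B(\hat x_0,\vep_i r)\cap\hat\La$. Let $\cP=\xi_S\cup\{\hat M\setminus S\}$ and, on the invariant full-measure set $\hat S$ of points where the conclusions of Proposition~\ref{p.Pesinpointwise}, Corollary~\ref{c.Pesinblocks}, and (by ergodicity of $\hat f$ and of $\hat f^{-1}$) positive density of $\{n\ge0:\hat f^{\pm n}\hat x\in S\}$ all hold, define $\hat\cQ=\bigvee_{n\ge0}\hat f^{n}\cP$. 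Since $\hat f^{-1}\hat\cQ=\hat\cQ\vee\hat f^{-1}\cP$, one gets $(\hat f^{-1}\hat\cQ)(\hat x)=\hat\cQ(\hat x)\cap(\hat f^{-1}\cP)(\hat x)\subset\hat\cQ(\hat x)$ almost everywhere, which is the increasing property \ref{ep1}.

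For the subordination property \ref{ep3}, fix a.e.\ $\hat x$ and let $n_0(\hat x)$ be the least $n\ge0$ with $\hat f^{-n}\hat x\in S$. Because the local unstable leaves satisfy $\hat f^{-1}(\hWloc^u(\hat x))\subset\hWloc^u(\hat f^{-1}\hat x)$, the map $\hat f$ sends each local unstable leaf homeomorphically onto a larger relatively open piece of the leaf of its image, expanding along it; hence $\hat f^{n_0}(\cP(\hat f^{-n_0}\hat x))$ is a relatively open subset of $\hat W^u(\hat x)$ containing a neighborhood of $\hat x$ in $\hat W^u(\hat x)$, and in particular $\hat\cQ(\hat x)=\bigcap_{n\ge0}\hat f^{n}(\cP(\hat f^{-n}\hat x))\subset\hat W^u(\hat x)$. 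For $n\ne n_0$ each factor still contains a neighborhood of $\hat x$ in $\hat W^u(\hat x)$: if $\hat f^{-n}\hat x\in S$ then $n>n_0$ and the strong forward expansion together with the uniform margin $R_i$ keeps the plaque image large around $\hat x$; if $\hat f^{-n}\hat x\notin S$, the generic choice of $r$ gives a leafwise neighborhood of $\hat f^{-n}\hat x$ disjoint from $S$, whose $\hat f^{n}$-image is a neighborhood of $\hat x$ disjoint from $\hat f^{n}(S)$. Therefore $\hat\cQ(\hat x)$ contains a neighborhood of $\hat x$ in $\hat W^u(\hat x)$, and since $\pi$ restricts to a homeomorphism on $\hWloc^u(\hat x)$, $\pi(\hat\cQ(\hat x))$ contains a neighborhood of $x_0$ in $M$.

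Finally, by \ref{ep1} and invertibility of $\hat f$ one has $\bigvee_{n\ge0}\hat f^{-n}\hat\cQ=\bigvee_{k\in\Z}\hat f^{k}\cP$. To see that this is the partition into points (property \ref{ep2}), suppose $\hat x\ne\hat y$ lie in the same atom of $\hat f^{k}\cP$ for every $k\in\Z$; choosing $j$ in the positive-density set with $\hat f^{j}\hat x\in S$ forces $\hat f^{j}\hat y\in S$ and then both points lie in a single plaque of $S$, so the backward contraction estimate of item~(3) of Proposition~\ref{p.Pesinpointwise}, uniform on the block, gives $\hat d(\hat x,\hat y)\le C\,e^{-(\la_i-\vep_i)j}\,\diam S$, and letting $j\to\infty$ yields $\hat x=\hat y$. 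As \ref{ep1} also shows $\hat f^{-n}\hat\cQ$ increases to this point partition, the $\sigma$-algebras $\cM_n$ generate the $\sigma$-algebra of $\hat S$ (property \ref{ep4}). I expect the main obstacle to be the neighborhood half of \ref{ep3}: one must ensure the countable intersection defining $\hat\cQ(\hat x)$ does not collapse to a lower-dimensional set, which is exactly where the uniform geometry of the plaques on a single hyperbolic block — the margin $R_i$ and the continuity in Corollary~\ref{c.Pesinblocks} — together with a passage to the connected component of $\hat x$ inside its leaf are needed, and where the absence of a genuine unstable foliation on $\hat M$ must be circumvented by hand, as in \cite{Le84a}.
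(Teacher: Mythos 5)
Your construction is exactly the one in the paper (plaques of a hyperbolic block restricted to $B(x_0,r)$, the partition $\cP=\xi_S\cup\{\hat M\setminus S\}$, $\hat\cQ=\bigvee_{n\ge0}\hat f^n\cP$), and your treatment of items (1), (2) and (4) is essentially the paper's: increasing by construction, Poincar\'e recurrence to $S$ plus backward contraction along unstable leaves to shrink diameters and separate points. The gap is in item (3), and it is the step you yourself flag as ``the main obstacle'' without closing it. From ``each factor $\hat f^{n}(\cP(\hat f^{-n}\hat x))$ contains a leafwise neighborhood of $\hat x$'' one cannot conclude that the countable intersection $\hat\cQ(\hat x)$ does, because those neighborhoods may shrink to $\hat x$ as $n\to\infty$; a decreasing sequence of neighborhoods can have trivial intersection. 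Your only quantitative input is that $r$ is chosen so the union of leafwise plaque boundaries has $\hat\eta$-measure zero, which gives, for each fixed $n$, \emph{some} leafwise neighborhood of $\hat f^{-n}\hat x$ avoiding $\partial S$, but with no control on its size in $n$ --- and neither the margin $R_i$ nor the continuity on the block repairs this, since what matters is how fast the backward orbit $x_{-n}$ approaches $\partial B(x_0,r)$.

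What is missing is the paper's quantitative choice of the radius: one introduces
$\beta_r(\hat y)=\inf_{n\ge0}\{R_i,\,r/\ga_i,\,\tfrac1{2\ga_i}e^{\la_i n}d(y_{-n},\partial B(x_0,r))\}$
and shows that any $z_0\in\Wloc^u(\hat x)$ with $d(x_0,z_0)<\beta_r(\hat x)$ lies, at every backward time, in the same atom of $\cP$ as $x_{-n}$, so $\pi(\hat\cQ(\hat x))$ contains the leafwise ball of radius $\beta_r(\hat x)$. The point is then to prove $\beta_r>0$ almost everywhere, and this is achieved not by a measure-zero-boundary genericity but by a Borel--Cantelli argument in the \emph{parameter} $r$: for Lebesgue-a.e.\ $r\in[0,r_i]$ one has $\sum_k\hat\eta\big(\{\hat y:|d(x_0,y_0)-r|<e^{-\la_i k}\}\big)<\infty$, whence, by invariance of $\hat\eta$ and Borel--Cantelli, for $\hat\eta$-a.e.\ $\hat x$ the distance $d(x_{-n},\partial B(x_0,r))$ drops below $D^{-1}e^{-\la_i n}$ only finitely often. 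Only this exponential lower bound, matched against the exponential backward contraction rate along the leaf, keeps the intersection from collapsing. Without this step (or an equivalent quantitative substitute) your proof of the subordination property \ref{ep3} does not go through; the rest of your argument is sound and coincides with the paper's.
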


\begin{proof}
Since $\hat\eta$ is an expanding measure,
Proposition~\ref{p.Pesinpointwise} guarantees the existence of local
unstable manifolds at $\hat\eta$-almost every point. Take $i \geq 1$
such that $\hat\eta(\hat\La_i)>0$ and let $r_i$, $\vep_i$, $\ga_i$
and $R_i$ be given by Corollary~\ref{c.Pesinblocks}. Fix also
$0<r\leq r_i$ and $\hat x \in \supp(\hat\eta\mid_{\hat\La_i})$.
Recall that $\hat y \mapsto \Wloc^u(\hat y) \cap B(x_0,r)$ is a
continuous function on $B(\hat x, \vep_i r) \cap \hat\La_i$.
Consider the sets
$$
\hat V(\hat y,r)=\{\hat z \in \hWloc^u(\hat y) : z_0 \in B(x_0,r)\},
$$
defined for any $\hat y \in B(\hat x, \vep_i r) \cap \hat\La_i$.
Define also
$$
\hat S (\hat x,r)
    =\bigcup \{\hat V (\hat y, r) : \hat y \in B(\hat x,\vep_i r) \cap \hat\La_i\}
$$
and the partition $\hat\cQ_0(r)$ of $\hat M$ whose elements are the
connected components $\hat V(\hat y,r)$ of unstable manifolds just
constructed and their complement $\hat M \setminus \hat S (\hat
x,r)$. Furthermore, consider the set $\hat S_r$ and the partition
$\hat \cQ(r)$ given by
\begin{equation*}
\hat S_r = \bigcup_{n=0}^{+\infty} \hat f^n(\hat S(\hat x,r))
    \quad\text{and}\quad
\hat Q(r) = \bigvee_{n=0}^{+\infty} \hat f^n(\hat Q_0(r)).
\end{equation*}
Then, the partition $\hat\cQ$ coincides with the partition
$\hat\cQ(r)$ and the set $\hat S$ is given by $\bigcap_{j\geq 0}
\hat f^{-j}(\hat S_r)$ for a particular choice of the parameter $r$.
In what follows, for notational convenience and when no confusion is
possible we shall omit the dependence of the partition $\hat\cQ$ on
$r$.

It is clear from the construction that every partition $\hat\cQ(r)$
is increasing, that is the content of \eqref{ep1}. In addition,
since $\hat\eta$ is ergodic and $\hat\eta(\hat S(\hat x,r))>0$ then
the set of points that return infinitely often to $\hat S(\hat
x,r)$, which we called $\hat S_r$, is a full measure set by
Poincar\'e's Recurrence Theorem. In other words, if a point $\hat y$
belongs to $\hat S_r$ there are positive integers $(n_j)_{j}$ such
that $\hat f^{n_j}(\hat y) \in \hat V(\hat f^{n_j}(\hat y),r)$.
Hence, the backward distance contraction along unstable leaves
guarantees that the diameter of the partition $\bigvee_{n=0}^{n}
\hat f^{-j}\hat\cQ$ tend to zero as $n \to\infty$, proving
\eqref{ep2}.
By construction, there is a full measure set such that any two
distinct points $\hat y$ and $\hat z$ lie in different elements of
$\hat f^{-n}\hat\cQ$ for some $n \in \N$. Indeed, if $\hat
f^{-n}\hat\cQ(\hat y) = \hat f^{-n}\hat\cQ(\hat z)$ for every $n
\geq 0$ then $\hat f^n(\hat y)$ and $\hat f^n(\hat z)$ lie
infinitely often in the same local unstable manifold. But
\eqref{ep2} implies that $\hat y$ and $\hat z$ should coincide,
which is a contradiction and proves our claim. In particular, the
decreasing family of $\si$-algebras $\cM_n$, $n \geq 1$, generate
the $\si$-algebra in $\hat S_r$, which proves \eqref{ep4}.

We proceed to show that the partition $\hat\cQ(r)$ satisfies
\eqref{ep3} for Lebesgue almost every parameter $r$. Given  $0<r\leq
r_i$ and $\hat y \in \hat S_r$ define
$$
\beta_r(\hat y)
    =\inf_{n \geq 0}\Big\{R_i, \frac{r}{\ga_i}, \frac{1}{2\ga_i} e^{\la_i n} d(y_{-n}, \partial
    B(x_0,r))\Big\},
$$
that it clearly non-negative. It is enough to obtain the following:
\begin{itemize}
\item[(a)] If $z_0 \in \Wloc^u(\hat y)$ and $d(y_0,z_0)<\beta_r(\hat
            y)$ then there exists $\hat z \in \hat\cQ(\hat y)$ such that
            $\pi(\hat z)=z_0$;
\item[(b)] There exists a full Lebesgue measure set of parameters $0<r\leq r_i$
            such that the function $\beta_r(\cdot)$ is
            strictly positive almost everywhere and $\hat\eta(\partial \hat\cQ(r))=0$.
\end{itemize}

Take $\hat y \in \hat S_r$ and assume that $z_0 \in \Wloc^u(\hat y)$
is such that $d(y_0,z_0)<\beta_r(\hat y)$. If $\hat y \in \hat
S(\hat x,r)$ then there exists $\hat w \in B(\hat x,\vep_i r)$ such
that $\hat y \in \hWloc^u(\hat w)$. Furthermore, since
$d(y_0,z_0)<\beta_r(\hat y)<R_i$ then there exists $\hat z \in
\hWloc^u(\hat w)$ such that $\pi(\hat z)=z_0$. Hence
$$
d(y_{-n},z_{-n}) \leq \ga_i e^{-n \la_i} d(y_0,z_0), \;\forall n \in
\N,
$$
which implies that $d(y_{-n},z_{-n}) \leq r$ and $d(y_{-n},z_{-n})
\leq 1/2 \,d(y_{-n},\partial B(x_0,r))$ for every $n \in \N$.
Together with Corollary~\ref{c.Pesinblocks}, this shows that
$y_{-n}$ and $z_{-n}$ belong to the same element of the partition
$\hat\cQ_0$ for every $n \geq 1$ and, assuming (b) for the moment,
that $\pi(\cQ(\hat y))$ contains a neighborhood of $y_0$ in
$\Wloc^u(\hat y)$. On the other hand, if $\hat y \in \hat S_r
\setminus \hat S(\hat x,r)$ then there exists $k \geq 1$ such that
$\hat f^{-k}(\hat y) \in \hat S(\hat x,r)$ and consequently the
projection of the set
$$
\hat\cQ(\hat y)=\hat f^{k} (\hat\cQ( \hat f^{-k}(\hat y))
$$
contains an open neighborhood of $y_0$ in $\Wloc^u(\hat y)$. This
completes the proof of (a).

The proof of (b) is slightly more involving. We begin with the
following remark from measure theory: if $r_0>0$, $\vartheta$ is a
Borel measure in $[0,r_0]$ and $0<a<1$ then Lebesgue almost every $r
\in[0,r_0]$ satisfies
\begin{equation}\label{eq.sum}
\sum_{k=0}^{\infty} \vartheta \big( [r-a^k, r+a^k] \big)< \infty.
\end{equation}
Indeed, the set
$$
B_{a,k}
    = \Big\{r \in [0,r_0]: \vartheta \big( [r-a^k, r+a^k] \big)> \frac{\vartheta \big( [0, r_0] \big)}{k^2} \Big\}
$$
can be covered by a family $I_k$ of balls of radius $a^k$ centered
at points of $B_{a,k}$ in such a way that any point is contained in
at most two intervals of $I_k$. Since
$$
\# I_k \,\frac{\vartheta \big( [0, r_0] \big)}{k^2}
    \leq \sum_{I \in I_k} \vartheta(I)
    \leq 2 \,\vartheta([0,r_0])
$$
then $\#I_k \leq 2 k^2$ and it is clear that $ \Leb(B_{a,k}) \leq 2
a^k \# I_k$ is summable. Borel-Cantelli's lemma implies that
Lebesgue almost every $r \in[0,r_0]$ belongs to finitely many sets
$B_{a,k}$, which proves the summability condition in \eqref{eq.sum}.

Back to the proof of (b), let $\vartheta$ be the measure of the
interval $[0,r_i]$ defined by $\vartheta( E ) = \hat\eta \big(\,
\hat y \in \hat M: d(x_0,y_0) \in E \,\big)$. The previous assertion
guarantees that for Lebesgue almost every $r \in[0,r_i]$ it holds
\begin{equation}\label{eq.sum2}
\sum_{k=0}^{\infty} \hat\eta \big(\, \hat y \in \hat M:
|d(x_0,y_0)-r|< e^{-\la_i k} \,\big) < \infty.
\end{equation}
On the other hand, there exists $D>0$ such that
$|d(z_0,x_0)-r|<D\tau$ whenever $d(z_0,\partial B(x_0,r))<\tau$ and
$0<\tau<r\leq r_i$. Therefore
\begin{multline*}
\sum_{k=0}^{\infty} \hat\eta \big(\, \hat y \in \hat M: |d(y_{-n},
\partial B(x_0,r))| < D^{-1}e^{-\la_i k} \,\big)
    \leq \\
\leq \sum_{k=0}^{\infty} \hat\eta \big(\, \hat y \in \hat M:
|d(x_0,y_{-n})-r|< e^{-\la_i k} \,\big),
    % =
    %\sum_{k=0}^{\infty} \hat\eta \big(\, \hat y \in \hat M: |d(x_0,y_0)-r|< e^{-\la_i k} \,\big)
\end{multline*}
which is finite because of the invariance of $\hat\eta$ and the
former condition \eqref{eq.sum2}. Using Borel-Cantelli's lemma once
more it follows that $\hat\eta$-almost every $\hat y$ satisfies
$$
|d(y_{-n},\partial B(x_0,r))| < D^{-1}e^{-\la_i k}
$$
for at most finitely many positive integers $k$, proving that
$\beta_r(\hat y)>0$. Furthermore, since $\eta(\cup_{n\geq 0} f^n
(\partial B(x_0,r)))=0$ for all but a countable set of parameters
$0<r \leq r_i$ then $\hat\cQ(\hat y)$ contains a neighborhood of
$\hat y$ in $\hWloc^u(\hat y)$ for $\hat\eta$-almost every $\hat
y\in \hat M$. This shows that (b) holds and, in consequence, for
Lebesgue almost every $r\in[0,r_i]$ the partition $\hat\cQ(r)$
satisfies the requirements of the proposition.
\end{proof}

Let $(\hat\eta_x)_x$ be the disintegration of the measure $\hat\eta$
on the measurable partition $\hat\cQ$, given by Rokhlin's theorem.
Recall that for $\hat\eta$-almost every $\hat x$ the map
$\pi\mid_{\hWloc^u(\hat x)} : \hWloc^u(\hat x) \to \Wloc^u(\hat x)$
is a bijection. For any such $\hat x$ let $\hat\nu_x$ be the measure
on $\hWloc^u(\hat x)$ obtained as the pull-back of
$\nu\mid_{\Wloc^u(\hat x)}$ by the bijection $\pi\mid_{\hWloc^u(\hat
x)}$. Let $\hat\nu$ denote the measure defined on $\hat M$ by the
disintegration $(\hat\nu_{\hat x})_{\hat x}$, that is to say that
$$
\hat \nu(\hat E)
    = \int \hat\nu_{\hat x} (\hat E) \, d\hat\eta (\hat x)
$$
for every measurable set $\hat E$ in $\hat M$. As a byproduct of the
previous result we obtain

\begin{corollary}\label{c.positivemeasure}
$0<\hat\nu_{\hat x}(\hat\cQ(\hat x))<\infty$, for $\hat\eta$-almost
every $\hat x$.
\end{corollary}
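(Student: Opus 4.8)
The plan is to obtain both inequalities directly from Proposition~\ref{p.generating.partition} and the definition of the measures $\hat\nu_{\hat x}$, with no new estimate needed. First I would recall, from Proposition~\ref{p.Pesinpointwise} and the construction of $\hat\nu_{\hat x}$ preceding the corollary, that for $\hat\eta$-almost every $\hat x$ the projection restricts to a homeomorphism $\pi\mid_{\hWloc^u(\hat x)}\colon\hWloc^u(\hat x)\to\Wloc^u(\hat x)$ and that $\hat\nu_{\hat x}$ is the pull-back of $\nu\mid_{\Wloc^u(\hat x)}$ under it; hence $\hat\nu_{\hat x}(\hat E)=\nu(\pi(\hat E))$ for every measurable $\hat E\subset\hWloc^u(\hat x)$. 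By \eqref{ep3} of Proposition~\ref{p.generating.partition} the element $\hat\cQ(\hat x)$ is contained in $\hWloc^u(\hat x)$ for almost every $\hat x$, so $\hat\nu_{\hat x}(\hat\cQ(\hat x))=\nu(\pi(\hat\cQ(\hat x)))$. Finiteness would then be immediate, because $\nu$ is a probability measure on $M$ and $\pi(\hat\cQ(\hat x))\subset M$, so that $\hat\nu_{\hat x}(\hat\cQ(\hat x))\le\nu(M)=1<\infty$.

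For the strict positivity I would use \eqref{ep3} once more: for $\hat\eta$-almost every $\hat x$ the set $\pi(\hat\cQ(\hat x))$ contains an open ball $B(x_0,\rho)$ around $x_0=\pi(\hat x)$ in $M$, with $\rho=\rho(\hat x)>0$ (here one uses that the unstable leaves have full dimension $m=\dim M$, so that $\Wloc^u(\hat x)$ has nonempty interior in $M$). It then remains to check that $x_0\in\supp(\nu)$ for $\hat\eta$-almost every $\hat x$; this is where the standing hypothesis of Theorem~\ref{thm.equilibrium is acim} that $\eta$ gives full weight to $\supp(\nu)$ enters, since together with $\pi_*\hat\eta=\eta$ it gives $\hat\eta(\pi^{-1}(\supp\nu))=\eta(\supp\nu)=1$. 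For such $\hat x$ one has $\nu(B(x_0,\rho))>0$, in fact $\nu(B(x_0,\rho))\ge\xi(\rho)>0$ by Lemma~\ref{l.uniform ball measure}, and therefore $\hat\nu_{\hat x}(\hat\cQ(\hat x))\ge\nu(B(x_0,\rho))>0$. Intersecting the finitely many $\hat\eta$-full measure sets on which these assertions hold would conclude the argument.

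Since the substantive geometric work has already been carried out in the construction of the partition $\hat\cQ$, I do not expect a genuine obstacle here; the only point deserving attention is the verification that $x_0\in\supp(\nu)$ almost everywhere, which rests entirely on the assumption that the equilibrium state is carried by $\supp(\nu)$.
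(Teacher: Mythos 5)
Your argument is correct and is essentially the paper's own proof: both identify $\hat\nu_{\hat x}(\hat\cQ(\hat x))$ with $\nu$ of the projection $\pi(\hat\cQ(\hat x))$, get finiteness from $\nu$ being a probability, and get positivity from item \eqref{ep3} of Proposition~\ref{p.generating.partition} (the projection contains a neighborhood of $x_0$) together with $\eta(\supp\nu)=1$ and $\pi_*\hat\eta=\eta$, which give $x_0\in\supp(\nu)$ almost everywhere. The appeal to Lemma~\ref{l.uniform ball measure} is harmless but not needed, since $x_0\in\supp(\nu)$ already forces every ball around $x_0$ to have positive $\nu$-measure.
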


\begin{proof}
For every $\hat x$ in a full $\hat\eta$-measure set one has that
$$
\hat\nu_{\hat x}(\hat\cQ(\hat x))
    =\nu\big( \pi(\hat\cQ(\hat x))\cap \Wloc^u(\hat x) \big).
$$
Since $\hat\eta$ is an expanding measure then $\hWloc^u(\hat x)$ is
a neighborhood $\hat x$ and $\Wloc^u(\hat x) \cap \pi(\hat\cQ(\hat
x))$ contains a neighborhood of $x_0$ in $M$. In addition, since
$\eta(\supp \nu)=1$, for every $\hat x$ in a full $\hat\eta$-measure
set it holds that $x_0 \in \supp(\nu)$. Then it is clear that
$0<\hat\nu_{\hat x}(\hat\cQ(\hat x))<\infty$, $\hat\eta$-almost
everywhere, which proves the corollary.
\end{proof}

The next preparatory lemma shows that $\hat\nu$ has a Jacobian with
respect to $\hat f$ and establishes Rokhlin's formula for the
natural extension.

\begin{lemma}\label{l.Pesin.extension}
The measure $\hat\nu$ has a Jacobian $J_{\hat \nu} \hat f = J_{\nu}
f \circ \pi$ with respect to $\hat f$. In addition,
$$
h_{\hat\eta}(\hat f)= \int \log J_{\hat \nu} \hat f \;d\hat\eta.
$$
Furthermore, for $\hat\eta$-almost every $\hat x$ and every $\hat y
\in \hat\cQ(\hat x)$ the product
$$
\De(\hat x,\hat y) = \prod_{j=1}^{\infty }
        \frac{J_{\hat \nu} \hat f(\hat f^{-j}(\hat x))}{J_{\hat \nu} \hat f(\hat f^{-j}(\hat y))}
$$
is positive and finite.
\end{lemma}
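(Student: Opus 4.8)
The plan is to prove the three assertions of Lemma~\ref{l.Pesin.extension} in turn, each building on Proposition~\ref{p.generating.partition} and the geometry of the partition $\hat\cQ$.

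\textbf{Jacobian of $\hat\nu$.} First I would check that $J_{\hat\nu}\hat f = J_\nu f\circ\pi$. Fix $\hat x$ in the full measure set where $\hat\cQ(\hat x)$ projects bijectively onto a neighborhood of $x_0$ in $\Wloc^u(\hat x)$. Since $\hat f$ restricted to a small set is invertible (it never identifies two points with the same zeroth coordinate, because $f$ is a local homeomorphism) and $\pi\circ\hat f = f\circ\pi$, the claim reduces to transporting the conformality of $\nu$ through $\pi$. Concretely, for a measurable $\hat A$ on which $\hat f$ is injective and small enough that $\pi$ is injective on $\hat A$ and on $\hat f(\hat A)$, one has $\pi(\hat f(\hat A)) = f(\pi(\hat A))$ with $f$ injective there, so $\hat\nu(\hat f(\hat A)) = \nu(f(\pi(\hat A))) = \int_{\pi(\hat A)} J_\nu f\,d\nu = \int_{\hat A} (J_\nu f\circ\pi)\,d\hat\nu$, using that $\hat\nu$ is the pull-back of $\nu$ along $\pi$ on unstable leaves together with the definition of $\hat\nu$ by the disintegration $(\hat\nu_{\hat x})_{\hat x}$. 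The routine point to be careful about is matching the disintegration of $\hat\nu$ against the conditional measures, but since $J_\nu f = \la e^{-\phi}$ is continuous and bounded away from $0$ and $\infty$ this causes no trouble.

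\textbf{Rokhlin's formula.} For the entropy identity, I would invoke the Rokhlin/Ledrappier machinery: because $\hat\cQ$ is increasing ($\hat f^{-1}\hat\cQ\succ\hat\cQ$, item~(1)) and $\bigvee_{j\ge0}\hat f^{-j}\hat\cQ$ is the point partition (item~(2)), $\hat\cQ$ is a generating partition in Rokhlin's sense, so $h_{\hat\eta}(\hat f) = H_{\hat\eta}(\hat f^{-1}\hat\cQ\mid\hat\cQ)$. On the other hand, the conditional measures $\hat\nu_{\hat x}$ are $\hat f$-equivariant with Jacobian $J_{\hat\nu}\hat f$ by the first part, and Corollary~\ref{c.positivemeasure} guarantees $0<\hat\nu_{\hat x}(\hat\cQ(\hat x))<\infty$, so one may form the normalized conditionals and apply the standard computation expressing $H_{\hat\eta}(\hat f^{-1}\hat\cQ\mid\hat\cQ)$ as $\int\log J_{\hat\nu}\hat f\,d\hat\eta$ — this is exactly the argument in Ledrappier~\cite{Le84a} (and in Rokhlin's theory of entropy via measurable partitions), with the bounded-distortion Corollary~\ref{lem. bounded distortion} supplying the finiteness of the relevant Radon--Nikodym derivatives. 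Since $\hat\eta$ is an equilibrium state and $\log J_{\hat\nu}\hat f = \log\la - \phi\circ\pi$, the right-hand side equals $\log\la - \int\phi\,d\eta = \Ptop(f,\phi) - \int\phi\,d\eta = h_\eta(f) = h_{\hat\eta}(\hat f)$, which also confirms consistency.

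\textbf{Convergence of the infinite product $\De(\hat x,\hat y)$.} For the last assertion, write $\log\De(\hat x,\hat y) = \sum_{j\ge1}\big(\log J_{\hat\nu}\hat f(\hat f^{-j}\hat x) - \log J_{\hat\nu}\hat f(\hat f^{-j}\hat y)\big)$. Since $J_{\hat\nu}\hat f = \la e^{-\phi\circ\pi}$ and $\phi$ is H\"older with exponent $\alpha$, each term is bounded by $C\,d(x_{-j},y_{-j})^\alpha$. When $\hat y\in\hat\cQ(\hat x)$, items~(3)--(4) of Proposition~\ref{p.generating.partition} (equivalently Corollary~\ref{c.Pesinblocks}(1) and Proposition~\ref{p.Pesinpointwise}(3)) give $d(x_{-j},y_{-j})\le \ga_i e^{-\la_i j}d(x_0,y_0)$ along the unstable leaf, for $\hat\eta$-a.e. $\hat x$ with $\hat x$ in some $\hat\La_i$. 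Hence the series is dominated by $C\,\ga_i^\alpha\,d(x_0,y_0)^\alpha\sum_{j\ge1}e^{-\alpha\la_i j}<\infty$, so $\log\De(\hat x,\hat y)$ is finite and $\De(\hat x,\hat y)\in(0,\infty)$. The main obstacle I anticipate is not the estimates — which are routine given the exponential backward contraction already established — but rather keeping the measure-theoretic bookkeeping of the disintegration clean: one must ensure that the conditional measures $\hat\nu_{\hat x}$, the equivariance under $\hat f$, and the generating property of $\hat\cQ$ all hold simultaneously on a single $\hat f$-invariant full-measure set, so that Rokhlin's entropy formula applies without qualification. This is the step where Ledrappier's construction has to be adapted carefully to the non-manifold natural extension $\hat M$, but Proposition~\ref{p.generating.partition} has already done precisely this work, so here it is only a matter of assembling the pieces.
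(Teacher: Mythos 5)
Your argument for the third part (convergence of the infinite product) is correct and matches the paper: H\"older continuity of $J_\nu f = \la e^{-\phi}$ plus the exponential backward contraction on unstable leaves gives summability of the logarithmic series. However, there are genuine problems with the first two parts.

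For the Jacobian, the chain of equalities $\hat\nu(\hat f(\hat A)) = \nu(f(\pi(\hat A)))$ is not correct as stated: $\hat\nu$ is defined by $\hat\nu(\hat E) = \int \hat\nu_{\hat x}(\hat E)\,d\hat\eta(\hat x)$ and does \emph{not} project to $\nu$ under $\pi$ (its image under $\pi$ involves the $\hat\eta$-weight of the disintegration, not $\nu$ alone). The correct computation must be carried out at the level of the conditional measures $\hat\nu_{\hat x}$ on each unstable leaf, where the pull-back relation to $\nu$ actually holds, and then integrated against $\hat\eta$, using the compatibility of the disintegrations along $\hat f^{-1}\hat\cQ$ and $\hat\cQ$; this is exactly what the paper's chain of displayed equalities does, and it is more than ``routine bookkeeping'' --- your shortcut would give a false identity if taken literally.

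The more serious gap is in the Rokhlin formula. You claim that after establishing $h_{\hat\eta}(\hat f) = H_{\hat\eta}(\hat f^{-1}\hat\cQ\mid\hat\cQ)$ one may invoke ``the standard computation'' expressing the conditional entropy as $\int\log J_{\hat\nu}\hat f\,d\hat\eta$. This is circular: that identity is precisely the target of the entire argument (it is the paper's equation relating $H_{\hat\eta}(\hat f^{-1}\hat\cQ\mid\hat\cQ)$ to the $\hat\nu$-Jacobian, obtained by \emph{combining} Lemma~\ref{l.Pesin.extension} with Proposition~\ref{p.entropygenerating}), and it is what ultimately yields $\hat\eta_{\hat x}\ll\hat\nu_{\hat x}$ via Jensen's inequality in the lemma that follows. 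One cannot compute $H_{\hat\eta}(\hat f^{-1}\hat\cQ\mid\hat\cQ)$ using the Jacobian of $\hat\nu$ before knowing that $\hat\eta$ is absolutely continuous with respect to $\hat\nu$ on leaves. In addition, the identification $h_{\hat\eta}(\hat f)=H_{\hat\eta}(\hat f^{-1}\hat\cQ\mid\hat\cQ)$ does not follow merely from $\hat\cQ$ being ``generating in Rokhlin's sense''; it is the content of Proposition~\ref{p.entropygenerating}, proved \emph{after} this lemma with a nontrivial auxiliary finite-entropy partition. The actual proof of the second assertion in the paper is far more elementary and does not touch the partition $\hat\cQ$ at all: since $\eta$ is an equilibrium state, $h_\eta(f) = \Ptop(f,\phi) - \int\phi\,d\eta = \log\la - \int\phi\,d\eta = \int\log J_\nu f\,d\eta$, and this transfers to the natural extension via $\pi_*\hat\eta=\eta$ and $h_{\hat\eta}(\hat f)=h_\eta(f)$. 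You do record this computation at the end, but you mislabel it as a ``consistency check'' --- it is the proof, and the partition-entropy route you present as primary does not go through at this stage.
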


\begin{proof}
Since the sigma-algebra $\hat\cB$ is the completion of the
sigma-algebra generated by the cylinders $\pi_i^{-1} (f^{-i} \cB)$,
$i \geq 1$, then the first claim in the lemma is a consequence from
the fact that
\begin{equation}\label{eq.extension.conformal0}
\hat\nu_{\hat f (\hat x)} (\hat f(\hat E))
    = \int_{\hat E \cap (\hat f^{-1} \hat\cQ)(\hat x)} J_\nu f \circ \pi \, d\hat\nu_{\hat x}
\end{equation}
for almost every $\hat x$ and every small cylinder $\hat E=
\pi^{-1}(E)$. Indeed, if $\hat E$ is a small cylinder then it is
clear that
\begin{equation}\label{eq.extension.conformal}
\hat\nu (\hat f (\hat E))
    =\int \hat\nu_{\hat f (\hat x)} (\hat f(\hat E)) \, d\hat\eta(\hat x)
    = \int \int_{\hat E \cap (\hat f^{-1} \hat\cQ)(\hat x)} J_\nu f \circ \pi \, d\hat\nu_{\hat
    x} \,d\hat\eta(\hat x).
\end{equation}
Let $\tilde\nu_{\hat x}$ denote the restriction of the measure
$\hat\nu_{\hat x}$ to the set $(\hat f^{-1}\hat\cQ)(\hat x) \subset
\hat\cQ(\hat x)$. Then $\hat\nu$ has a disintegration $\hat\nu=\int
\tilde\nu_{\hat x} \, d\hat\eta$ with respect to the measurable
partition $\hat f^{-1}\hat\cQ$. Together with
\eqref{eq.extension.conformal} this gives
\begin{equation*}
\hat\nu (\hat f (\hat E))
    = \int \int_{\hat E} J_\nu f \circ \pi \, d\tilde\nu_{\hat x} \,d\hat\eta(\hat x)
    = \int_{\hat E} J_\nu f \circ \pi \, d\hat\nu,
\end{equation*}
which proves that $\hat\nu$ has a Jacobian and $J_{\hat\nu}\hat f=
J_\nu f \circ \pi$. Hence, to prove the first assertion in the lemma
we are reduced to prove \eqref{eq.extension.conformal0} above. If $f
\mid E$ is injective and $\hat E = \pi^{-1}(E)$ then
\begin{multline*}
\hat\nu_{\hat f (\hat x)} (\hat f(\hat E))
    = \hat\nu_{\hat f (\hat x)}
        \big( \hat f [ \hat E \cap (\hat f^{-1}\hat\cQ)(\hat x)] \big)
    = \nu \big( f( E \cap \pi((\hat f^{-1}\hat\cQ)(\hat x)) \big)\\
    = \int_{E \cap \pi((\hat f^{-1}\hat\cQ)(\hat x)} J_\nu f \, d\nu
    = \int_{\hat E \cap (\hat f^{-1}\hat\cQ)(\hat x)} J_\nu f \circ \pi \, d\hat\nu_{\hat x},
\end{multline*}
which proves \eqref{eq.extension.conformal0}. On the other hand,
$h_{\eta}(f)= \int J_\nu f \, d\eta$ because $\eta$ is an
equilibrium state, $\Ptop(f,\phi)=\log \la$ and $J_\nu f = \la
e^{-\phi}$. So, using $\pi_*\hat\eta=\eta$ we obtain
$$
h_{\hat\eta} (\hat f)
        = h_\eta(f)
        = \int \log J_\nu f \,d\eta
        = \int \log (J_\nu f \circ\pi) \,d\hat\eta
        = \int \log J_{\hat \nu} \hat f \;d\hat\eta,
$$
which proves the second assertion in the lemma. Finally, the
H\"older continuity of the Jacobian $J_{\hat\nu}\hat f=J_{\nu}f
\,\circ \,\pi$, the fact that $\hat\cQ$ is subordinated to unstable
leaves and the backward distance contraction for points in the same
unstable leaf yield that the product
$$
\De(\hat x,\hat y) = \prod_{j=1}^{\infty }
                \frac{J_{\hat \nu} \hat f(\hat f^{-j}(\hat x))}{J_{\hat \nu} \hat f(\hat f^{-j}(\hat y))}
$$
is convergent for almost every $\hat x$ and every $\hat y \in
\hat\cQ(\hat x)$. The proof of the lemma is now complete.
\end{proof}

The last main ingredient to the proof of
Theorem~\ref{thm.equilibrium is acim} is the following generating
property of the partition $\hat\cQ$.

\begin{proposition}\label{p.entropygenerating}
$h_{\hat\eta}(\hat f)
    =H_{\hat\eta}( \hat f^{-1} \hat\cQ \mid \hat\cQ)$.
\end{proposition}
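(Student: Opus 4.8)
The plan is to recognize Proposition~\ref{p.entropygenerating} as the instance, for the automorphism $(\hat f,\hat\eta)$, of Rokhlin's formula for the entropy with respect to an increasing generating partition; the three inputs are exactly properties \eqref{ep1}, \eqref{ep2}, \eqref{ep4} of Proposition~\ref{p.generating.partition}. I may assume $\hat\eta$ ergodic. First I would record two elementary facts. (i) $a:=H_{\hat\eta}(\hat f^{-1}\hat\cQ\mid\hat\cQ)<\infty$: by the geometric construction each atom $\hat\cQ(\hat x)$ is a connected piece of local unstable leaf of bounded diameter, so $\hat f(\hat\cQ(\hat x))$ is connected of diameter at most $L$ times that of $\hat\cQ(\hat x)$ and hence meets at most a uniformly bounded number $\kappa$ of atoms of $\hat\cQ$; thus $\hat f^{-1}\hat\cQ$ subdivides each atom of $\hat\cQ$ into at most $\kappa$ pieces and $a\le\log\kappa$. (ii) Since \eqref{ep1} gives $\hat f^{-1}\hat\cQ\succ\hat\cQ$, the sequence $\hat f^{-j}\hat\cQ$ is increasing, so $\bigvee_{j=0}^{m}\hat f^{-j}\hat\cQ=\hat f^{-m}\hat\cQ$, and the chain rule together with the $\hat f$-invariance of $\hat\eta$ yields $H_{\hat\eta}(\hat f^{-m}\hat\cQ\mid\hat\cQ)=\sum_{i=1}^{m}H_{\hat\eta}(\hat f^{-i}\hat\cQ\mid\hat f^{-(i-1)}\hat\cQ)=m\,a$ for every $m\ge1$.

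For the inequality $h_{\hat\eta}(\hat f)\ge a$ I would approximate $\hat\cQ$ from below: choose finite partitions $\hat\cP_k$ with $\hat\cP_k\le\hat\cQ$, $\hat\cP_k\nearrow$, and $\bigvee_k\hat\cP_k=\hat\cQ$ (possible since $\hat\cQ$ is a measurable partition of a Lebesgue space). Starting from the standard identity $h_{\hat\eta}(\hat f,\hat\cP)=H_{\hat\eta}\big(\hat f^{-1}\hat\cP\mid\bigvee_{j\ge0}\hat f^{j}\hat\cP\big)$ and using that \eqref{ep1} forces $\hat f^{j}\hat\cQ\le\hat\cQ$ for $j\ge0$, one gets $\bigvee_{j\ge0}\hat f^{j}\hat\cP_k\le\hat\cQ$, $\bigvee_{j\ge0}\hat f^{j}\hat\cP_k\nearrow\hat\cQ$ and $\hat f^{-1}\hat\cP_k\nearrow\hat f^{-1}\hat\cQ$, while all conditional entropies involved are finite; by monotone continuity of conditional entropy in both arguments,
\[
h_{\hat\eta}(\hat f,\hat\cP_k)=H_{\hat\eta}\Big(\hat f^{-1}\hat\cP_k\mid\bigvee_{j\ge0}\hat f^{j}\hat\cP_k\Big)\longrightarrow H_{\hat\eta}(\hat f^{-1}\hat\cQ\mid\hat\cQ)=a,
\]
so that $h_{\hat\eta}(\hat f)=\sup_{\hat\cP}h_{\hat\eta}(\hat f,\hat\cP)\ge a$.

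For the reverse inequality I would pass to the extended entropy $h_{\hat\eta}(\hat f,\zeta):=H_{\hat\eta}\big(\zeta\mid\bigvee_{j\ge1}\hat f^{j}\zeta\big)$, defined for any measurable partition $\zeta$ for which this quantity is finite; it coincides with the usual value on finite partitions, satisfies the relative-entropy chain rule $h_{\hat\eta}(\hat f,\zeta_2)=h_{\hat\eta}(\hat f,\zeta_1)+h_{\hat\eta}(\hat f,\zeta_2\mid\zeta_1)\ge h_{\hat\eta}(\hat f,\zeta_1)$ whenever $\zeta_1\le\zeta_2$, and is unchanged when $\zeta$ is replaced by a finite-orbit join $\bigvee_{j=-m}^{m}\hat f^{j}\zeta$. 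Because \eqref{ep1} gives $\bigvee_{j\ge1}\hat f^{j}\hat\cQ=\hat f\hat\cQ$, we get $h_{\hat\eta}(\hat f,\hat\cQ)=H_{\hat\eta}(\hat\cQ\mid\hat f\hat\cQ)=H_{\hat\eta}(\hat f^{-1}\hat\cQ\mid\hat\cQ)=a$. Now for an arbitrary finite partition $\hat\cP$: properties \eqref{ep2} and \eqref{ep4} say the point partition equals $\bigvee_{j\ge0}\hat f^{-j}\hat\cQ$ and the $\sigma$-algebras $\sigma(\hat f^{-j}\hat\cQ)$ generate the whole $\sigma$-algebra, so given $\epsilon>0$ there are $m\ge1$ and a finite partition $\hat\cP'\le\bigvee_{j=-m}^{m}\hat f^{j}\hat\cQ$ with $H_{\hat\eta}(\hat\cP\mid\hat\cP')<\epsilon$; then, using monotonicity in the middle step,
\[
h_{\hat\eta}(\hat f,\hat\cP)\le h_{\hat\eta}(\hat f,\hat\cP')+\epsilon\le h_{\hat\eta}\Big(\hat f,\bigvee_{j=-m}^{m}\hat f^{j}\hat\cQ\Big)+\epsilon=h_{\hat\eta}(\hat f,\hat\cQ)+\epsilon=a+\epsilon.
\]
Taking the supremum over $\hat\cP$ and letting $\epsilon\to0$ gives $h_{\hat\eta}(\hat f)\le a$, completing the proof.

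The hard part is entirely entropy-theoretic: $H_{\hat\eta}(\hat\cQ)=+\infty$, so $\hat\cQ$ is not a partition to which Kolmogorov--Sinai applies directly, and one must carry the argument through conditional entropies (equivalently, through a careful approximation of finite partitions by finite sub-partitions of $\hat f^{-m}\hat\cQ$). The purely dynamical content — the telescoping identity $H_{\hat\eta}(\hat f^{-m}\hat\cQ\mid\hat\cQ)=m\,a$, the finiteness $a<\infty$, and the subordination/generating properties — is precisely what Proposition~\ref{p.generating.partition} was built to supply, following Ledrappier~\cite{Le84a}.
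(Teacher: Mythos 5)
Your inequality $h_{\hat\eta}(\hat f)\ge H_{\hat\eta}(\hat f^{-1}\hat\cQ\mid\hat\cQ)$ is fine: taking finite $\hat\cP_k\nearrow\hat\cQ$, using $\bigvee_{j\ge0}\hat f^{j}\hat\cP_k\le\hat\cQ$ (from \eqref{ep1}) and monotone convergence in the first argument of conditional entropy does give $h_{\hat\eta}(\hat f)\ge\lim_k H_{\hat\eta}(\hat f^{-1}\hat\cP_k\mid\hat\cQ)=a$; this is the soft half, valid for any increasing measurable partition. The genuine gap is in the reverse inequality, which is the actual content of the proposition. Your argument rests on the claim that the ``extended entropy'' $\zeta\mapsto H_{\hat\eta}\big(\zeta\mid\bigvee_{j\ge1}\hat f^{j}\zeta\big)$ is monotone under refinement (and satisfies a chain rule) for arbitrary measurable partitions. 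That is false: refining any partition all the way to the partition into points $\epsilon$ gives $H_{\hat\eta}(\epsilon\mid\bigvee_{j\ge1}\hat f^{j}\epsilon)=H_{\hat\eta}(\epsilon\mid\epsilon)=0$, so the quantity can drop strictly under refinement whenever $h_{\hat\eta}(\hat f)>0$. Consequently the key step $h_{\hat\eta}(\hat f,\hat\cP')\le h_{\hat\eta}\big(\hat f,\bigvee_{j=-m}^{m}\hat f^{j}\hat\cQ\big)$ is unjustified, and no property of the specific partition $\hat\cQ$ is invoked to rescue it. Note also that increasing plus \eqref{ep2}/\eqref{ep4} alone cannot suffice abstractly: for a product of two Bernoulli shifts the increasing partition generated by the past of the first factor together with the entire second factor is future-generating, yet its conditional entropy is strictly smaller than $h$. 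What kills this tail obstruction in the paper is the existence of a partition $\hat\cP$ with $H_{\hat\eta}(\hat\cP)<\infty$ whose forward join $\hat\cP^{(\infty)}=\bigvee_{n\ge0}\hat f^{n}\hat\cP$ refines $\hat\cQ$ (Lemma~\ref{l.partition.Mane}, via Ma\~n\'e's lemma and the integrable function $\hat D_\vep$): one then writes $h_{\hat\eta}(\hat f,\hat\cP)$ as $H_{\hat\eta}(\hat\cQ\mid\hat f\hat\cQ\vee\hat f^{n}\hat\cP^{(\infty)})+H_{\hat\eta}(\hat\cP^{(\infty)}\mid\hat f^{-n}\hat\cQ\vee\hat f\hat\cP^{(\infty)})$, kills the second term using \eqref{ep4} and $H_{\hat\eta}(\hat\cP)<\infty$, and pushes the first term up to $H_{\hat\eta}(\hat\cQ\mid\hat f\hat\cQ)$ using that the diameters of the atoms of $\hat f^{-n+1}\hat\cQ$ shrink. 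Your proposal contains no replacement for this mechanism, so the hard direction is not proved.

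A secondary problem is your step (i): the bound $a\le\log\kappa$ is not justified, because atoms of $\hat\cQ$ have no uniform lower bound on their size (the inradius function $\beta_r$ in Proposition~\ref{p.generating.partition} is only almost everywhere positive), so a set of bounded diameter may meet infinitely many atoms and the claimed uniform $\kappa$ does not exist; the finiteness of $a$ is in fact a consequence of $a\le h_{\hat\eta}(\hat f)<\infty$, not an input one can get by such a counting argument. If you want to complete the proof along your lines, you must import the finite-entropy auxiliary partition of Lemma~\ref{l.partition.Mane} (or an equivalent device) into the $h\le a$ direction, which is exactly the route the paper takes.
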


The proof of this result involves two preliminary lemmas. Let $i
\geq 1$ and $\hat\La_i$ be given as in the proof of
Proposition~\ref{p.generating.partition} and $r_i$ given by
Corollary~\ref{c.Pesinblocks}. The following lemma gives a dynamical
characterization of the local unstable manifolds.

\begin{lemma}\label{l.D.ep}
Given $\vep>0$ there is a measurable function $\hat D_\vep: \hat
B_\la \to \R_+$ satisfying $\log\hat D_\vep \in L^1(\hat\eta)$ and
such that, if $d(x_{-n},y_{-n}) \leq \hat D_\vep(\hat f^{-n}(\hat
x)) \; \forall n \geq 0$ then $\hat y \in \hWloc^u(\hat x)$ and
$d(x_0,y_0)<2r_i$.
\end{lemma}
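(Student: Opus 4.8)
Lemma~\ref{l.D.ep} asks us to produce a measurable ``admissibility radius'' $\hat D_\vep$ on $\hat B_\la$ whose backward orbit pinches points onto local unstable manifolds, with the extra quantitative bound $d(x_0,y_0)<2r_i$ controlling the zeroth coordinate. The plan is to build $\hat D_\vep$ directly out of the Pesin-block data in Corollary~\ref{c.Pesinblocks}: for each $i\ge 1$ we have parameters $\vep_i,\la_i,r_i,\ga_i,R_i$, and the point $\hat x$ lies in $\hat\La_i$ for infinitely many ``good'' indices $i$ (the $\hat\La_i$ cover a full measure set). I would set, roughly, $\hat D_\vep(\hat x) = \min\{R_i, r_i, \de_\vep(\hat x)/\ga(\hat f^{-1}(\hat x)), \ldots\}$ whenever $\hat x \in \hat\La_i$, where $\de_\vep$ and $\ga$ are the measurable functions furnished by Proposition~\ref{p.Pesinpointwise}(2)--(3); to make it globally defined and measurable, use the first index $i=i(\hat x)$ with $\hat x\in\hat\La_i$ and take $\hat D_\vep$ to depend only on that block's constants together with the pointwise functions of Proposition~\ref{p.Pesinpointwise}.

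\smallskip

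First I would verify the \emph{implication}: suppose $d(x_{-n},y_{-n})\le \hat D_\vep(\hat f^{-n}(\hat x))$ for all $n\ge 0$. Applying the definition at $n=0$ and using $\hat D_\vep(\hat x)\le \de_\vep(\hat x)/\ga(\hat f^{-1}(\hat x))$ together with the telescoped bound $\hat D_\vep(\hat f^{-n}(\hat x))\le \de_\vep(\hat f^{-n}(\hat x))\, e^{\cdots}$ — here one needs $\hat D_\vep$ to decay along backward orbits at least as fast as $e^{-(\la-\vep)n}$, which is why the argument of $\de_\vep$ must be dragged back — one is exactly in the situation of hypothesis (2) of Proposition~\ref{p.Pesinpointwise}, so $y_0\in\Wloc^u(\hat x)$ and the associated $\hat y$ lies in $\hWloc^u(\hat x)$. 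The refinement $d(x_0,y_0)<2r_i$ then comes from also requiring $\hat D_\vep(\hat x)\le r_i$ (or $2r_i$): when $\hat x\in\hat\La_i$ the local unstable leaf has been trimmed to lie in $B(x_0,r_i)$, so any $y_0$ on it with $d(x_{-n},y_{-n})$ small for all $n$ satisfies $d(x_0,y_0)< 2r_i$ by the triangle inequality inside that ball. The backward-contraction estimate $d(x_{-n},y_{-n})\le \ga_i e^{-\la_i n} d(x_0,y_0)$ of Corollary~\ref{c.Pesinblocks}(1) is consistent with this and gives the uniform geometric decay.

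\smallskip

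Second, the \emph{integrability} $\log \hat D_\vep \in L^1(\hat\eta)$. Writing $\hat D_\vep$ as a minimum of finitely-or-countably many terms, $\log\hat D_\vep$ is the minimum (hence, up to sign, a supremum) of the logarithms of those terms. The block constants $R_i,r_i,\ga_i$ contribute only through the single index $i(\hat x)$, and $\sum_i \hat\eta(\hat\La_i\setminus\bigcup_{j<i}\hat\La_j)\cdot |\log r_i|$ can be made finite by choosing the sequence $(\hat\La_i)$ and the $r_i$ appropriately in Corollary~\ref{c.Pesinblocks} — more cleanly, one fixes a single block $\hat\La_{i_0}$ of positive measure, works with its constants on $\hat\La_{i_0}$, and on the complement defines $\hat D_\vep$ via the pointwise functions $\de_\vep,\ga$ alone; since $\ga(\hat x)=L^{N_{\hat x}}$ with $N_{\hat x}$ the Pesin entrance time, $\log\ga=N_{\hat x}\log L$ is $\hat\eta$-integrable because $N_{\hat x}$ is (it is dominated by a Kac-type return time for the expanding set, cf.\ Corollary~\ref{c.nu.expanding}), and $\log\de_\vep$ is controlled similarly. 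Thus each piece of $\log\hat D_\vep$ is integrable and so is the minimum.

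\smallskip

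The main obstacle I anticipate is bookkeeping the interaction between the two scales: the ``pointwise'' Pesin functions $\de_\vep,\ga$ of Proposition~\ref{p.Pesinpointwise} (which give the \emph{inclusion} into $\hWloc^u$) and the ``block'' constants of Corollary~\ref{c.Pesinblocks} (which give the \emph{continuity} of leaves and the trimming to $B(x_0,r_i)$ needed for the $d(x_0,y_0)<2r_i$ conclusion). One must ensure $\hat D_\vep$ is simultaneously small enough for both, while its logarithm stays integrable — i.e.\ $\hat D_\vep$ should not be forced to shrink faster than a tempered (subexponential) rate along orbits. This is handled by the temperedness of $N_{\hat x}$ and $\de_\vep$, which is already implicit in Proposition~\ref{p.Pesinpointwise}; the remaining work is just to choose the defining minimum so that measurability is transparent and the backward-decay requirement $\hat D_\vep(\hat f^{-n}(\hat x))\lesssim e^{-(\la-\vep)n}$ is met, for which one can simply incorporate a factor controlled by $\ga(\hat f^{-n}(\hat x))^{-1}$ into the definition.
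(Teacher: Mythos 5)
Your skeleton agrees with the paper's (fix a Pesin block $\hat\La_i$ of positive $\hat\eta$-measure, cap $\hat D_\vep$ by $2r_i$ so that the bound $d(x_0,y_0)<2r_i$ is immediate from the case $n=0$, and feed the backward estimates into Proposition~\ref{p.Pesinpointwise}(2)), but the two quantitative pillars of the lemma are not supplied correctly. First, integrability: you build $\hat D_\vep$ out of the pointwise Pesin functions $\de_\vep(\hat x)$ and $\ga(\hat x)=L^{N_{\hat x}}$ and claim $\log\ga\in L^1(\hat\eta)$ because $N_{\hat x}$ is ``dominated by a Kac-type return time, cf.\ Corollary~\ref{c.nu.expanding}''. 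That corollary gives $\int n_1\,d\nu<\infty$ for the conformal measure $\nu$ and the first hyperbolic time; it says nothing about $\hat\eta$, and $N_{\hat x}$ is a tempering time for backward Birkhoff averages, not a return time, so its integrability is neither proved in the paper nor automatic. The same objection applies to $\log\de_\vep$. The paper's proof avoids this issue entirely: it defines $\hat D_\vep(\hat x)=\min\{2r_i,\de_i,\de_i/\ga_i\}\,e^{-(\la+\vep)R(\hat x)}$ using only the \emph{uniform} block constants together with the first hitting time $R(\hat x)$ of the forward orbit to $\hat\La_i$ (finite a.e.\ by ergodicity), and then $\log\hat D_\vep\in L^1(\hat\eta)$ is exactly Kac's lemma applied to $R$ — all the non-uniformity is pushed into a function whose integrability is free.

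Second, the backward-decay requirement. To invoke Proposition~\ref{p.Pesinpointwise}(2) you need $\hat D_\vep(\hat f^{-n}(\hat x))\le \de_\vep(\hat x)\,e^{-(\la-\vep)n}$, a bound anchored at $\hat x$ but evaluated along the backward orbit; you acknowledge this but only propose to ``incorporate a factor controlled by $\ga(\hat f^{-n}(\hat x))^{-1}$'' or to ``drag back the argument of $\de_\vep$''. Neither produces exponential decay in $n$ (there is no reason that $\ga(\hat f^{-n}(\hat x))^{-1}$ is $\lesssim e^{-(\la-\vep)n}$), and forcing such decay by hand would reopen the integrability problem, since the decay rate would then have to be compensated in $\log\hat D_\vep$. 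The hitting-time exponent in the paper's formula is precisely the device that reconciles the two demands: for $\hat x\in\hat\La_i$ the paper observes that $R(\hat f^{-n}(\hat x))=n$, so the definition automatically gives $\hat D_\vep(\hat f^{-n}(\hat x))\le \de_i\,e^{-(\la+\vep)n}$, which is the hypothesis of Proposition~\ref{p.Pesinpointwise}(2) with the uniform block constant $\de_i$ in place of $\de_\vep(\hat x)$, while $\log\hat D_\vep$ remains integrable by Kac. Without this (or an equivalent) mechanism, your construction does not close, so as written the proposal has a genuine gap rather than being an alternative proof.
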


\begin{proof}
Since $\hat\eta(\hat\La_i)>0$ and $\hat\eta$ is assumed to be
ergodic then some iterate of almost every point will eventually
belong to $\hat\La_i$ by Poincar\'e's recurrence theorem. So, the
first hitting time $R(\hat x)$ is well defined almost everywhere in
$\hat\La_i$ and $\int_{\hat\La_i} R \,d\hat\eta =
1/\hat\eta(\hat\Lambda_i)$, by Kac's lemma. This proves that the
logarithm of the function $\hat D_\vep: \hat M \to \R$ given by
$$
\hat D_\vep(\hat x)=
    \begin{cases}
    \begin{array}{ll}
    \min\big\{ 2r_i \,,\, \de_i \,,\, \de_i/\ga_i \big\} \, e^{-(\la+\vep) R(\hat x)}
                    & ,\text{if}\; \hat x \in \hat\La_i\\
    \min\big\{2r_i \,,\, \de_i \,,\, \de_i/\ga_i\big\}
                    & ,\text{otherwise}
    \end{array}
    \end{cases}
$$
is $\hat\eta$-integrable. On the other hand, if $\hat x
\in\hat\La_i$ then $R(\hat f^{-n}(\hat x))=n$. Any $\hat y \in \hat
M$ such that $ d(x_{-n},y_{-n})\leq \hat D_\vep(\hat f^{-n}(\hat
x))$ for every  $n \geq 0$ clearly satisfies $d(x_0,y_0)<2r_i$ and,
by Proposition~\ref{p.Pesinpointwise}(2), belongs to $\Wloc^u(\hat
x)$. This concludes the proof of the lemma.
\end{proof}

This result allow us to construct an auxiliary measurable partition
of finite entropy that will be useful to compute the metric entropy
$h_{\hat\eta}(\hat f)$.

\begin{lemma}\label{l.partition.Mane}
There exists a measurable partition $\hat \cP$ of $\hat S$ such that
$H_{\hat\eta}(\hat\cP)<\infty$, $\diam(\hat\cP(\hat x)) \leq \hat
D_\vep(\hat x)$ at $\hat\eta$-almost every $\hat x$, and that the
partition
$$
\hat \cP^{(\infty)}
    =\bigvee_{n=0}^{+\infty} \hat f^n \hat\cP
$$
is finer than $\hat\cQ$.
\end{lemma}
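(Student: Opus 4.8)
The plan is to build $\hat\cP$ as a common refinement of a Ma\~n\'e-type partition (controlled by $\hat D_\vep$) with the two-element partition $\{\hat S_0,\hat M\setminus\hat S_0\}$, where $\hat S_0$ is the ``stack'' $\hat S(\cdot,r)$ of local unstable plaques used in the proof of Proposition~\ref{p.generating.partition}. The first two assertions will then be immediate, and the generating property $\hat\cP^{(\infty)}\succ\hat\cQ$ will follow by combining the dynamical characterisation of local unstable leaves in Lemma~\ref{l.D.ep} with the coherence of the lamination in Corollary~\ref{c.Pesinblocks}.

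First I would recall the classical construction of Ma\~n\'e (see e.g.\ \cite{Man87}): since $\log\hat D_\vep\in L^1(\hat\eta)$ by Lemma~\ref{l.D.ep}, there is a countable partition $\hat\cP_1$ of $\hat M$ with $H_{\hat\eta}(\hat\cP_1)<\infty$ and $\diam(\hat\cP_1(\hat x))\le\hat D_\vep(\hat x)$ for $\hat\eta$-a.e.\ $\hat x$, the diameter being taken in the metric $\hat d$ of $\hat M$, which dominates the distance between zeroth coordinates. Put $\hat\cP=\hat\cP_1\vee\{\hat S_0,\hat M\setminus\hat S_0\}$, restricted to $\hat S$. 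Since the second factor has only two atoms, $H_{\hat\eta}(\hat\cP)\le H_{\hat\eta}(\hat\cP_1)+\log2<\infty$; and since refining a partition only shrinks its atoms, the bound $\diam(\hat\cP(\hat x))\le\hat D_\vep(\hat x)$ still holds a.e. This settles the first two requirements of the lemma.

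For $\hat\cP^{(\infty)}\succ\hat\cQ$, fix $\hat x$ in the relevant full $\hat\eta$-measure set and $\hat y\in\hat\cP^{(\infty)}(\hat x)$, so that $\hat f^{-n}\hat y\in\hat\cP(\hat f^{-n}\hat x)$ for every $n\ge0$. Then $d(x_{-n},y_{-n})\le\hat d(\hat f^{-n}\hat x,\hat f^{-n}\hat y)\le\hat D_\vep(\hat f^{-n}\hat x)$ for all $n\ge0$, and applying Lemma~\ref{l.D.ep} to each shifted point $\hat f^{-m}\hat x$ (using the $\hat f$-invariance of $\hat\eta$) gives $\hat f^{-m}\hat y\in\hWloc^u(\hat f^{-m}\hat x)$ and $d(x_{-m},y_{-m})<2r_i$ for all $m\ge0$. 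Now fix $n\ge0$. If $\hat f^{-n}\hat x\notin\hat S_0$, then $\hat f^{-n}\hat y$ lies in the same atom of $\{\hat S_0,\hat M\setminus\hat S_0\}$, hence also outside $\hat S_0$, and both points lie in the ``garbage'' atom $\hat M\setminus\hat S_0$ of $\hat\cQ_0$. If $\hat f^{-n}\hat x\in\hat S_0$, then $\hat f^{-n}\hat y\in\hat S_0$ as well; since $\hat f^{-n}\hat y\in\hWloc^u(\hat f^{-n}\hat x)$ lies on the same local unstable leaf as $\hat f^{-n}\hat x$ and its zeroth coordinate lies in the ball defining the plaques, the coherence of the lamination in Corollary~\ref{c.Pesinblocks} places $\hat f^{-n}\hat y$ in the same plaque $\hat V(\cdot,r)$, i.e.\ in the same atom of $\hat\cQ_0$ as $\hat f^{-n}\hat x$. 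In either case $\hat f^{-n}\hat y\in\hat\cQ_0(\hat f^{-n}\hat x)$ for every $n\ge0$, whence $\hat y\in\bigcap_{n\ge0}\hat f^n\hat\cQ_0(\hat f^{-n}\hat x)=\hat\cQ(\hat x)$; this proves $\hat\cP^{(\infty)}(\hat x)\subset\hat\cQ(\hat x)$ a.e.

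The step requiring the most care is the last one: deducing, from the fact that $\hat f^{-n}\hat y$ stays $\hat D_\vep$-close to $\hat f^{-n}\hat x$ along the whole backward orbit and on the same side of $\hat S_0$, that $\hat f^{-n}\hat y$ sits in the \emph{same} $\hat\cQ_0$-plaque rather than merely on the correct local unstable leaf. This is where the exact form of $\hat D_\vep$ in Lemma~\ref{l.D.ep} and the block parameters $r_i,\de_i,\ga_i,R_i$ enter: one must check that $\hat D_\vep$ is small enough, below the coherence radius supplied by Corollary~\ref{c.Pesinblocks}, so that the pieces of local unstable manifold through $\hat f^{-n}\hat x$ and $\hat f^{-n}\hat y$ genuinely coincide near $\hat f^{-n}\hat x$. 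The remaining bookkeeping --- that the various null sets coming from Ma\~n\'e's lemma, from Lemma~\ref{l.D.ep}, and from the recurrence of $\hat\eta$ to $\hat\La_i$ may be discarded --- is routine, using the ergodicity and $\hat f$-invariance of $\hat\eta$.
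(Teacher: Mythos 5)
Your proposal is correct and follows essentially the same route as the paper's proof: a Ma\~n\'e partition of finite entropy subordinate to $\hat D_\vep$, joined with the two-element partition $\{\hat S(\hat x,r),\hat M\setminus\hat S(\hat x,r)\}$, then Lemma~\ref{l.D.ep} applied along the backward orbit together with the coincide-or-disjoint property in Corollary~\ref{c.Pesinblocks}(3) to place each $\hat f^{-n}\hat y$ in the same $\hat\cQ_0$-plaque as $\hat f^{-n}\hat x$. In fact your last step spells out explicitly (via the bound $d(x_{-n},y_{-n})<2r_i$ versus the separation $>2r_i$ of distinct plaques on the same unstable leaf) what the paper leaves as ``easy to see''.
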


\begin{proof}
Let $\hat D_\vep$ be the measurable function given by the previous
lemma. By Lemma 2 in \cite{Man81}, there exists a measurable and
countable partition $\hat\cP_0$ such that
$H_{\hat\eta}(\hat\cP_0)<\infty$ and $\diam \hat\cP(\hat x) \leq
\hat D_\vep(\hat x)$ for a.e. $\hat x \in \hat M$. Let $\hat\cP$ be
the finite entropy partition obtained as the refinement of $\hat
\cP_0$ and $\{\hat M \setminus \hat S(\hat x,r), \hat S(\hat
x,r)\}$. Notice that there is a full measure set where any two
points $\hat x$ and $\hat y$ belong to the same element of $\hat f^n
\hat\cP$ for every $n \geq 0$ if and only
$$
d(x_{-n}, y_{-n}) \leq \hat D_\vep(\hat f^{-n} \hat x)
    \quad \text{for every} \; n \geq 0.
$$
In particular, Lemma~\ref{l.D.ep} above implies that each element of
$\hat\cP$ is a piece of some local unstable manifold. Hence, since
$\hat\cP$ was chosen to refine $\{\hat M \setminus \hat S(\hat x,r),
\hat S(\hat x,r)\}$ then it is easy to see that
$$
\bigcap_{n \geq 0} \hat f^n \hat\cP(\hat f^{-n}(\hat x))
    \subset \hat\cQ(\hat x).
$$
for almost every $\hat x$. So, the partition $\hat\cP$ just
constructed satisfies the conclusions of the lemma.
\end{proof}

\begin{proof}[Proof of Proposition~\ref{p.entropygenerating}]
Let $\vep>0$ be arbitrary small. Up to a refinement of the partition
$\hat\cP$ we may assume without loss of generality that
$h_{\hat\eta}(\hat f,\hat\cP) \geq h_{\hat\eta}(\hat f)-\vep$. Since
the partition $\hat\cP^{(\infty)}$ is finer than $\hat\cQ$ then
$$
h_{\hat\eta}(\hat f,\hat\cP)
        = h_{\hat\eta}(\hat f,\hat\cP^{(\infty)})
        = h_{\hat\eta}(\hat f,\hat\cP^{(\infty)} \vee \hat\cQ)
        = h_{\hat\eta}(\hat f,\hat f^n \hat\cP^{(\infty)} \vee \hat\cQ)
$$
for every $n \geq 1$. Using that $h_{\hat\eta}(\hat f,\hat
\xi)=H_{\hat\eta}(\hat f^{-1} \hat \xi,\hat\xi)$ for every
increasing partition $\hat\xi$, the right hand side term in the
previous equalities coincides with the relative entropy
 $H_{\hat\eta}( \hat f^n \hat\cP^{(\infty)} \vee \hat\cQ \mid \hat f^{n+1} \hat\cP^{(\infty)} \vee \hat f\hat\cQ)$
and, consequently,
\begin{equation*}
h_{\hat\eta}(\hat f, \hat\cP)
 =
 H_{\hat\eta}(\hat\cQ \mid \hat f\hat\cQ \vee \hat f^n \hat\cP^{(\infty)})
    +
 H_{\hat\eta}(\hat\cP^{(\infty)} \mid \hat f^{-n} \hat\cQ \vee \hat f \hat\cP^{(\infty)}).
\end{equation*}
The second term in the right hand side above is bounded by
$H_{\hat\eta}(\hat\cP)$, which is finite. Then
Proposition~\ref{p.generating.partition}\eqref{ep4} implies that it
tends to zero as $n\to\infty$. On the other hand, the diameter of
almost every element in $\hat f^{-n+1}\hat\cQ$ tend to zero as $n
\to\infty$, proving that there exists a sequence of sets $(\hat
D_n)_{n\geq 1}$ in $\hat M$ satisfying $\lim_n \hat\eta(\hat D_n)=1$
and such that $ \hat f\cQ(\hat x) \subset \hat f^n
\hat\cP^{(\infty)}(\hat x) \quad\text{for every}\; \hat x \in \hat
D_n$. Then
\begin{multline*}
 H_{\hat\eta}(\hat\cQ \mid \hat f\hat\cQ \vee \hat f^n \hat\cP^{(\infty)})
        = \int - \log \hat\eta_{(\hat f\hat\cQ \vee \hat f^n \hat\cP^{(\infty)})(\hat
        x)} (\hat\cQ(\hat x))\, d\hat\eta(\hat x) \geq \\
        \geq \int_{\hat D_n(\hat x)} - \log \hat\eta_{(\hat f\hat\cQ)(\hat
        x)} (\hat\cQ(\hat x)) \, d\hat\eta(\hat x),
\end{multline*}
where the measures $\hat\eta_{\hat f\hat\cQ \vee \hat f^n
\hat\cP^{(\infty)}}$ and $\hat\eta_{\hat f\hat\cQ}$ denote
respectively the conditional measures of $\eta$ with respect to the
partitions $\hat f\hat\cQ \vee \hat f^n \hat\cP^{(\infty)}$ and
$\hat f\hat\cQ$. This proves that $\lim_{n} H_{\hat\eta}(\hat\cQ
\mid \hat f\hat\cQ \vee \hat f^n \hat\cP^{(\infty)}) \geq
H_{\hat\eta}(\hat\cQ \mid \hat f\hat\cQ)$. Since the other
inequality is always true we deduce that $h_{\hat\eta}(\hat f,
\hat\cP)=H_{\hat\eta}(\hat\cQ \mid \hat f\hat\cQ)$. Since $\vep>0$
was chosen arbitrary this proves that $h_{\hat\eta}(\hat
f)=H_{\hat\eta}(\hat\cQ \mid \hat f\cQ)$, as claimed.
\end{proof}

It follows from Lemma~\ref{l.Pesin.extension} and
Proposition~\ref{p.entropygenerating} that
\begin{equation}\label{eq.gPesin.formula}
H_{\hat\eta}( \hat f^{-1} \hat\cQ \mid \hat\cQ)
        = \int \log J_{\hat \nu} \hat f \, d\hat \eta.
\end{equation}
With this in mind we obtain the following

\begin{lemma}
$\hat\eta$ admits a disintegration $(\hat\eta_{\hat x})_{\hat x}$
along the measurable partition $\hat\cQ$ such that
\begin{equation}\label{eq.disintegrated}
\hat\eta_{\hat x} (B)
    = \frac{1}{Z(\hat x)} \int_{\hat\cQ(\hat x) \cap B} \De(\hat x,\hat y)
    \;d\hat\nu_{\hat x}(\hat y),
    \quad\text{where}\quad
    Z(\hat x)=\int_{\hat\cQ(\hat x)} \De(\hat x,\hat y) \,d\hat\nu_{\hat
x}(\hat y)
\end{equation}
for every measurable set $B$ and $\hat\eta$-almost every $\hat x$.
In consequence $\hat\eta_{\hat x}$ is absolutely continuous with
respect to $\hat\nu_{\hat x}$ for almost every $\hat x$.
\end{lemma}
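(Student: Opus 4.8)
The plan is to follow Ledrappier's strategy: produce the right-hand side of \eqref{eq.disintegrated} as a candidate disintegration of $\hat\eta$ along $\hat\cQ$, and then use the entropy identity \eqref{eq.gPesin.formula}, together with the conformality relation \eqref{eq.extension.conformal0} for $\hat\nu$, to force the genuine conditional measures $(\hat\eta_{\hat x})_{\hat x}$ (given by Rokhlin's theorem) to coincide with the candidate. So for $\hat\eta$-almost every $\hat x$ define the measure $\hat m_{\hat x}$ on $\hat\cQ(\hat x)$ by $\hat m_{\hat x}(B)=Z(\hat x)^{-1}\int_{\hat\cQ(\hat x)\cap B}\De(\hat x,\hat y)\,d\hat\nu_{\hat x}(\hat y)$. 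By Corollary~\ref{c.positivemeasure} and the fact that $\hat y\mapsto\De(\hat x,\hat y)$ is bounded on $\hat\cQ(\hat x)$ (a consequence of the H\"older continuity of $J_\nu f$ and the backward contraction along unstable leaves), $\hat m_{\hat x}$ is a well-defined probability measure; moreover, since $Z(\hat z)=Z(\hat x)/\De(\hat x,\hat z)$ and $\De(\hat z,\cdot)=\De(\hat z,\hat x)\De(\hat x,\cdot)$, one has $\hat m_{\hat z}=\hat m_{\hat x}$ whenever $\hat z\in\hat\cQ(\hat x)$. It then suffices to prove $\hat\eta_{\hat x}=\hat m_{\hat x}$ for $\hat\eta$-almost every $\hat x$, which is precisely \eqref{eq.disintegrated}, and absolute continuity of $\hat\eta_{\hat x}$ with respect to $\hat\nu_{\hat x}$ follows at once.

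The heart of the argument is a comparison of two computations. Write $\hat\cQ_n=\bigvee_{j=0}^{n}\hat f^{-j}\hat\cQ$, so that $\hat\cQ_0=\hat\cQ$, the $\hat\cQ_n$ increase, and $\bigvee_n\hat\cQ_n$ is the partition into points by Proposition~\ref{p.generating.partition}\eqref{ep2}. Because the increasing property $\hat f^{-1}\hat\cQ\succ\hat\cQ$ implies $\hat f^{n}$ maps $\hat\cQ_n(\hat x)$ bijectively onto $\hat\cQ(\hat f^{n}(\hat x))$, a change of variables using \eqref{eq.extension.conformal0} and the cocycle identity $\De(\hat f\hat a,\hat f\hat b)=\big(J_{\hat\nu}\hat f(\hat a)/J_{\hat\nu}\hat f(\hat b)\big)\De(\hat a,\hat b)$ yields
\[
\hat m_{\hat x}\big(\hat\cQ_n(\hat x)\big)
 = \frac{Z(\hat f^{n}(\hat x))}{Z(\hat x)\;\prod_{j=0}^{n-1}J_{\hat\nu}\hat f(\hat f^{j}(\hat x))}.
\]
On the other hand, the chain rule for conditional entropy, the fact that $\hat f^{j}$ maps atoms of $\hat f^{-(j+1)}\hat\cQ$ and of $\hat\cQ_j$ onto atoms of $\hat f^{-1}\hat\cQ$ and of $\hat\cQ$ respectively, and the $\hat f$-invariance of $\hat\eta$, give $H_{\hat\eta}(\hat\cQ_n\mid\hat\cQ)=n\,H_{\hat\eta}(\hat f^{-1}\hat\cQ\mid\hat\cQ)$, which equals $n\int\log J_{\hat\nu}\hat f\,d\hat\eta$ by \eqref{eq.gPesin.formula}. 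Once one checks that $\log Z\in L^1(\hat\eta)$ — so that the boundary terms $\log Z\circ\hat f^{n}$ in the displayed formula telescope away after integration — both sides of $-\int\log\hat m_{\hat x}(\hat\cQ_n(\hat x))\,d\hat\eta$ and $H_{\hat\eta}(\hat\cQ_n\mid\hat\cQ)=-\int\log\hat\eta_{\hat x}(\hat\cQ_n(\hat x))\,d\hat\eta$ equal $n\int\log J_{\hat\nu}\hat f\,d\hat\eta$, hence $\int\log\big(\hat\eta_{\hat x}(\hat\cQ_n(\hat x))/\hat m_{\hat x}(\hat\cQ_n(\hat x))\big)\,d\hat\eta=0$ for every $n$.

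Finally I would run the standard Gibbs (Jensen) argument. Fixing $n$, concavity of $\log$ applied to the partition of $\hat\cQ(\hat x)$ into $\hat\cQ_n$-atoms (each of positive $\hat\nu_{\hat x}$-measure, hence countably many) gives, for $\hat\eta$-almost every $\hat x$,
\[
\int_{\hat\cQ(\hat x)}\log\frac{\hat\eta_{\hat x}(\hat\cQ_n(\hat z))}{\hat m_{\hat x}(\hat\cQ_n(\hat z))}\,d\hat\eta_{\hat x}(\hat z)\ \ge\ 0,
\]
with equality if and only if $\hat\eta_{\hat x}$ and $\hat m_{\hat x}$ assign the same mass to every $\hat\cQ_n$-atom inside $\hat\cQ(\hat x)$; integrating in $\hat x$ and using $\hat m_{\hat x}(\hat\cQ_n(\hat z))=\hat m_{\hat z}(\hat\cQ_n(\hat z))$ recovers the integral of the previous paragraph, which is zero, so the equality case holds $\hat\eta$-almost everywhere. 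Intersecting over $n$ and letting $n\to\infty$, the fact that $\bigvee_n\hat\cQ_n$ is the partition into points (Proposition~\ref{p.generating.partition}\eqref{ep2}) shows that $\hat\eta_{\hat x}$ and $\hat m_{\hat x}$ agree on a generating increasing family of $\sigma$-algebras of $\hat\cQ(\hat x)$, whence $\hat\eta_{\hat x}=\hat m_{\hat x}$ for $\hat\eta$-almost every $\hat x$, proving \eqref{eq.disintegrated} and the asserted absolute continuity. The two points requiring care are the integrability $\log Z\in L^1(\hat\eta)$, which I would obtain from the boundedness of $\De$ on a hyperbolic block together with Kac's lemma for the return time to that block (using also $Z(\hat f^n\hat x)\le Z(\hat x)\prod_{j<n}J_{\hat\nu}\hat f(\hat f^j\hat x)$ to control $\log Z$ along orbit segments), and the bookkeeping in the change of variables leading to the formula for $\hat m_{\hat x}(\hat\cQ_n(\hat x))$; everything else is routine.
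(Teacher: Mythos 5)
Your overall route is the same Ledrappier argument the paper uses: take $\De(\hat x,\cdot)/Z(\hat x)$ as a candidate density, check via \eqref{eq.extension.conformal0} and the cocycle identity that the candidate mass of an $n$-atom is $Z(\hat f^n(\hat x))\big(Z(\hat x)\prod_{j=0}^{n-1}J_{\hat\nu}\hat f(\hat f^j(\hat x))\big)^{-1}$, compare with $H_{\hat\eta}(\hat f^{-n}\hat\cQ\mid\hat\cQ)=n\int\log J_{\hat\nu}\hat f\,d\hat\eta$ coming from \eqref{eq.gPesin.formula}, and conclude by strict concavity of $\log$ together with the generating property of Proposition~\ref{p.generating.partition}. (Note that, $\hat\cQ$ being increasing, your $\hat\cQ_n$ is just $\hat f^{-n}\hat\cQ$, so your ``$n$-step'' bookkeeping is literally the paper's step of replacing $\hat f$ by $\hat f^n$; the Jensen step and the use of Corollary~\ref{c.positivemeasure} are also as in the paper.)

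The genuine gap is the point you yourself flag: you reduce the key identity $\int\big(\log Z-\log Z\circ\hat f^n\big)\,d\hat\eta=0$ to the claim $\log Z\in L^1(\hat\eta)$, and the sketch you offer for that claim does not work. Boundedness of $\De$ on a hyperbolic block controls the density but not the conditional mass: the lower bound for $Z(\hat x)$ is governed by $\hat\nu_{\hat x}(\hat\cQ(\hat x))$, i.e.\ essentially by the radius $\beta_r(\hat x)$ from the proof of Proposition~\ref{p.generating.partition}, for which the paper only establishes almost-everywhere positivity (atoms can be arbitrarily thin slivers when some backward iterate comes close to $\partial B(x_0,r)$), not integrability of its logarithm. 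Moreover, transporting the block estimate to a general point through the time elapsed since the last visit to the block is not justified by Kac's lemma: Kac gives $L^1$ of the first return time for the induced measure, whereas integrability of the ``age'' requires a second-moment condition on returns, which is not available. In fact no integrability of $\log Z$ is needed, and this is exactly where the paper's proof is more careful: since the candidate measure is a probability, $\log\frac{Z\circ\hat f}{Z}\le\log J_{\hat\nu}\hat f$ is bounded above, so Birkhoff's theorem applies to this coboundary; the telescoped averages $\frac1n\log\frac{Z(\hat f^n(\hat x))}{Z(\hat x)}$ converge almost everywhere, and since $Z\circ\hat f^n$ has the same distribution as $Z$ they converge to $0$ in probability, hence the a.e.\ limit is $0$ and $\int\log\frac{Z\circ\hat f}{Z}\,d\hat\eta=0$. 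Replacing your integrability claim by this argument (applied to $\hat f^n$ for each $n$, or to $\hat f$ and summing) closes the gap; the rest of your proof is sound.
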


\begin{proof}
Recall that $\De(\hat x ,\hat y)$ is well defined for almost every
$\hat x$ and every $\hat y \in \hat\cQ(\hat x)$ according to
Lemma~\ref{l.Pesin.extension}. In particular
Corollary~\ref{c.positivemeasure} implies that $0<Z(\hat x)<\infty$
almost everywhere.
Let $\rho_{{\hat x}}$ denote the measure in the right hand side of
the first equality in \eqref{eq.disintegrated}. Since $\hat
f^{-1}\hat \cQ\succ \hat\cQ$ a simple computation involving a change
of coordinates gives that
$$
\rho_{\hat x}((\hat f^{-1}\hat\cQ)({\hat x}))
    %= \frac{1}{Z({\hat x})} \int_{\hat\cQ({\hat x}) \cap (\hat f^{-1}\hat\cQ)({\hat x})} \De({\hat x},{\hat y}) \,d\hat\nu({\hat y})
    = \frac{1}{Z({\hat x})} \int_{(\hat f^{-1}\hat\cQ)({\hat x})} \De({\hat x},{\hat y}) \,d\hat\nu_{\hat x}({\hat y})
    = \frac{Z(\hat f({\hat x}))}{Z({\hat x}) \;J_{\hat\nu} \hat f({\hat x})}.
$$
We claim that
$$
-\int \log \rho_{\hat x}((\hat f^{-1}\hat\cQ)({\hat x})) \,d\hat\eta
        = \int \log J_{\hat\nu} \hat f \,d\hat\eta.
$$
Since $\rho_{\hat x}$ is a probability measure then $-\log
\rho_{\hat x}((\hat f^{-1}\hat\cQ)({\hat x}))$ is a positive
function and clearly the negative part of this function belongs to
$L^1(\hat\eta)$. Using that $J_\nu f$ is bounded away from zero and
infinity the same is obviously true also for $\log\frac{Z(\hat
f({\hat x}))}{Z({\hat x})}$. So, Birkhoff's ergodic theorem yields
that the limit
$$
 \omega(\hat x):= \lim_{n \to \infty} \frac1n \log Z(\hat f^n(\hat x))
        =\lim_{n \to \infty} \frac1n \log \frac{Z(\hat f^n(\hat x))}{Z(\hat x)}
        =\lim_{n \to \infty} \frac1n \sum_{j=0}^{n-1} \log \frac{Z \circ \hat f (\hat f^j(\hat x))}{Z(\hat f^j(\hat x))}
$$
do exist (although possibly infinite) and that
$$
\int \omega(\hat x) d\hat\eta(\hat x)
    = \int \log\frac{Z(\hat f({\hat x}))}{Z({\hat x})} d\hat\eta(\hat x).
$$
Since $Z$ is almost everywhere positive and finite, the sequence
$1/n \log Z(\hat f^n(\hat x))$ converge to zero in probability and,
consequently, it is almost everywhere convergent to zero along some
subsequence $(n_j)_j$. This shows that $\omega(\hat x)=0$ for
$\hat\eta$-almost every $\hat x$ and proves our claim.
On the other hand using relation \eqref{eq.gPesin.formula} and the
equality
$$
H_{\hat\eta}(\hat f^{-1}\hat\cQ \mid \hat\cQ)
    = - \int \log \hat\eta_{{\hat x}}(\hat f^{-1}\hat\cQ({\hat x}))
    \;d\hat\eta({\hat x})
$$
we obtain
$$
\int \log  \Big( \frac{d\hat\rho}
                {d \hat\eta} \Big|_{\hat f^{-1} \hat\cQ}
            \Big) d\hat\eta=0.
$$
Since the logarithm is a strictly concave function then
$$
0=\int \log  \Big( \frac{d\hat\rho_{\hat x}}
                {d \hat\eta_{\hat x}} \Big|_{\hat f^{-1} \hat\cQ} \Big)
                d\hat\eta
        \leq \log \Big( \int \frac{d\hat\rho_{\hat x}}{d \hat\eta_{\hat x}} \Big|_{\hat f^{-1} \hat\cQ}
                d\hat\eta\Big)=0,
$$
and the equality holds if and only if the Radon-Nykodym derivative
$\frac{d\hat\rho_{\hat x}}{d \hat\eta_{\hat x}}$ restricted to the
sigma-algebra generated by ${\hat f^{-1} \hat\cQ}$ is almost
everywhere constant and equal to one. Replacing $\hat f$ by any
power $\hat f^n$ in the previous computations it is not difficult to
check that $\hat\eta_{\hat x}$ and $\hat\rho_{\hat x}$ coincide in
the increasing family of sigma-algebras generated by the partitions
$\hat f^{-n}(\hat\cQ)$, $n \geq 1$.
Proposition~\ref{p.generating.partition}\eqref{ep4} readily implies
that $\hat\eta_{\hat x} = \rho_{\hat x}$ at $\hat\eta$-almost every
$\hat x$, which completes the proof of the lemma.
\end{proof}

We know from the previous lemma that $\hat\eta_{\hat x} \ll
\hat\nu_{\hat x}$ almost everywhere. Then, using that $\Wloc^u(\hat
x)$ is a neighborhood of $x_0$ in $M$ and the bijection
$$
\pi\mid_{\hWloc^u(\hat x)} : \hWloc^u(\hat x) \to \Wloc^u(\hat x)
$$
it follows that $\pi_*\hat\eta_{\hat x} \ll \nu$ for
$\hat\eta$-almost every $\hat x$. Since $(\hat\eta_{\hat x})$ is a
disintegration of $\hat\eta$ and $\pi_*\hat\eta=\eta$ it is
immediate that $\eta\ll\nu$. This completes proof of the theorem.
\end{proof}

\begin{remark}\label{rmk.general.endomorphism}
We point out there is an analogous version of
Theorem~\ref{thm.equilibrium is acim} that holds for piecewise
differentiable maps $f$ that behave like a power of the distance to
a possible critical or singular locus, as considered in
\cite{ABV00}. Indeed, assume that $\phi$ is an H\"older continuous
potential and $\nu$ is an expanding conformal measure such that
$J_\nu f=\la e^{-\phi}$ is H\"older continuous, where
$\la=\exp{\Ptop(f,\phi)}$. Assume also that $\eta$ is an equilibrium
state for $f$ with respect to $\phi$ and $\eta(\supp\nu)=1$. If
$\eta$ has non-uniform expansion and satisfies a slow recurrence
condition then there is a local unstable leaf passing through almost
every point, in the same way as in
Proposition~\ref{p.Pesinpointwise}. The construction of an
increasing partition as in Proposition~\ref{p.generating.partition}
and the proof of the absolute continuity of $\eta$ with respect to
$\nu$ remains unaltered. This is of independent interest and can be
applied, e.g. when $f$ is a quadratic map with positive Lyapunov
exponent, $\phi=-\log|\det Df|$ and $\nu$ is the Lebesgue measure to
prove the uniqueness of the SRB measure.
\end{remark}

Through the remaining of the section assume that $f$ is
topologically mixing. Since equilibrium states coincide with the
invariant measures that are absolutely continuous with respect to
$\nu$ then there is only one equilibrium state $\mu$ for $f$ with
respect to $\phi$. Thus, Theorem~\ref{Thm. Equilibrium States2} is a
direct consequence of Proposition~\ref{prop. acim} and the previous
statement. To finish the proof of Theorem~\ref{Thm. Equilibrium
States} it remains only to show exactness of the equilibrium state:

\begin{lemma}
$\mu$ is exact.
\end{lemma}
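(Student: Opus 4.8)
The plan is to prove directly that the tail $\sigma$-algebra $\cB_\infty=\bigcap_{n\ge0}f^{-n}\cB$ is $\mu$-trivial. Fix $A\in\cB_\infty$; since $\mu$ and $\nu$ are equivalent (Lemma~\ref{l. Gibbs property acim}), if $\nu(A)=0$ we are done, so I assume $\nu(A)>0$ and aim to show $\mu(A^c)=0$. First I would note that $f$ is surjective: topological mixing gives an open $U$ and $N\ge1$ with $f^N(U)=M$, whence $f(M)\supseteq f(f^{N-1}(U))=M$. Writing $A=f^{-n}(A_n)$ with $A_n=f^n(A)$ one then also has $A^c=f^{-n}(A_n^c)$, and by $f$-invariance $\mu(A_n)=\mu(A)$ and $\mu(A_n^c)=\mu(A^c)$ for every $n$. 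Since $d\mu/d\nu$ is bounded above, it therefore suffices to produce, for each $\varepsilon>0$, an integer $n$ with $\nu(M\setminus A_n)$ bounded by a fixed constant times $\varepsilon$.

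The first step is to find a hyperbolic pre-ball almost entirely contained in $A$. Here I would re-run the disjointification part of the proof of Lemma~\ref{support Gibbs} (which is carried out there for a general expanding conformal measure) with $A$ playing the role of $G$: fixing a large threshold $n_0$, choosing a compact $K\subset A\cap H$ and an open $O\supset K$ with $\nu(A\setminus K)<\varepsilon\,\nu(A)$ and $\nu(O\setminus K)<\varepsilon\,\nu(A)$, covering $K$ by hyperbolic pre-balls $B(x,n(x),\delta/4)$ with $x\in K$ and $n(x)\ge n_0$ a hyperbolic time for $x$, and extracting a pairwise disjoint subfamily that carries a definite fraction $\tau$ of the measure. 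An averaging argument then yields a point $x\in A$ and a hyperbolic time $m\ge n_0$ for which $A^c$ has density $<\varepsilon'$ in $B(x,m,\delta/4)$, with $\varepsilon'\to0$ as $\varepsilon\to0$. Using Lemma~\ref{delta} (so that $f^m$ maps $B(x,m,\delta)$ homeomorphically onto $B(f^m(x),\delta)$ and $B(x,m,\delta/4)$ onto $B(f^m(x),\delta/4)$), the identity $A^c=f^{-m}(A_m^c)$, and the bounded distortion of Corollary~\ref{lem. bounded distortion} (applicable since $J_\nu f=\la e^{-\phi}$ is H\"older), this transfers to a bound $\nu\big(A_m^c\cap B(f^m(x),\delta/4)\big)\le K_0\varepsilon'$ on a genuine metric ball of radius $\delta/4$.

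The second step uses topological mixing to spread this out. By compactness of $M$ one obtains a uniform $N_0$ such that every ball of radius $\delta/4$ has a forward image equal to $M$ within $N_0$ iterates: cover $M$ by finitely many balls $B(w_j,\delta/8)$, pick times $N_j$ with $f^{N_j}(B(w_j,\delta/8))=M$, and observe $B(w,\delta/4)\supset B(w_j,\delta/8)$ whenever $w\in B(w_j,\delta/8)$. Applying this to $B(f^m(x),\delta/4)$ gives some $N\le N_0$ with $f^N\big(B(f^m(x),\delta/4)\big)=M$. Splitting that ball into the finitely many pieces of $\cP^{(N)}$ on which $f^N$ is injective, changing variables piece by piece, and using that the conformal Jacobian $J_\nu f^N=\la^N e^{-S_N\phi}$ is bounded away from $0$ and $\infty$, I would bound $\nu\big(M\setminus f^N(A_m\cap B(f^m(x),\delta/4))\big)$ by a fixed constant times $\nu\big(A_m^c\cap B(f^m(x),\delta/4)\big)\le K_0\varepsilon'$ (a $w$ in the complement has all its $f^N$-preimages inside $B(f^m(x),\delta/4)$ in $A_m^c$). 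Since $f^N\big(A_m\cap B(f^m(x),\delta/4)\big)\subset f^{m+N}(A)=A_{m+N}$, this gives $\nu(M\setminus A_{m+N})\le C\,\varepsilon'$ with $C$ independent of $\varepsilon$, hence $\mu(A^c)=\mu(A_{m+N}^c)\le\|d\mu/d\nu\|_\infty\,\nu(M\setminus A_{m+N})\le C\,\|d\mu/d\nu\|_\infty\,\varepsilon'$. Letting $\varepsilon\to0$ yields $\mu(A^c)=0$, so $\mu(A)\in\{0,1\}$; thus $\cB_\infty$ is $\mu$-trivial and $\mu$ is exact.

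The main obstacle is the first step: getting a pre-ball in which $A$ has density arbitrarily close to $1$. A hyperbolic pre-ball can be exponentially thin inside the concentric metric ball, so the Lebesgue density theorem with respect to balls (valid on a Besicovitch space) does \emph{not} suffice to conclude that $A$ nearly fills some pre-ball around a density point of $A$; one really has to use the disjointification behind Lemma~\ref{support Gibbs}, where the disjoint pre-balls covering $K$ together carry a proportion $\ge\tau$ of the measure, forcing the $A^c$-content to be small in at least one of them. Everything after that is routine bookkeeping with bounded distortion and the uniformly bounded conformal Jacobian.
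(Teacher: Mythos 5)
Your argument is essentially the paper's proof. Both use the disjointification mechanism behind Lemma~\ref{support Gibbs} (applied to the tail-measurable set rather than an invariant set, which is fine since that part of the argument never uses invariance) to locate a hyperbolic pre-ball in which the complement has small relative measure, then push forward by $f^m$ with bounded distortion onto a genuine $\delta/4$-ball, and finally use topological mixing with a uniform $N$ together with the boundedness of the conformal (respectively $\mu$-) Jacobian to spread the estimate to all of $M$; the only cosmetic difference is that you carry the estimates in $\nu$ and convert to $\mu$ at the end via the two-sided density bound, whereas the paper works with $\mu$ directly via $J_\mu f$.
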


\begin{proof}
Let $E \in \cB_\infty$ be such that $\mu(E)>0$ and let $\vep>0$ be
arbitrary. There are measurable sets $E_n \in \cB$ such that
$E=f^{-n}(E_n)$. On the other hand, since $\mu$ is regular there
exists a compact set $K$ and an open set $O$ such that $K \subset E
\cap H \subset O$ and $\mu(O \setminus K) < \vep \mu(K)$, where $H$
denotes as before the set of points with infinitely many hyperbolic
times and $\vep>0$ is small. The same argument used in the proof of
Lemma~\ref{support Gibbs} shows that there exists $\tau>0$ $n \geq
1$ and $x \in H_n$ such that
$$
\frac{\mu(B(x,n,\de/4)\setminus E)}{\mu(B(x,n,\de/4))}<
\tau^{-1}\vep.
$$
Since $n$ is a hyperbolic time then $f^n \mid_{B(x,n,\de)}$ is a
homeomorphism that satisfies the bounded distortion property. Hence
$$
\frac{\mu(B(f^n(x),\de/4) \setminus f^n(E))}{\mu(B(f^n(x),\de/4))}
    <K_0 \tau^{-1}\vep.
$$
The topologically mixing assumption guarantees the existence of a
uniform $N \geq 1$ (depending only on $\de$) such that every ball of
radius $\de/4$ is mapped onto $M$ by $f^N$. Furthermore, since $\mu
\ll \nu$ with density $h=\frac{d\mu}{d\nu}$ bounded away from zero
and infinity then $J_{\mu} f = J_\nu f \; (h \circ f )/h$ satisfies
$C^{-1} \leq J_{\mu} f \leq C$ for some constant $C>1$. In
particular, since $d^N$ is an upper bound for the number of inverse
branches of $f^N$, $C$ bounds the maximal distortion of the Jacobian
at each iterate and $\mu$ is $f$-invariant we obtain that
$$
\mu(M \setminus E)
    = \mu(M \setminus E_{n+N})
    < K_0 d^N C^N \tau^{-1} \vep.
$$
The arbitrariness of $\vep>0$ shows that $\mu(E)=1$. This proves
that $\mu$ is exact.
\end{proof}

We finish this section with the

\begin{proof}[Proof of Corollary~\ref{Cor.ContinuousPotentials}]
If $\phi$ is a continuous potential satisfying (P), the existence of
an equilibrium state for $f$ with respect to $\phi$ will follow from
upper semi-continuity of the metric entropy. Let $\{\phi_n\}$ be a
sequence of H\"older continuous potentials satisfying (P) and
converging to $\phi$ in the uniform topology. Take $\mu_n$ to be an
equilibrium state for $f$ with respect to $\phi_n$, given by
Theorem~\ref{Thm. Equilibrium States2}, and let $\mu$ be an
accumulation point of the sequence $(\mu_n)_n$. Note that the
constants $c$ and $\de$ given by Lemma~\ref{delta} are uniform for
every $\mu_n$. So, any partition $\cR$ of diameter smaller than
$\delta$ that satisfies $\mu(\partial\cR)=0$ is generating with
respect to $\mu_n$, and
$$
h_{\mu}(f,\cR) \geq \limsup h_{\mu_n}(f,\cR).
$$
Using the continuity of $\phi \mapsto \Ptop(f,\phi)$ and $\phi
\mapsto \int \phi\; d\mu$ it follows that
\begin{multline*}
h_{\mu}(f, \cR)
    %\geq \limsup_{n\to\infty} h_{\mu_n}(f,\cR)
    %\geq \limsup_{n\to\infty} h_{\mu_n}(f)
    = \limsup_{n\to\infty}
        \Big[\Ptop(f,\phi_n) - \int \phi_n\; d\mu_n \Big]%\\
    = \Ptop(f,\phi) - \int \phi\; d\mu
    \geq h_\mu(f).
\end{multline*}
This proves that $\mu$ is an equilibrium state for $f$ with respect
to $\phi$. Furthermore, the function
$$
(\eta,\phi) \mapsto h_\eta(f)+\int \phi \,d\eta
$$
is upper-semicontinuous on the product space of $c$-expanding
measures and convex set of continuous potentials satisfying (P).
Hence, proceeding as in \cite[Corollary~9.15.1]{Wa82} there exists a
residual $\mathcal R \subset C(M)$ of potentials satisfying (P) such
that there is a unique equilibrium state for $f$ with respect to
$\phi$. The proof of the corollary is now complete.
\end{proof}

%%%%%%%%%%%%%%%%%%%%%%%%%%%%%%%%%%%%%%%%%%%%%%%%%%%%%%%%%%%%%%%%%%%%%%%
\section{Stability of equilibrium states}\label{Proof of Theorems 3, 4 and 5}

%%%%%%%%%%%%%%%%%%%%%%%%%%%%%%%%%%%%%%%%%%%%%%%%%%%%%%%%%%%%%%%%%%%%%
\subsection{Statistical stability}\label{subsec.statistical}

Here we prove upper semi-continuity of the metric entropy and use
the continuity assumption on the topological pressure to prove that
the equilibrium states vary continuously with respect to the data
$f$ and $\phi$.

\begin{proof}[Proof of Theorem~\ref{Thm. Statistical
Stability}]
Let $\mathcal W$ be the set of H\"older continuous potentials and
$\cF$ the set of local homeomorphisms introduced in
Subsection~\ref{stability eq. states}. The strategy is to construct
a generating partition for \emph{all} maps in $\cF$. A similar
argument was considered in \cite{Ar07a}. Fix $(f,\phi) \in \cF
\times \cW$ and arbitrary sequences $\cF \ni f_n \to f$ in the
uniform topology, with $L_n \to L$ in the uniform topology, and $\cW
\ni \phi_n \to \phi$ in the uniform topology, let $\mu_n$ be an
equilibrium state for $f_n$ with respect to $\phi_n$ and $\eta$ be
an $f$-invariant measure obtained as an accumulation point of the
sequence $(\mu_n)_n$.

We begin with the following observation. Since the constants $c$ and
$\de$ given by Lemma~\ref{delta} are uniform in $\cF$, any partition
$\cR$ of diameter smaller than $\de/2$ satisfying $\eta(\partial
\cR)=0$ generates the Borel sigma-algebra for every $g \in \cF$.
Then, Kolmogorov-Sinai theorem implies that
$h_{\mu_n}(f_n)=h_{\mu_n}(f_n,\cR)$ and
$h_{\eta}(f)=h_{\eta}(f,\cR)$, that is,
$$
h_{\mu_n}(f_n)%=h_{\mu_n}(f_n,\cR)
 = \inf_{k \geq 1} \frac{1}{k} H_{\mu_n}(\cR_n^{(k)})
 \quad \text{and} \quad
 h_{\eta}(f) %=h_{\eta}(f,\cR)
  = \inf_{k \geq 1} \frac{1}{k} H_{\eta}(\cR^{(k)}),
$$
where $H_{\eta}(\cR)=\sum_{R \in \cR} -\eta(R) \log \eta(R)$ and we
considered the dynamically defined partitions
$$
\cR_n^{(k)}=\bigvee_{j=0}^{k-1} f_n^{-j} (\cR)
    \quad \text{and} \quad
\cR^{(k)}=\bigvee_{j=0}^{k-1} f^{-j} (\cR).
$$
Since $\eta$ gives zero measure to the boundary of $\cR$ then
$H_{\mu_n}(\cR_n^{(k)})$ converge to $H_{\eta}(\cR^{(k)})$ as $n\to
\infty$ by weak$^*$ convergence. Furthermore, for every $\vep>0$
there is $N \geq 1$ such that
$$
h_{\mu_n}(f_n)
  \leq \frac{1}{N} H_{\mu_n}(\cR_n^{(N)})
  \leq \frac{1}{N} H_{\eta}(\cR^{(N)})+\vep
  \leq h_\eta(f)+2\vep.
$$
Recalling the continuity assumption of the topological pressure
$\Ptop(f,\phi)$ on the data $(f,\phi)$, that $\mu_n$ is an
equilibrium state for $f_n$with respect to $\phi_n$, and that $\int
\phi_n d\mu_n \to \int \phi \,d\eta$ as $n \to \infty$, it follows
that
$$
h_\eta(f)+\int \phi \,d\eta \geq \Ptop(f,\phi).
$$
This shows that $\eta$ is an equilibrium state for $f$ with respect
to $\phi$. Since every equilibrium state belongs to the convex hull
of ergodic equilibrium states and these coincide with finitely many
ergodic measures absolutely continuous with respect to $\nu$ (recall
Theorem~\ref{Thm. Equilibrium States2}), this completes the proof of
Theorem~\ref{Thm. Statistical Stability}.
\end{proof}

We finish this subsection with some comments on the assumption
involving the continuity of the topological pressure. The map $\phi
\mapsto P(f,\phi)$ varies continuously, provided that $f$ is a
continuous transformation (see for instance \cite[Theorem
9.5]{Wa82}). On the other hand, in this setting the topological
pressure $\Ptop(f,\phi)$ coincides with $\log \la_{f,\phi}$, where
$\la_{f,\phi}$ is the spectral radius of the transfer operator
$\cL_{f,\phi}$, \emph{for every $f\in \cF$ and every $\phi \in
\cW$}. Moreover, the operators $\cL_{f,\phi}$ vary continuously with
the data $(f,\phi)$. So, the continuous variation of the topological
pressure should be a consequence of the most likely spectral gap for
the transfer operator $\cL_{f,\phi}$ in the space of H\"older
continuous observables. Such a spectral gap property was obtained by
Arbieto, Matheus~\cite{AM} in a related context.

%%%%%%%%%%%%%%%%%%%%%%%%%%%%%%%%%%%%%%%%%%%%%%%%%%%%%%%%%%%%%%%%%%
\subsection{Stochastic stability}

The results in this section are inspired by some analogous in
\cite{AA03}. First we introduce some definitions and notations.
Given $\un f \in \cF^\N$, define $\un f ^j=f_j \circ \dots f_2 \circ
f_1$. Let $(\theta_\vep)_{0<\vep\leq 1}$ be a family of probability
measures in $\cF$. Given a (not necessarily invariant) probability
measure $\nu$, we say that $(f,\nu)$ is \emph{non-uniformly
expanding along random orbits} if there exists $c>0$ such that
$$
\limsup_{n \to \infty} \frac{1}{n} \sum_{j=1}^{n} \log \|Df(\un
f^j(x))^{-1}\| \leq -2c<0
$$
for $(\theta_\vep^\N\times \nu)$-almost every $(\un f,x)\in \cF^\N
\times M$. If this is the case, Pliss's lemma guarantees the
existence of infinitely many hyperbolic times for almost every point
where, in this setting, $n\in\N$ is a \emph{$c$-hyperbolic time for
$({\un f},x)\in \cF^\N \times M$} if
$$
\prod_{j=n-k}^{n-1} \|Df(\un f^j(x))^{-1}\|< e^{-ck}
    \quad \text{for every $0\leq k\leq n-1$}.
$$
We refer the reader to \cite[Proposition 2.3]{AA03} for the proof.
Given $\vep>0$, let $n_1^{\vep}: \cF^\N \times M \to \N$ denote the
first hyperbolic time map. Set also $H_n({\un f})=\{x \in M : n \;
\text{is a $c$-hyperbolic time for}\; ({\un f},x)\}$. In the
remaining of the section let $f \in \cF$ and $\nu$ be an expanding
conformal measure such that $\supp\nu=H$. The next result shows that
$f$ has random non-uniform expansion. More precisely,

\begin{lemma}\label{lem. nue along random orbits}
Let $(\theta_\vep)_{0<\vep\leq 1}$ be a family of probability
measures in $\cF$ such that $\supp\theta_\vep$ is contained in a
small neighborhood $V_\vep(f)$ of $f$ and $\bigcap_{\vep}
V_\vep(f)=\{f\}$. If $\cF\ni g \mapsto J_\nu g$ is a continuous
function and $\vep$ is small enough then $(f,\nu)$ is non-uniformly
expanding along every random orbit of $(\hat f,\theta_\vep)$.
Furthermore,
$$
(\theta_\vep^\N \times \nu)
    (\left\{(\un f,x) \in \cF^\N \times M : n_1^\vep({\un f},x)>k \right\})
$$
decays exponentially fast and, consequently, $\int n_1^\vep
\,d\,(\theta_\vep^\N \times\nu) < \infty$.
\end{lemma}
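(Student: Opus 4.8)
The plan is to transcribe the deterministic estimates of Proposition~\ref{p.recurrence} and Corollary~\ref{c.nu.expanding} to the random setting, the crucial point being that every bound involved can be made \emph{uniform} in the realization $\un f\in\cF^\N$, so that integrating against $\theta_\vep^\N$ does not spoil the exponential decay. First I would fix the constants: since $\cF$ is a family of local homeomorphisms satisfying (H1), (H2) with uniform constants, we may use a single region $\cA$, a single covering $\cP$ by domains of injectivity, and uniform constants $\si>1$, $L>0$ close to $1$, $q<e^{h(f)}$ and $c>0$ for every $g\in\cF$; fix $\ga<1$ as after Lemma~\ref{l.combinatorio}, so that $\#I(\ga,n)\le e^{(\log q+\vep_0/2)n}$ for all large $n$, and recall $\si^{-(1-\ga)}L^\ga<e^{-2c}$ by \eqref{eq. relation expansion}. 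By \eqref{eq.Jacobian} we have $J_\nu f>e^{\log q+\vep_0}$ on $M$; since $g\mapsto J_\nu g$ is continuous in the uniform norm and $\bigcap_\vep V_\vep(f)=\{f\}$, I may take $\vep$ so small that $J_\nu g>e^{\log q+3\vep_0/4}$ on $M$ for every $g\in V_\vep(f)\supset\supp\theta_\vep$.

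Next I would prove the fibered cylinder estimate. For $\un f=(f_1,f_2,\dots)$ put $\un f^{\,0}=\mathrm{id}$, $\un f^{\,j}=f_j\circ\cdots\circ f_1$, and consider $\cP^{(n)}(\un f)=\bigvee_{j=0}^{n-1}(\un f^{\,j})^{-1}\cP$. On each $P\in\cP^{(n)}(\un f)$ every $\un f^{\,j}$ is injective, and the chain rule for Jacobians gives $J_\nu\un f^{\,n}=\prod_{j=0}^{n-1}(J_\nu f_{j+1})\circ\un f^{\,j}$ on $P$; hence
\[
1\ge\nu(\un f^{\,n}(P))=\int_P J_\nu\un f^{\,n}\,d\nu\ge e^{(\log q+3\vep_0/4)n}\,\nu(P),
\]
so $\nu(P)\le e^{-(\log q+3\vep_0/4)n}$, with a bound independent of $\un f$. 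Setting $B(n,\un f)=\{x\in M:\tfrac1n\#\{0\le j<n:\un f^{\,j}(x)\in\cA\}\ge\ga\}$, this set is covered by those $P\in\cP^{(n)}(\un f)$ whose $\cP$-itinerary lies in $I(\ga,n)$, whence $\nu(B(n,\un f))\le\#I(\ga,n)\,e^{-(\log q+3\vep_0/4)n}\le e^{-\vep_0 n/4}$ for all large $n$, still uniformly in $\un f$. By Fubini, the set $B(n)=\{(\un f,x):x\in B(n,\un f)\}$ satisfies $(\theta_\vep^\N\times\nu)(B(n))\le e^{-\vep_0 n/4}$ for large $n$, which is exactly the exponential decay asserted; by Borel--Cantelli, $(\theta_\vep^\N\times\nu)$-almost every $(\un f,x)$ lies in $B(n)$ for only finitely many $n$ (in fact, for \emph{every} $\un f$, $\nu$-a.e.\ $x$ has this property).

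Finally I would conclude as in Corollary~\ref{c.nu.expanding}. For $(\un f,x)\notin B(n)$, assumption (H1) together with \eqref{eq. relation expansion} gives
\[
\frac1n\sum_{j=0}^{n-1}\log L_{f_{j+1}}(\un f^{\,j}(x))\le\ga\log L+(1-\ga)\log\si^{-1}\le-2c
\]
once $n$ is large, so $\limsup_n\frac1n\sum_{j=0}^{n-1}\log L_{f_{j+1}}(\un f^{\,j}(x))\le-2c<0$ at almost every $(\un f,x)$, i.e.\ $(f,\nu)$ is non-uniformly expanding along the random orbits of $(\hat f,\theta_\vep)$; the existence of infinitely many $c$-hyperbolic times with positive density then follows from the Pliss-type statement \cite[Proposition~2.3]{AA03}. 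By that same statement, if $x\notin B(k,\un f)$ and $k$ is large then $(\un f,x)$ has a hyperbolic time $\le k$, so $\{n_1^\vep>k\}\subset B(k)$ for all large $k$ and
\[
\int n_1^\vep\,d(\theta_\vep^\N\times\nu)=\sum_{k\ge0}(\theta_\vep^\N\times\nu)(\{n_1^\vep>k\})\le\const+\sum_{k}e^{-\vep_0 k/4}<\infty.
\]

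I expect the main obstacle, and the only genuinely new point over Proposition~\ref{p.recurrence}, to be verifying that the estimates are honestly uniform in $\un f$ — the cylinder bound $\nu(P)\le e^{-(\log q+3\vep_0/4)n}$ and the combinatorial count $\#I(\ga,n)$ must not depend on $\un f$ — so that integration over $\cF^\N$ preserves the exponential rate; the remaining points (measurability of $(\un f,x)\mapsto n_1^\vep$ and of the sets $B(n)$, and validity of the Jacobian chain rule for random compositions with respect to the non-invariant, $f$-conformal reference measure $\nu$) are routine.
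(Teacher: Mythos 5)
Your proof is correct and follows essentially the same route as the paper's: a cylinder estimate uniform in $\un f$ obtained from the continuity of $g\mapsto J_\nu g$ on $\supp\theta_\vep$, the combinatorial count $\# I(\ga,n)$, Fubini over $\theta_\vep^\N$, Borel--Cantelli, the random Pliss lemma, and the inclusion $\{n_1^\vep>n\}\subset B(n)$ to get integrability. The one minor point where the paper is more careful is that it does not assume a common contracting region for all $g\in\cF$: it works with the enlarged set $\ti\cA=\bigcup_{g\in\supp\theta_\vep}\cA_g$ and checks that, for $\vep$ small, $\ti\cA$ is still contained in the same $q$ elements of $\cP$ as $\cA_f$, which is exactly what your frequency-of-visits estimate requires when the regions $\cA_g$ are allowed to differ.
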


\begin{proof}
Given $g \in \cF$, let $\cA_g \subset M$ be the region described in
(H1) and (H2). Denote by $\ti \cA$ the enlarged set obtained as the
union of the regions $\cA_g$ taken over all $g \in
\supp\theta_\vep$. If $\vep>0$ is small enough then we can assume
that $\ti \cA$ is contained in the same $q$ elements of the covering
$\cP$ as the set $\cA_f$.

Now we claim that, if $\ga$ is chosen as before and ${\un f} \in
\cF^\N$ the measure of the set
$$
B(n, {\un f})
  = \Big\{ x \in  M : \frac{1}{n} \#\{0\leq j \leq n-1: \un f^j(x) \in \ti\cA\} \geq \ga \Big\}
$$
decays exponentially fast. Indeed, the same proof of
Lemma~\ref{l.combinatorio} yields that $B(n, {\un f})$ is covered by
at most $e^{(\log q+\vep_0/2) n}$ elements of $\cP^{(n)}({\un
f})=\bigvee_{j=0}^{n-1} \un f^{-j}(\cP)$, for every large $n$. On
the other hand, since $\supp(\theta_\vep)$ is compact the function
$\supp\theta_\vep \ni g \mapsto J_\nu g$ is uniformly continuous:
for every $\vep>0$ there exists $a(\vep)>0$ (that tends to zero as
$\vep\to 0$) such that
$$
e^{-a(\vep)} \leq \frac{J_\nu f(x)}{J_{\nu} g(x)} \leq e^{a(\vep)}
$$
for every $g \in \supp(\theta_\vep)$ and every $x\in M$. As in the
proof of Proposition~\ref{p.recurrence}, this implies that
$$
1 \geq \nu(\un f^n(P))
    = \int_P \prod_{j=0}^{n-1} J_\nu f_j \circ \un f^j \, d\nu
    \geq e^{-a(\vep) n} \int_P J_\nu f^n \, d\nu
    > e^{(\log q + \vep_0 -a(\vep)) n} \; \nu(P)
$$
and, consequently, $\nu(P) \leq e^{-(\log q + \vep_0 -a(\vep)) n}$
for every $P \in \cP^{(n)}(\un f)$ and every large $n$. Hence
$$
\nu(B(n,{\un f}))
    \leq
    \#  \{P \in \cP^{(n)}(\mathrm{{\un f}}): P \cap B(n,\mathrm{{\un f}}) \neq \emptyset\}
    \times e^{-(\log q + \vep_0 -a(\vep)) n}
$$
which decays exponentially fast and proves the claim. Then, the set
$$
\un B(n)
    = \Big\{ ({\un f},x) \in \cF^\N \times M : \frac{1}{n}
    \#\{0\leq j \leq n-1: \un f^j(x) \in \ti\cA\} \geq \ga \Big\}
$$
is such that $(\theta_\vep \times \nu)(B(n))= \int \nu\left(
B(n,{\un f}) \right)\, d\theta_\vep^\N({\un f})$ also decays
exponentially fast. Borel-Cantelli guarantee that the frequency of
visits of the random orbit $\{\un f^j(x)\}$ to $\ti \cA$ is smaller
than $\ga$ for $\theta_\vep^\N\times\nu$-almost every $(\un f,x)$.
Moreover, since every $g \in \cF$ satisfy (H1) and (H2) with uniform
constants this proves that $f$ is non-uniformly expanding along
random orbits. Moreover, the first hyperbolic time map $n_1^\vep$ is
integrable because
$$
\int n_1 \, d(\theta_\vep^\N \times \nu)
   = \sum_{n\geq 0} (\theta_\vep^\N \times \nu)(\{n_1>n\})
   \leq  \sum_{n\geq 0} (\theta_\vep^\N \times \nu)(B(n))
   <\infty.
$$
This completes the proof of the lemma.
\end{proof}

\begin{remark}
Before proceeding with the proof, let us discuss briefly the
continuity assumption on $\cF \ni g \to J_\nu g$. First notice that
in our setting this is automatically satisfied when $\nu$ coincides
with the Lebesgue measure since it reduces to the continuity of $g
\mapsto \log |\det Dg|$. Given $g \in \cF$, let $\nu_g$ denote the
expanding conformal measure and set $P_g=\Ptop(f,\phi)$. Observe
that if $k$ is a $c$-hyperbolic time for $x$ with respect to $f$
then it is a $c/2$-hyperbolic time for $x$ with respect to every $g$
sufficiently close to $f$. Consequently
$$
K(c/2,\de)^{-2} e^{-|P_f-P_g| k}
   \leq \frac{\nu_g(B(x,k,\de))}{\nu_f(B(x,k,\de))}\leq
K(c/2,\de)^2 e^{|P_f-P_g| k},
$$
which proves that the conformal measures $\nu_f$ and $\nu_g$ are
comparable at hyperbolic times and that $J_\nu
g=d(g^{-1}_*\nu)/d\nu$ is a well defined object in the domain of
each inverse branch $g^{-1}$. So, in general, the relation above
indicates that the continuity of the topological pressure should
play a crucial role to obtain the continuity of the Jacobian $\cF
\ni g \to J_\nu g$.
\end{remark}

Given $n\geq 1$ define $f_x^n:\cF^\N \to M$ given by $f_x^n({\un
g}):=\un g^n(x)$. Since $f$ is non-uniformly expanding and
non-uniformly expanding along random orbits then there are finitely
many ergodic stationary measures absolutely continuous with respect
to $\nu$. More precisely,

\begin{theorem}\label{Thm. Stochastic Stability}
Let $(\theta_\vep)_\vep$ be a non-degenerate random perturbation of
$f \in \cF$. Given $\vep>0$ there are finitely many ergodic
stationary measures $\mu^\vep_1, \mu^\vep_2, \dots , \mu^\vep_l$
that are absolutely continuous with respect to the conformal measure
$\nu$ and
\begin{equation}\label{eq. physical stationary}
\mu^\vep_i
    = \lim_{n \to \infty}
    \frac{1}{n} \sum_{j=0}^{n-1} \int \un f^j_*  (\nu | B(\mu^\vep_i)) \; d\theta_\vep^\N(\un{f}),
\end{equation}
for every $1 \leq i \leq l$. In addition, $l\geq 1$ can be taken
constant for every sufficiently small $\vep$.
\end{theorem}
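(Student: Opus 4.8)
The plan is to run the argument of Sections~\ref{Conformal Measure} and~\ref{Absolutely Continuous Measures} for the skew product $F:\cF^\N\times M\to\cF^\N\times M$ in place of $f$, using Lemma~\ref{lem. nue along random orbits} as the substitute for Corollary~\ref{c.nu.expanding}. First I would record the random analogues of the tools of Subsection~\ref{Hyperbolic times}: whenever $n$ is a $c$-hyperbolic time for $(\un f,x)$, the dynamical ball $B(x,n,\de)$ is mapped homeomorphically by $\un f^n$ onto $B(\un f^n(x),\de)$ with backward exponential contraction, and the Jacobian $J_\nu(\un f^n)=\prod_{j=0}^{n-1}J_\nu f_j\circ\un f^j$ has bounded distortion on $B(x,n,\de)$, with a distortion constant $K_0$ that is uniform in $\un f$ because $\log J_\nu g$ is uniformly H\"older over $\supp\theta_\vep$ once $\vep$ is small. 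By Lemma~\ref{lem. nue along random orbits}, for $\theta_\vep^\N$-a.e.\ $\un f$ the set $H(\un f)$ of points with infinitely many hyperbolic times has full $\nu$-measure, and $n_1^\vep$ is $(\theta_\vep^\N\times\nu)$-integrable.

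Next I would consider the Cesaro averages $\tilde\mu^\vep_n=\frac1n\sum_{j=0}^{n-1}F^j_*(\theta_\vep^\N\times\nu)$ on $\cF^\N\times M$, whose projection to $M$ is $\frac1n\sum_{j=0}^{n-1}\int \un f^j_*\nu\;d\theta_\vep^\N(\un f)$. Every weak$^*$ accumulation point of $(\tilde\mu^\vep_n)_n$ is $F$-invariant, hence of the form $\theta_\vep^\N\times\mu^\vep$ with $\mu^\vep$ stationary; by non-degeneracy $\mu^\vep\ll\nu$ automatically. Splitting $H_n(\un f)^c$ exactly as in Section~\ref{Absolutely Continuous Measures} --- visits of the random orbit to $\ti\cA$ have frequency $<\ga$ outside a set of exponentially small $\nu$-measure, by the proof of Lemma~\ref{lem. nue along random orbits} --- together with the bounded-distortion estimate on hyperbolic pre-balls, one gets that $d\mu^\vep/d\nu$ is bounded above by a constant that does not depend on $\vep$. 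This is the random analogue of Lemma~\ref{density control}.

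Finitude then follows from the random version of Lemma~\ref{support Gibbs}: if $G\subset\cF^\N\times M$ is $F$-invariant with $(\theta_\vep^\N\times\nu)(G)>0$, the generating property of hyperbolic pre-balls produces, for a positive $\theta_\vep^\N$-measure set of $\un f$, a disk $\De$ of radius $\de/4$ in $M$ with $\nu(\De\setminus G_{\un f})=0$. Since such disks attached to distinct ergodic components are $\nu$-essentially disjoint and $M$ is compact, there are at most finitely many ergodic stationary measures $\mu^\vep_1,\dots,\mu^\vep_l$ absolutely continuous with respect to $\nu$, every a.c.\ stationary measure is a convex combination of them, each is expanding along random orbits, and their basins $B(\mu^\vep_i)$ cover $\nu$-a.e.\ point of $M$. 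Formula~\eqref{eq. physical stationary} is then the statement that, restricted to $B(\mu^\vep_i)$, the Cesaro averages of the pushforwards of $\nu$ converge to $\mu^\vep_i$, which follows from the ergodicity of $\theta_\vep^\N\times\mu^\vep_i$, Birkhoff's theorem for $F$, and the absolutely continuous ergodic decomposition.

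The remaining point, and the genuine obstacle, is that $l\ge1$ can be taken constant for all small $\vep$. The disk-packing argument above already gives a uniform upper bound $l\le L_0$, where $L_0$ is the maximal number of pairwise disjoint $\de/4$-disks in $M$, independently of $\vep$. To identify the exact value one uses the second half of the non-degeneracy hypothesis --- $\supp\theta_\vep$ contains, uniformly in $x$, a ball of radius $\rho_0>0$ around $f(x)$ --- which yields a uniform spreading property along random orbits allowing one to compare the ergodic decompositions for different small $\vep$ and conclude that the number of ergodic a.c.\ stationary measures stabilizes; this is the analogue in our context of the corresponding step in \cite{AA03}, and it is where the bulk of the work lies. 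Once $l$ is known to be eventually constant, the proof of Theorem~\ref{Thm. Stochastic Stability} is complete.
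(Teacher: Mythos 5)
Your route to the first three claims is viable but considerably heavier than the paper's. For existence and absolute continuity the paper does not pass through any random analogue of Lemma~\ref{density control}: it takes accumulation points of $\frac1n\sum_{j=0}^{n-1}(f_x^j)_*\theta_\vep^\N$ on $M$, which are stationary, and then observes that
$$
\mu^\vep(E)=\int\mu^\vep(g^{-1}E)\,d\theta_\vep(g)=\int\big((f_x)_*\theta_\vep\big)(E)\,d\mu^\vep(x),
$$
so that the \emph{first} non-degeneracy condition (uniformly bounded density of $(f_x)_*\theta_\vep$ with respect to $\nu$) gives $\mu^\vep\ll\nu$ immediately, with no hyperbolic-time or distortion input. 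For finitude at a fixed $\vep$ the paper again avoids a random ``support Gibbs'' lemma: since $x\in B(\mu^\vep)$ forces $g(x)\in B(\mu^\vep)$ for all $g\in\supp\theta_\vep$, the \emph{second} non-degeneracy condition shows that $B(\mu^\vep)$ contains a ball of radius $r_\vep$ around $f(x)$; compactness of $M$ then bounds the number of pairwise disjoint basins. One further caveat about your approach: the bounded-distortion step for $J_\nu(\un f^n)$ along random hyperbolic pre-balls needs $\log J_\nu g$ to be H\"older in $x$ with a constant uniform over $g\in\supp\theta_\vep$, and that is strictly stronger than the continuity-in-$g$ hypothesis of Theorem~\ref{Thm. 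Stochastic Stability2}; the estimate supplied by Lemma~\ref{lem. nue along random orbits} is $|\log J_\nu g(x)-\log J_\nu f(x)|\le a(\vep)$ uniformly in $x$, and summing this along a block of length $n$ contributes $n\,a(\vep)$ to the distortion, which does not stay bounded. So the density bound you want is not a direct consequence of the stated hypotheses. Your formula \eqref{eq. physical stationary} via dominated convergence on the basin matches the paper.

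The genuine gap is in the final claim, that $l$ can be taken constant for all small $\vep$, which you explicitly leave unproved. The paper's argument is short and does not go through ``comparing ergodic decompositions via uniform spreading''; it has two concrete halves. (i) Each $\supp\mu_i^\vep$ is an invariant set with non-empty interior, and by non-uniform expansion along random orbits it contains a hyperbolic pre-ball $V_n(x)$ for $f$, hence by forward invariance a ball of radius $\de$; since $\de$ is uniform over $\cF$ this gives $l(\vep)\le l_0$ with $l_0$ independent of $\vep$. (ii) Each $\supp\mu_i^\vep$ is forward-invariant under $f$ and has positive $\nu$-measure, so for any $\vep'<\vep$ it must be contained in $\supp\mu_j^{\vep'}$ for some $j$; this monotonicity of supports as $\vep$ decreases, together with the uniform bound in (i), is what forces $l(\vep)$ to stabilize. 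Your proposal identifies neither the ball-of-uniform-radius mechanism nor the nesting of supports, and the vague ``uniform spreading allows one to compare ergodic decompositions'' would need to be replaced by one of these concrete steps to close the argument.
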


\begin{proof}
This proof follows closely the one of Theorem~C in \cite{AA03}. For
that reason we give a brief sketch of the proof and refer the reader
to \cite{AA03} for details. It is easy to check that any
accumulation point $\mu^\vep$ of the sequence of probability
measures
\begin{equation}\label{eq. conv. stationary}
\frac{1}{n} \sum_{j=0}^{n-1} (f_x^j)_* \theta_\vep^\N
\end{equation}
on $M$ is a stationary measure. Moreover, any stationary measure
$\mu^\vep$ is absolutely continuous with respect to $\nu$ because of
the non-degeneracy of the random perturbation and
$$
\mu^\vep(E)
  =\int \mu^\vep(g^{-1}(E)) \; d\theta_\vep(g)
  = \int 1_E(g(x)) \; d\theta_\vep(g)\, d\mu^\vep(x)
  =\int ((f_x)_*\theta_\vep^\N) (E) \;d\mu^\vep
$$
for every measurable set $E$.

On the other hand, by the ergodic decomposition of the $F$-invariant
probability measure $\theta_\vep^\N \times \mu^\vep$ there are
ergodic stationary measures. We prove that there can be at most
finitely many of them. Indeed, a point $x$ belongs to the basin of
attraction $B(\mu^\vep)$ of an ergodic stationary measure $\mu^\vep$
if and only if
\begin{equation}\label{eq.basin}
\frac{1}{n} \sum_{j=0}^{n-1} \psi
  (\un f^j(x)) \to \int \psi \,d\mu^\vep
\end{equation}
for every $\psi \in C(M)$ and $\theta_\vep^\N$-almost every $\un f
\in \cF^\N$. In addition, if $x\in B(\mu^\vep)$ then $g(x) \in
B(\mu^\vep)$ for every $g \in \supp(\theta_\vep)$. Furthermore, the
non-degeneracy of the random perturbation implies that $B(\mu^\vep)$
contains the ball of radius $r_\vep$ centered at $f(x)$. Then, the
compactness of $M$ implies that there are finitely many ergodic
absolutely continuous stationary measures $\mu_1^\vep, \dots
,\mu_l^\vep$, with $1 \leq l \leq l(\vep)$. Since
$\nu(B(\mu_i^\vep))>0$, integrating \eqref{eq.basin} with respect to
$\nu$ and using the dominated convergence theorem one obtains
$$
\int \psi \,d\mu_i^\vep
%  =\lim_{n} \frac{1}{n} \sum_{j=0}^{n-1}
%          \int \psi(f^j_{\mathrm{\underline{U}}}(x)) \,d(\nu|B(\mu^\vep_i))
  =\lim_{n} \frac{1}{n} \sum_{j=0}^{n-1}
          \int_{B(\mu^\vep_i)} \psi \circ \un f^j \;d\nu
  = \lim_{n} \frac{1}{n} \sum_{j=0}^{n-1}
          \int \psi \,d \,\un f^j_*(\nu| B(\mu^\vep_i))
$$
for every $\psi \in C(M)$ and $\theta_\vep^\N$-almost every $\un f
\in \cF$. This proves the first statement of the theorem.

It remains to show that $l=l(\vep)$ can be chosen constant for every
sufficiently small $\vep$. The support of each stationary measure
$\mu_i^\vep$ is an invariant set with non-empty interior (see
\cite{AA03}). Since $f$ is non-uniformly expanding then
$\supp(\mu_i^\vep)$ contains some hyperbolic pre-ball $V_n(x)$
associated to $f$ and, by invariance, a ball of radius $\de$. This
proves that $l(\vep)\leq l_0$ for every small $\vep>0$. On the other
direction, since the set $\supp(\mu_i^\vep)$ has positive
$\nu$-measure and is forward invariant by $f$ it must be contained
in the support of some ergodic stationary measure $\mu_i^{\vep'}$
for every $\vep'$ smaller than $\vep$. This proves the $l$ can be
taken constant for small $\vep$ and completes sketch of the proof of
the theorem.
\end{proof}

Now we are in a position to prove that the equilibrium states
constructed in Theorem~\ref{Thm. Equilibrium States} are
stochastically stable.

\begin{proof}[Proof of Theorem~\ref{Thm. Stochastic Stability2}]
Let $(\mu^\vep)_{\vep>0}$ be a sequence of stationary measures
absolutely continuous with respect to $\nu$ and let $\eta$ be any
weak$^*$ accumulation point. Theorem~\ref{Thm. Stochastic Stability}
implies that there is $l\geq 1$ such that there are exactly $l$
ergodic stationary measures $\mu^\vep_1, \dots, \mu^\vep_l$ that are
absolutely continuous with respect to $\nu$, for every sufficiently
small $\vep$. Furthermore,
\[
\mu_i^\vep = \lim_{n \to \infty} \nu^\vep_{n,i}
    \quad\text{where}\quad
\nu^\vep_{n,i}
    =\frac{1}{n} \sum_{j=0}^{n-1} \int \un f^j_* (\nu \mid
    B(\mu_ i^\vep)) \, d\theta_\vep^\N(\un f).
\]
Proceed as in the beginning of Subsection~\ref{sec. upper bound and
finitude} and write $\nu^\vep_{n} \leq \xi^\vep_{n} + \frac{1}{n}
\sum_{j=0}^{n-1} \eta^\vep_j$ with
$$
\xi^\vep_{n,i}
    = \frac{1}{n} \sum_{j=0}^{n-1} \int_{B(\mu_ i^\vep)}
    \un f^j_* (\nu \mid H_j({\un f})) \, d\theta_\vep^\N(\un f)
$$
and
$$
\eta^\vep_{n,j}
   =\sum_{k>0} \int_{B(\mu_i^\vep)} \un f^k_* \,\Big( [\un f^j_*(\nu
    \mid H_j({\un f}))]  \mid\{n_1^\vep(\cdot, \si^j({\un f}))>k\} \Big)\,
d\theta_\vep^\N({\un f}).
$$
The arguments from Section~\ref{Absolutely Continuous Measures} and
the uniform integrability of $\vep \mapsto n_1^\vep \in
L^{1}(\theta_\vep^\N \times \nu)$ yield that each measure
$\nu_{n,i}^\vep$ is absolutely continuous with respect to $\nu$ with
density bounded from above by a constant depending only on $\vep$.
By weak$^*$ convergence it follows that $\eta$ is also absolutely
continuous with respect to $\nu$ and, consequently, $\eta$ belongs
to the convex hull of finitely many ergodic equilibrium states
$\mu_1, \dots, \mu_k$ for $f$ with respect to $\phi$. This completes
the proof of the theorem.
\end{proof}

%%%%%%%%%%%%%%%%%%%%%%%%%%%%%%%%%%%%%%%%%%%%%%%%%%%%%%%%%%%%%%%%%%%%%%%

\end{document}